\numberwithin{equation}{section}
\newlength{\templength}
\newlength{\textparindent}
\let \@makefntextorig \@makefntext
\newcommand{\@makefntextcustom}[1]{%
    \parindent 2\textparindent%
    \hspace{-\textparindent}%
    \settowidth{\templength}{0}%
    \ifnum\value{footnote}<10 \hspace{\templength}\else\fi%
    \thefootnote.\enskip #1%
}
\renewcommand{\@makefntext}[1]{\@makefntextcustom{#1}}
\definecolor{gr}{rgb}{0.0, 0.42, 0.24}
\numberwithin{equation}{section}
\date{\today}
\newtheorem{thm}{Theorem}[section]
\newtheorem{lem}[thm]{Lemma}
\newtheorem{prop}[thm]{Proposition}
\newtheorem{defn}[thm]{Definition}
\newtheorem{rem}[thm]{Remark}
\numberwithin{equation}{section}
\newcommand{\ud}{\mathrm{d}}
\newcommand{\ep}{\varepsilon}
\newcommand{\R}{\mathbb{R}}
\theoremstyle{definition}
\colorlet{BLUE}{blue}
\numberwithin{equation}{section}
\newcommand{\sB}{\mathcal B}
\newcommand{\sE}{\mathcal E}
\newcommand{\sC}{\mathcal C}
\newcommand{\sF}{\mathcal F}
\newcommand{\sO}{\mathcal O}
\newcommand{\sP}{\mathcal P}
\newcommand{\sX}{\mathcal Z}
\newcommand{\bE}{{\bf E}}
\newcommand{\bP}{{\bf P}}
\newcommand{\vv}{{\bf v}}
\newcommand{\N}{\mathbb{N}}
\newcommand{\dpiCoeff}{\EuScript{K}}
\newcommand{\norm}[1]{\left\| #1 \right\|}
\newcommand{\iprod}[2]{( #1, #2 )}
\newcommand{\Hiprod}[2]{( #1, #2 )}
\newcommand{\Viprod}[2]{(( #1, #2 ))}
\newcommand{\Hnorm}[1]{| #1 |}
\newcommand{\Vnorm}[1]{\| #1 \|}
\newcommand{\Vdualpair}[2]{\langle #1, #2 \rangle}
\newcommand{\levy}{L\'{e}vy~}
\newcommand{\cadlag}{c\`{a}dl\`{a}g~}
\newcommand{\ra}{\rightarrow}
\newcommand{\tn}[1]{\textnormal{#1}}
\renewcommand{\tilde}{\widetilde}
\renewcommand{\emph}{\textbf}
\renewcommand{\hat}{\widehat}
\newcommand{\cutoff}[1]{\theta(\Vnorm{#1})}
\newcommand{\oud}{\otimes \mathrm{d}}
\newtheoremstyle{myThm}% name
{8pt}% Space above
{8pt}% Space below
{\it}% Body font
{}% Indent amount
{\bf}% Theorem head font
{.}% Punctuation after theorem head
{6mm}% Space after theorem head
{}% Theorem head spec (can be left empty, meaning `normal')
\newtheoremstyle{myEx}% name
{8pt}% Space above
{8pt}% Space below
{\normalfont}% Body font
{}% Indent amount
{\bf}% Theorem head font
{.}% Punctuation after theorem head
{6mm}% Space after theorem head
{}% Theorem head spec (can be left empty, meaning `normal')
\theoremstyle{myThm}
\theoremstyle{definition}
\theoremstyle{myEx}
\newenvironment{myenum}{\begin{enumerate}[\it i\tn{)}]}{\end{enumerate}}
\date{}
\begin{document}
	
	\title[Stochastic Ladyzenskaya-Smagorinsky equations with L\'evy noise]{ Nonlinear Stochastic parabolic partial differential equations with a monotone operator of the Ladyzenskaya-Smagorinsky type, driven by a L\'evy noise}
	\author[P. Nguyen, K. Tawri and R. Temam]{Phuong Nguyen$^{ 1,2}$, Krutika Tawri$^{1}$ and Roger Temam$^{1}$}

%\thanks{Email address: temam@indiana.edu}

\begin{comment}	[1]{$^{1}$The Institute for Scientific Computing and
		Applied Mathematics \\ Indiana University, 831 E. 3rd St., Rawles Hall\\ Bloomington, IN 47405, USA}
	{$^{2}$Department of Mathematics and Statistics, Texas Tech University, 1108 Memorial Circle, Lubbock, TX 79409-1042, USA.}\end{comment}	
\maketitle
	{
	\centering{$^{1}$ Department of Mathematics and Institute for Scientific Computing and Applied Mathematics,}
	\centering{Indiana University,}
	\centering{Bloomington, IN 47405, USA.}\\
	\centering{$^{2}$ Department of Mathematics and Statistics,}
	\centering{Texas Tech University,}
	\centering{Lubbock, TX 79409, USA.}
}

%\maketitle
\begin{abstract}
{The aim of this article is to show the global existence of both martingale and pathwise solutions of
{stochastic} equations with a monotone operator, of the Ladyzenskaya-Smagorinsky type, driven by a general Lévy noise. The classical approach based on using directly the Galerkin approximation is not valid. Instead, our approach is
based on using appropriate approximations for the monotone operator, Galerkin approximations and on the theory of martingale solutions.}
\end{abstract}
\tableofcontents

\section{Introduction} \label{sec:Introduction}
In this article we study the existence and uniqueness of global {martingale and} pathwise solutions to stochastic partial differential equations (SPDEs) with a nonlinear monotone operator driven by a \levy noise, not necessarily square-integrable. %This article is closely related to \cite{CNTT20} but we consider in this article equations for which the methods and results of \cite{CNTT20} do not apply directly. The equations that we consider in this article are of parabolic monotone type but more general than those considered in \cite{CNTT20}. %In what follows we will first consider in Sections \ref{sec:Setting} to \ref{sec:globally_monotonic_coefficients} equations similar to those studied in \cite{CNTT20} in what we call the Hilbertian framework, then in Section \ref{sec:general_case} we consider more general equations which necessitate the non Hilbertian framework.
 As explained below from the physical point of view, the two types of equations that we consider are firstly the equations of Ladyzenskaya-Smagorinsky type, that are essentially the Navier Stokes equations with a viscosity depending on the gradient of velocity. The second type of equations that we consider are parabolic equations with a linear elliptic part and a nonlinearity of polynomial type. Such equations were already considered {in \cite{CNTT20}, \cite{DHI}} with restrictions on the degree of the polynomials but here polynomials of arbitrary degrees can be considered. Such equations appear in biology in e.g. population dynamics.\\
 The equation that we consider is written in the abstract form
\begin{equation} \label{spde}
\begin{cases}
\ud u(t) + [\gamma Au(t) + B(u(t),u(t)) + F(u(t))] \ud t \\
\hspace{0.5in}= G(u(t-)) \ud W + \int_{E_0} K(u(t-),\xi) \ud \hat{\pi}(t,\xi) + \int_{E \setminus E_0} \dpiCoeff(u(t-),\xi) \ud \pi(t,\xi),  \\
u(0) = u_0 ,
\end{cases}
\end{equation}
where $u$ takes values in a real, separable Hilbert space $H$. On the left-hand side of \eqref{spde}, $A$ is a linear operator on $H$, generally unbounded, $B$ is a bilinear form of the type often encountered in fluid dynamics and $F$ is a monotone operator on a reflexive Banach space $X$ which is continuously and densely embedded into $H$ (see e.g. \cite{Br73} {and below}). In Section \ref{sec:functional_framework} we introduce the precise assumptions on $F$. {The terms on the right-hand side of \eqref{spde} are stochastic integrals that represent different effects of a \levy noise, which is fitting in stochastic models of fluid dynamics as a way to represent interactions that occur at random times. The term $G(u)\ud W$ has effects that are continuous in time on the system, whereas the two terms $\int_{E_0} K(u(t-),\xi) \ud \hat{\pi}(t,\xi)$, $\int_{E \setminus E_0} \dpiCoeff(u(t-),\xi) \ud \pi(t,\xi),$ influence the system discretely in time.}

Interest in SPDEs with \levy noise has been growing in the recent years, see e.g., \cite{BLZ}, \cite{EulerPaper}, \cite{1LayerShallow}, \cite{MS16}, \cite{Motyl}. In \cite{MajCh16} and \cite{Thual16} the authors use processes with jumps to model phase transitions in bursts of wind that contribute to the dynamics of El Ni\~no. In \cite{SteNee14} the authors use processes with jumps to model phase transitions in precipitation dynamics.
Models based on SPDEs with \levy noise have been used to describe observational records in paleoclimatology in \cite{Dit99}, with the jump events of the \levy noise proposed as representing abrupt triggers for shifts between glacial and warm climate states.
See also \cite{PenEw08} for a comparison of Wiener noise and \levy noise in modeling climate phenomena.

{Our main results are Theorem \ref{existence_martingale} and Theorem \ref{existence} which establish the existence and uniqueness of martingale and pathwise solutions to the equation \eqref{spde} (see also Definitions \ref{definition1} and \ref{definition2}). We use the classical Galerkin approximation method to prove Theorem \ref{existence_martingale}. However a traditional application of the Galerkin method fails as {the projectors} $P_n$, defined in \eqref{e11.12}, may not necessarily be projectors in the Banach space $X$ associated with $F$ (see Section \ref{sec:functional_framework}). Hence we approximate the monotone operator $F$ in \eqref{spde} by a family of monotone operators $F^R$ acting in another Hilbert space, continuously and densely embedded in $H$, so as to recover certain useful properties when $R \rightarrow \infty$. The a priori estimates for \eqref{spde} also suggest the need to truncate the bilinear term $B$ in the original system into $B_{\tilde R}$ (see equations \eqref{approx_eqn}), then pass to the limit $\tilde R \rightarrow \infty$.

 The paper is organized as follows. {In Section \ref{sec:globally_monotonic_coefficients} we state the main results.} In Section \ref{sec:functional_framework} we layout the functional analytical and probabilistic framework and the assumptions on the terms appearing in \eqref{spde}. In Section \ref{galerkin} we introduce the Galerkin system for the approximation equations \eqref{approx_eqn}. We establish a priori estimates in Section \ref{ss11.4} and tightness of the laws of the Galerkin approximations in Section \ref{tightness}. Then we use the Skorohod convergence theorem to obtain a sequence, same in distribution, that converges a.s. on a
 new probability space and pass to the limit to obtain a global martingale solution to the truncated
 equation \eqref{approx_eqn} in Section \ref{passagetothelimit}. We then pass to the limit $R\rightarrow \infty$ and $\tilde R \rightarrow \infty$ in Sections \ref{sec:R} and \ref{sec:tildeR} respectively to obtain a global martingale solution to \eqref{spde}. In Section \ref{global} we establish pathwise uniqueness for the equation \eqref{spde} and then obtain a global pathwise solution to \eqref{spde} using a well-known argument {by Gy\"ongy and Krylov} that  generalizes {to infinite dimensions the classical} Yamada–Watanabe theorem. In Section \ref{Examples} we {describe} examples that fit the setting of our analysis.

 }
%%%%%%%%%%%%%%%%%%%%%%%%%%
%%%%%%%%%%%%%%%%%%%%%%%%%%%%%%%%%%%%%%%%%%%%
%%%%%%%%%%%%%%%%%%%%%%%%%%%%%%%%%%%%%%%%%%%%%%%%%%%%%%%%%%%%%%%
%%%%%%%%%%%%%%%%%%%%%%%%%%%%%%%%%%%%%%%%%%%%%%%%%%%%%%%%%%%%%%%%%%%%%%%%%%%%%%%%
\section{Mathematical framework} \label{sec:functional_framework}
{The following is a general framework which we will adapt to the actual problem under consideration.}
We consider a separable, real Hilbert space $V \subset H$, such that the embedding is dense and compact. Similarly, $X$ is a reflexive Banach space continuously and densely embedded into $H$.  We may thus define the Gelfand inclusions
\begin{align}\label{V}%eq11.2
V\subset H\equiv H' \subset V',\\
X \subset H \equiv H' \subset X'.\label{eq11.1}%eq11.1
\end{align}
where $V'$, $X'$ and $H'$ are the dual spaces of $V$, $X$ and $H$ respectively.
 In fact, we will strengthen \eqref{V} and \eqref{eq11.1} by requiring that $V\cap X$ is dense in $V$ and $X$, so that, for $V'+X'$
denoting the dual of $V\cap X$, we have
\begin{subequations}
	\begin{equation}\label{assumptiononX}%eq11.3a
	V\cap X\text{}\subset
	X\subset H\subset X'\subset V'+X',
	\end{equation}
	and
	\begin{equation}\label{assumptiononV}%eq11.3b
	V\cap X\text{}\subset
	V\subset H\subset V' \subset V'+X'.
	\end{equation}
\end{subequations}
 We denote by $\Hiprod{\cdot}{\cdot}$, $\Hnorm{\cdot}$, $\Viprod{\cdot}{\cdot}$, and $\Vnorm{\cdot}$ the norms and inner products of $H$, and $V$ respectively.\\
  The duality pairing between $V'$ and $V$ will be denoted by $\Vdualpair{\cdot}{\cdot}$.\\
For the purpose of the abstract analysis in this paper, in continuation with \cite{TT20}, and keeping our primary examples in mind, we will first assume that, for some open set $\sO \subseteq \R^d$, $V$ and $H$ are closed subspaces of $(H^1(\mathcal{O}))^d$ and $(L^2(\mathcal{O}))^d$ respectively and that $X=(W^{k,p}(\mathcal{O}))^d \cap V$, for some $p > 2$ and an integer $k\geq 0$. However {slightly} different settings can be {introduced} depending on the properties of the monotone operator {(see Remark \ref{fw_explanation} and Section \ref{other}).} %as explained through an example in Section \ref{other}. 

We now give the precise assumptions on each of the terms appearing in \eqref{spde}, beginning with the linear term $A$. In our analysis we shall assume that $\gamma>0$ in \eqref{spde}. However we {show in Section \ref{withoutA} (see Remark \ref{imp_equi}), how the case $\gamma=0$ is handled.} We assume that $A : D(A) \subset V \ra H$ is an unbounded, densely defined, bijective, linear operator such that $\Hiprod{Av}{w} = \Viprod{v}{w}$ for all $v,w \in D(A)$. Under these conditions on $A$, the space $D(A)$ is complete under the Hilbertian norm $|v|_{D(A)} := \Hnorm{Av}$. When $H$ is a subspace of $(L^2(\sO))^d$ and $V$ a subspace of $(H^1(\sO))^d$, the role of $A$ can be played by the negative Laplacian associated with suitable boundary conditions. It is clear that for suitable boundary conditions, the operator $A$ is symmetric. The operator $A$ can be viewed as a linear continuous operator from $V$ to $V'$ via
\begin{equation} \label{L2.1}
\Vdualpair{Av}{w} := \Viprod{v}{w} , \qquad \tn{for all } v,w \in V.
\end{equation}
%It is immediate from this equation that $A \colon V \ra V'$ is bounded with norm $1$. Furthermore, it is easy to show that the linear operator $A^{-1} \colon H \to V$ is bounded using the closed graph theorem.\\ %Indeed, suppose that $Au_n \ra Au$ in $H$ (recall that $A$ is assumed to be surjective, so any convergent sequence in $H$ can be written this way) and suppose that $A^{-1}(Au_n) = u_n \ra v$ in $V$. We need to show that $u = v$. For any $w \in V$ we have
%\[
%\Viprod{u_n - u}{w} = \Vdualpair{Au_n - Au}{w}  \ra  0,
%\]
%which means that $u_n \ra u$ weakly in $V$. Since we assume $u_n \ra v$ strongly in $V$ it follows that $u = v$, so $A^{-1}: H \ra V$ is continuous by the closed graph theorem. 
Furthermore, since we assume that $V$ is compactly embedded in $H$, it follows that $A^{-1}$ is a compact operator on $H$. %Indeed, $A^{-1}$ maps bounded sequences in $H$ to bounded sequences in $V$, which have $H$-convergent subsequences by the compact embedding of $V$ in $H$. 
Since $A$ is assumed to be symmetric and surjective we easily see that $A^{-1}$ is self-adjoint. % From the relation $\Hiprod{A^{-1}(Au)}{Au} =  \Hiprod{Au}{u} = \Vnorm{u}^2$ for all $u \in D(A)$ we see that $A^{-1}$ is a positive operator. 
 By the spectral theor{y} for positive compact operators, there exists an orthonormal basis $\left(w_k\right)_{k=1}^\infty$ of $H$ {made of eigenvectors of $A$}. %whose corresponding eigenvalues are positive and tend to $\infty$ as $k \ra 0$ (see \cite{Br11}). 
 The sequence $\left(w_k\right)_{k=1}^\infty$ lies in $D(A)$ and forms an orthogonal basis in $D(A),V,V'$. Furthermore, each $w_k$ is an eigenvector of $A$ whose corresponding eigenvalue $\lambda_k$ is positive and $\lambda_k \ra \infty$ as $k \ra \infty$. Note that
\[
D(A^{1/2}) := \Big\{ \sum_{k=1}^\infty \alpha_k w_k \in H : \sum_{k=1}^\infty \alpha_k^2 \lambda_k < \infty \Big\}
\]
is nothing but the space $V$. Under the conditions given above, one can show that the embedding $D(A) \subset D(A^{1/2}) = V$ is compact (see e.g.~\cite{LM72}, \cite{T_NSE}).

% The B term:
% We should think of B as being the bilinear term from Euler or Navier-Stokes
% For now I will use the same conditions on B as in 'Loc. Mart...'
Next we assume that $B \colon V \times D(A) \to V'$ is bilinear and continuous. Furthermore, we assume that $B$ maps the subspace $D(A) \times D(A)$ continuously into $H$ and that $B$ possesses the cancellation property
\begin{equation} \label{eqn:B_cancellation_property}
\Vdualpair{B(v,w)}{w} = 0, \qquad \tn{for all } v \in V \tn{ and } w \in D(A).
\end{equation}
We also assume that %there exists a constant $C > 0$ such that
%\begin{align}
%|\Vdualpair{B(v,v^{\#})}{w}| &\leq C \Hnorm{v}^{\frac12} \cdot \Vnorm{v}^{\frac12} \cdot|v^{\#}|^{} \Hnorm{Aw}, \qquad \tn{for all } v \in V, v^{\#} \in H \tn{ and } w \in D(A), \label{eqn:B_bound1}
%\end{align}
%|\Vdualpair{B(v,v^{\#})}{w}| &\leq C \Hnorm{v} \cdot \|v^{\#}\| \|w\|^{\frac14} \Hnorm{Aw}^{\frac34}, \qquad \tn{for all } v \in H ,v^{\#} \in V \tn{ and } w \in D(A), \label{eqn:B_bound2}
\begin{equation}\begin{split}
\begin{cases}\label{eqn:B_bound2}
|{B(v,w)}|_{D(A)'} &\lesssim \Hnorm{v}  \|w\| \qquad \tn{for all } v \in H ,w\in V, %\tn{ and } w \in D(A),
 \\
 &\lesssim {\Vnorm{v}  |w|} \qquad \tn{for all } v \in V ,w \in H, %\tn{ and } w \in D(A),
 \end{cases}\end{split}\end{equation}and,
%|\Vdualpair{B(v,u)}{w}| & \leq C|v|^{\frac14}\|v\|^{\frac34}|u|^{\frac14}\|u\|^{\frac34}\|w\|\label{eqn:B_bound3}
% for some continuous  embedding $V_{\#} \subset (L^\infty(\mathcal{O}))^d$ we have
 \begin{align}
%|\Vdualpair{B(v,v)}{w}| &\leq  \Hnorm{v}^2|w|_{W^{1,\infty}(\Omega)} , \qquad \tn{for all } v \in H \text{ and,} \label{eqn:B_bound4}\\
|{B(v,v)}|_{V'} & \lesssim |v|^{\frac12}\|v\|^{\frac32}  \qquad \tn{for all } v \in V. \label{eqn:B_bound3}
\end{align}
\begin{comment}{In Section \ref{uniqueness}, for $   \mathcal{S}>0$ and Banach space %$  \mathcal{R}>0$
	$Z$ such that the embedding $L^p(0,T;X) \subset L^{   \mathcal{S}}(0,T;Z)$ is continuous, we also use the following assumption:
\begin{align}\label{rs}
(B(v,v),v^{\#}) \leq |v|^2|v^{\#}|^{   \mathcal{S}}_{Z}+\|v\|^{2} +\|v\|^p_X.
\end{align}}
\end{comment}
In Section \ref{uniqueness},
	we also use the following assumption:
	\begin{align}\label{rs}
	|B(v,v)|_{X'} \lesssim |v|\|v\| \qquad \tn{for all } v \in V.
	\end{align}
We will often write $B(v) := B(v,v)$.\\
 For certain subspaces $V$ and $H$ of $(H^1(\mathcal{O}))^d$ and $(L^2(\mathcal{O}))^d$ respectively, we have in mind for $B$ the bilinear function $B(v,w) := (v \cdot \nabla) w$, corresponding to the nonlinear terms in the Navier-Stokes equations (see e.g. \cite{T_NSE}). 
{  \begin{rem}\label{fw_explanation}
 	The framework above, which we will use till the end of Section \ref{ex_poly}, covers the main example that we have in view, namely the Ladyzenskaya-Smagorinsky equations and equations of the same type. We will see in Section \ref{other} a variation of this framework which covers other related equations.
 \end{rem}}
Before discussing the coefficients $G$ and $K$ of the noise terms in equation \eqref{spde} we discuss the assumptions on the noise. The noise processes will be defined on a fixed probability space $(\Omega,\sF,\bP)$. In this paper, we assume that $W$ is a Wiener process taking values in a real, separable Hilbert space $U$ (see \cite{Bil95}).
 We denote by $Q$ the covariance operator of $W$ and we set $U_0 := Q^{1/2}(U)$. We assume that $\pi$ is a Poisson random measure on $(0,\infty) \times E$ arising from a stationary Poisson point process $\Pi$ on a measurable space $(E,\mathscr{E})$. The intensity measure of $\pi$ has the form $\ud \nu \oud t$ for some $\sigma$-finite measure $\nu$ on $(E,\mathscr{E})$. In equation \eqref{spde} we fix a set $E_0 \in \mathscr{E}$ such that $\nu(E \setminus E_0) < \infty$.   Furthermore, we assume that $W$ is independent of $\pi$. Under this condition it is possible to construct a filtration $\left(\sF_t\right)_{t \geq 0}$ on $(\Omega,\sF,\bP)$ such that $W$ is an $\sF_t-$Wiener process, $\Pi$ is an $\sF_t-$ Poisson point process and such that the filtered probability space $(\Omega,\sF,(\sF_t)_{t \geq 0},\bP)$ satisfies the usual conditions (as explained in {e.g.} \cite{PZvsIWreview}). {The motivation behind these assumptions stems from the L\'evy-Khinchin decomposition of a \levy process $L$ which, informally, gives us the rule for the decomposition of integration with respect to $L$ as $\ud L=a \ud t+\ud W+\ud \hat \pi+\ud \pi$, where $\pi$ is the jump measure of $L$.  Hence as an example, we can set $E:=U\setminus \{0\}$, $E_0:=\{y\in U:0<|y|_U<1\}$ and endow the space $E$ with a metric such that it is separable, complete, such that $A \subset E$ is bounded if and only if $A$ is separated from 0, and $\nu$ is finite on bounded subsets of $E$.}\\
  Now, let $L_2(U_0;H)$ denote the space of Hilbert-Schmidt operators from ${U_0} $ to $H$ equipped with the
 Hilbert-Schmidt norm
$ \|S\|^2_{L_2(U_0;H)} :=
\sum_{n=1}^\infty |S{e_n}|^2
 $ where $\{e_n\}_{n=1}^\infty$ is an orthonormal basis of ${U_0}$. 
 
 Then for the coefficients $G$ and $K$ in equation \eqref{spde} we assume that,
\begin{itemize}
	\item $G \colon H \ra L_2(U_0,H)$ and $G$ maps $V$ into $L_2(U_0,V)$, and % and $G$ maps $D(A)$ into $L_2(U_0,D(A))$. the part about D(A) doesn't appear to be used
%	\item The map $v \mapsto G(v)$ is linear.	
	\item $K \colon H \times E \ra H$ is a Borel measurable function and $K$ maps $V \times E$ into $V$.
	\item And {for some $\rho >0$} we have
\end{itemize}
\begin{align} \label{eqn:G,K_H_growth}
&\norm{G(v)}^2_{L_2(U_0,H)} + \int_{E_0} \Hnorm{K(v,\xi)}^2 \ud \nu(\xi) \leq \rho(1 + \Hnorm{v}^2) \qquad \tn{for all } v \in H,\\
%\\\iprod{v}{G(v)\ud W(s)} +\int_{E_0} \iprod{K(v,\xi)}{v}\ud \nu(\xi,s)=\rho(1+|v|^2)
\label{eqn:G,K_H_Lipschitz}
&\norm{G(v)-G(w)}^2_{L_2(U_0,H)} + \int_{E_0} \Hnorm{K(v,\xi) - K(w,\xi)}^2 \ud \nu(\xi) \leq \rho\Hnorm{v-w}^2 \quad \tn{for all } v,w \in H.
\end{align}
%These conditions are not unusual. For an example of $K$, consider a $U$-valued \levy process $L$ with \levy measure $\nu$, let $E := U \setminus \{0\}$ and $E_0 := \{ \xi \in U : 0 < |\xi|_U < 1 \}$ and define $K \colon H \times E \ra H$ by\[K(u,\xi) := |\xi|_U \cdot u, \qquad \tn{for all } u \in H.\]
\iffalse
We also assume that the following measurability conditions are satisfied for every $x \in H$:
\begin{align}
(\omega,s) \mapsto G(x) \text{ is a measurable mapping from } \Omega\times[0,\infty)\rightarrow L_2(U_0,H),\\
(\omega,s,\xi) \mapsto K(x,\xi) \text{ is a $\mathcal{P}_{[0,T]}\otimes \mathcal{E}$-measurable mapping from } \Omega\times[0,T]\times E\rightarrow H,
\end{align}
{where $\sP_{[0,T]}$ is the $\sigma-$ field of predictable subsets of $\Omega \times [0,T]$}.
\fi
Note that the above conditions and Lemma 3.2 from \cite{CNTT20} imply that the stochastic integrals $\int_0^t G(u(s-)) \ud W(s)$ and  $\int_{(0,t]} \int_{E_0} K(u(s-),\xi) \ud \hat{\pi}(s,\xi)$ appearing on the right-hand side of equation \eqref{spde} are well-defined for every adapted, c\`adl\`ag, $H$-valued process $u$ that satisfies $\bE \int_0^T \Hnorm{u(s)}^2 \ud s < \infty$.\\
We assume that $\dpiCoeff \colon H \times E \ra V$ is a measurable function. In contrast to $K$, there will be no growth or Lipschitz assumptions on $\dpiCoeff$. We introduce the additional term  with coefficient $\dpiCoeff$ on the right-hand side of \eqref{spde} to represent the influence of large jumps from \levy noise. If the \levy noise is not square integrable, then the influence of large jumps cannot be represented without this additional term; see e.g., \cite{PZvsIWreview} or \cite{PZ}. 
Now we introduce the nonlinear term $F: X \ra X'$. %We will often use the shorthand $F(u)$ for $F(t,u)$.
{We} assume that $F$ satisfies the following conditions:
\begin{itemize}	
\item (F1) (Monotonicity): For all $ v,w\in X$ we have
\begin{align}\label{F1}
\iprod{F(v)-F(w)}{v-w}_{X,X'}\ge 0.
\end{align}
\item (F2) (Hemicontinuity)
For all $u,v,w \in X$, the map
\begin{align}\label{F2}
\mathbb{R}: \lambda \mapsto ({F(u+\lambda v)},{w})_{X,X'} \quad \text{is continuous.}
\end{align}
%$F$ is weakly $\mathcal{C}^2$ on the finite dimensional subspaces of $X$, i.e, with values in $X'$.
\end{itemize}
In view of using the results of \cite{TT20} we also assume the existence of a functional $J (v):  X \rightarrow \mathbb{R}$, such that its G\^{a}teaux derivative $J'(v)=F(v)$. Then, given \eqref{F1}, we know from \cite{Lio69}, \cite{ET99} that for given $v\in X$, the map $v \mapsto J(v)$ is convex. For $i \in \mathbb{N}$, we denote by $D^iv$, the ordered tuple of all partial derivatives of $v$ of order $i$. {We also assume, as in \cite{TT20} that $J$ possesses the integral representation:}
\begin{align}\label{J_form}
	J( v) = \sum_{i=0}^k \int_{\mathcal{O}} \mathscr{J}_i( D^iv) \ud x,
\end{align}
where for every $i=0,1,...,k$, $\mathscr{J}_i : \mathbb{R}^{d^{i+1}} \rightarrow \mathbb{R}$ is further assumed to have the following properties: \\
(i) $\mathscr{J}_i$  is convex and attains its minima at $0 \in \R^{d^{i+1}}$ and there exist $c_1>0,c_2\geq 0$ such  that
\begin{align}\label{gen_convex}
x D^2 \mathscr{J}_i (x) x^T \geq c_1|x|_{l^{p}(\mathbb{R}^{d^{i+1}})}^{p} -c_2 \quad \forall x \in \mathbb{R}^{d^{i+1}}.
\end{align}
(ii) $\mathscr{J}_i$  satisfies the following growth conditions: there exists $c_3 \geq 0$ %and an $\mathcal{F}_t$-
%adapted processes $f\in L^1([0,T]\times\Omega,\ud t\otimes \ud \bP)$
 such that $\forall x \in \mathbb{R}^{d^{i+1}}$
\begin{equation}\label{gc}
\begin{split}
\mathscr{J}_i( x) & \geq c_1|x|_{l^{p}(\mathbb{R}^{d^{i+1}}) }^{p}  -c_3 , \\
|\nabla_x \mathscr{J}_i(x)|_{l^{\frac{{p}}{{p}-1}}(\mathbb{R}^{d^{i+1}})} &\leq c_1|x|_{l^{{p}}(\mathbb{R}^{d^{i+1}})}^{{p}-1}   + c_3, \\
|x \, D^2 \mathscr{J}_i(x)|_{l^{\frac{{p}}{{p}-1}}(\mathbb{R}^{d^{i+1}})} &\leq c_1|x|_{l^{{p}}(\mathbb{R}^{d^{i+1}})}^{{p}-1} + c_3, 
\end{split}
\end{equation}
where $D^2 \mathscr{J}_i$ is the Hessian {of $\mathscr{J}_i$}. 
Observe that by assuming the above conditions we have that the hypothesis \eqref{F4} {below} is satisfied. 
Adhering to the notation $f_i(x)  : \mathbb{R}^{d^{i+1}} \rightarrow \mathbb{R}^{d^{i+1}}$ given by $f_i=\nabla_x \mathscr{J}_i(x)$ from \cite{TT20} we have for $v,\varphi \in X$,
\begin{align}\label{gen_Ff}
(F(v),\varphi) &= \sum_{i=0}^k  \int_{\mathcal{O}} \nabla \mathscr{J}_i (D^i v) \cdot D^i\varphi \, \ud x \\
&= \sum_{i=0}^k  \int_{\mathcal{O}} f_i (D^i v) \cdot D^i\varphi \,\ud x. \nonumber
%&= \sum_{i=0}^k (f_i(D^i v),D^i\varphi).\nonumber
\end{align}
We also assume,
\begin{itemize}
	\item (F3) (Coercivity)
We have  $\forall v\in X$,
	\begin{align}\label{F3}%eq11.5
	&\iprod{F(v)}{v}_{X,X'}\ge c_1\norm{v}^p_{X}.
	\end{align}
	\item (F4) (Boundedness)
	There exist {$ c_4\ge 0$} such that  $\forall v\in X$,
	\begin{align}\label{F4}%eq11.6
	\norm{F(v)}_{X'}	\le c_1\norm{v}^{p-1}_X +c_4.
	\end{align}
\end{itemize}
Here we would like to point out that the assumptions (F1)-(F4) are not unusual and that they have physical significance. Examples of such an operator $F$ include the monotone operator appearing in the Ladyzenskaya equations (Smagorinsky's model of turbulence) and the $p$-Laplacian. These examples are discussed at length in Section \ref{Examples}.
\section{Existence results}\label{sec:globally_monotonic_coefficients}
\subsection{The main results}
In order to prove the existence of solutions to \eqref{spde} one can first prove the existence when $\mathcal{K}=0$ and then add the noise term using the so-called piecing out argument (see e.g.\cite{INW66},\cite{IW}).
So we will first consider the following stochastic differential equation:
\begin{align}\label{eqn}
\ud u(t)+[\gamma A(u(t))+&B(u(t),u(t))
+F(u(t))]\ud t\\
&=G(u(t-))\ud W(t)+
\int_{E_0}K(
u(t-),\xi)\ud \hat\pi(t,
\xi) \quad {\text{in }\mathcal{O}\times(0,T)}\nonumber\\
u(0)&=u_0\quad \text{in }\mathcal{O}. \label{bc}
\end{align}
The precise meaning of \eqref{eqn}-\eqref{bc} appears below where we define (probabilistically) "weak" and "strong" solutions.
\begin{defn}\label{definition1}
%	{The initial data $u(0)$ is given by its law $\mu_0$, a probability measure on $H$.}
	A pair $(\tilde{\mathscr{S}},\tilde u)$ is called a martingale solution to \eqref{eqn}-\eqref{bc} if
for a stochastic basis		$\tilde{\mathscr{S}}=(\tilde\Omega,\tilde{\mathcal{F}},(\tilde{\mathcal{F}}_t)_{t\ge 0},
		\tilde\bP,\tilde W,\tilde \pi) $, the process $\tilde u\in L^2(\tilde\Omega ,L^\infty(0,T;H)) \cap L^2([0,T]\times \tilde\Omega,\ud t\otimes \ud \tilde\bP;V)
		\cap L^p([0,T]\times \tilde\Omega,\ud t\otimes \ud \tilde\bP;X)$
	 is $\tilde{\mathcal{F}}$-adapted with c\`adl\`ag sample paths in $V'+X'$, $\tilde\bP$-a.s., and $\tilde u(0)$ is $\tilde{\mathcal{F}}_0$-measurable
		with law $\mu_0$, and the equation		
		\begin{align}\label{eqn_1}
		\tilde{u}(t)+\int_0^t[\gamma A(\tilde{u}(s))+&B(\tilde{u}(t),\tilde{u}(t))+ F(\tilde{u}(s))]\ud s\\
		&= \tilde{u}(0)
		+\int_0^tG(\tilde{u}(s-))\ud \tilde{W}(s)+
		\int_{(0,t]}\int_{E_0}
		K(\tilde{u}(s-),\xi)\ud \hat{\tilde{\pi}}(s,\xi),\nonumber
		\end{align}

 holds %in ${V'+X'}$, 
 $\tilde{\bP}$-a.s., for all $t\in [0,T]$.
\end{defn}
\begin{defn}\label{definition2}
{We are given} a stochastic basis
	$(\Omega,\mathcal{F}, (\mathcal{F}_t)_{t\ge 0},
	\bP,W,\pi) $ and a random variable\\ $u_0\in L^{2}(\Omega,\mathcal{F}_0,\bP;H)$ with law $\mu_0$. We say that an $\mathcal{F}_t$-adapted process 
	$u \in L^2(\Omega,L^\infty(0,T;H))\cap L^2([0,T]\times \Omega,\ud t\otimes \ud \bP;V)
	\cap L^p([0,T]\times \Omega,\ud t \otimes \ud \bP;X)$
	is a {(global)} pathwise solution to \eqref{eqn}-\eqref{bc} if $u$ has c\`adl\`ag sample paths in {$V'+X'$}, $\bP$-a.s., and if the equation
	\begin{align}\label{eqn2:pathwise}
	u(t)+\int_0^t[\gamma A(u(s))+&B(u(t),u(t))+ F(u(s))]\ud s\\
	&=u_0
	+\int_0^tG(u(s-))\ud W(s)+
	\int_{(0,t]}\int_{E_0}
	K(u(s-),\xi)\ud \hat\pi(s,\xi),\nonumber
	\end{align}
holds %in {$V'+X'$}, 
$\bP$-a.s., for all $t\in [0,T]$.
\end{defn}

Now we state the main results of this paper:
\begin{thm}\label{existence_martingale}
We assume that we are given an $ \mathcal{F}_0$-measurable random variable $u_0: \Omega \rightarrow {H}$ %which belongs to the space $L^2(\Omega;H)$.
with law $\mu_0$ that satisfies		\begin{align}			\int_{H}|v|^2 d\mu_0(v) < \infty.	\end{align}
	We also assume that $G,K$ satisfy the hypotheses \eqref{eqn:G,K_H_growth}-\eqref{eqn:G,K_H_Lipschitz}, the term $B$ satisfies \eqref{eqn:B_bound2}-\eqref{eqn:B_bound3}, {$\gamma>0$ and the term $A$ satisfies \eqref{L2.1},}
	and that the nonlinear term  $F$ satisfies the conditions \eqref{F1}$-$\eqref{F4}.
	Then for any $p>2$ there exists a martingale solution $\tilde u$ to \eqref{eqn}-\eqref{bc} in the sense of Definition \ref{definition1}.
\end{thm}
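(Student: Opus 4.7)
The plan is to follow the three-layer approximation scheme the authors outline in the introduction, pushing the classical Galerkin + Skorokhod + Minty approach through obstacles caused by the Banach-space monotone operator $F$ and the non-square-integrable jump term (the latter is absent here, since this theorem treats only \eqref{eqn}--\eqref{bc}). First I would fix two truncation parameters $R,\tilde R > 0$ and replace $F$ by a Hilbertian approximation $F^R$ acting in a Hilbert space $H^R$ densely embedded in $H$ (e.g.\ a space of type $D(A^s)$, following \cite{TT20}), chosen so that $P_n$ is genuinely an orthogonal projector there and $F^R$ enjoys Lipschitz-type bounds on each $H_n$; and I would truncate $B$ to $B_{\tilde R}(v) := \theta(\|v\|/\tilde R)B(v,v)$ with a smooth cutoff $\theta$. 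This yields the finite-dimensional Galerkin SDE
\begin{equation*}
\ud u^{n}+[\gamma Au^{n}+P_{n}B_{\tilde R}(u^{n})+P_{n}F^{R}(u^{n})]\ud t = P_{n}G(u^{n}(t-))\ud W+\int_{E_{0}}P_{n}K(u^{n}(t-),\xi)\ud\hat\pi,
\end{equation*}
with $u^{n}(0)=P_{n}u_{0}$, which has a unique global strong solution by standard SDE theory since the truncated drift is locally Lipschitz with linear growth on $H_n$.

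Next I would derive a priori estimates that are uniform in $n$ (for fixed $R,\tilde R$). Applying Itô's formula to $|u^{n}|^{2}$, using the cancellation property \eqref{eqn:B_cancellation_property} to kill $B_{\tilde R}$, the coercivity \eqref{F3} on $F^R$, and the growth \eqref{eqn:G,K_H_growth} on $G,K$, I obtain
\begin{equation*}
\mathbb{E}\sup_{t\le T}|u^{n}(t)|^{2}+\gamma\mathbb{E}\!\int_{0}^{T}\|u^{n}\|^{2}\ud t+c_{1}\mathbb{E}\!\int_{0}^{T}\|u^{n}\|_{X}^{p}\ud t \le C(1+\mathbb{E}|u_{0}|^{2}),
\end{equation*}
with the Burkholder--Davis--Gundy inequality absorbing the martingale terms. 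By \eqref{F4} this gives a bound on $F^{R}(u^{n})$ in $L^{p/(p-1)}(0,T;X')$, and the estimate \eqref{eqn:B_bound3} (with the cutoff) controls $B_{\tilde R}(u^{n})$ in $L^{4/3}(0,T;V')$. Combined with a fractional-Sobolev-in-time estimate for the stochastic integrals and an Aldous-type tightness criterion for the càdlàg jump part, this yields tightness of the laws $\{\mathcal{L}(u^{n})\}$ in the Jakubowski--Skorokhod space $L^{2}(0,T;H)\cap\mathcal{D}([0,T];V'+X')$ equipped with the appropriate topologies.

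I would then apply the Jakubowski--Skorokhod representation theorem to extract, on a new probability space $(\tilde\Omega,\tilde{\mathcal F},\tilde\bP)$, copies $\tilde u^{n}\to\tilde u$ almost surely in that space, together with copies $\tilde W^{n}\to\tilde W$ and $\tilde\pi^{n}\to\tilde\pi$. Passage to the limit in the linear, Wiener and compensated Poisson terms is standard once one checks that $\tilde u^{n}\to\tilde u$ strongly in $L^{2}(0,T;H)$ a.s.\ (via the compact embedding $V\subset H$). The truncated bilinear term $P_n B_{\tilde R}(\tilde u^{n})\to B_{\tilde R}(\tilde u)$ follows from this strong convergence. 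The key obstacle, as in all Galerkin proofs with monotone operators on Banach spaces, is the identification of the weak limit of $F^{R}(\tilde u^{n})$: here I would use Minty's monotonicity trick, testing the limit equation against $\tilde u-\varphi$ for arbitrary $\varphi\in L^{p}(0,T;X)$, using \eqref{F1}, the hemicontinuity \eqref{F2}, and an Itô expansion of $|\tilde u^{n}|^{2}$ combined with lower semicontinuity to show $\limsup \langle F^{R}(\tilde u^{n}),\tilde u^{n}\rangle \le \langle F^R(\tilde u),\tilde u\rangle$. This produces a martingale solution of the doubly truncated equation.

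Finally I would remove the truncations. Passing $R\to\infty$ uses uniform a priori bounds on the $X$-norm (from \eqref{F3}) together with another tightness + Skorokhod + Minty argument, where the key step is to exploit the convex functional $J$ and the representation \eqref{J_form}--\eqref{gen_Ff} to pass to the limit in the monotone part (handled in the forthcoming Section \ref{sec:R}). The limit $\tilde R\to\infty$ uses the cancellation property and the same uniform estimates to show the cutoff $\theta$ disappears on the event $\{\|u\|\le\tilde R\}$, leaving a martingale solution to \eqref{eqn}--\eqref{bc}. I expect the hardest step to be the Minty identification at the $R\to\infty$ stage, where $F^R$ is defined in a different Hilbert space than the limit operator $F$, so one must carefully reconcile the dualities $(X,X')$ and $(H^R,(H^R)')$ and verify that the integral representation \eqref{J_form} is stable under the approximation procedure.
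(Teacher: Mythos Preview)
Your broad architecture (double truncation, Galerkin, tightness via fractional Sobolev + Aldous, Skorokhod, Minty at each stage) matches the paper's. However, there is a genuine gap in your a priori estimate at the Galerkin level, and it is precisely the obstacle the paper is built around.

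You claim, invoking ``the coercivity \eqref{F3} on $F^{R}$'', the bound
\[
\mathbb{E}\sup_{t\le T}|u^{n}|^{2}+\gamma\,\mathbb{E}\!\int_{0}^{T}\|u^{n}\|^{2}\,\ud t+c_{1}\,\mathbb{E}\!\int_{0}^{T}\|u^{n}\|_{X}^{p}\,\ud t\le C.
\]
But \eqref{F3} is the $X$-coercivity of $F$, not of $F^{R}$. The whole point of introducing $F^{R}$ is that $P_{n}$ is \emph{not} a projector in $X$, so one cannot run the Galerkin scheme with $F$ itself (the pairing $P_{n}F(u^{n})$ need not make sense, since the eigenvectors $w_{i}$ are not assumed to lie in $X$). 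The paper's $F^{R}\colon V\to V'$ is the explicit truncation \eqref{gen_def_FR}--\eqref{gen_def_fR}, and its coercivity is only \eqref{FR3}: $\langle F^{R}(v),v\rangle\ge c_{1}\|v\|^{2}-c_{5}$ in the $V$-norm. Consequently Lemma~\ref{uniformboundL2} yields only $L^{\infty}(0,T;H)\cap L^{2}(0,T;V)$ bounds on $u^{n}$, uniform in $n,R,\tilde R$; there is no $L^{p}(0,T;X)$ control at this stage. Correspondingly $F^{R}(u^{n})$ is bounded in $L^{2}(0,T;V')$ (via \eqref{FR4}, with constant depending on $R$), not in $L^{p/(p-1)}(0,T;X')$, and the first tightness/Skorokhod step lives in $\mathcal{D}([0,T];V')$, not $\mathcal{D}([0,T];V'+X')$.

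The $X$-information enters only after $n\to\infty$, through the refined lower bound \eqref{FRlower}, which produces the mixed estimate \eqref{apriori2} on $\bar u^{R}$: separate control of $|D^{i}\bar u^{R}|^{p}$ on $\{|D^{i}\bar u^{R}|<R\}$ and of $R^{p-2}|D^{i}\bar u^{R}|^{2}$ on the complement. This is strictly weaker than a bound on $\|\bar u^{R}\|_{X}^{p}$, and the Minty identification as $R\to\infty$ cannot be done directly on $F^{R}(\bar u^{R})$; instead the paper introduces the truncated gradients $\xi_{i}^{R}=D^{i}\bar{\bar u}^{R}\mathbbm{1}_{\{|D^{i}\bar{\bar u}^{R}|\le R\}}$ and proves the four convergences of Lemma~\ref{4claims}, which reconcile the $V$--$V'$ duality of $F^{R}$ with the $X$--$X'$ duality of $F$ at the level of the integrands $f_{i}^{R}$. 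Your sketch (``reconcile the dualities'') correctly senses the difficulty but misses this mechanism.

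A secondary issue: your cutoff $B_{\tilde R}(v)=\theta(\|v\|/\tilde R)B(v,v)$ uses the $V$-norm, whereas the paper uses $\theta_{\tilde R}(|v|)$ with the $H$-norm. Since only $\sup_{t}|u^{n}(t)|$ (not $\sup_{t}\|u^{n}(t)\|$) is controlled a priori, the $H$-norm cutoff is what makes the bound \eqref{frac_B} on $B_{\tilde R}(u^{n})$ in $D(A)'$ uniform in $n$, and what allows the final removal $\tilde R\to\infty$ via $\theta_{\tilde R}(|u^{\tilde R}|)\to 1$ using only $u^{\tilde R}\to u$ in $L^{2}(0,T;H)$.
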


\begin{thm}\label{existence}
We assume that the hypotheses of Theorem \ref{existence_martingale} hold. We additionally assume that the term $B$ satisfies \eqref{rs}.
Then for any $p>2$, there exists a unique global pathwise solution $u$ to \eqref{eqn}-\eqref{bc}, in the sense of Definition \ref{definition2}, {with initial condition $u_0$}.
\end{thm}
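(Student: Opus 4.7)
The strategy is the classical Yamada--Watanabe principle in the form of Gy\"ongy--Krylov: pathwise existence follows from combining the martingale existence provided by Theorem \ref{existence_martingale} with a pathwise uniqueness result, for which the extra assumption \eqref{rs} is precisely tailored. The bulk of the proof therefore consists in establishing pathwise uniqueness; the Gy\"ongy--Krylov step is then applied to the Galerkin sequence already constructed for Theorem \ref{existence_martingale} in order to upgrade convergence in law to convergence in probability on the originally prescribed basis.

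For pathwise uniqueness, let $u_1, u_2$ be two pathwise solutions on the same stochastic basis, driven by the same $(W,\pi)$ and with the same initial datum $u_0$, and set $z := u_1 - u_2$. Subtracting two copies of \eqref{eqn2:pathwise} one obtains a stochastic evolution equation for $z$ whose drift lies in $V'+X'$. We apply It\^o's formula to $|z|^2$, localized by a stopping time $\tau_n$ chosen so that $\int_0^{\tau_n}(\|u_i\|^2 + \|u_i\|_X^p)\,\ud s \le n$ for $i=1,2$. The $F$-contribution is nonnegative by \eqref{F1} and is dropped; the quadratic variation of the Wiener integral together with the compensator of the compensated Poisson integral are bounded by $C\!\int|z|^2\,\ud s$ via \eqref{eqn:G,K_H_Lipschitz}. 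For the nonlinear term, the cancellation property \eqref{eqn:B_cancellation_property} together with the usual trilinear antisymmetry give $\langle B(u_1)-B(u_2), z\rangle = \langle B(z,u_2),z\rangle = -\langle B(z,z),u_2\rangle$, so that by \eqref{rs} and Young's inequality
\begin{equation*}
|\langle B(z,z), u_2\rangle| \le |B(z,z)|_{X'}\,\|u_2\|_X \lesssim |z|\,\|z\|\,\|u_2\|_X \le \gamma\|z\|^2 + C|z|^2\,\|u_2\|_X^2,
\end{equation*}
and the $\gamma\|z\|^2$ is absorbed into the dissipation $2\gamma\!\int\!\|z\|^2\,\ud s$. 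Taking expectations yields
\begin{equation*}
\bE|z(t\wedge\tau_n)|^2 \le C\,\bE\!\int_0^{t\wedge\tau_n}|z(s)|^2\bigl(1+\|u_2(s)\|_X^2\bigr)\,\ud s,
\end{equation*}
and since $p>2$ gives the continuous embedding $L^p(0,T;X)\subset L^2(0,T;X)$, the weight $1+\|u_2\|_X^2$ is $\bP$-a.s.~integrable in time. A stochastic Gronwall argument then forces $z\equiv 0$ on $[0,\tau_n]$; letting $n\to\infty$ concludes pathwise uniqueness.

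The existence of a pathwise solution is then obtained via the Gy\"ongy--Krylov criterion applied to the Galerkin sequence $(u^n)$ from the proof of Theorem \ref{existence_martingale}. For any two subsequences $(u^{n_k}), (u^{m_k})$, the joint laws are tight by the same a priori estimates used previously, and a Skorokhod-type representation furnishes almost surely convergent pairs on a new probability space; each coordinate of the limit is a martingale solution to \eqref{eqn}--\eqref{bc} with common initial data and common driving noise, so pathwise uniqueness forces the two limits to coincide a.s. Hence every subsubsequential limit is supported on the diagonal, and Gy\"ongy--Krylov delivers convergence in probability on the original space to an $\mathcal{F}_t$-adapted limit which inherits the requisite regularity and integrability and is therefore the sought pathwise solution. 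The principal technical hurdle is the rigorous justification of It\^o's formula for $|z|^2$ in this low-regularity mixed-duality setting, since $z$ lies only in $L^p(0,T;X)\cap L^2(0,T;V)\cap L^\infty(0,T;H)$ with drift in $V'+X'$ and an $H$-valued c\`adl\`ag semimartingale part: one needs a careful approximation compatible simultaneously with the monotone $X$-structure of $F$, the trilinear bound \eqref{rs} on $B$, and the compensated jump integral. Once this It\^o step is in place, the remaining Yamada--Watanabe mechanics are standard.
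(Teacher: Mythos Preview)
Your overall strategy and the pathwise uniqueness argument coincide with the paper's. Using the cancellation and antisymmetry of $B$ to write $\langle B(u_1)-B(u_2),z\rangle=-\langle B(z,z),u_2\rangle$, invoking \eqref{rs} to bound this by $C|z|\,\|z\|\,\|u_2\|_X$, absorbing $\gamma\|z\|^2$ into the dissipation, dropping the $F$-term by \eqref{F1}, and closing with a stochastic Gronwall lemma localized by a stopping time based on $\int_0^\cdot\|u_i\|_X^p$ is exactly what the paper does in Proposition~\ref{pathwise} (with $u_1$ in place of your $u_2$, which is immaterial).

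There is, however, a genuine gap in your Gy\"ongy--Krylov step. You propose to apply it to the Galerkin sequence $(u^n)$, asserting that every subsequential Skorokhod limit is a martingale solution of \eqref{eqn}--\eqref{bc}. In this paper's setup that is false: the Galerkin system \eqref{approximationsolution} discretizes the \emph{doubly truncated} equation \eqref{approx_eqn}, in which $B$ is replaced by $B_{\tilde R}$ and $F$ by the Hilbertian approximation $F^R$. The entire reason for this detour (see the introduction and the opening of Section~\ref{galerkin}) is that $P_n$ is not a projector on $X$, so a direct Galerkin scheme for \eqref{eqn} is unavailable; the limit $n\to\infty$ therefore produces only a solution of \eqref{approx_eqn}, not of \eqref{eqn}--\eqref{bc}. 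The martingale existence proof requires three successive limits $n\to\infty$, $R\to\infty$, $\tilde R\to\infty$, and the paper applies Gy\"ongy--Krylov at the last level, to the sequence $(\bar{\bar u}^{\tilde R})$ of solutions to \eqref{approx_eqn2} (Section~\ref{path_sol}); only at that stage do subsequential limits solve the original equation, so that your pathwise uniqueness result forces the diagonal concentration. Your argument is repaired by replacing $(u^n)$ with $(\bar{\bar u}^{\tilde R})$ throughout the Gy\"ongy--Krylov paragraph.
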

\section{Galerkin scheme}\label{galerkin}
	We describe here the Galerkin approximation of the problem, then derive moment estimates, estimates in fractional Sobolev spaces, tightness of the Galerkin approximation and then pass to the limit to obtain the existence of solutions in Theorem \ref{existence_martingale}. \\
	To construct the Galerkin scheme for equation \eqref{eqn}-\eqref{bc} we use the orthonormal basis {$(w_k)^\infty_{k=1}$} of $H$ which consists of the eigenvectors of $A$ as described in Section \ref{sec:functional_framework}. Consider the finite dimensional subspace of $H$:
\begin{equation*}
H_n:
=\text{span}\{w_1,...,w_n\}, n\ge 1.
\end{equation*}
Let $P_n: H\rightarrow H_n$ be the finite dimensional projection of $H$ onto $H_n$, i.e.,
\begin{equation}\label{e11.12}
P_nv:=\sum_{i=1}^n
\iprod{v}{w_i}w_i, \text{   }v\in H.
\end{equation}
Note that since {the} $w_k$ are eigenvectors of $A$, $P_n$ is also an orthogonal projector in $V$, $D(A)$, $V'$ onto  $H_n$. \textit{However this property is not true for the space $X$ and this is precisely what prevents us from applying {directly} the Galerkin scheme to \eqref{eqn}-\eqref{bc} in the traditional way.} Hence instead we consider the following truncated stochastic equations:
	\begin{equation}\label{approx_eqn}\begin{split}
	\begin{cases}
	&\ud u(t)+[\gamma A(u(t))+\theta_{\tilde{R}}(|u|)B(u(t),u(t)) +F^R(u(t))]\ud t \\
	&\hspace{1.5in}=G(u(t-))\ud W(t)+ \int_{E_0}K(u(t-),\xi)\ud \hat\pi(t,\xi)\\
	& \hspace{1.2in} u(0)=u_0.
\end{cases}	
\end{split}\end{equation}
	%$B_{\tilde{R}}(v(t),v(t))=\theta_{\tilde{R}}(|u|_H)B(v(t),v(t))$
Here, for the parameter $\tilde{R}>0$, $\theta_{\tilde{R}}:[0,\infty) \rightarrow [0,1]$ is a Lipschitz function with constant $1$ such that $\theta_{\tilde R} \equiv 1$ on $[0,\tilde{R}]$ and $\theta_{\tilde{R}}\equiv 0$ on $[2\tilde{R},\infty)$. We will frequently denote $\theta_{\tilde{R}}(|u|)B(u,u)$ by $B_{\tilde{R}}(u,u)$. Given another parameter $R>0$, $F^R: V \rightarrow V'$ is a monotone, hemicontinuous and coercive operator on $V$ defined later and approximating $F$ as $R \rightarrow \infty$  {as described in \cite{TT20} and below}.\\ Like in  \eqref{gen_Ff}, for well-chosen {functions} $f^R_i :\mathbb{R}^{d^{i+1}} \rightarrow \mathbb{R}^{d^{i+1}}$ we consider $F^R:V \rightarrow V'$ of the form
	\begin{align}\label{gen_def_FR}
	(F^R(v),\varphi) &= \sum_{i=0}^k \int_{\mathcal{O}} f^R_i(D^iv) \cdot D^i\varphi \, \ud x \quad \text{for }v,\varphi \in V.
	\end{align}
	
The choice of the functions $f^R_i$ is made by the following construction from \cite{TT20}. \\
For each $i=0,..,k$ we choose $f_i^{R}:\R^{d^{i+1}} \rightarrow \R^{d^{i+1}}$ as follows: For $x \in \R^{d^{i+1}}$
	\begin{align}\label{gen_def_fR}
	f_i^{R}(x)=f_i(x)\mathbbm{1}_{\{|x|_{p}<R\}} +  \left[ f_i \left(\frac{Rx}{|x|_{p}}\right) + \left( 1-\frac{R}{|x|_{p}}\right) x  D^2 \mathscr{J}_i\left(\frac{Rx}{|x|_{p}}\right) \right] \mathbbm{1}_{\{|x|_{p}>R\}},
	\end{align}	
where for brevity we use the notation $|\cdot|_{p} \, :=|\cdot|_{l^{p}(\R^{d^{i+1}})}$ given $x \in R^{{d^{i+1}}}$ and $i=0,1,...,k$. %Later we also denote ${R_x}:=R\frac{{x}}{|x|_{{p}}}$.
	We also present here in brief a few results proved in \cite{TT20} that we will make use of:
	\begin{itemize}	
		\item (FR1) (Monotonicity): We have $\forall u,v\in V$,
		\begin{align}\label{FR1}
		\Vdualpair{F^R(u)-F^R(v)}{u-v}\ge 0.
		\end{align} 
	{\item (FR2) (Hemicontinuity)
		For all $u,v,w \in V$, the map 
		\begin{align}\label{FR2}
		\mathbb{R}: \lambda \mapsto \Vdualpair{F^R(u+\lambda v)}{w} \quad \text{is continuous.}
		\end{align}}
			\item (FR3) (Equicoercivity)
			With an adaptation of the proof of Theorem 2.2 in \cite{TT20} we can show that there exist $c_5=c_5(\mathcal{O},d) \ge 0$ independent of $R$ and some $R_0>1$ such that for any $R>R_0$ and any $ v\in V$,
			\begin{align}\label{FR3}
			&\Vdualpair{F^R(v)}{v}\ge c_1\norm{v}^2-c_5.
			\end{align}
			The importance of \eqref{FR3} is mentioned in Remark \ref{imp_equi}.
			\item (FR4) (Boundedness)
		We have for any $ v\in V$, 
			\begin{align}\label{FR4}
			|{F^R(v)}|_{V'}	\le c_1R^{p-2}\norm{v}.
			\end{align}
		\end{itemize}		
	Now, for fixed $R>R_0$ and $\tilde{R}>0$ we can introduce the $n^{th}$ Galerkin approximation {of} \eqref{approx_eqn} $u^n:=u^{n,R,\tilde{R}}$ that satisfies		
	\begin{equation}\label{approximationsolution}
	\begin{cases}
	&\ud u^n(t)+\left[\gamma A(u^n(t))+P_n B_{\tilde{R}}(u^n(t),u^n(t)) + {P_n F^R(u^n(t))}\right]\ud t \\
	&\hspace{2in}=P_nG(u^n(t-))\ud W(t)+\int_{E_0} P_nK(u^n(t-),\xi)\ud \hat\pi(t,\xi),\\
	& u^n(0)=P_nu_0=:u^n_0.
	\end{cases}
	\end{equation}
%{The study of the existence of the solutions to SDEs driven by Brownian motion and locally square integrable martingales can be found in the literature.}
{Since $B_{\tilde{R}}$ is globally Lipschitz on $H_n$, $F^R$ satisfies \eqref{FR1}-\eqref{FR4} and $G$ and $K$ satisfy the Lipschitz condition \eqref{eqn:G,K_H_Lipschitz}, the existence of a unique pathwise solution $u^n$ to the SDE \eqref{approximationsolution} is guaranteed thanks to Theorem 1.2 in \cite{KRZ} and its generalization in Theorem 3.1 in \cite{ABW}. {(See also Theorem 3.5 in \cite{CNTT20} for a local existence result.)}% and also Theorem 1 in \cite{GK1}, Theorem 3.1.1. in \cite{Prevot} in case of Brownian motion}.
\begin{comment}
We will use the following lemma of monotone operators (see e.g. \cite{Lio69} Chapter 2 Section 1.2).
\begin{lem}\label{ll1.1}
	If $v_n \ra u$ strongly in $X$ as $n \ra \infty$, then $F(t,v_n) \ra F(t,u)$ weakly in $X'$.
\end{lem}
\begin{proof}
	By the bounded growth assumption on $F$, $F(v_n)$ is bounded in $X'$. So there exists a subsequence $F(v_{n_k})$ that converges weakly to some $\chi \in X'$. We then show that $\chi=F(v)$.
	Because $F$ is strongly monotone, we have $\forall u \in X$,
	$$\iprod{F(u)-F(v_{n_k})}{u-v_{n_k}}_{X,X'} \ge C_1\Hnorm{u- v_{n_k}}^2.$$
	Letting $k \ra \infty$, we obtain
	$$\iprod{F(u)-\chi}{u-v}_{X,X'} \ge 0.$$
	Let $u=v\pm \alpha w$ for any arbitrarily positive $\alpha \in \R,$ {and}  $w \in X$, then {the } above inequality yields
	$$\pm\iprod{F(v+\alpha w)-\chi}{\alpha w}_{X,X'} \ge 0.$$
	
	Then, we first divide both sides by $\alpha$ and let $\alpha \ra 0$, and{ we obtain}
	$$\pm\iprod{F(v)-\chi}{w}_{X,X'} \leq 0.$$
	Since $w$ is arbitrary, $\chi=F(v)$.
	
\end{proof}
\end{comment}
\subsection{Moment estimates}\label{ss11.4}
In order to construct a solution to the system \eqref{approx_eqn}, we need to perform a priori estimates for the approximating sequence $u^n:=u^{n,R,\tilde{R}}$ {and then pass to the limit $n \rightarrow \infty$, $R \rightarrow \infty$, $\tilde R \rightarrow \infty$ (in that order).}
\begin{lem}\label{uniformboundL2}
	\textnormal{($L^2$-regularity)}
	Suppose that F satisfies the conditions \eqref{J_form}-\eqref{gc}, so that {the} family of operators $F^R$ satisfies (FR1)-(FR4). Then there exists a constant $C(T,\rho,c_1,c_5)> 0$, independent of $n$ and in fact independent of $R$ and $\tilde{R}$ such that for every $n \geq 1$:	
	\begin{align}\label{e11.14}
	&
	\bE\sup_{s\in[0,T]}
	|u^n(s)| ^{2}+
	(\gamma+c_1)\bE\int_0^T \norm{u^n(s)}^2  \ud s \le C\bE|u_0|^2=: K_1(u_0,T,\rho,c_1,c_5).
	%+\bE \int_0^T|f|_H\ud t
	\end{align}
\end{lem}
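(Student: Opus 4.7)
The plan is to apply It\^o's formula to $|u^n(t)|^2$ for the finite-dimensional semimartingale $u^n$ solving \eqref{approximationsolution}, then exploit three structural properties—the cancellation \eqref{eqn:B_cancellation_property} of $B$, the equicoercivity (FR3) of $F^R$, and the linear $H$-growth \eqref{eqn:G,K_H_growth} of $G,K$—to derive a pathwise differential inequality that closes via Gronwall's lemma and the Burkholder--Davis--Gundy inequality applied, separately, to the Brownian and compensated-Poisson martingale parts.

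\textbf{Step 1 (It\^o formula and structural simplifications).} It\^o's formula for $|u^n(t)|^2$ yields
\begin{equation*}
\begin{split}
|u^n(t)|^2 &+ 2\gamma\int_0^t\|u^n\|^2 ds + 2\int_0^t\theta_{\tilde R}(|u^n|)\langle P_n B(u^n,u^n), u^n\rangle ds + 2\int_0^t\langle P_n F^R(u^n), u^n\rangle ds \\
&= |u_0^n|^2 + M_W(t) + \int_0^t\|P_n G(u^n)\|^2_{L_2(U_0,H)} ds + \int_0^t\!\int_{E_0}|P_n K|^2 d\nu\, ds + M_\pi(t),
\end{split}
\end{equation*}
with $M_W(t):= 2\int_0^t (u^n, P_n G\, dW)$ and $M_\pi(t):=\int_0^t\int_{E_0}[2(u^n(s-), P_n K)+|P_n K|^2] d\hat\pi$. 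Because $P_n$ is self-adjoint on $H$ and $u^n\in H_n$: (i) $\langle P_n B(u^n,u^n), u^n\rangle = \langle B(u^n,u^n), u^n\rangle = 0$ by \eqref{eqn:B_cancellation_property}, killing the $\tilde R$-truncated bilinear term independently of $\tilde R$; (ii) $2\langle P_n F^R(u^n), u^n\rangle\ge 2c_1\|u^n\|^2 - 2c_5$ by (FR3), independently of $R$; (iii) $\|P_n G\|^2 + \int|P_n K|^2 d\nu \le \rho(1+|u^n|^2)$ by \eqref{eqn:G,K_H_growth} and $\|P_n\|\le 1$; and $|u_0^n|\le |u_0|$. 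Assembling these yields the pathwise inequality
\begin{equation*}
|u^n(t)|^2 + 2(\gamma+c_1)\int_0^t\|u^n\|^2 ds \le |u_0|^2 + (2c_5+\rho)T + \rho\int_0^t|u^n|^2 ds + M_W(t) + M_\pi(t).
\end{equation*}

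\textbf{Step 2 (BDG, Gronwall, closing).} Localise by $\tau_N:=\inf\{t\le T:|u^n(t)|\ge N\}$, so that $M_W(\cdot\wedge\tau_N)$ and $M_\pi(\cdot\wedge\tau_N)$ are genuine martingales. Taking expectation of the stopped inequality and applying Gronwall first gives the preliminary bound $\sup_{t\le T}\bE|u^n(t\wedge\tau_N)|^2 + \bE\int_0^{T\wedge\tau_N}\|u^n\|^2 ds \le C$. For the supremum bound, take $\sup_{s\le t\wedge\tau_N}$ then $\bE$. Standard BDG yields
\begin{equation*}
\bE\sup|M_W| \le C\bE\!\left(\int|u^n|^2\|G\|^2 ds\right)^{1/2} \le \tfrac14\bE\sup|u^n|^2 + C\bE\!\int(1+|u^n|^2) ds.
\end{equation*}
For the Poisson part we split $M_\pi=M_\pi^{(1)}+M_\pi^{(2)}$ with $M_\pi^{(1)}:=\int\!\int 2(u^n(s-), P_n K)d\hat\pi$, $M_\pi^{(2)}:=\int\!\int|P_n K|^2 d\hat\pi$, and apply BDG with $p=1$ using the true quadratic variation $[\cdot]$:
\begin{equation*}
\bE\sup|M_\pi^{(2)}| \le C\bE\sqrt{\textstyle\int\!\int|K|^4 d\pi} \le C\bE\!\int\!\int|K|^2 d\pi = C\bE\!\int\!\int|K|^2 d\nu\, ds \le C,
\end{equation*}
where the crucial middle inequality is the $\ell^2\hookrightarrow\ell^1$ bound $(\sum a_i^4)^{1/2} \le \sum a_i^2$ for $a_i\ge 0$ applied to the jump contributions of $|K|^2$; similarly
\begin{equation*}
\bE\sup|M_\pi^{(1)}| \le C\bE\!\left(\sup|u^n|\sqrt{\textstyle\int\!\int|K|^2 d\pi}\right) \le \tfrac14\bE\sup|u^n|^2 + C.
\end{equation*}
Absorbing the two $\tfrac14\bE\sup|u^n|^2$ terms into the left-hand side, controlling $\rho\bE\int|u^n|^2 ds$ via the preliminary moment bound, and invoking Gronwall gives, uniformly in $N$, $\bE\sup_{s\le t\wedge\tau_N}|u^n|^2 + (\gamma+c_1)\bE\int_0^{t\wedge\tau_N}\|u^n\|^2 ds \le C(\bE|u_0|^2+T)$. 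Monotone convergence as $N\to\infty$ then yields \eqref{e11.14}.

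\textbf{Main obstacle.} The delicate step is bounding the Poisson martingale $M_\pi$ given only the $L^2(\nu)$-growth \eqref{eqn:G,K_H_growth}: the natural predictable quadratic variation of $M_\pi^{(2)}$ contains a $|K|^4$ contribution that is not directly integrable. The resolution above—splitting $M_\pi$ and applying the elementary $\ell^2\hookrightarrow\ell^1$ inequality $(\sum a_i^4)^{1/2}\le \sum a_i^2$—replaces the problematic $\bE\sqrt{\int\!\int|K|^4 d\pi}$ by the controlled $\bE\int\!\int|K|^2 d\nu\, ds$ and allows Gronwall to close without demanding any higher-order integrability of $K$ than the hypothesis provides.
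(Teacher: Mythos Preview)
Your proof is correct and follows the same overall scheme as the paper (It\^o on $|u^n|^2$, cancellation for $B$, equicoercivity (FR3), BDG, Gronwall). The one genuine difference is in how you handle the jump correction term. You write It\^o's formula with the compensated term $\int\!\int |P_nK|^2\,d\nu\,ds + M_\pi^{(2)}$, which forces you to control $\bE\sup|M_\pi^{(2)}|$ via BDG and the $\ell^2\hookrightarrow\ell^1$ trick $(\sum a_i^4)^{1/2}\le\sum a_i^2$; you then flag this as the ``main obstacle.'' The paper instead keeps this term in its uncompensated form $\int_{(0,t]}\!\int_{E_0}|P_nK|^2\,d\pi$, which is nonnegative and nondecreasing in $t$, so the supremum is attained at the right endpoint and $\bE$ of it equals $\bE\int\!\int|K|^2\,d\nu\,ds$ by the isometry---no BDG and no fourth-moment issue at all. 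Your route is valid, but the obstacle you identify is an artifact of the decomposition you chose rather than an intrinsic difficulty. A secondary minor difference: you do a two-pass argument (preliminary moment bound, then sup bound), whereas the paper takes $\sup$ then $\bE$ directly and applies Gronwall to $r\mapsto\bE\sup_{s\le r}|u^n(s)|^2$; both are standard and equivalent here.
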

\begin{proof}
	{We begin by applying the It\^{o} formula to the process in \eqref{approximationsolution}. To be precise, here we use the It\^{o} formula in the case of solutions to SDEs with \levy noise as stated in Theorem 2.19 in \cite{CNTT20} and the special case in which $\psi(u)=|u|^2$ as stated in Corollary 1 {in \cite{CNTT20}}.  See also Theorem I.3.1 in \cite{KR07} and \cite{Metivier82} for the  It\^{o} formula in the general context of semi-martingales. We obtain}
	\begin{equation}\label{ito}
	\begin{split}
	&|{u^n}|^2+2\int_0^t
	\gamma \iprod{Au^n(s)}{u^n(s)}\ud s
	+2\int_0^t
	\iprod{P_n B_{\tilde{R}}(u^n(s))}{u^n(s)}\ud s +2\int_0^t	\iprod{P_n F^R(u^n(s))}{u^n(s)}_{}\ud s \\
	&=|u^n_0|^2 +\int_{(0,t]}\int_{E_0}	2
	\iprod{u^n(s)}{P_nK(u^n(s-),\xi)\ud \hat\pi(s,\xi)}+\int_0^t2\iprod{u^n(s)}{P_nG(u^n(s-))\ud W(s)}\\
	&+\int_0^t
	\norm{P_nG(u^n(s))}_{L_2(U_0,H)}^2\ud s  +\int_{(0,t]}\int_{E_0}
	|{P_nK(u^n(s-),\xi)}|^2 \ud \pi(s,\xi).
		\end{split}
	\end{equation}
Here we {made} use of the cancellation property $\iprod{B_{\tilde{R}}(u,u)}{u}=0$. {Now we use} the equicoercivity result \eqref{FR3} along with the equation \eqref{L2.1},{ we take} the supremum in time over $[0,r]$ for a fixed $r \in[0,T]$ and then take expectation of both sides; {we find} 
	\begin{align}
	\bE[\sup_{0\le t\le r}\Hnorm{u^n(t)}^2] +& 2\bE\int_0^r {\left((\gamma +c_1)\norm{u^n(s)}^2-c_5 \right)} \ud s \label{e11.16} \\
	&	\le \bE[\Hnorm{u^n_0}^2] + \bE\sup_{0\le t\le r}[I(t)+J(t)+K(t)+L(t)],\nonumber
	\end{align}
	where
	\begin{equation*}
	\begin{aligned}
	 I(t) &=
	\int_0^t  \norm{P_nG(u^n(s))}_{L_2(U_0,H)}^2\ud s, \\
	 J(t)&=\int_{(0,t]}\int_{E_0}	|{P_nK(u^n(s-),\xi)}|^2 \ud \pi(s,\xi), \\
	 K(t)&=\int_0^t\iprod{u^n(s)}{P_nG(u^n(s-))\ud W(s)}, \\
	L(t)&=\int_{(0,t]}\int_{E_0}
		\iprod{u^n(s)}{P_nK(u^n(s-),\xi)\ud \hat\pi(s,\xi)}.
		\end{aligned}
	\end{equation*}	
	We estimate the terms $I(t)$ and $J(t)$
	based on the assumption \eqref{eqn:G,K_H_growth} along with Young's inequality as follows
	\begin{align}
	\nonumber\bE\sup_{0\le t\le r}
	\int_0^t 
	\norm{P_nG(u^n(s))
	}_{L_2(U_0,H)}^2 \ud s
	&\lesssim \bE \sup_{0\leq t \leq r}\int_0^t
\norm{G(u^n(s))}_{L_2(U_0,H)}^2 \ud s \nonumber\\
	&= \bE\int_0^r
		\norm{G(u^n(s))}_{L_2(U_0,H)}^2 \ud s \nonumber\\
	&\lesssim \bE \int_0^r
	\rho(|u^n(s)|^2+1) \ud s. \label{I(t)}
	%\\&\le C(\rho,T)\bE\int_0^T|u^n(s)|^2 \ud s+C(T,\rho).	
	\end{align}
	Similarly for $J(t)$ we use the isometry property {(see e.g. Theorem 2.16 in \cite{CNTT20}%A.1 \cite{EulerPaper}
		) }to obtain,
	\begin{align}
	\bE\sup_{0\le t\le r}\int_{(0,t]}\int_{E_0}
	|P_nK(u^n(s-),\xi)|^2 \ud\pi(s,\xi)  &\leq \bE\sup_{0\le t\le r}\int_{(0,t]}\int_{E_0}
	|K(u^n(s-),\xi)|^2 \ud\pi(s,\xi)   \nonumber\\
	&= \bE
	\int_{(0,r]}\int_{E_0} 
	|K(u^n(s-),\xi)|^2\ud\nu(\xi)\ud s \nonumber\\
	&\lesssim \bE \int_{(0,r]}
	\rho(|u^n(s)|^2+1) \ud s. 
	%\nonumber\\	&\le C(\rho,T)\bE\int_0^T|u^n(s)|^2 \ud s+C(T,\rho).
	\label{K(t)}
	\end{align}
{Note that in \eqref{I(t)} and \eqref{K(t)} we used the fact that $P_n$ is a projector on $H$.}	The next two martingale terms  $K(t)$ and $L(t)$ are treated by using the Burkholder-Davis-Gundy (BDG) inequality (see e.g. \cite{PZ}) and Young's inequality.	Observe that for some constant $C(\rho)>0$ independent of $n$ we obtain	
	\begin{align}
	 \bE\sup_{0\le t\le r}\left|
	\int_0^t\iprod{u^n(s)}{P_nG(u^n(s-))\ud W(s)}\right|
	&\lesssim \bE\left(\int_0^r
	\iprod{u^n(t)}{P_nG(u^n(t-))}^2 \ud t\right)^{\frac12} \nonumber\\
	&  \lesssim \bE\left(\int_0^r \norm{P_nG(u^n(t))}^2_{L_2(U_0,H)}|u^n(t)|^2\ud t\right)^{\frac12} \nonumber\\
	&  \lesssim \bE\sup_{0\le t\le r}|u^n(t)|\left( \int_0^r\norm{G(u^n(t))}^2_{L_2(U_0,H)}\ud t \right)^{\frac12}\nonumber,
	\end{align}which implies for $K(t)$ that,\begin{align*} \bE\sup_{0\le t\le r}\left|
	\int_0^t\iprod{u^n(s)}{P_nG(u^n(s-))\ud W(s)}\right|&\le \frac14 \bE\sup_{0\leq t \leq r}|u^n(t)|^2+C(\rho)\bE\int_0^r\left(1+|u^n(s)|^2\right) \ud s.
	\end{align*}
	Similarly using the Burkholder-Davis-Gundy inequality, (see Proposition 2 pg. 5650 in \cite{CNTT20}) we obtain for the term $L(t)$: %(see inequality A.4 in \cite{EulerPaper}),
	\begin{align}	
	&\bE  \sup_{0\le t\le r}\left|\int_{(0,t]}\int_{E_0} 
	\iprod{u^n(s)}{P_nK(u^n(s-),\xi)} \ud \hat\pi(s,\xi)\right| \nonumber\\
	&\qquad \qquad \qquad\lesssim  \bE \left(\int_{(0,r]}\int_{E_0}
	|u^n(s)|^{2}
	\Hnorm{P_nK(u^n(s-),\xi)}^2\ud \pi(s,\xi)\right)^{\frac 12}\nonumber\\
	&\qquad \qquad \qquad\le \frac14 \bE\sup_{0\le t\le r}
	|{u^n(t)}|^2+ C\bE\bigg[\int_{(0,r]}\int_{E_0}\Hnorm{K(u^n(s-),\xi)}^2\ud \pi(s,\xi)\bigg] \nonumber\\
		&\qquad \qquad \qquad= \frac14 \bE\sup_{0\le t\le r}
	|{u^n(t)}|^2+ C\bE\bigg[\int_{(0,r]}\int_{E_0}\Hnorm{K(u^n(s-),\xi)}^2\ud	\nu(\xi) ds \bigg] \nonumber\\
		&\qquad \qquad \qquad \le \frac14 \bE\sup_{0\leq t \leq r}|u^n(t)|^2+C(\rho)\bE\int_0^r\left(1+|u^n(s)|^2\right) \ud s. \label{N(t)}
	\end{align}
Collecting all relations from \eqref{I(t)} through \eqref{N(t)} and applying in  \eqref{e11.16}, we obtain
	\begin{align*}
	&\bE \sup_{0\le s\le r}|u^n(s)|^2+{(\gamma+c_1)}\bE\int_0^r\norm{u^n(s)}^2\ud s	\\
	&\hspace{2in}\le  ~\bE|u^n_0|^2
	+\frac12 \bE\sup_{0\le s\le r}|u^n(s)|^2+C(\rho)\bE\int_0^r(1+|u^n(s)|^2)\ud s. 
	\end{align*}
	Multiplying both sides by 2 and using the fact that $\bE|u_0^n|^2 \leq \bE|u_0|^2$, {we obtain} for some constant $C(T,\rho,c_1,c_5)>0$ independent of $n, R, \tilde R$
	\begin{equation}
	\begin{aligned}
	& \bE\sup_{0\le s\le r}|u^n(s)|^2+{2(\gamma +c_1)}\int_0^r \bE \norm{u^n(s)}^2  \ud s
	  \le ~ C\left(\bE\Hnorm{u_0}^2 + {\int_0^r \sup_{0 \leq t\leq s}\bE|u^n(t)|^2 \ud s}\right)+C.
	\end{aligned}
	\end{equation}
	The proof is complete after applying the  Gr\"onwall lemma % (see e.g. Lemma 5.3 in \cite{GHZ})
	 to $\bE\sup_{0\le s\le r}\Hnorm{u^n(s)}^2$.
\end{proof} 
\begin{rem}
	Observe that the absence of the linear term $A$, {that is if $\gamma=0$}, does not change this result thanks to the equicoercivity result stated in \eqref{FR3} and derived in \cite{TT20} { for the operators $F^R$}. This fact will be used in Section \ref{Examples}. See also Remark \ref{imp_equi} for an explanation.
\end{rem}

\subsection{Estimates in fractional Sobolev spaces}\label{ss11.5}
To obtain compactness, we will use a particular characterization of the fractional Sobolev space ${W^{\alpha,m}(0,T;Z)}$ where $Z$ is a Banach space.\\ Given $m>1$ and $\alpha \in (0,1)$, $W^{\alpha,m}(0,T;Z)$ is the Sobolev space consisting of {the} $v \in L^m(0,T;Z)$ such that the (Gagliardo) seminorm $\int_0^T \int_0^T \frac{|v(t)-v(s)|^m}{|t-s|^{1+\alpha m}}\ud t\ud s$ is finite. This space is then endowed with the norm
\begin{align}
\|v\|^m_{W^{\alpha,m}(0,T;Z)}=\|v\|_{L^m(0,T;Z)}^m + \int_0^T \int_0^T \frac{|v(t)-v(s)|^m}{|t-s|^{1+\alpha m}}\ud t\ud s.
\end{align}
We denote $W^{\alpha,2}=:H^\alpha$. Now we state the following lemma:

 %\end{comment}

\begin{lem}\label{Sobolev1}
	Let $\mathcal{Y}_0\subset\mathcal{Y}\subset\mathcal{Y}_1$ be Banach spaces, $\mathcal{Y}_0$ and $\mathcal{Y}_1$ reflexive with compact embedding of $\mathcal{Y}_0$ in $\mathcal{Y}$. Let $s, m >1$ and $\alpha \in (0,1)$ be given. Let
	$\mathcal{G}$ be the space
	\begin{equation*}
	\mathcal{G}=L^\infty(0,T;\mathcal{Y})\cap L^s(0,T;\mathcal{Y}_0)
	\cap W^{\alpha,m}(0,T;\mathcal{Y}_1),
	\end{equation*}
	endowed with the natural norm. Then the embedding of $\mathcal{G}$ in $L^r(0,T;\mathcal{Y})$ is compact, $\forall 1 \leq r<\infty$.
\end{lem}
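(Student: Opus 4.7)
The strategy is to reduce to a classical Aubin--Lions--Simon--Flandoli--Gatarek type compactness theorem that yields strong convergence in $L^q(0,T;\mathcal{Y})$ for a single exponent $q>1$, and then upgrade to an arbitrary $L^r(0,T;\mathcal{Y})$, $1 \leq r < \infty$, by interpolating against the $L^\infty(0,T;\mathcal{Y})$ bound built into the definition of $\mathcal{G}$. Fix a bounded sequence $(v_n) \subset \mathcal{G}$; the goal is to extract a subsequence converging strongly in $L^r(0,T;\mathcal{Y})$.

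First I would set $q := \min(s,m) > 1$. Since $(0,T)$ has finite measure, $L^s(0,T;\mathcal{Y}_0) \hookrightarrow L^q(0,T;\mathcal{Y}_0)$ continuously, so $(v_n)$ is bounded in $L^q(0,T;\mathcal{Y}_0)$. For the fractional Sobolev piece, a standard Slobodetskii-type interpolation trades a small amount of regularity for integrability, giving $W^{\alpha,m}(0,T;\mathcal{Y}_1) \hookrightarrow W^{\alpha',q}(0,T;\mathcal{Y}_1)$ continuously for some $\alpha' \in (0,\alpha)$. (Alternatively one may invoke a version of Simon's theorem that already accommodates different exponents in the $L$-piece and the Sobolev-piece, bypassing this reduction.) The Flandoli--Gatarek compactness theorem, valid under our hypotheses ($\mathcal{Y}_0 \hookrightarrow \mathcal{Y}$ compact, $\mathcal{Y}$ continuously embedded in $\mathcal{Y}_1$, $q>1$, $\alpha' \in (0,1)$), then yields the compact embedding
\[
L^q(0,T;\mathcal{Y}_0) \cap W^{\alpha',q}(0,T;\mathcal{Y}_1) \subset\subset L^q(0,T;\mathcal{Y}).
\]
Passing to a subsequence, $v_n \to v$ strongly in $L^q(0,T;\mathcal{Y})$ for some limit $v$.

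To upgrade to arbitrary $r \in [1,\infty)$: when $r \leq q$, the continuous embedding $L^q(0,T;\mathcal{Y}) \hookrightarrow L^r(0,T;\mathcal{Y})$ on the finite interval transfers the convergence immediately. When $r > q$, I would interpolate against the uniform $L^\infty(0,T;\mathcal{Y})$ bound via
\[
\|v_n - v\|_{L^r(0,T;\mathcal{Y})} \;\leq\; \|v_n - v\|_{L^q(0,T;\mathcal{Y})}^{q/r} \, \|v_n - v\|_{L^\infty(0,T;\mathcal{Y})}^{1 - q/r},
\]
where the first factor tends to zero by Step~2 and the second is uniformly bounded in $n$ since $(v_n)$ is bounded in $L^\infty(0,T;\mathcal{Y}) \subset \mathcal{G}$. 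Hence $v_n \to v$ strongly in $L^r(0,T;\mathcal{Y})$, completing the proof.

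The main obstacle is the bookkeeping of exponents in the middle step: the two distinct integrability indices $s$ and $m$ appearing in $\mathcal{G}$ must be reconciled before a classical single-exponent compactness lemma can be invoked. Once a common exponent $q$ has been produced, both the Flandoli--Gatarek embedding and the final $L^q$--$L^\infty$ interpolation are routine. It is also worth noting that the $L^\infty(0,T;\mathcal{Y})$ hypothesis is used only to cover the range $r > q$; for $r \leq q$ it is unnecessary.
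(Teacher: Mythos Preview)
Your proof is correct and follows essentially the same route as the paper: set $q=\min(s,m)$, invoke the Flandoli--Gatarek compact embedding into $L^q(0,T;\mathcal{Y})$, then use the $L^\infty(0,T;\mathcal{Y})$ bound to upgrade to $L^r$. The only difference is in the last step: the paper extracts a.e.\ convergence in $\mathcal{Y}$ from the strong $L^q$ convergence and then applies the Vitali convergence theorem, whereas you use the interpolation inequality $\|v_n-v\|_{L^r}\le \|v_n-v\|_{L^q}^{q/r}\|v_n-v\|_{L^\infty}^{1-q/r}$ directly; note that this implicitly requires $v\in L^\infty(0,T;\mathcal{Y})$, which you should justify (it follows from a.e.\ convergence of a further subsequence together with the uniform $L^\infty$ bound on $(v_n)$).
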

\begin{proof}
	Let $q=\min(m,s)>1$ and define $\mathscr{G}_q=L^q(0,T;\mathcal{Y}_0) \cap W^{\alpha,q}(0,T;\mathcal{Y}_1)$. Then it is shown in \cite{Tem95} (Section 13.3 Theorem 13.2) and \cite{FG95} (Theorem 2.1) that the injection of $\mathscr{G}_q$ in $L^q(0,T;\mathcal{Y})$ is compact. Now if $\{v_k\}_{k\geq 1}$ is a bounded sequence in {$\mathcal{G}$} that converges to $v$ weakly in $ \mathscr{G}_q$ and strongly in $L^q(0,T;\mathcal{Y})$, then up to the extraction of a subsequence, $v_k(t) \rightarrow v(t)$ strongly in $\mathcal{Y}$ {for }a.e. $t \in (0,T)$. Since $v_k$ is also bounded in $L^\infty(0,T;\mathcal{Y})$,  using the Vitali convergence theorem, we have that $v_m \rightarrow v$ strongly in $L^r(0,T;\mathcal{Y})$ for every $r\in [1,\infty)$.
\end{proof}
%See Lemma \ref{lemA1} and Remark \ref{remA1} in Appendix A.
%%\begin{proof}
%%This result is a variation of Theorem in \cite{FG95} which is itself a variation of Theorem .. in \cite{Tem95}.
%%\\It is proved in \cite{FG95} that \begin{equation*}
%%\tilde{Y}=L^q(0,T;\mathcal{Y})\cap W^{\alpha,q}(0,T,\mathcal{Y}_0)
%%\end{equation*}
%%is compactly embedded in $L^q(0,T;\mathcal{Y}_1)$.\\
%%The additional remark is that if a sequence $u^n$ is bounded in $\tilde{Y}$, then it contains a subsequence strongly convergent in $L^q(0,T;\mathcal{Y})$. If in addition, the sequence is bounded in $L^\infty(0,T;\mathcal{Y})$, then by Arzela Theorem, that subsequence converges in $L^r(0,T;\mathcal{Y})$ for any $r\ge 1$.
%%\end{proof}
In what follows, we will apply the above Lemma with $\mathcal{Y}_0=V$, $ \mathcal{Y}_1=D(A)'$, $\mathcal{Y}=H$, $\alpha\in (0,\frac 12)$, $s=m=2$. We will use the above lemma again in Sections \ref{sec:R} and \ref{sec:tildeR} as well.   

We are now in position to show that the {approximating} sequence $\{u^n\}_{n=1}^\infty$ {is} bounded in the space $L^{2}(\Omega;H^{\alpha}(0,T;D(A)'))$
for $\alpha\in (0,\frac 12)$.  For that purpose, we {recall} the following proposition { essentially borrowed from \cite{FG95}:}
\begin{prop}\label{noisefraction}
	For every $\alpha \in (0,\frac 12)$ and $1\leq \beta \leq 2$ there exists a constant $C(\alpha,\beta,\rho,K_1)$, independent of $n, R, \tilde R$, such that
	\begin{equation}\label{IW}
	\bE\left\|\int_0^tP_n G(u^n(s))\ud W(s)\right\|^{\beta}_{W^{\alpha,\beta}
		(0,T;H)}\le C,
	\end{equation}
	and
	\begin{equation}\label{Ipi}
	\bE\left\|\int_{(0,t]}\int_{E_0}P_n K(u^n(s-),\xi)\ud \hat \pi(s,\xi)\right\|^{\beta}_{W^{\alpha,\beta}(0,T;H)}\le C.
	\end{equation}
\end{prop}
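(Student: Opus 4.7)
My plan is to prove both bounds \eqref{IW} and \eqref{Ipi} via the standard two-step strategy: first establish pointwise-in-$(s,t)$ H\"older-type increment estimates for the stochastic integrals, then integrate against the singular Gagliardo kernel $|t-s|^{-(1+\alpha\beta)}$ and exploit $\alpha<1/2$ to ensure integrability. This is essentially the approach used in \cite{FG95} for the Wiener part, adapted to include the compensated Poisson integral.

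\textbf{Step 1 (Increment estimate for the Wiener integral).} Denote $I_W(t):=\int_0^t P_n G(u^n(s))\,\ud W(s)$. For $0\le s<t\le T$ and $\beta\in[1,2]$, I first treat $\beta=2$: by It\^o's isometry and the fact that $P_n$ is a contraction on $L_2(U_0,H)$,
\begin{align*}
\bE|I_W(t)-I_W(s)|^2 \le \bE\int_s^t \|G(u^n(r))\|_{L_2(U_0,H)}^2\,\ud r \le \rho\,\bE\int_s^t(1+|u^n(r)|^2)\,\ud r \le C(t-s),
\end{align*}
where the last inequality uses \eqref{eqn:G,K_H_growth} and the $L^2$-bound $K_1$ from Lemma~\ref{uniformboundL2}. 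Then for $\beta\in[1,2]$ I apply Jensen's inequality to get $\bE|I_W(t)-I_W(s)|^\beta \le C(t-s)^{\beta/2}$.

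\textbf{Step 2 (Increment estimate for the compensated Poisson integral).} Write $I_\pi(t):=\int_{(0,t]}\!\int_{E_0} P_nK(u^n(r-),\xi)\,\ud\hat\pi(r,\xi)$. Using the isometry for compensated Poisson integrals (as invoked earlier in \eqref{K(t)}) combined with \eqref{eqn:G,K_H_growth} and Lemma~\ref{uniformboundL2},
\begin{align*}
\bE|I_\pi(t)-I_\pi(s)|^2 \le \bE\int_s^t\!\!\int_{E_0}|K(u^n(r),\xi)|^2\,\ud\nu(\xi)\,\ud r \le \rho\,\bE\int_s^t(1+|u^n(r)|^2)\,\ud r \le C(t-s).
\end{align*}
Again Jensen yields $\bE|I_\pi(t)-I_\pi(s)|^\beta\le C(t-s)^{\beta/2}$ for $\beta\in[1,2]$. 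Crucially, the constants $C$ in both steps depend only on $T,\rho,K_1$ and are therefore uniform in $n,R,\tilde R$.

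\textbf{Step 3 (Assembly of the fractional Sobolev norm).} The $L^\beta(0,T;H)$ piece is immediate: setting $s=0$ in Steps~1--2 gives $\bE|I(t)|^\beta\le C\,t^{\beta/2}$, so $\bE\|I\|_{L^\beta(0,T;H)}^\beta\le C$. For the Gagliardo seminorm, Fubini permits interchanging $\bE$ with the double time integral, giving
\begin{align*}
\bE\int_0^T\!\!\int_0^T \frac{|I(t)-I(s)|^\beta}{|t-s|^{1+\alpha\beta}}\,\ud t\,\ud s \le C\int_0^T\!\!\int_0^T |t-s|^{\beta/2-1-\alpha\beta}\,\ud t\,\ud s.
\end{align*}
The exponent is $>-1$ precisely when $\alpha<1/2$, so this double integral is finite. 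Combining both contributions gives \eqref{IW} and \eqref{Ipi}.

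\textbf{Expected difficulty.} The computation itself is fairly routine; the only subtle point is confirming that the BDG / isometry estimate for the compensated Poisson integral of a (possibly) non-square-integrable driver still closes at exponent $2$ here --- which it does because we only integrate over $E_0$, where $K$ satisfies the quadratic growth bound \eqref{eqn:G,K_H_growth}. The uniformity of the constant in $R,\tilde R$ follows automatically since neither $G,K$ nor the uniform estimate in Lemma~\ref{uniformboundL2} depend on those parameters.
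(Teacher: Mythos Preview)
Your proof is correct and follows essentially the same route as the paper's, with the only difference being one of packaging: the paper invokes Lemma~2.1 of \cite{FG95} (and its Poisson analogue, Proposition~A.4 in \cite{EulerPaper}) as a black box to bound $\bE\|I\|^\beta_{W^{\alpha,\beta}(0,T;H)}$ directly by $\bE\int_0^T\|P_nG(u^n)\|^\beta_{L_2(U_0,H)}\,\ud s$ (resp.\ $\bE\int_0^T\!\int_{E_0}|P_nK|^\beta\,\ud\nu\,\ud s$), whereas you unpack that lemma by hand via the increment bound $\bE|I(t)-I(s)|^\beta\le C(t-s)^{\beta/2}$ and integration against the Gagliardo kernel. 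Both arguments hinge on the same ingredients---It\^o isometry, the growth bound \eqref{eqn:G,K_H_growth}, and Lemma~\ref{uniformboundL2}---and the integrability of $|t-s|^{\beta/2-1-\alpha\beta}$ for $\alpha<1/2$.
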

\begin{proof}
Using Lemma 2.1 in \cite{FG95}, the assumption \eqref{eqn:G,K_H_growth} and Lemma \ref{uniformboundL2} we obtain
	\begin{equation} 
	\begin{aligned}
	~ \bE\left\|\int_0^tP_nG(u^n(s))\ud W(s)\right\|^{\beta}_{W^{\alpha,\beta}
		(0,T;H)} &\lesssim \bE\int_0^T \norm{P_nG(u^n)}^{\beta}_{L_2(U_0,H)} \ud s \\
	&  \lesssim \left( \bE \int_0^T(1+|u^n(s)|^{2}) \ud s\right)^{\frac{\beta}{2}} \leq C.
	\end{aligned}
	\end{equation}
A continuity result similar to Lemma 2.1 in \cite{FG95} holds for the stochastic integral with respect to a compensated Poisson random measure. For more details of the case $\beta=2$, we refer the reader to Proposition A.4 in \cite{EulerPaper}. Using the assumption \eqref{eqn:G,K_H_growth} and Lemma \ref{uniformboundL2} we thus obtain 
	\begin{align}
\bE\left\|\int_{(0,t]}\int_{E_0}P_nK(u^n(s-),\xi)\ud \hat \pi(s,\xi)\right\|^{\beta}_{W^{\alpha,\beta}(0,T;H)} &\lesssim \bE\int_0^T \int_{E_0}|P_nK(u^n(s-),\xi)|^{\beta} \ud\nu(\xi) \ud s \nonumber \\
&  \lesssim \left( \bE \int_0^T(1+|u^n(s)|^{2}) \ud s\right)^{\frac{\beta}{2}} \leq C.
\end{align}

\end{proof}

\begin{prop}\label{boundinW}
	Let $u^n$ be the solution to \eqref{approximationsolution} with an inital data $u_0\in L^2(\Omega,\mathcal{F}_0,\bP;H)$.
	Then for every $\alpha\in (0,\frac 12)$ there exists a constant $C=C(\alpha,\gamma,T,R,\tilde{R},K_1)>0$ such that
	{\begin{equation}\label{frac_u}
	\sup_{n\ge 1}\bE\norm{u^n}^{2}_{H^{\alpha}(0,T;{{{}D(A)'}})}\le C.
	\end{equation}}
\end{prop}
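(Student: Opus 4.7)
The plan is to decompose each Galerkin approximation as
\[
u^n(t) = u^n_0 + \mathcal{I}_A(t) + \mathcal{I}_B(t) + \mathcal{I}_F(t) + \mathcal{I}_G(t) + \mathcal{I}_K(t),
\]
corresponding respectively to the initial datum, the three integrated drift terms ($\gamma Au^n$, $P_n B_{\tilde R}(u^n,u^n)$, $P_n F^R(u^n)$), and the two stochastic integrals, and then to bound each piece separately in $L^2(\Omega; H^{\alpha}(0,T;D(A)'))$.

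The two stochastic-integral contributions $\mathcal{I}_G$ and $\mathcal{I}_K$ are already handled by Proposition \ref{noisefraction} with $\beta=2$, since the continuous embedding $H \hookrightarrow D(A)'$ upgrades the bounds in $W^{\alpha,2}(0,T;H)$ to bounds in $H^{\alpha}(0,T;D(A)')$. The initial datum $u^n_0 = P_n u_0$, viewed as a constant function of time, has vanishing Gagliardo seminorm, so its $H^\alpha(0,T;D(A)')$-norm reduces to the $L^2(0,T;D(A)')$-part, bounded above by $T^{1/2}|u_0|_H$, whose second moment is finite by hypothesis.

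For the three drift terms I will exploit the continuous embedding $W^{1,2}(0,T;D(A)') \hookrightarrow H^{\alpha}(0,T;D(A)')$, valid for every $\alpha\in(0,\tfrac12)$, so that it suffices to bound each integrand in $L^2(\Omega \times (0,T); D(A)')$. For $\gamma Au^n$, the symmetry of $A$ and the Gelfand chain $D(A) \hookrightarrow H \hookrightarrow D(A)'$ yield $|Av|_{D(A)'} \leq |v|_H$, whence
\[
\bE\int_0^T \gamma^2 |Au^n(s)|_{D(A)'}^2 \,\ud s \;\leq\; \gamma^2 T \,\bE\sup_{[0,T]} |u^n|^2 \;\leq\; C
\]
by Lemma \ref{uniformboundL2}. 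Since the $w_k$ are eigenvectors of $A$, $P_n$ commutes with $A$ and is a contraction on $D(A)$, hence by self-adjointness also on $D(A)'$. Combining \eqref{eqn:B_bound2} with the cutoff estimate $\theta_{\tilde R}(|u|)|u| \leq 2\tilde R$ gives $|P_n B_{\tilde R}(u^n,u^n)|_{D(A)'} \leq 2\tilde R\,\|u^n\|$, and therefore $\bE\int_0^T |P_n B_{\tilde R}|_{D(A)'}^2 \ud s \leq 4\tilde R^2\,\bE\int_0^T \|u^n\|^2 \ud s$, again finite by Lemma \ref{uniformboundL2}. For $P_n F^R(u^n)$, the boundedness \eqref{FR4} together with $V' \hookrightarrow D(A)'$ yields $|P_n F^R(v)|_{D(A)'} \lesssim R^{p-2}\|v\|$, and the same type of argument applies.

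The main obstacle, had one worked directly with the untruncated bilinear term $B(u^n,u^n)$, would be the quadratic estimate $|B(u,u)|_{D(A)'} \lesssim |u|\,\|u\|$ from \eqref{eqn:B_bound2}, whose square is $|u|^2\|u\|^2$ and would require a fourth-moment control on $u^n$ that is not directly available from Lemma \ref{uniformboundL2}. The cutoff $\theta_{\tilde R}$ converts the estimate into one linear in $\|u^n\|$, at the cost of a $\tilde R$-dependent constant, which is allowed by the statement of the proposition. Summing the bounds above gives the desired uniform-in-$n$ estimate in $L^2(\Omega; H^{\alpha}(0,T;D(A)'))$.
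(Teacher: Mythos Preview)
Your proof is correct and follows essentially the same strategy as the paper: separate off the stochastic integrals via Proposition~\ref{noisefraction}, then use the embedding $W^{1,2}(0,T;D(A)')\hookrightarrow H^{\alpha}(0,T;D(A)')$ to reduce the drift part to $L^2$-in-time estimates on $Au^n$, $P_nB_{\tilde R}(u^n)$, and $P_nF^R(u^n)$ in $D(A)'$. The only minor variation is that for the $A$-term you use $|Av|_{D(A)'}\le |v|_H$ and the $\sup_t|u^n|^2$ bound, whereas the paper uses $|Av|_{D(A)'}\le |Av|_{V'}\le\|v\|$ and the $L^2(0,T;V)$ bound; both are immediate from Lemma~\ref{uniformboundL2}.
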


\begin{proof}
	In light of Proposition \ref{noisefraction}, it is sufficient to prove that
	\begin{equation} \label{eqn:eqn:fractional_Sobolev_estimate.3}
	\sup_{n \geq 1} \bE \left\|  u^n - \int_0^{\cdot}P_n G(u^n(s))\ud W(s) - \int_{(0,{\cdot }]}\int_{E_0}P_n K(u^n(s-) ,\xi)\ud \hat \pi(s,\xi)\right\| ^{2}_{H^{\alpha}(0,T;{D(A)'})} < \infty.
	\end{equation}
	Let $g^n := u^n - \int_0^tP_n G(u^n)\ud W(s) - \int_{(0,t]}\int_{E_0}P_n K(u^n,\xi)\ud \hat \pi(s,\xi)$. Since $\alpha<1$ we use the fact that the embedding $H^{1}\subset H^{\alpha}$ is continuous %and use the Poincar\'e inequality
	 to obtain,	
	\begin{align}
	 ~\norm{g^n}^{2}_{H^{\alpha}(0,T;{D(A)'})}
	&\lesssim  \norm{u_0^n -\int_0^{\cdot} \gamma Au^n(s) \ud s  - \int_0^{\cdot} P_n B_{\tilde{R}}(u^n(s)) \ud s - \int_0^{\cdot} P_nF^R(u^n(s)) \ud s}^{2}_{H^{1}(0,T;{D(A)'})} \nonumber \\\nonumber
	&\lesssim  ~ \norm{u_0^n -\int_0^{\cdot}  \gamma Au^n(s) \ud s  - \int_0^{\cdot}  P_n B_{\tilde{R}}(u^n(s)) \ud s - \int_0^{\cdot} P_nF^R(u^n(s)) \ud s}^{2}_{L^{2}(0,T;{{}D(A)'})}  \\
	& \qquad  + \int_0^T \gamma\Hnorm{Au^n(s)}_{{D(A)'}}^2\ud s + \int_0^T  \Hnorm{B_{\tilde{R}}(u^n(s))}_{{D(A)'}}^2 \ud s + \int_0^T \Hnorm{F^R(u^n(s))}_{{{}D(A)'}}^2\ud s \nonumber\\
	\lesssim & ~ \Hnorm{u_0}^2+ \int_0^T \gamma\Hnorm{Au^n(s)}^2_{{{}D(A)'}} \ud s + \int_0^T \Hnorm{B_{\tilde{R}}(u^n(s))}_{{D(A)'}}^2 \ud s + \int_0^T \Hnorm{F^R(u^n(s))}^2_{{D(A)'}} \ud s \nonumber  \\
	=: & ~\Hnorm{u_0}^2 + \mathcal{K}_1 + \mathcal{K}_2 + \mathcal{K}_3,\label{gn}
	\end{align}
	where the hidden constants depend on $T$ but not on $n$. Next, using Lemma \ref{uniformboundL2} we obtain the existence of a constant $C = C(T,\gamma,K_1) > 0$, independent of $n, R, \tilde {R}$, such that
	\begin{equation}\label{eqn:fractional_Sobolev_A_est}
	\bE[\mathcal{K}_1]\lesssim \bE\int_0^T
	\gamma\Hnorm{Au^n(s)}_{V'}^2\ud s \leq \gamma\bE\int_0^T\|u^n(s)\|^2 \ud s \leq C.
	\end{equation}
	
	We estimate $\mathcal{K}_2$ using the available estimates for $B$, \eqref{eqn:B_bound2}, as follows.
	\begin{align}
	\bE\int_0^T|B_{\tilde{R}}(u^n(s))|_{D(A)'}^2 \ud s \lesssim \bE \int_0^T \big(\theta_{\tilde{R}}(|u^n|)\big)^2|u^n(s)|^{2} \|u^n(s)\|^2 \ud s \lesssim \tilde{R}^2\bE \int_0^T\|u^n(s)\|^2 \ud s.\label{frac_B}
	\end{align}
	 {The above follows from an application of H\"older's inequality and the fact that $\theta_{\tilde R}(x)$ vanishes when $x \geq 2\tilde R$. Then we use Lemma \ref{uniformboundL2} which shows that the right-hand side of \eqref{frac_B}} is bounded by a constant $C=C(\tilde{R},T,K_1)>0$ independent of $n$ and $R$.\\% by Lemma {\ref{uniformboundH10}}. for some $C=C(\tilde{R},T)$, independent of $n$ 
	 	We estimate $\mathcal{K}_3$ using the property \eqref{FR4} and Lemma \ref{uniformboundL2} and obtain for some $C=C(R,T,p,K_1)>0$ independent of $n$ and $\tilde R$,
	\begin{equation}
	\begin{aligned}\label{frac_F}
	\bE[\mathcal{K}_3] \lesssim & ~ \bE\int_0^T
	|F^R(u^n(s))|^2_{V'}\ud s\lesssim R^{2p-4}\bE\int_0^T\norm{u^n(s)}^2 \ud s\le C.
	\end{aligned}
	\end{equation}
	Thus using Proposition \ref{noisefraction} with $\beta=2$ and \eqref{eqn:fractional_Sobolev_A_est}-\eqref{frac_F} we obtain the desired result \eqref{frac_u}. {Although there are sharper estimates available for the term $\mathcal{K}_3$, \eqref{frac_F} suffices at this point.	}
\end{proof}
\subsection{Tightness}\label{tightness}
\begin{prop}\label{tightnessinV}
	Let $(u^n)_{n=1}^\infty$ be the solution to the equations \eqref{approximationsolution} with initial data\\ $u_0\in L^2(\Omega,\mathcal{F}_0,\bP;H)$. Then the laws of
	$(u^n)_{n=1}^\infty$ are tight in $L^2(0,T;H).$
\end{prop}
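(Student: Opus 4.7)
The plan is to use the compactness result of Lemma \ref{Sobolev1} together with the moment estimates from Lemma \ref{uniformboundL2} and Proposition \ref{boundinW}, followed by a standard Chebyshev-type argument. Fix $\alpha \in (0,\tfrac12)$. Apply Lemma \ref{Sobolev1} with the choice $\mathcal{Y}_0 = V$, $\mathcal{Y} = H$, $\mathcal{Y}_1 = D(A)'$, and $s = m = 2$. Since $V$ is compactly embedded in $H$ (by our standing assumption) and both $V$ and $D(A)'$ are reflexive, we conclude that
\[
\mathcal{G} := L^\infty(0,T;H) \cap L^2(0,T;V) \cap W^{\alpha,2}(0,T;D(A)')
\]
is compactly embedded in $L^2(0,T;H)$.

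Next, for each $M > 0$ introduce the set
\[
\mathcal{K}_M := \Bigl\{ v \in \mathcal{G} : \|v\|_{L^\infty(0,T;H)} \le M,\ \|v\|_{L^2(0,T;V)} \le M,\ \|v\|_{H^{\alpha}(0,T;D(A)')} \le M \Bigr\},
\]
which is a bounded subset of $\mathcal{G}$ and hence relatively compact in $L^2(0,T;H)$; in fact, it is closed in $L^2(0,T;H)$ (the three constituent norms are lower semicontinuous with respect to weak convergence in the corresponding spaces, and standard arguments show that the intersection of the associated closed balls is closed in $L^2(0,T;H)$), so $\mathcal{K}_M$ is compact in $L^2(0,T;H)$.

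Finally, I would estimate $\bP(u^n \notin \mathcal{K}_M)$ using Chebyshev's inequality. By subadditivity,
\[
\bP(u^n \notin \mathcal{K}_M) \le \bP\bigl(\|u^n\|_{L^\infty(0,T;H)} > M\bigr) + \bP\bigl(\|u^n\|_{L^2(0,T;V)} > M\bigr) + \bP\bigl(\|u^n\|_{H^{\alpha}(0,T;D(A)')} > M\bigr).
\]
Each of these three probabilities is bounded by $M^{-2}$ times the corresponding second moment: the first two are controlled uniformly in $n$ by Lemma \ref{uniformboundL2}, while the third is controlled uniformly in $n$ by Proposition \ref{boundinW}. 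Consequently there is a constant $C$ independent of $n$ such that $\bP(u^n \notin \mathcal{K}_M) \le C/M^2$ for every $n$ and $M$. Given $\varepsilon > 0$, choose $M$ so that $C/M^2 < \varepsilon$, which yields the tightness of the laws of $(u^n)_{n=1}^\infty$ on $L^2(0,T;H)$.

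No step presents a genuine obstacle since all the hard analytic work has been done in the a priori estimates; the only mild care is in verifying that $\mathcal{K}_M$ is actually closed (hence compact) in $L^2(0,T;H)$, which is standard.
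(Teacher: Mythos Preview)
Your proof is correct and follows essentially the same route as the paper: apply Lemma \ref{Sobolev1} with $\mathcal{Y}_0=V$, $\mathcal{Y}=H$, $\mathcal{Y}_1=D(A)'$, $s=m=2$, then combine the a priori bounds of Lemma \ref{uniformboundL2} and Proposition \ref{boundinW} with Chebyshev's inequality. The only cosmetic difference is that the paper's compact set $\mathcal{B}_M$ omits the $L^\infty(0,T;H)$ constraint you include in $\mathcal{K}_M$; since $r=s=m=2$ here, the classical compactness of $L^2(0,T;V)\cap H^\alpha(0,T;D(A)')$ in $L^2(0,T;H)$ already suffices without the $L^\infty$ interpolation step, so the extra constraint is harmless but unnecessary.
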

\begin{proof}
The proof follows standard procedure (see e.g. \cite{DGHT}). For $M>0$, let
	\begin{equation}
	{\mathcal{B}}_M:=\{u\in L^2(0,T;V)\cap H^{\alpha}(0,T;D(A)'):\Hnorm{u}^2_{L^2(0,T;V)}+\Hnorm{u}^2_
	{H^{\alpha}(0,T;D(A)')}\le M\}.
	\end{equation}
Thanks to Lemma \ref{Sobolev1}, we know that ${\mathcal{B}}_M$ is a compact subset of $L^2(0,T;H)$. By using Chebyshev's inequality, we see that	
	\begin{equation}
	\begin{aligned}
	\bP\left[u^n\not\in {\mathcal{B}}_M\right]\le & \bP\left[
	\Hnorm{u^n}^2_{L^2(0,T;V)}\ge \frac{M}{2}\right]+
	\bP\left[
	\Hnorm{u^n}^2_{H^{\alpha}(0,T;D(A)')}\ge \frac{M}{2}\right]	\\
	\le & \frac{4}{M^2}\bE\left[
	\Hnorm{u^n}^2_{L^2(0,T;V)}+\Hnorm{u^n}^2_
	{H^{\alpha}(0,T;D(A)')}\right].
	\end{aligned}
	\end{equation}
	By Lemma \ref{uniformboundL2} and Proposition \ref{boundinW}, there exists a constant $C=C(T,M,R,\tilde R,K_1)>0$ independent of $n$ such that for every positive integer $n\in \mathbb{N}$,
	\begin{equation}
	\bP\bigg[u^n\not\in {\mathcal{B}}_M \bigg]\le \frac {C}{M^2}.
	\end{equation}
This  proves that the laws of $u^n$ are tight in $L^2(0,T;H).$\end{proof}
However in order to obtain a candidate solution that is \cadlag we will need to establish tightness of the laws of $u^n$ on a certain space of functions that are \cadlag in time endowed with the Skorohod topology;  {see below and in \cite{Billingsley99}}.\\
 For that purpose we will show that the Aldous condition, [cf.\cite{Aldous}, \cite{Aldous2}, \cite{Metivier88} (Theorem 3.2, page 29)] is satisfied. {The following is an easy way to guarantee that the Aldous condition is satisfied.} %We state here  which states an equivalent Aldous condition.% This following Lemma can also be found in \cite{EulerPaper}.
\begin{lem}
	Let {$(\Xi,||\cdot||_\Xi)$} be a separable Banach space and let $(X_l)_{l \in \N}$ be a sequence of $\, \Xi$-valued random variables. Assume that for every sequence $(\tau_l)_{l\in\mathbb{N}}$
	of $\mathcal{F}$-stopping times with $\tau_l\le T$ and {for every} $\delta \ge 0$ the following condition holds
	\begin{equation}\label{Aldous}
	\sup_{l \geq 1}\bE(||X_l(\tau_l+\delta)-
	X_l(\tau_l)||^\alpha_{\Xi}\le C\delta^{\epsilon},
	\end{equation}
	for some $\alpha, {\epsilon}>0$ and a constant $C>0$.
Then the sequence $(X_l)_{l\in \mathbb{N}}$
	satisfies the Aldous condition in the space $\Xi$ and thus the laws of $X_l$ form a tight sequence on $\mathcal{D}([0,T];\Xi)$ endowed with the Skorohod topology.\end{lem}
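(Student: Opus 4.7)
The plan is a direct reduction, via Markov's inequality, from the moment bound \eqref{Aldous} to the standard probabilistic form of Aldous's criterion, followed by an appeal to the classical tightness theorem for c\`adl\`ag processes in the Skorohod topology. There is no hard analytic step; the lemma is essentially a convenient repackaging of Aldous's criterion into a form in which only a moment bound needs to be verified in each application.

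Concretely, I would first fix $\eta > 0$. For any $\mathcal{F}$-stopping time $\tau_l\le T$ and any $\delta\ge 0$, Markov's inequality combined with the hypothesis \eqref{Aldous} yields
\begin{equation*}
\sup_{l \geq 1} \bP\bigl(\|X_l(\tau_l + \delta) - X_l(\tau_l)\|_\Xi \ge \eta\bigr)\le \frac{1}{\eta^\alpha}\sup_{l \geq 1} \bE \|X_l(\tau_l + \delta) - X_l(\tau_l)\|_\Xi^\alpha \le \frac{C\delta^{\epsilon}}{\eta^\alpha}.
\end{equation*}
Given arbitrary $\eta, \kappa > 0$, the choice $\delta_0 := (\kappa\eta^\alpha/C)^{1/\epsilon}$ then guarantees $\sup_l \bP(\|X_l(\tau_l + \delta) - X_l(\tau_l)\|_\Xi \ge \eta) \le \kappa$ for every $\delta \in [0,\delta_0]$ and every sequence $(\tau_l)$ of $\mathcal{F}$-stopping times bounded by $T$. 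This is precisely the form of the Aldous condition~[A] used in the cited references.

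With Aldous's condition in hand, the tightness of the laws of $(X_l)$ on $\mathcal{D}([0,T];\Xi)$ endowed with the Skorohod topology follows from the classical tightness criterion for c\`adl\`ag processes (see \cite{Aldous}, \cite{Aldous2}, and Theorem~3.2 on p.~29 of \cite{Metivier88}), provided the companion compact-containment requirement holds, i.e.\ that $\{X_l(t)\}_l$ is tight in $\Xi$ for each fixed $t$; in the applications of this lemma within the paper, the latter will be supplied by the moment bounds already established in Section~\ref{ss11.4} together with the compactness of the relevant Sobolev embeddings. Since the whole argument is elementary once the Markov estimate is written, the only conceivable obstacle is to keep track of the precise form in which Aldous's criterion is stated in each of the cited references, but all the standard formulations are equivalent for our purposes.
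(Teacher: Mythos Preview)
Your approach is correct and is precisely the standard reduction: Markov's inequality converts the moment bound \eqref{Aldous} into the probabilistic Aldous condition, after which tightness in $\mathcal{D}([0,T];\Xi)$ follows from the cited references. The paper does not supply its own proof of this lemma; it is stated as a known consequence of \cite{Aldous}, \cite{Aldous2}, \cite{Metivier88}, so your argument fills in exactly what those references contain.

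One useful observation you make, which the paper's statement of the lemma does not make explicit, is that the Aldous condition alone does not yield tightness in $\mathcal{D}([0,T];\Xi)$: one also needs the compact-containment condition (tightness of $\{X_l(t)\}_l$ in $\Xi$ for each fixed $t$). You correctly point out that in the paper's applications this is supplied by the uniform $L^\infty(0,T;H)$ bounds of Lemma~\ref{uniformboundL2} together with the compact embedding $H\hookrightarrow V'$ (and analogously $H\hookrightarrow V'+X'$ in the later sections). This is indeed how the argument works, though the paper leaves it implicit.
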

 See also Theorem 13.2 of \cite{Tem95} for a compactness result in the deterministic setting {using a condition} analogous to the condition \eqref{Aldous}.

We will show that the laws of the approximate solutions $u^n$ denoted by $\mu_u^n$
are tight as probability measures on the phase space {$\sX$} where
\begin{equation}\label{chiu}
\sX = L^2(0,T;H)\cap \mathcal{D}([0, T];V'),
\end{equation}
and
\begin{equation}\label{laws_n}
\mu_u^n(\cdot)=\bP(u^n\in\cdot) \in Pr\left(L^2(0,T;H)\cap
\mathcal{D}([0, T];V')\right).
\end{equation}
Here $Pr(\mathcal{S})$ denotes the set of probability measures on the metric space $\mathcal{S}$. For a complete, separable metric space $\mathcal{S}$ we denote
by $\mathcal{D}([0, T]; \mathcal{S})$ the space of functions $u: [0, T] \rightarrow \mathcal{S}$ that are {right-continuous on $[0, T)$ and have left-limits at every point in $(0, T]$}. The space {$\mathcal{D}([0, T]; \mathcal{S})$} is endowed with the Skorokhod topology, which makes $\mathcal{D}([0, T]; \mathcal{S})$ separable and metrizable by a complete metric (see e.g. Chapter 3 in \cite{Billingsley99}).
\begin{prop}\label{tightnessinD}
	Suppose $\bE|u_0|^2<\infty$. Then the laws $\{\mu_u^n\}_{n\geq 1}$ of the Galerkin approximations form a tight sequence of probability measures on $\mathcal{D}([0,T];V')$
	endowed with the Skorohod topology.
\end{prop}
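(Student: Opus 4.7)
The plan is to apply the preceding lemma with $\Xi=V'$: it suffices to exhibit exponents $\alpha,\epsilon>0$ such that
\[
\sup_{n\ge 1}\bE\|u^n(\tau_n+\delta)-u^n(\tau_n)\|_{V'}^\alpha \le C\,\delta^\epsilon
\]
for every sequence $(\tau_n)$ of $\mathcal{F}$-stopping times bounded by $T$ and every $\delta>0$. Using the Galerkin equation \eqref{approximationsolution} between $\tau_n$ and $\tau_n+\delta$, I would decompose
\[
u^n(\tau_n+\delta)-u^n(\tau_n)=-\int_{\tau_n}^{\tau_n+\delta}\!\big[\gamma Au^n+P_n B_{\tilde R}(u^n)+P_n F^R(u^n)\big]\,\ud s + I_W + I_\pi,
\]
where $I_W$ and $I_\pi$ denote respectively the Wiener and the compensated-Poisson stochastic integrals over $(\tau_n,\tau_n+\delta]$, and then estimate each of the five summands separately in $V'$.

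For the linear dissipation, $|Au^n|_{V'}=\|u^n\|$; Cauchy-Schwarz in time combined with Lemma \ref{uniformboundL2} produces a bound by $C\delta^{1/2}$, and the same reasoning applied to \eqref{FR4} controls the $F^R$-piece by $C(R)\delta^{1/2}$. For the truncated bilinear term, the estimate \eqref{eqn:B_bound3} together with the support property of $\theta_{\tilde R}$ yields $|B_{\tilde R}(u^n)|_{V'}\lesssim(2\tilde R)^{1/2}\|u^n\|^{3/2}$; a H\"older inequality in time then gives $\int_{\tau_n}^{\tau_n+\delta}\|u^n\|^{3/2}\,\ud s\le \delta^{1/4}\bigl(\int_{\tau_n}^{\tau_n+\delta}\|u^n\|^2\,\ud s\bigr)^{3/4}$, and Jensen's inequality with Lemma \ref{uniformboundL2} leads to a bound by $C(\tilde R)\delta^{1/4}$. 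For the two stochastic pieces, the continuous embedding $H\hookrightarrow V'$ lets me work in $H$; the It\^o isometry (resp.\ the isometry for the compensated Poisson integral), together with the growth condition \eqref{eqn:G,K_H_growth} and Lemma \ref{uniformboundL2}, deliver $\bE\|I_W\|_{V'}^2+\bE\|I_\pi\|_{V'}^2\lesssim \bE\int_{\tau_n}^{\tau_n+\delta}\rho(1+|u^n|^2)\,\ud s\le C\delta$, hence a $C\delta^{1/2}$ bound on their first moments. Summing the five contributions yields the Aldous condition \eqref{Aldous} with $\alpha=1$ and $\epsilon=1/4$, and the tightness on $\mathcal{D}([0,T];V')$ follows.

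The main obstacle is the truncated bilinear term: although the cutoff $\theta_{\tilde R}$ absorbs the $|u^n|^{1/2}$ factor appearing in \eqref{eqn:B_bound3}, the remaining $\|u^n\|^{3/2}$ still has to be integrated in time, and the only moment information available at this stage is the $L^\infty_tH\cap L^2_tV$-bound from Lemma \ref{uniformboundL2}. H\"older with these exponents then produces only $\delta^{1/4}$ rather than the $\delta^{1/2}$ rate enjoyed by the other four summands, and this slower rate dictates the choice $\alpha=1$, $\epsilon=1/4$ in the Aldous condition.
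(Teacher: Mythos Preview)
Your proposal is correct and follows essentially the same approach as the paper: decompose $u^n(\tau_n+\delta)-u^n(\tau_n)$ into the five summands coming from \eqref{approximationsolution}, estimate each in $V'$ using respectively $|Au^n|_{V'}=\|u^n\|$, \eqref{eqn:B_bound3} with the cutoff, \eqref{FR4}, and the It\^o isometry together with \eqref{eqn:G,K_H_growth}, and invoke Lemma \ref{uniformboundL2} throughout. The only cosmetic difference is that the paper verifies the moment condition term by term with different exponent pairs ($\alpha=1,\epsilon=1/2$ for $A$ and $F^R$; $\alpha=1,\epsilon=1/4$ for $B_{\tilde R}$; $\alpha=2,\epsilon=1$ for the stochastic integrals), whereas you pass the second-moment stochastic bounds down to first moments and sum into a single pair $(\alpha,\epsilon)=(1,1/4)$; both routes yield the Aldous condition and the conclusion is the same.
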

\begin{proof}
	Let $(\tau_n)_{n\geq 1}$ be a sequence of stopping times where $0\le \tau_n\le T$ and {let us integrate} the equation \eqref{approximationsolution} from 0 to t for $t \in[0,T]$; we obtain
	\begin{equation}
	\begin{aligned}\label{Mn}
	u^n(t)& =  u^n_0 - \int_0^t \gamma A(u^n(s))\ud s - \int_0^t P_nB_{\tilde{R}}(u^n(s),u^n(s))\ud s - \int_0^t P_n F^R(u^n(s))\ud s
	\\&\quad +\int_0^tG(u^n(s-))\ud W(s) + \int_{(0,t]}\int_{E_0}
	K(u^n(s-),\xi)\ud\hat\pi(s,\xi)
	\\&=: u^n_0+\mathcal{M}_1(t)+
	\mathcal{M}_2(t)+
	\mathcal{M}_3(t)+
	\mathcal{M}_4(t)+\mathcal{M}_5(t).
	\end{aligned}
	\end{equation}
	
	\noindent Let $\delta >0$ {be given}. We need to show that {a suitable Aldous condition is satisfied and for that purpose we need to show that }all the terms in the above equation satisfy
	\eqref{Aldous} {with suitable $\alpha, \epsilon$ and $\Xi=V'$}. %Since $u^n_0$ is time-independent, \eqref{Aldous} obviously holds for this term with $C=0$. 
	We first treat $\mathcal{M}_1$ by using the H\"{o}lder inequality% and obtain for some $C=C(T,\gamma,K_1)>0$ independent of $n$,
	\begin{equation}
	\begin{aligned}\label{M1}
	 \bE\norm{\mathcal{M}_1(\tau_n+\delta)-\mathcal{M}_1(\tau_n)}_{V'}
	&\leq \gamma\bE\int_{\tau_n}^{\tau_n+\delta}|A(u^n(s))|_{V'}\ud s \\
	& \le \gamma\bE\int_{\tau_n}^{\tau_n+\delta}\norm{u^n(s)}\ud s
	\le \gamma \delta^{\frac 12}\left(\bE\int_{0}^{T}\norm{u^n(s)}^2\ud s\right)^{\frac12}. 
	\end{aligned}
	\end{equation}
	The last term is bounded independently of $n$ thanks to Lemma \ref{uniformboundL2}. Hence, $\mathcal{M}_1(t)$ satisfies \eqref{Aldous} with $\alpha=1,\epsilon=\frac 12$.	
	Next, we deal with $\mathcal{M}_2(t)$. By applying H\"older's inequality and the assumption \eqref{eqn:B_bound3}, we obtain for some $C=C(T,K_1)>0$ independent of $n$	
	\begin{align}
	\bE\norm{\mathcal{M}_2(\tau_n+\delta)-\mathcal{M}_2(\tau_n)}_{V'} 
	\le& C\bE\int_{\tau_n}^{\tau_n+\delta}|B_{\tilde{R}}(u^n(s),u^n(s))|_{V'}\ud s  \nonumber	\\
	\le& C\bE\int_{\tau_n}^{\tau_n+\delta}{\theta_{\tilde{R}}(|u^n(s)|)}|u^n(s)|^{\frac12}\norm{u^n(s)}^{\frac32}\ud s \nonumber\\
	\le& C
	\left(\bE\int_{\tau_n}^{\tau_n+\delta}{\theta_{\tilde{R}}(|u^n(s)|)}|u^n(s)|^2\ud s\right)^{\frac 14}
	\left(\bE\int_{\tau_n}^{\tau_n+\delta}||u^n(s)||^2\ud s\right)^{\frac34}\nonumber\\
	\nonumber
	\le& C\delta^{\frac14}\left(\bE \sup_{s\in[0,T]}{|u^n(s)|}^{2}\right)^{\frac14} 
\left(\bE\int_{0}^T||u^n(s)||^2\ud s\right)^{\frac34} \\\leq& C\delta^{\frac14}.\label{M2}
	\end{align}	
	The last inequality above comes from Lemma \ref{uniformboundL2}.
	%{ Like in my comment above, I would estimate $B$ with $|B(u)|_{V'} \leq |u|^{1/2} %|Au|^{3/2}$. The remaining steps are unchanged with $|Au|^{3/2}$ in place of $||u||^{3/2}$ %and the conclusion follows from Lemma \ref{uniformboundH10}.}
	Hence, $\mathcal{M}_2(t)$
	satisfies the condition
	\eqref{Aldous} for $\alpha=1,\epsilon=\frac 14$.
	Next, by making use of \eqref{FR4}, we readily obtain {that} for some $C=C({R},T,p,K_1)>0$ independent of $n$
	\begin{equation}
	\begin{aligned}\label{aldousF}
	\bE\norm{\mathcal{M}_3(\tau_n+\delta)-\mathcal{M}_3(\tau_n)}_{V'}	& \lesssim \bE\int_{\tau_n}^{\tau_n+\delta}|F^R(u^n(s))|_{V'}\ud s	\\
	& \lesssim \bE\int_{\tau_n}^{\tau_n+\delta}R^{p-2}\|u^n(s)\|\ud s	\\
	& \lesssim R^{p-2}\delta^{\frac12} \left(\bE\int_0^T \norm{u^n(s)}^2\ud s\right)^{\frac12}\\
	&\leq C \delta^{\frac12}.
	\end{aligned}
	\end{equation}
	Thus, $\mathcal{M}_3(t)$ satisfies \eqref{Aldous} with $\alpha=1,\epsilon=\frac12$. Note that for \eqref{aldousF} (and earlier for \eqref{gn}) we used the fact that $P_n$ is a projector operator in $V'$ (as well as in $H$). This is an essential point motivating the introduction of the approximation $F^R$ of $F$ (see \cite{TT20}).\\
	Next by using the It\^o isometry and \eqref{eqn:G,K_H_growth} and noticing that $V\subset H$, we find
	\begin{equation}
	\begin{aligned}
	&\bE\left[\norm{\mathcal{M}_4(\tau_n+\delta)-\mathcal{M}_4(\tau_n)}^{2}_{V'}\right]
	\lesssim \bE \left[\Hnorm{\mathcal{M}_4(\tau_n+\delta)-\mathcal{M}_4(\tau_n)}^{2}\right]
	\\
	= & \bE\left|\int_{\tau_n}^{\tau_n+\delta}
	P_nG(u^n(s))\ud W(s)\right|^2= \bE\int_{\tau_n}^{\tau_n+\delta}
	|P_nG(u^n(s))|^2_{L_2(U_0,H)}\ud s
	\\
	\lesssim &~ \bE\int_{\tau_n}^{\tau_n+\delta}\rho(|u^n(s)|^2+1)\ud s\lesssim \rho\delta\bE\left[\sup_{s\in [0,T]}(1+|u^n(s)|^2)\right] \leq C(\rho,K_1)\delta.
	\end{aligned}
	\end{equation}
	Hence, $\mathcal{M}_4(t)$ satisfies \eqref{Aldous} with $\alpha=2, \epsilon=1$.
	
	\noindent Finally, we consider the $\mathcal{M}_5(t)$ term. Using the It\^o isometry and \eqref{eqn:G,K_H_growth}, we write
	\begin{equation}
	\begin{aligned}\label{M5}
	&~ \bE\left[\norm{\mathcal{M}_5(\tau_n+\delta)-\mathcal{M}_5(\tau_n)}^{2}_{V'}\right]
	\lesssim \bE
	\left[\Hnorm{\mathcal{M}_5(\tau_n+\delta)-\mathcal{M}_5(\tau_n)}^{2}\right]
	\\
	= &~ \bE\left|\int_{(\tau_n,\tau_n+\delta]}\int_{E_0}K(u^n(s-),\xi)\ud\hat\pi(s,\xi)\right|^2
	= \bE\int_{\tau_n}^{\tau_n+\delta}\int_{E_0}|K(u^n(s-),\xi)|^2\ud\nu(\xi)\ud s
	\\
	\lesssim &~ \bE\int_{\tau_n}^{\tau_n+\delta}\rho(|u^n(s)|^2+1)\ud s\lesssim \rho\delta\bE\left[\sup_{s\in [0,T]}(1+|u^n(s)|^2)\right]\le C(\rho,K_1)\delta.
	\end{aligned}
	\end{equation}
	Thanks to Lemma \ref{uniformboundL2}, we see that $\mathcal{M}_5(t)$ satisfies the relation \eqref{Aldous} with $\alpha=2, \epsilon=1$ {and $\Xi=V'$}. Collecting all {the} estimates from \eqref{M1} to \eqref{M5}, we conclude that the laws of $(u^n)_{n=1}^\infty$ are tight in $\mathcal{D}([0,T];V')$ endowed with the Skorohod topology.
\end{proof}
\begin{prop}\label{tightnessinchi}
	Let $(u^n)_{n=1}^\infty$
	be the solution to the equations \eqref{approximationsolution} with $u_0\in L^2(\Omega,\sF_0,\bP;H)$. Then the laws of $(u^n)_{n=1}^\infty$ are tight in the space $\sX$, which was defined in \eqref{chiu}.
\end{prop}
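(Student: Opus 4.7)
The plan is to combine the two previously established tightness results, Proposition \ref{tightnessinV} (tightness in $L^2(0,T;H)$) and Proposition \ref{tightnessinD} (tightness in $\mathcal{D}([0,T];V')$), into a single tightness statement on the intersection space $\sX$ endowed with the topology for which a sequence converges iff it converges in both factors.

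Given $\ep>0$, I would first invoke Proposition \ref{tightnessinV} to produce a compact set $\mathcal{B}_\ep^1 \subset L^2(0,T;H)$ such that $\bP(u^n \notin \mathcal{B}_\ep^1) < \ep/2$ for every $n$, and then invoke Proposition \ref{tightnessinD} to produce a compact set $\mathcal{B}_\ep^2 \subset \mathcal{D}([0,T];V')$ such that $\bP(u^n \notin \mathcal{B}_\ep^2) < \ep/2$ for every $n$. Set $\mathcal{B}_\ep := \mathcal{B}_\ep^1 \cap \mathcal{B}_\ep^2$. A union bound then immediately gives
\begin{equation*}
\bP(u^n \notin \mathcal{B}_\ep) \leq \bP(u^n \notin \mathcal{B}_\ep^1) + \bP(u^n \notin \mathcal{B}_\ep^2) < \ep,
\end{equation*}
uniformly in $n$, so the result reduces to showing that $\mathcal{B}_\ep$ is compact in $\sX$.

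To check the compactness of $\mathcal{B}_\ep$ in $\sX$, I would take an arbitrary sequence $(v_k) \subset \mathcal{B}_\ep$. Since $\mathcal{B}_\ep^1$ is compact in $L^2(0,T;H)$, there is a subsequence $(v_{k_j})$ converging in $L^2(0,T;H)$ to some limit $v \in \mathcal{B}_\ep^1$. Since $\mathcal{B}_\ep^2$ is compact in $\mathcal{D}([0,T];V')$, there is a further subsequence $(v_{k_{j_l}})$ converging in $\mathcal{D}([0,T];V')$ to some limit $\tilde v \in \mathcal{B}_\ep^2$. To identify $v$ with $\tilde v$, I would view both limits in the common ambient space $L^2(0,T;V')$: the continuous embedding $H \hookrightarrow V'$ gives $L^2(0,T;H) \hookrightarrow L^2(0,T;V')$ continuously, while convergence in $\mathcal{D}([0,T];V')$ together with the uniform bound provided by membership in $\mathcal{B}_\ep^2$ implies (along a further subsequence, at continuity points of $\tilde v$) convergence in $L^2(0,T;V')$ as well. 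Hence $v = \tilde v$ almost everywhere, so the joint limit lies in $\mathcal{B}_\ep^1 \cap \mathcal{B}_\ep^2 = \mathcal{B}_\ep$, and we have extracted a subsequence converging in the topology of $\sX$.

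The main obstacle is the identification of the two limits in the ambient space, because the spaces $L^2(0,T;H)$ and $\mathcal{D}([0,T];V')$ sit side by side with no direct inclusion. The Gelfand embedding $H \subset V'$ is what rescues us, since it places both limits inside $L^2(0,T;V')$ where uniqueness of the limit holds. Once this identification is granted, the argument is just the standard union-bound assembly of two tightness results into one on the intersection space.
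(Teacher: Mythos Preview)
Your proof is correct and follows exactly the same approach as the paper: obtain compact sets in each factor via Propositions \ref{tightnessinV} and \ref{tightnessinD}, intersect them, and apply a union bound. The paper's proof simply asserts that $K_1\cap K_2$ is sequentially compact in $\sX$ without justification, whereas you supply the diagonal-subsequence argument and the identification of the two limits in $L^2(0,T;V')$; this extra detail is sound and fills the only step the paper leaves implicit.
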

\begin{proof}
	For every $\ep >0$, by Propositions \ref{tightnessinV} and \ref{tightnessinD} and the definition of tightness, there exist compact sets $K_1\subset \mathcal{D}([0,T];V')$
	and $K_2\subset L^2(0,T;H)$ such that	
	\begin{equation}
	\sup_{n \geq 1} \bP \left[u^n\not\in K_1\right]\le \frac{\ep}{2}
	\quad \text { and } \quad
	\sup_{n\geq 1} \bP \left[u^n\not\in K_2\right]\le \frac{\ep}{2}.
	\end{equation}
	Since $K_1\cap K_2$ is sequentially compact in $ \sX$ and
	\begin{equation}
	\sup_{n\geq 1}
	\bP\left[u^n\in (K_1\cap K_2)^C\right]\le
	\sup_{n\geq 1} \bP \left[u^n\not\in K_1\right]+\sup_{n\geq 1} \bP  \left[u^n\not\in K_2\right]\le \ep,
	\end{equation}
	we conclude that the laws of $(u^n)_{n=1}^\infty$ are tight on $ \sX$. 	
\end{proof}
\subsection{Passage to the limit}\label{passagetothelimit}
\subsubsection{Convergence results}
%\subsubsection{Strong Convergence}
 Let $\mathcal{N}^{\# *}_{[0,\infty)\times E}$ denote the space of counting measures on $[0,\infty) \times E$ that are finite on bounded sets and let $\Upsilon:= H \times \sX \times C([0,T];U) \times \mathcal{N}^{\# *}_{[0,\infty)\times E}$. {It is an established fact that the spaces $\mathcal{N}^{\# *}_{[0,\infty)\times E}$ and $C([0,T];U)$ (cf. \cite{DVJ}) are separable and metrizable by a complete metric. Thus we see that the sequence of probability measures, $\mu_W^n(\cdot):=\bP(W\in\cdot)$, being constantly equal to one element, is tight on $C([0,T];U)$. Similarly we find out that the law of $\pi$ is tight on $\mathcal{N}^{\# *}_{[0,\infty)\times E}$. It is also clear that $u^n_0\rightarrow u_0$ $\bP$-a.s. in $H$, that is the laws of $(u^n_0)_{n = 1}^{\infty}$ are tight on $H$ and that their weak limit is the measure $\mu_0=\bP(u_0 \in \cdot)$. Finally, recalling Proposition \ref{tightnessinchi} and using Tychonoff's theorem it can be shown that the sequence of the joint probability measures $\mu^n$ associated with the approximating sequence ${(u^n_0,u^n,W,\pi)_{n=1}^{\infty}}$ is tight on the space $\Upsilon$.}
Hence, {by} virtue of the Prohorov theorem, $({\mu^n})_{n=1}^\infty$ is weakly compact over $\Upsilon$. This implies that there exists a probability measure $\mu^\infty$ {on $\Upsilon$} such that for some subsequence $\mu^n$ converges weakly to $\mu^\infty$. Since {we} want to show the existence of a solution to the truncated equations \eqref{approx_eqn}, the weak
convergence {of} $u^n$ {to} $ u $ in law and the weak convergence along a further subsequence {invoking} Lemma \ref{uniformboundL2} will not be enough. Instead,
we will use the Skorohod representation theorem {as stated in Theorem \ref{skorohodtheorem}} to upgrade the weak convergence to a.s. convergence in $\Upsilon$ for random variables defined on a new probability space. %We refer to Theorem C.1 in \cite{BHR} for the statement and the proof of the Skorohod representation theorem (see also Theorem 2.4 in \cite{DaPrato}).
\begin{thm}\label{skorohod1}
There exists a probability space $(\tilde \Omega,\tilde{\mathcal{F}},\tilde \bP)$%{\footnote{ Please note, here and below, that $\tilde{ \Omega}$ is another set of events possibly different from $\Omega$. It is not the topological closure of $\Omega$ when such a closure exists.}}
, with the associated expectation denoted by $\tilde \bE$, and {$\Upsilon-$}valued random variables $(\bar u_0^{n},
\bar u^{n},\bar W^{n},\bar\pi^{n})$
and $(\bar u_0,
\bar u,\bar W, \bar\pi)$, such that
\begin{myenum}
	\item The probability laws of $(\bar u_0^{n}, \bar u^{n},\bar W^{n}, \bar\pi^{n})$ and  $(\bar u_0, \bar u,\bar W, \bar\pi)$ are $\mu^{n}$ and $\mu^\infty$, respectively,
	\item $(\bar W^{n},\bar\pi^{n})=(\bar W,\bar\pi) $ everywhere on $\tilde \Omega$,
	\item $(\bar{u}_0^{n},\bar{u}^{n},\bar W^{n},\bar\pi^{n})$ converges almost surely in {$\Upsilon $} to $(\bar u_0,\bar u,\bar W^{},\bar\pi^{})$, i.e. since $\bar W^{n}=W$ and $\bar {\pi}^{n}=\bar{\pi}$:
	\begin{align}
	&\bar u_0^{n}\rightarrow \bar u_0 \text{ in } H, \,\tilde \bP\tn{-a.s.},\label{uonk}	\\
	&\bar u^{n}\rightarrow
	\bar u \text { in } L^2(0,T;H)\cap \mathcal{D}([0,T];V'),
	\tilde \bP\tn{-a.s.},\label{un_convH}
	\end{align}
	\item  $\bar{\pi} $, which is the Poisson random measure over $(\tilde \Omega,\tilde{\mathcal{F}}, \tilde \bP)$ with intensity measure {$\ud t \otimes \ud \nu$}, is time invariant.
	\item $\bar u^n(0)=\bar u^n_0$ $\tilde \bP$-a.s. and
	$(\bar u^{n}_0,\bar u^{n},\bar W^{},\hat{\bar\pi})$ satisfies
	the following equation {in $H_n$}, $\tilde \bP$- a.s. for every $t\in[0,T]$
	\begin{equation}\label{barunk}
	\begin{aligned}
	&\bar u^{n}(t)+\int_0^t\left[\gamma A(\bar u^{n}(s))+P_{n} B_{\tilde R}(\bar u^{n}(s),\bar u^{n}(s))+ P_{n} F^R(\bar u^{n}(s))\right] \ud s
	\\&\qquad \qquad =\bar u^{n}_0+\int_0^tP_{n}G(\bar u^{n}(s
	))\ud \bar W(s)+\int_{(0,t]}\int_{E_0}P_{n}K(\bar u^{n}(s-),\xi)\ud \hat{\bar\pi}(s,\xi),
	% P_nK(t,u^n(t),\xi)\ud \hat\pi(t,\xi)
	\end{aligned}
	\end{equation}
	with respect to some filtration $\left(\bar{\mathcal{F}}^{n}_t \right)_{t \geq 0}$.
\end{myenum} \end{thm}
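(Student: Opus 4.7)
The plan is to combine Prohorov's theorem with a suitable Skorohod representation theorem applied on the product space $\Upsilon$, and then transfer the Galerkin equation to the new probability space. Having shown in Proposition~\ref{tightnessinchi} together with the discussion preceding the statement that the joint laws $\mu^n$ of $(u^n_0, u^n, W, \pi)$ are tight on $\Upsilon$, Prohorov's theorem yields a subsequence (still indexed by $n$) that converges weakly to some $\mu^\infty \in Pr(\Upsilon)$. Each factor of $\Upsilon$ is a Polish space, hence so is $\Upsilon$, and Theorem~\ref{skorohodtheorem} then provides a probability space $(\tilde\Omega, \tilde\sF, \tilde\bP)$ together with $\Upsilon$-valued random variables $(\bar u^n_0, \bar u^n, \bar W^n, \bar \pi^n)$ and $(\bar u_0, \bar u, \bar W, \bar \pi)$ whose laws are $\mu^n$ and $\mu^\infty$ respectively, and such that the former converges $\tilde\bP$-almost surely to the latter in $\Upsilon$. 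This already yields items~(i) and~(iii), together with \eqref{uonk} and \eqref{un_convH}.

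For item~(ii) I would exploit the fact that the marginals of $\mu^n$ in the components $C([0,T];U)$ and $\mathcal{N}^{\# *}_{[0,\infty)\times E}$ are the same for every $n$, being the laws of $W$ and $\pi$ themselves. Consequently, Skorohod's theorem can be applied so that these two coordinates of the representation coincide identically with $(\bar W, \bar \pi)$ for every $n$; alternatively, one may compose with an explicit measurable transformation making the equality hold everywhere on $\tilde\Omega$. For item~(iv), the properties characterizing $\bar \pi$ as a time-invariant Poisson random measure on $[0,\infty)\times E$ with intensity $\ud t\otimes \ud \nu$ (Poisson marginals on bounded measurable sets, independence over disjoint sets, and temporal stationarity) are fully determined by its law in $\mathcal{N}^{\# *}_{[0,\infty)\times E}$, and this law coincides with that of $\pi$. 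Hence these properties transfer to $\bar \pi$, with respect to the filtration $\bar \sF^n_t$ obtained by completing and right-continuously augmenting the $\sigma$-field generated by $(\bar u^n(s), \bar W(s), \bar \pi|_{(0,s]\times \cdot})_{s\leq t}$. An analogous argument shows that $\bar W$ is a $Q$-Wiener process with respect to this filtration.

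The main obstacle is item~(v): verifying that $\bar u^n$ satisfies \eqref{barunk} on $(\tilde\Omega, \tilde\sF, \tilde\bP)$. Since the stochastic integrals in \eqref{approximationsolution} are not defined pathwise, equality of joint laws does not directly transfer the equation from the original probability space. My plan is to apply a standard equivalence-in-law argument. Write the residual of \eqref{approximationsolution}, after bringing every term to one side, as a measurable functional $\Phi^n_t$ of the tuple $(u^n_0, u^n, W, \pi)$. The two stochastic integrals inside $\Phi^n_t$ can be approximated in probability by Riemann-sum-type step-process expressions that are continuous functionals of the paths; equality in law of the tuples then implies that the analogous approximations on the new probability space converge in probability to the stochastic integrals appearing in \eqref{barunk} with respect to $\bar W$ and $\bar \pi$. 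Passing to the limit through these approximations transfers the identity $\Phi^n_t = 0$, which holds $\bP$-a.s.~on $(\Omega, \bP)$, to the corresponding identity $\tilde\bP$-a.s.~on $(\tilde\Omega, \tilde\bP)$, and yields \eqref{barunk} for every $t\in [0,T]$. The fact that $\bar u^n$ takes values in the finite-dimensional space $H_n$, on which the coefficients $P_n G$, $P_n K(\cdot,\xi)$, $P_n B_{\tilde R}$, $P_n F^R$ and $A$ are all continuous, makes the approximation and convergence steps routine.
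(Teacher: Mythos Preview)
Your proposal is correct and follows essentially the same approach as the paper: the paper likewise invokes Prohorov's theorem and then the specialized Skorohod representation theorem (Theorem~\ref{skorohodtheorem}) to obtain (i)--(iii), notes that (iv) is a consequence of equality in law, and for (v) defers to the Bensoussan-type argument (adapted to L\'evy noise as in \cite{Ben} and Proposition~B.1 of \cite{EulerPaper}), which is exactly the Riemann-sum approximation and transfer-by-law scheme you outline. If anything, your write-up is more explicit than the paper's, which simply cites these references for the verification of (v).
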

	 Let $\sF_t'$ be the $\sigma$-field generated by the random variables $\bar{W}(s), \bar{u}(s)$, $\bar{\pi}((0,s] \times \Gamma)$ for all $s \leq t$ and all Borel subsets $\Gamma$ of $E_0$. Then we define
\begin{align}\label{Ft}
\mathcal{N} &:=\{\mathcal{A}\in \tilde{\mathcal{F}} \ | \ \tilde \bP(\mathcal{A})=0\},\qquad
\bar{\mathcal{F}}^0_t:=\sigma(\mathcal{F}_t' \cup \mathcal{N}),\qquad
\bar{\mathcal{F}}_t :=\bigcap_{s\ge t}\bar{\mathcal{F}}^0_s.
\end{align}
This gives a complete, right-continuous filtration $(\bar{\mathcal{F}}_t)_{t \geq 0}$, {dependent on $R$ and $\tilde R$}, on the new probability space $(\tilde \Omega,\tilde{\mathcal{F}},\tilde \bP)$, to which the noise processes and solutions are adapted. For each $n$ we define a filtration $\left(\bar{\mathcal{F}}^{n}_t \right)_{t \geq 0}$ {in $t$} on  $(\tilde \Omega,\tilde{\mathcal{F}},\tilde \bP)$ the same way as above but using $\bar{u}^{n}$ in place of $\bar{u}$. Each $\bar{u}^{n}$ is adapted to $\left(\bar{\mathcal{F}}^{n}_t \right)_{t \geq 0}$.
\begin{rem}{We note here the point appearing in the proof of the Skorohod representation theorem, {as it appears in {Theorem} \ref{skorohodtheorem},} that the space $(\tilde \Omega,\tilde{\mathcal{F}},\tilde \bP)$, unlike the variables $(\bar u_0^{n},
\bar u^{n},\bar W^{n},\bar\pi^{n})$, does not depend on $R$ and $\tilde R$. Even though the analysis that follows does not require the new probability space to be independent of $R$ and $\tilde R$, as tightness and weak convergence are properties of the law, the preceding observation keeps the notation concise.}\end{rem} %{\footnote{ Please note, here and below, that in the proof of the Skorohod representation theorem $\tilde{ \Omega}=[0,1)\times [0,1)$, $\tilde{\mathcal{F}}$ is the Borel algebra {on $\tilde \Omega$} and $\tilde \bP$ is the Lebesgue measure on $\tilde \Omega$ and thus $(\tilde \Omega, \tilde{\mathcal{F}},\tilde \bP)$ is independent of $n,R, \tilde R$.}} 
	{Since $u^n_0$ converges to $u_0$ as $n\rightarrow \infty$, ($P_n$ converges to $I$) $\bP$-a.s. in $H$, $\bar u_0$ has the same law as that of $u_0$, which is independent of $R$ and $\tilde R$. However $\bar u_0$ itself might depend on $R$ and $\tilde R$, but this is not relevant in the analysis below.} 
%We can obtain $i)$, $ii)$ and $iii)$ directly from {the} Skorohod representation theorem. 

The proof of $v)$ can be obtained by adapting the argument of \cite{Ben} from the Wiener case to the \levy noise case. For more details concerning the utilization of \cite{Ben}, the reader is referred to e.g. Proposition B.1 in \cite{EulerPaper}.\\\begin{comment}
For $iv)$, we need to show {that}
\begin{enumerate}
	\item $\forall\text{ }  \Gamma\in \sB(E_0), \bar \pi(\Gamma)\sim $ Poisson $(\lambda (\Gamma))$, where $\ud \lambda := \ud t \oud \nu$.
	\item  For disjoint sets $\Gamma_1,\Gamma_2,...,\Gamma_m\in \sB(E_0)$, $\bar \pi_i(\Gamma_1),\bar \pi_i(\Gamma_2),...,\bar
	\pi_i(\Gamma_m)$ are independent.
\end{enumerate}
Both parts are direct consequence of $i)$ and $ii)$.

\end{comment}
 Now we define the new stochastic basis $\bar{\mathscr{S}}:=(\tilde \Omega,\tilde{\mathcal{F}},(\bar{\mathcal{F}}_t)_{t \geq 0},\tilde \bP, \bar{W},\bar\pi)$.  We will show in Theorem  \ref{prop:passage_to_limit} that  $(\bar{\mathscr{S}},\bar u)$ is a global martingale solution of the system \eqref{approx_eqn}.
Observe that $\bar u^{n}$ also satisfies the estimates in Lemma \ref{uniformboundL2} and thus the following lemma holds true.
	\begin{lem}
	For $\bar u^n$ and $\bar u$ as in \eqref{un_convH}, Lemma \ref{uniformboundL2} implies that $ \bar u \in  L^\infty([0,T];L^2(\tilde{ \Omega};H))\cap
		L^2([0,T] \times\tilde \Omega,\ud t\otimes \ud \tilde\bP;V)$ and that there exists a subsequence, still denoted by $n \rightarrow \infty$ such that
		\begin{align}\label{weakstar}
		\bar u^{n} &\rightharpoonup  \bar u \text{ weak* in } L^\infty([0,T];L^2(\tilde \Omega;H)),\\	\label{weakV}
		\bar u^{n}&\rightharpoonup \bar u \text{ weakly in }L^2([0,T]\times\tilde \Omega,\ud t\otimes \ud \tilde\bP;V),
		\shortintertext{and thanks to \eqref{FR4}}
		Y^{n}:= P_{n}F^R(\cdot,\bar u^{n})&\rightharpoonup  Y^{R,\tilde{R}} \text{ weakly in }
		L^{2}([0,T]\times \tilde \Omega,\ud t\otimes \ud \tilde\bP;V').\label{Y}
		\end{align}		
	\end{lem}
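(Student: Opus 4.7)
My plan for proving this lemma rests on the observation that, by part $i)$ of Theorem \ref{skorohod1}, $\bar u^n$ has the same law as $u^n$, so every moment estimate proved for $u^n$ transfers verbatim to $\bar u^n$. In particular, Lemma \ref{uniformboundL2} gives
\[
\tilde\bE\sup_{s\in[0,T]}|\bar u^n(s)|^2+(\gamma+c_1)\tilde\bE\int_0^T\|\bar u^n(s)\|^2\,\ud s\le K_1,
\]
uniformly in $n$. In particular the norm $\sup_{s\in[0,T]}|\bar u^n(s)|_{L^2(\tilde\Omega;H)}^2=\sup_{s\in[0,T]}\tilde\bE|\bar u^n(s)|^2$ is dominated by $\tilde\bE\sup_{s\in[0,T]}|\bar u^n(s)|^2\le K_1$, so $(\bar u^n)$ is bounded in $L^\infty([0,T];L^2(\tilde\Omega;H))$, and simultaneously $(\bar u^n)$ is bounded in $L^2([0,T]\times\tilde\Omega,\ud t\otimes\ud\tilde\bP;V)$.

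First I would apply Banach--Alaoglu. Since $L^\infty([0,T];L^2(\tilde\Omega;H))$ is the dual of the separable space $L^1([0,T];L^2(\tilde\Omega;H))$, a subsequence (still labelled $\bar u^n$) converges weak* to some $v\in L^\infty([0,T];L^2(\tilde\Omega;H))$; since $L^2([0,T]\times\tilde\Omega;V)$ is reflexive, a further subsequence converges weakly to some $w\in L^2([0,T]\times\tilde\Omega;V)$. The step that then requires care is the identification $v=w=\bar u$: from \eqref{un_convH}, $\bar u^n\to\bar u$ $\tilde\bP$-a.s.\ in $L^2(0,T;H)$. Combining a.s.\ convergence with the uniform bound $\tilde\bE\sup_{[0,T]}|\bar u^n|^2\le K_1$ and Vitali's theorem yields $\bar u^n\to\bar u$ strongly in $L^2([0,T]\times\tilde\Omega;H)$, hence in particular weakly in $L^2([0,T]\times\tilde\Omega;H)$. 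Since the embedding $V\hookrightarrow H$ (and the corresponding one at the level of Bochner spaces) is continuous, the weak limit $w\in L^2([0,T]\times\tilde\Omega;V)$ must coincide with $\bar u$ by uniqueness of weak limits in $L^2([0,T]\times\tilde\Omega;H)$. The same argument, tested against elements of $L^1([0,T];L^2(\tilde\Omega;H))$, forces $v=\bar u$.

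For the third assertion, I would use the equiboundedness estimate \eqref{FR4}, which gives
\[
\tilde\bE\int_0^T|Y^n(s)|_{V'}^2\,\ud s\le \tilde\bE\int_0^T|F^R(\bar u^n(s))|_{V'}^2\,\ud s\le c_1^2R^{2(p-2)}\tilde\bE\int_0^T\|\bar u^n(s)\|^2\,\ud s\le C(R,K_1),
\]
using that $P_n$ is a projector in $V'$. Thus $(Y^n)$ is bounded in the reflexive space $L^2([0,T]\times\tilde\Omega;V')$, and a further subsequence converges weakly to some $Y^{R,\tilde R}$. I would make no attempt here to identify $Y^{R,\tilde R}$ with $F^R(\bar u)$; that identification will be carried out subsequently using the Minty--Browder monotonicity trick based on (FR1)--(FR2). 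The main subtle point in the present lemma is simply the identification of the weak limits of $\bar u^n$ as $\bar u$, handled as above via the a.s.\ convergence in \eqref{un_convH} together with uniform integrability.
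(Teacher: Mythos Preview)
Your proposal is correct and fills in the standard details that the paper omits (the lemma is stated without proof, as an immediate consequence of the uniform bounds in Lemma~\ref{uniformboundL2} together with weak and weak$^*$ compactness). One small imprecision: with only the second-moment bound $\tilde\bE\sup_{[0,T]}|\bar u^n|^2\le K_1$, Vitali yields strong convergence in $L^q(\tilde\Omega;L^2(0,T;H))$ for $q<2$ rather than $q=2$ (cf.\ \eqref{conv_un}, where the paper obtains $L^1$), but this weaker convergence already suffices to identify the weak limits with $\bar u$, so your argument goes through unchanged.
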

Now we state the main result of this section.
\begin{thm}\label{prop:passage_to_limit} 
		Suppose that $\bE[|u_0|^2] <\infty$. Then $\bar u$ solves \eqref{approx_eqn}  with respect to the stochastic basis $(\tilde \Omega,\tilde{\mathcal{F}}, (\bar{\mathcal{F}}_t)_{t\ge 0},\tilde \bP,\bar{W},\bar\pi)$ with initial condition $\bar u_0$, where $\bar u_0$ has the same law as $u_0$.
\end{thm}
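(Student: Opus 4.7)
The plan is to pass to the limit $n\to\infty$ in the Galerkin equation \eqref{barunk} satisfied by $(\bar u^n, \bar W, \bar\pi)$ on the new probability space, using the a.s.\ convergence \eqref{un_convH} together with the weak convergences \eqref{weakstar}--\eqref{Y}. First I would fix a test function $\varphi \in D(A)$ (or more precisely $\varphi=w_j$ for some fixed $j$, and later use density) and test \eqref{barunk} against $\varphi$ to get a scalar SDE. For the linear term $\int_0^t \gamma\langle A\bar u^n, \varphi\rangle\,\ud s$ the weak convergence \eqref{weakV} and the self-adjointness of $P_n$ (with $P_n\varphi\to\varphi$ in $V$) give passage to the limit after taking $\tilde\bE$ or arguing pathwise along a subsequence. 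The initial data term handles itself by \eqref{uonk}.

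Next I would treat the nonlinear convective term $B_{\tilde R}$. Because of the cutoff $\theta_{\tilde R}$ and the estimate \eqref{eqn:B_bound3}, $B_{\tilde R}(\bar u^n,\bar u^n)$ is uniformly bounded in $L^2([0,T]\times\tilde\Omega;V')$, and the strong convergence $\bar u^n\to\bar u$ in $L^2(0,T;H)$ $\tilde\bP$-a.s., combined with the local Lipschitz nature of $v\mapsto \theta_{\tilde R}(|v|)B(v,v)$ on bounded subsets of $H\cap V$, lets me identify the limit as $B_{\tilde R}(\bar u,\bar u)$ after invoking Vitali's convergence theorem. For the stochastic integrals, since $P_nG\to G$ and $P_nK\to K$ strongly in the appropriate operator norms, and since $G,K$ are Lipschitz in $H$ by \eqref{eqn:G,K_H_Lipschitz}, standard It\^o and compensated Poisson isometry arguments combined with $\bar u^n\to\bar u$ in $L^2([0,T]\times\tilde\Omega;H)$ yield
\[
\int_0^t P_n G(\bar u^n(s))\,\ud\bar W(s)\longrightarrow \int_0^t G(\bar u(s-))\,\ud\bar W(s),\qquad \int_{(0,t]}\!\!\int_{E_0}\! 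P_n K(\bar u^n(s-),\xi)\,\ud\hat{\bar\pi}\longrightarrow \int_{(0,t]}\!\!\int_{E_0}\! K(\bar u(s-),\xi)\,\ud\hat{\bar\pi},
\]
in $L^2(\tilde\Omega;H)$ (and in particular in $V'$), uniformly in $t$.

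The main obstacle will be identifying the weak limit $Y^{R,\tilde R}$ in \eqref{Y} with $F^R(\bar u)$, since $F^R$ is only monotone and hemicontinuous on $V$ and $\bar u^n$ converges only weakly in $L^2(\ud t\otimes\ud\tilde\bP;V)$. I plan to carry out a stochastic Minty monotonicity argument: having passed to the limit in all other terms we obtain an equation of the form
\begin{equation*}
\bar u(t)+\int_0^t\bigl[\gamma A\bar u(s)+B_{\tilde R}(\bar u(s),\bar u(s))+Y^{R,\tilde R}(s)\bigr]\ud s=\bar u_0+\int_0^t G(\bar u(s-))\,\ud\bar W(s)+\int_{(0,t]}\!\!\int_{E_0}\! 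K(\bar u(s-),\xi)\,\ud\hat{\bar\pi}(s,\xi).
\end{equation*}
Applying the It\^o formula for $|\cdot|^2$ to this limit equation and to \eqref{barunk}, taking expectations, and using the cancellation property $\langle B_{\tilde R}(\bar u^n),\bar u^n\rangle=0$, I obtain expressions for $\tilde\bE|\bar u(t)|^2+2\tilde\bE\int_0^t \gamma\|\bar u\|^2+\langle Y^{R,\tilde R},\bar u\rangle\,\ud s$ and a similar identity at the Galerkin level. Combining these with the lower semicontinuity of $v\mapsto \tilde\bE\int_0^t\gamma\|v\|^2\,\ud s$ under weak convergence, and the assumed monotonicity \eqref{FR1}, namely $\tilde\bE\int_0^T\langle F^R(\bar u^n)-F^R(v),\bar u^n-v\rangle\,\ud s\ge 0$ for an arbitrary $v\in L^2([0,T]\times\tilde\Omega;V)$, I can pass to the limit $n\to\infty$ to deduce $\tilde\bE\int_0^T\langle Y^{R,\tilde R}-F^R(v),\bar u-v\rangle\,\ud s\ge 0$. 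Choosing $v=\bar u-\lambda\psi$ for $\lambda>0$ and $\psi$ arbitrary, dividing by $\lambda$, letting $\lambda\downarrow 0$, and using hemicontinuity \eqref{FR2} gives $Y^{R,\tilde R}=F^R(\bar u)$, which closes the argument and shows $(\bar{\mathscr S},\bar u)$ is a martingale solution to \eqref{approx_eqn}.
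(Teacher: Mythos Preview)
Your overall architecture is correct and matches the paper's: pass to the limit in all terms of \eqref{barunk} to obtain a limit equation with the unidentified $Y^{R,\tilde R}$ in place of $F^R(\bar u)$, then run a stochastic Minty argument using \eqref{FR1}--\eqref{FR2} to identify $Y^{R,\tilde R}=F^R(\bar u)$. The treatment of $A$, $B_{\tilde R}$, and the initial condition is essentially as in the paper.

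There is, however, a genuine gap. You assert that $\bar u^n\to\bar u$ in $L^2([0,T]\times\tilde\Omega;H)$ and then use this to get $L^2(\tilde\Omega)$-convergence of the stochastic integrals. But with only $\bE|u_0|^2<\infty$ you have merely $\tilde\bP$-a.s.\ convergence in $L^2(0,T;H)$ together with a uniform bound $\sup_n\tilde\bE\|\bar u^n\|_{L^2(0,T;H)}^2<\infty$; Vitali then yields convergence in $L^q(\tilde\Omega;L^2(0,T;H))$ only for $q<2$, not $q=2$. Consequently the noise integrals converge only in $L^1(\tilde\Omega;L^1(0,T;H))$, and in the Minty step you cannot pass to the limit in the quadratic terms $\tilde\bE\int_0^t\|P_nG(\bar u^n)\|_{L_2(U_0,H)}^2\,\ud s$ and $\tilde\bE\int_{(0,t]}\int_{E_0}|P_nK(\bar u^n,\xi)|^2\,\ud\bar\pi$ by a direct convergence argument.

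The paper's fix is an exponential-weight device: one applies It\^o's formula to $e^{-\rho t}|\bar u^n(t)|^2$ (with $\rho$ the Lipschitz constant from \eqref{eqn:G,K_H_Lipschitz}), expands the noise quadratic terms around a fixed test process $\Phi$ via $|a|^2=|a-b|^2+2\langle a,b\rangle-|b|^2$, and uses \eqref{eqn:G,K_H_Lipschitz} to bound $\|G(\bar u^n)-G(\Phi)\|_{L_2}^2+\int_{E_0}|K(\bar u^n,\xi)-K(\Phi,\xi)|^2\,\ud\nu\le\rho|\bar u^n-\Phi|^2$. This exactly cancels the term $-\rho|\bar u^n-\Phi|^2$ produced by differentiating the weight $e^{-\rho t}$, leaving on the right-hand side only expressions that are \emph{linear} in $\bar u^n$ (cross terms with $G(\Phi)$, $K(\Phi,\cdot)$), for which weak convergence \eqref{weakstar}--\eqref{weakV} suffices. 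Without this trick your Minty inequality does not close under the stated hypothesis $\bE|u_0|^2<\infty$; see the paper's Remark following the proof, which flags precisely this point.
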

\begin{proof}
We are going to prove this theorem in two steps.\\
\noindent {{\underline{Step 1}}}:
We will first show that $\bar u $ solves the following equation
\begin{align}\label{step1}
\bar u(t)+\int_0^t[\gamma A(\bar u(s))+B_{\tilde{R}}(\bar u(s),\bar u(s))+ Y^{R,\tilde{R}}]&\ud s\\
&\hspace{-1.5in}=\bar u_0+\nonumber\int_0^tG(\bar u(s))\ud\bar{W}(s)+\int_{(0,t]}\int_{E_0}K(\bar u(s-),\xi)\ud\hat{\bar{\pi}}(s,\xi),
\end{align}
where $Y^{R,\tilde{R}}$ was defined in  \eqref{Y}. 
% \end{itemize}	% removed by Justin, 11/22/18
%For this purpose, we use the integral form of \eqref{unk} with lower and upper bound 0 and $t$ for some $t\in [0,T]$. 
We fix $\phi\in D(A)$ and take the inner product of equation \eqref{barunk} in $H$ with $\phi$ on both sides and let $n\rightarrow \infty$. \\
{First, observe that from \eqref{un_convH} we have {that} $\bar u^n \rightarrow \bar u$, $\tilde \bP$-a.s. in $L^2(0,T;H)$ and thus in $L^1(0,T;H)$. 
Thanks to Lemma \ref{uniformboundL2} we also know that the sequence $\{\bar u^n\}_{n=1}^\infty$ is bounded in $L^2(\tilde\Omega; L^2(0,T;H))$ {independently} of $n$. 

Thus an application of the Vitali convergence {theorem} gives us that \begin{align}\label{conv_un}\bar u^n \rightarrow \bar u \quad \text{in } L^1(\tilde\Omega;L^1(0,T;H)).\end{align}}
We also know from \eqref{uonk} that $	(\bar u^{n}_0,\phi) \rightarrow (\bar u_0,\phi)$ in $L^1(\tilde{ \Omega}).$\\
We will next show that, as $n \rightarrow \infty$,
\begin{equation}\label{conv_A}
\int_0^t\iprod{A\bar{u}^{n}(s)}{\phi} \ud s \rightarrow \int_0^t\iprod{A\bar u(s)}{\phi} \ud s \quad \text { in }
L^1(\tilde \Omega\times [0,T]).
\end{equation}
First, we note that for almost every $\omega \in \tilde{ \Omega}$ {and for every $t\in [0,T] $}, 
\begin{equation}
\begin{aligned}\label{A1}
 \left|\int_0^t\iprod{A\bar{u}^{n}(s)-A\bar u(s)}{\phi}
\ud s\right|
&=\left|\int_0^t\iprod{\bar{u}^{n}(s)-\bar u(s)}{A\phi}
\ud s\right| \\
&\lesssim \Hnorm{A\phi}\left(\int_0^T
\Hnorm{\bar{u}^{n}(s)-\bar{u}(s)}^2 \ud s\right)^{\frac12}\\
&\rightarrow 0 \text{ as } n\rightarrow \infty \text{ from \eqref{un_convH}}. %\text{ due to the fact that } \bar{u}^{n}\rightarrow \bar u \text { in }H.
\end{aligned}
\end{equation}
Furthermore, we infer from Lemma \ref{uniformboundL2} the existence of a constant $C=C(T,\phi,K_1)>0$ independent of $n$ such that, 
\begin{align}
\sup_{n\geq 1}
\tilde \bE \int_0^T\left|\int_0^t
\iprod{A\bar{u}^{n}(s)}{\phi}\ud s\right|^2 \ud t
&\leq T\sup_{n\geq 1}
\tilde \bE\int_0^T\left|\iprod{\bar{u}^{n}(s)}{A\phi}\right|^2\ud s\nonumber\\
&\le C(T)\sup_{n\geq 1}
\tilde \bE\int_0^T|\bar{u}^{n}(s)|^2|A\phi|^2\ud s
\nonumber\\&\le C(T) \|\phi\|_{D(A)}^2\,\, \sup_{n\geq 1}\tilde \bE\sup_{s\in [0,T]}|\bar{u}^{n}(s)|^2\nonumber\\&\le C.\label{A2}
\end{align}
From \eqref{A1}-\eqref{A2} and by means of the Vitali convergence theorem, we obtain \eqref{conv_A}.

We now consider the term $B_{\tilde{R}}$ as introduced in Section \ref{sec:functional_framework}. We will show that as $ n \rightarrow \infty$,
\begin{equation}\label{conv_B}
\int_0^t\iprod{P_{n}B_{\tilde{R}}(\bar{u}^{n},\bar{u}^{n})}{\phi}\ud s
\rightarrow \int_0^t\iprod{B_{\tilde{R}}(\bar u,\bar u)}{\phi} \ud s \quad \text{ in $L^1(\tilde \Omega\times [0,T])$.}
\end{equation}
For this purpose, we write
\begin{align}
\left|\int_0^t\iprod{P_{n}B_{\tilde{R}}(\bar{u}^{n},\bar{u}^{n})-B_{\tilde{R}}(\bar u,\bar u)}{\phi}\ud s\right|
& \le \int_0^t|(\theta_{\tilde{R}}(|\bar u^{n}|)-\theta_{\tilde{R}}(|\bar u^{}|))\iprod{B_{}(\bar u,\bar u)}{P_{n}\phi}|\ud s \nonumber\\
& +\int_0^t\theta_{\tilde{R}}(|\bar u^{n}|)|\iprod{(I-P_{n})B(\bar u,\bar u)}{\phi}|\ud s \nonumber\\
&\hspace{-0.7in} + \int_0^t \theta_{\tilde{R}}(|u^{n}|)|\iprod{B(\bar{u}^{n}-\bar u,\bar{u}^{n})+B(\bar u,\bar u^{n}-\bar u)}{P_{n}\phi}|\ud s \nonumber \\
&:=I_{B1}^n(t)+I_{B2}^n(t)+I_{B3}^n(t).
\end{align}
Using the continuity of $\theta_{\tilde{R}}$ and \eqref{un_convH} together with the Lebesgue dominated convergence theorem we obtain that $\tilde \bE\int_0^T I_{B1}^{n} (t)\ud t \rightarrow 0 $ as $n\rightarrow \infty$. For a subsequence this implies,
\begin{align*}
I_{B1}^n\rightarrow 0 \text{ a.e. }(\omega,t) \in \tilde{ \Omega} \times [0,T] \text{ as } n\rightarrow \infty.
\end{align*}
Next observe that $\|(I-P_{n})\phi\| \leq \frac{1}{\sqrt{\lambda_{n}}}\|\phi\|_{D(A)}$ where {{}the} $\lambda_n$ are the eigenvalues of the operator $A$. Thus using \eqref{eqn:B_bound3} and that $V \subset H$ is continuous we see that
\begin{equation}\label{IB2}
\begin{aligned}
|I_{B2}^n(t)|= \int_0^t\theta_{\tilde{R}}(|\bar u^{n}|)|\iprod{(I-P_{n})B(\bar u,\bar u)}{\phi}|\ud s \nonumber& \le \int_0^t |\iprod{B(\bar u,\bar u)}{(I-P_{n})\phi}|\ud s \\
& \le \frac{1}{\sqrt{{\lambda_{n}}}}\|\phi\|_{D(A)} \int_0^t \|\bar u(s)\|^2\ud s.
%& \rightarrow 0 \text{ as }n\rightarrow \infty.
\end{aligned}
\end{equation}
Since we know that $\lambda_n\rightarrow \infty$ as $n\rightarrow \infty$ we have $\tilde \bE \int_0^T I_{B2}^n \ud t \rightarrow 0$ as $n\rightarrow \infty$. For a subsequence this implies,
$$
I_{B2}^n\rightarrow 0 \text{ a.e. }(\omega,t) \in \tilde{ \Omega} \times [0,T] \text{ as } n\rightarrow \infty.
$$ %for almost every $(\omega,t) \in \tilde{ \Omega} \times [0,T]$ since $\int_0^t\|\bar u(s)\|^2\ud s$ is bounded for almost every $(\omega,t) \in \tilde{ \Omega} \times [0,T]$.
 {Next using {the fact that $\theta_{\tilde R}\leq 1$} and the assumption \eqref{eqn:B_bound2} we obtain %Similarly since $D(A) \subset (L^\infty(\mathcal{O}))^d$ we have
	\begin{align*}
	|I_{B3}^n(t)| & \leq \int_0^t \theta_{\tilde{R}}(|\bar u^{n}|)|\bar u^{n}-\bar u|\left(\|\bar u^{n}\|+\|\bar u\|\right)\|\phi\|_{D(A)} \ud s\\
	&\leq \|\phi\|_{D(A)}\left(\|\bar u^{n}\|_{L^2(0,T;V)}+ \|\bar u\|_{L^2(0,T;V)}\right)\,|\bar u^{n}-\bar u|_{L^2(0,T;H)}.
	\end{align*}	
\iffalse
{ Or we can instead do the following:
	First observe that for any $1<q<2$ we have
	\begin{align}
\sup_{k \in \mathbb{N}}  \bE|\bar u^{n}|^q_{L^2(0,T;H)}=\sup_{k \in \mathbb{N}}  \bE(\int_0^T|\bar u^{n}|^2_{H})^{\frac{q}2} \leq( \sup_{k \in \mathbb{N}}  \bE|\bar u^{n}|^2_{L^2(0,T;H)} )^{\frac{q}2}\leq C
	\end{align}
	Combine with \eqref{un_convH} to apply Vitali's convergence lemma; we obtain
	\begin{align}
\bar{u}^{n} \rightarrow \bar{u} \quad \text{in } L^q(\tilde{ \Omega}; L^2(0,T;H)) \text{ as }n\rightarrow \infty.
	\end{align}
	Now using the assumption \eqref{eqn:B_bound1} and write for any $\frac32<q<2$
\begin{align*}
|I_{B3}^k(t)| & \leq \int_0^t \theta_{\tilde{R}}(|\bar u^{n}|)\left|\bar u^{n}-\bar u\right|^{\frac32}\left(\|\bar u^{n}\|+\|\bar u\|\right)^{\frac12}\|\phi\|_{D(A)} \ud s\\
&\leq \|\phi\|_{D(A)}\bE\left[\left(\int_0^T(\|\bar u^{n}(s)\|^2_{V}+ \|\bar u(s)\|^2_{V})\ud s\right)^{\frac14}\,\left(\int_0^T|\bar u^{n}(s)-\bar u(s)|_{H}^{2}\ud s\right)^{\frac3{4}}\right]\\
&\leq T^{(\frac14-\frac3{2q})}\|\phi\|_{D(A)}\left(\bE\int_0^T(\|\bar u^{n}(s)\|^2_{V}+ \|\bar u(s)\|^2_{V})\ud s\right)^{\frac14}\,\left(\bE(\int_0^T|\bar u^{n}(s)-\bar u(s)|_{H}^{2}\ud s)^{\frac{q}2}\right)^{\frac3{2q}}.
\end{align*}
}\fi
We know that $\tilde \bE \int_0^T\|\bar u^{n}\|^2 \ud s $ is bounded by a constant independent of $n$, %, we know that $\|\bar u^{n}\|_{L^2(0,T;V)}$ is also bounded for almost every $\omega \in \tilde{ \Omega}$. 
thus thanks to \eqref{un_convH} we obtain that $I_{B3}^n\rightarrow 0$ for a subsequence $n\rightarrow \infty$ for almost every $(\omega,t) \in \tilde{ \Omega} \times [0,T]$.}\\ Next, recall that $\theta_{\tilde R}(x)$ vanishes for $x \geq 2\tilde R$. Thus using the assumption \eqref{eqn:B_bound2} and Lemma \ref{uniformboundL2} we obtain,
\begin{align}
\sup_{n\geq 1}
\tilde \bE\int_0^T\bigg|\int_0^t\iprod{P_{n}B_{\tilde{R}}(\bar{u}^{n},\bar{u}^{n})}{\phi} \ud s\bigg|^2 \ud t
&= \sup_{n\geq 1}
\tilde \bE \int_0^T\bigg|\int_0^t \theta_{\tilde{R}}(|u^{n}|)\iprod{B(\bar{u}^{n},\bar{u}^{n})}{P_{n}\phi}\ud s\bigg|^2 \ud t \nonumber\\
%& \leq\sup_{n\in\mathbb{N}}\bE\bigg|\int_0^t\iprod{B(\bar{u}^{n},\phi)}{\bar{\vu}^{n}}ds\bigg|^4
&\le \tilde{R}^2 T \|\phi\|_{D(A)}^2\,\, \sup_{n\geq 1}
\tilde \bE\int_0^T\Vnorm{\bar u^{n}(s)}^2 \ud s \nonumber\\
& \le C(\phi,T, \tilde{R},K_1). \end{align}
Thus using {once more} the Vitali convergence theorem we {obtain} \eqref{conv_B}.\\
The noise terms are treated differently.
\begin{lem}\label{conv_G}
	The processes $
	(\int_0^tP_{n}G(\bar{u}^{n}(s))\ud\bar{W}(s))_{t \in [0,T]}$ converge to $(\int_0^tG(\bar{u}(s))\ud\bar{W}(s))_{t \in [0,T]}$  in $L^1(\tilde{ \Omega};L^1([0,T];H))$ as $n\rightarrow \infty$.
\end{lem}
\begin{proof}
We will first show that along a subsequence, {$\tilde \bP$-a.s.}
\begin{align}\label{g1}
P_{n}G(\bar{u}^{n})\rightarrow G(\bar u) \qquad \text{in $L^2([0,T];L_2(
U_0,H))$}.\end{align}
 Indeed, utilizing the Poincar\'e inequality, the hypothesis \eqref{eqn:G,K_H_Lipschitz} {and the fact that $P_n$ is a projector in $H$ }we obtain {$\tilde \bP$-a.s.} 
\begin{align}
\nonumber
\int_0^T\Vnorm{P_{n}G(\bar{u}^{n})- G(\bar u)}^2_{L_2(
	U_0,H)}\ud s
&\le \int_0^T(\Vnorm{P_{n}G(\bar{u}^{n})-P_{n} G(\bar u)}^2_{L_2(
	U_0,H)}\\
&\hspace{2in}+\Vnorm{(I-P_{n})G(\bar u)}^2_{L_2(
	U_0,H)})\ud s\nonumber\end{align}\begin{align}\label{G1}
&\le \rho\int_0^T\Hnorm{\bar{u}^{n}-\bar u}^2+\Vnorm{(I-P_{n})G(\bar u)}^2_{L_2(
	U_0,H)}\ud s
\rightarrow 0.
\end{align}
{The convergence in }\eqref{G1} follows from \eqref{un_convH} and the Lebesgue Dominated convergence theorem since we have that $\Vnorm{(I-P_{n})G(\bar u)}^2_{L_2(
	U_0,H)}$ converges to $0$ a.e. on $ [0,T]$ and is dominated by $\Vnorm{G(\bar u)}^2_{L_2(
	U_0,H)} \in L^1[0,T]$.\\
Due to Lemma 2.1 of \cite{DGHT} (see also \cite{Ben}), the convergence \eqref{g1} implies that
 \begin{align}\int_0^tP_{n}(G(\bar u^{n}))\ud\bar W(s) \rightarrow \int_0^tG(\bar u(s))\ud\bar W(s) \text{ in probability in $L^2(0,T;H)$. } \end{align}
  Additionally, for some $C=C(T,\rho,K_1)>0$ independent of $n$, thanks to the  It\^{o} isometry we obtain
\begin{align}
\tilde \bE\int_0^T \Hnorm{\int_0^t P_{n}G(\bar{u}^{n}(s))\ud\bar W(s)}^2\ud t
&=\int_0^T\tilde \bE\int_0^t\Vnorm{G(\bar{u}^{n})}^2_{L_2(
	U_0,H)}\ud s \ud t \nonumber\\
 & \le \rho T
\sup_{n\geq 1}
\tilde \bE\left(\int_0^T(\Hnorm{\bar{u}^{n}}^2+1)\ud s\right)\nonumber\\
&\le \rho T\sup_{n\geq 1}\tilde \bE\sup_{t\in [0,T]}
(\Hnorm{\bar{u}^{n}}^2+1)\le C.\label{G2}
\end{align}
Making use of the Vitali convergence theorem again, {we conclude the proof of Lemma \ref{conv_G}}.
\end{proof}
\begin{lem}\label{conv_K}
	The processes $
(\int_{(0,t]} \int_{E_0}{P_{n}K(\bar{u}^{n}(s-),\xi)}{}\ud\hat{\bar{\pi}}(s,\xi)
)_{t\in[0,T]}$ converge to\\
 $ (\int_{(0,t]} \int_{E_0} {K(\bar u(s-),\xi)}{}\ud\hat{\bar{\pi}}(s,\xi))_{t \in [0,T]}$
in $L^1(\tilde \Omega; L^1([0,T];H))$ as $n\rightarrow \infty$.\end{lem}
\begin{proof}

First we have {$\tilde \bP$-a.s.}
\begin{align}
 \int_0^T\int_0^t\int_{E_0}&
\left|{P_{n}K(\bar{u}^{n}(s-),\xi)-K(\bar u(s-),\xi)}{}\right|^2\ud\nu(\xi)\ud s \ud t
\nonumber\\\nonumber
\le &~ \int_0^T\int_0^t\int_{E_0}\left|{P_{n}K(\bar{u}^{n}(s-),\xi)-P_{n}K(\bar u(s-),\xi)}{}\right|^2\ud\nu(\xi)\ud s \ud t \nonumber\\
&\hspace{2in} +\int_0^T\int_0^t\int_{E_0}\left|{(I-P_{n})K(\bar{u}^{}(s-),\xi)}{}\right|^2\ud\nu(\xi)\ud s \ud t \nonumber \end{align}\begin{align}
\le &~ \int_0^T\int_0^t\int_{E_0}\left|{K(\bar{u}^{n}(s-),\xi)-K(\bar u(s-),\xi)}{}\right|^2\ud\nu(\xi)\ud s \ud t \nonumber\\
&\hspace{2in}+\int_0^T\int_0^t\int_{E_0}\left|{(I-P_{n})K(\bar{u}^{}(s-),\xi)}{}\right|^2\ud\nu(\xi) \ud s \ud t \nonumber\\
\le &~ T\int_0^T
\Hnorm{ \bar{u}^{n}(s)-\bar u(s)}^2 \ud s+ T\int_0^T\int_{E_0}\left|{(I-P_{n})K(\bar{u}^{}(s-),\xi)}{}\right|^2\ud\nu(\xi)\ud s
\nonumber\\
\rightarrow &\, 0\, \text{ as }n\rightarrow \infty.
\label{eqn:estimate1}
\end{align}

The last line follows thanks to the fact that
$\bar{u}^{n}\rightarrow \bar u$
in $L^2(0,T;H)$ {$\tilde \bP$-a.s.} and using the Lebesgue dominated convergence theorem since we also see that as $n\rightarrow \infty$ the sequence $|(I-P_{n})K(\bar u^{}(s-),\cdot)|^2_{L^2(E_0,\nu;H)}$ converges to $0$ a.e. on $[0,T]$ and is dominated by $|K(\bar u(s-),\cdot)|^2_{L^2(E_0,\nu;H)} \in L^1([0,T]).$
\\ This implies that
${P_{n}K(\bar u^{n}(s-),\xi)}{}$ converges to
$ {K(\bar u(s-),\xi}{})$
in $L^2([0,T]\times E_0,\ud t\otimes \ud \nu;H)$ $\tilde \bP-$a.s.
It is then easy to see that the above almost sure convergence implies the {following} convergence
\begin{equation}
\int_{(0,t]} \int_{E_0}{P_{n}K(\bar{u}^{n}(s-),\xi)}\ud\hat{\bar{\pi}}(s,\xi)
\rightarrow \int_{(0,t]} \int_{E_0} {K(\bar u(s-),\xi)}{}\ud\hat{\bar{\pi}}(s,\xi)
\end{equation}
in probability as $L^2(0,T;H)$-valued random variables as $n\rightarrow \infty$. 

Also observe that, using the isometry property {(e.g. \cite{IW} or Theorem 2.16 in \cite{CNTT20})}, we obtain

\begin{align}\label{eqn:estimate2}
\tilde \bE \int_0^T\bigg|{\int_{(0,t]}\int_{E_0}
	P_{n}K(\bar u^{n}(s-),\xi)}\ud\hat{\bar{\pi}}(s,\xi)\bigg|^2\ud t &=
\int_0^T\left( \tilde \bE\int_0^t\int_{E_0}|{P_{n}K(\bar u^{n}(s-),\xi)}{}|^2
\ud\nu(\xi)\ud s\right)\ud t\nonumber\\
&\le \rho T\, \sup_{n \geq 1}
\tilde \bE \int_0^T(1+|\bar u^{n}(s)|^2) \ud s \nonumber\\
& \le C(T,\rho,K_1).
\end{align}
From \eqref{eqn:estimate1}-\eqref{eqn:estimate2}, and an application of the Vitali convergence Theorem {we finish the proof of Lemma \ref{conv_K}}.
\end{proof}
{Gathering} all the convergence results \eqref{conv_un}, \eqref{conv_A}, \eqref{conv_B}, Lemma \ref{conv_G} and Lemma \ref{conv_K}, along with {the weak convergence obtained in} \eqref{Y}, we see that for any $\phi\in D(A)$ and {any} measurable set $\mathcal{S}\subset \tilde \Omega\times[0,T]$, we have
\begin{align}
\tilde \bE\int_0^T\chi_{\mathcal{S}}
\bigg[\iprod{\bar u(s)}{\phi}\ud s
+\int_0^t\gamma\iprod{A\bar u(s)}{\phi}\ud s
+\int_0^t\iprod{B_{\tilde R}(\bar u(s),\bar u(s))}{\phi} \ud s+\int_0^t
\iprod{ Y^{R,\tilde{R}}}{\phi}\ud s\bigg]\ud t
\nonumber \\
=\tilde \bE\int_0^T\chi_{\mathcal{S}}
\bigg[\iprod{\bar u_0}{\phi}\ud s+\int_0^t\iprod{G(\bar u(s))}{\phi}\ud\bar W(s)+\int_{(0,t]}\int_{E_0}\iprod{K(\bar u(s-),\xi)}{\phi}\ud\hat{\bar{\pi}}(s,\xi)\bigg]\ud t.\label{convergenceresult}
\end{align}
Since $D(A)$ is dense in {$V$}, and $\mathcal{S}$ is arbitrary it follows that $ \ud  \tilde\bP \otimes \ud t$-a.e. we have
\begin{align} \label{eqn:bar_u_soln_w/Y} % replaced by Justin, 2/16/19
&\bar u(t)
+\int_0^t \gamma A\bar u(s)\ud s
+\int_0^tB_{\tilde R}(\bar u(s),\bar u(s))\ud s+\int_0^t
 Y^{R,\tilde{R}}\ud s \nonumber
 \\&\hspace{1in}=
\bar u_0+\int_0^tG(\bar u(s))\ud \bar W(s)+\int_{(0,t]}\int_{E_0}K(\bar u(s-),\xi)\ud\hat{\bar{\pi}}(s,\xi) \text{ in }{V'}.
\end{align}
See Lemma 5.13 in \cite{EulerPaper} for details. In particular we have $\bar u_0=\bar u(0)$ $\tilde \bP$-a.s.\\
\noindent {\underline{Step 2}}:
In order to show that $\bar u$ is a solution to the equations \eqref{approx_eqn}, it remains to verify that
\begin{align}\label{YFn}
&Y^{R,\tilde{R}}=F^R(\bar{u}), \quad  \ud \tilde\bP \otimes \ud t \tn{-a.e.}
\end{align}
This follows from a classical monotonicity argument due to G. Minty \cite{M62}, \cite{M63} (see also \cite{Br73}, \cite{B77}, \cite{BLZ},\cite{LL65}, \cite{Prevot} \cite{T_NSE}). First using the product rule we have
	\begin{align}\label{mult_exp}
	e^{-\rho t}|\bar u(t)|^2= |\bar u_0|^2 +\int_0^t  e^{-\rho s}\left(-\rho|\bar u(s)|^2 \ud s + \ud |\bar u(s)|^2\right).
	\end{align}	
	Now we apply It\^o's formula to the process in \eqref{eqn:bar_u_soln_w/Y} using the function $\psi(u)=|\bar u|^2$ {as in \eqref{ito}}:
\begin{align}
\nonumber& \Hnorm{\bar u(t)}^2-\Hnorm{\bar u_0}^2
=\int_0^t \Bigg[-2 \gamma \iprod{A\bar u(s)}{\bar u(s)}-2\iprod{Y^{R,\tilde{R}}(s)}{\bar u(s)} + \Vnorm{G(\bar{u}(s))}^2_{L_2(U_0,H)}\Bigg]\ud s\\&+\int_{(0,t]}\int_{E_0}
|K(\bar{u}(s-),\xi)|^2\ud{\bar{\pi}}(\xi,s)\nonumber+\int_0^t\iprod{\bar u(s)}{G(\bar u(s))\ud W(s)} + \int_{(0,t]}\int_{E_0}\iprod{K(\bar u(s-),\xi)}{\bar u(s)}\ud\hat {\bar \pi}(s,\xi).
\end{align}
Combining the above two equations we obtain
\begin{align}
\nonumber \Hnorm{\bar u(t)}^2-\Hnorm{\bar u_0}^2
&=\int_0^t e^{-\rho s}
\bigg[-\rho|\bar u(s)|^2 -2\gamma\iprod{A\bar u(s)}{\bar u(s)}-2\iprod{Y^{R,\tilde{R}}(s)}{\bar u(s)}
 + \Vnorm{G(\bar{u}(s))}^2_{L_2(U_0,H)}\bigg]\ud s\\&+\int_{(0,t]}\int_{E_0}e^{-\rho s}
|K(\bar{u}(s-),\xi)|^2\ud{\bar{\pi}}(\xi,s) + \int_{(0,t]}\int_{E_0}e^{-\rho s}\iprod{K(\bar u(s-),\xi)}{\bar u(s)}\ud\hat {\bar \pi}(s,\xi)\nonumber\\
&+\int_0^t e^{-\rho s}\iprod{\bar u(s)}{G(\bar u(s))\ud W(s)}.\label{ito_exp}
\end{align}
We take expectation of both sides of \eqref{ito_exp}. Observing that the last two terms are martingales {and their expectations vanish and we obtain}
\begin{align}
\nonumber\tilde \bE\left(e^{-\rho t}\Hnorm{\bar u(t)}^2-\Hnorm{\bar u_0}^2\right)
&=\tilde \bE\Big(\int_0^t e^{-\rho s}
\bigg[-2\gamma\iprod{A\bar u(s)}{\bar u(s)}-2\iprod{Y^{R,\tilde{R}}(s)}{\bar u(s)}
\\
& + \Vnorm{G(\bar{u}(s))}^2_{L_2(U_0,H)}-\rho|\bar u(s)|^2\bigg]\ud s+ \int_{(0,t]} \int_{E_0} e^{-\rho s}
|K(\bar{u}(s-),\xi)|^2\ud{\bar{\pi}}(\xi,s)\Big)\label{ito_exp2}.
\end{align}
%{Here we also used the fact that $\iprod{\bar u(s)}{G(\bar u(s))\ud W(s)}$ and $ \int_{E_0}\iprod{K(\bar u(s-),\xi)}{\bar u(s)}\ud\hat {\bar \pi}(s,\xi)$ are martingales.}
In a similar fashion, we carry out \eqref{mult_exp} for $\bar u^{n}$, then apply It\^o's formula to the process in \eqref{barunk} and take expectation of both sides. Using that {$P_{n}(\bar u^{n}(t))=\bar u^{n}(t)$}, we obtain:
\begin{align}\label{eqn2}
\tilde \bE\left(e^{-\rho t}\Hnorm{\bar{u}^{n}(t)}^2-\Hnorm{\bar{u}^{n}_0}^2\right)
&=\tilde \bE\big(\int_{0}^t e^{-\rho s}
\bigg[-2\gamma\iprod{A\bar{u}^{n}(s)}{\bar{u}^{n}(s)} - 2\iprod{F^R(\bar{u}^{n}(s))}{\bar{u}^{n}(s)}
\nonumber\\
&\hspace{-0.4in}+\Vnorm{G(\bar{u}^{n}(s))}^2_{L_2(U_0,H)}-\rho|\bar u^{n}(s)|^2 \bigg]\ud s+ \int_{(0,t]} \int_{E_0} e^{-\rho s}
|K(\bar{u}^n(s-),\xi)|^2\ud{\bar{\pi}}(\xi,s)\big).
\end{align}
Next, we will use the monotonicity of $A$ that comes from its linearity and that of $F^R$  as specified in \eqref{FR1}. That is, for an arbitrary $ \Phi\in  L^2([0,T]\times \tilde{ \Omega},\ud t\otimes  \ud\tilde \bP;V)$, we write $\iprod{F^R(\bar u^{n}(s))}{\bar u ^{n}} \geq \iprod{F^R(\bar{u}^{n}(s))}{\Phi(s)} + \iprod{F^R(\Phi(s))}{\bar{u}^{n}(s)-\Phi(s)} $. Treating the terms involving $A$ similarly we obtain,
\begin{align}
\tilde \bE\left(e^{-\rho t}\Hnorm{\bar{u}^{n}(t)}^2-\Hnorm{\bar{u}^{n}_0}^2\right)&
\leq \tilde \bE\int_0^t e^{-\rho s}\bigg(
-2\gamma\iprod{A\bar{u}^{n}(s)}{\Phi(s)} - 2\gamma\iprod{A\Phi(s)}{\bar{u}^{n}(s)-\Phi(s)}\ud s
\nonumber\\&\hspace{-0.2in}-2\iprod{F^R(\bar{u}^{n}(s))}{\Phi(s)}\ud s  -2\iprod{F^R(\Phi(s))}{\bar{u}^{n}(s)-\Phi(s)}\ud s
\nonumber\\
%&\hspace{-0.3in}{+2\iprod{\bar u^{n}(s)}{G(\bar u^{n}(s))\ud W(s)} + 2\int_{E_0}\iprod{K(\bar u^{n}(s-),\xi)}{\bar u^{n}(s)}\ud\hat {\bar \pi}(s,\xi)}\nonumber
&\hspace{-0.2in}+\Vnorm{G(\bar{u}^{n}(s))}^2_{L_2(U_0,H)}\ud s+\int_{E_0}
|K(\bar{u}^{n}(s-),\xi) |^2\ud \nu(\xi)\ud s-\rho|\bar u^{n}|^2 \ud s\bigg).
\end{align}{Furthermore, using the equation $|a|^2=|a-b|^2 + 2\iprod{a}{b} -|b|^2$ we {see that the right-hand side of the equation above is equal to}}
\begin{equation}\begin{split}\label{nk}
&\tilde \bE\int_0^t e^{-\rho s}
\bigg[-2\gamma\iprod{A\bar{u}^{n}(s)}{\Phi(s)}\ud s+2\gamma\iprod{A\Phi(s)}{\Phi(s)}\ud s-2\gamma\iprod{A\Phi(s)}{\bar{u}^{n}(s)}\ud s\\
&+2\iprod{F^R(\Phi(s))}{\bar{u}^{n}(s)}\ud s
-2\iprod{F^R(\Phi(s))}{\Phi(s)}\ud s-2\iprod{F^R(\bar{u}^{n}(s))}{\Phi(s)}\ud s\\
%&+{2\iprod{\bar u^{n}(s)}{G(\bar u^{n}(s))\ud W(s)} + 2\int_{E_0}\iprod{K(\bar u^{n}(s-),\xi)}{\bar u^{n}(s)}\ud\hat {\bar \pi}(s,\xi)}\\
&-\rho\left(|\bar u^{n}(s)-\Phi(s)|^2-|\Phi(s)|^2 +2\iprod{\bar u^{n}}{\Phi}\right)\ud s
-\int_{E_0}|K(\Phi(s),\xi)|^2\ud\nu(\xi)\ud s
-\Vnorm{G(\Phi(s))}^2_{L_2(U_0,H)}\ud s
\\&+2\int_{E_0}
\iprod{K(\Phi(s),\xi)}{K(\bar{u}^{n}(s),\xi)}\ud\nu(\xi)\ud s +2\iprod{ G(\Phi(s))}{G(\bar u^{n}(s))}_{L_2(U_0,H)}\ud s\\
&+\Vnorm{G(\bar{u}^{n}(s))-G(\Phi(s))}^2_{L_2(U_0,H)}\ud s +\int_{E_0}
|K(\bar{u}^{n}(s-),\xi)-  K(\Phi(s),\xi) |^2\ud\nu(\xi)\ud s
\bigg].\nonumber
\end{split}
\end{equation}
We next use \eqref{eqn:G,K_H_Lipschitz} on the last two terms of the above equation {and bound them from above by $\rho|\bar u^{n}(s)-\Phi(s)|^2$}. This gives us the following upper bounds
\begin{align}
&\hspace{-0.7in}\tilde \bE\left(e^{-\rho  t}\Hnorm{\bar u^{n}(t)}^2-\Hnorm{\bar u^{n}_0}^2\right)\nonumber\\
%&\nonumber\tilde \bE\left(\Hnorm{\bar{u}^{n}(t)}^2-\Hnorm{\bar{u}^{n}(0)}^2\right)\leq\\
&\leq \tilde \bE\bigg[\int_0^t e^{-\rho s}
\bigg[-2\gamma\iprod{A\bar{u}^{n}(s)}{\Phi(s)}\ud s+2\gamma\iprod{A\Phi(s)}{\Phi(s)}\ud s-2\gamma\iprod{A\Phi(s)}{\bar{u}^{n}(s)}\ud s\nonumber\\
&+2\iprod{F^R(\Phi(s))}{\bar{u}^{n}(s)}\ud s
-2\iprod{F^R(\Phi(s))}{\Phi(s)}\ud s-2\iprod{F^R(\bar{u}^{n}(s))}{\Phi(s)}\ud s\nonumber\\
&-\rho\left(|\bar u^{n}(s)-\Phi(s)|^2-|\Phi(s)|^2 +2\iprod{\bar u^{n}(s)}{\Phi}\right)\ud s+{{}\rho |\bar u^{n}(s)-\Phi(s)|^2\ud s}\nonumber\\
 %\Hnorm{\Phi(s)}^2 ???
&-\Vnorm{G(\Phi(s))}^2_{L_2(U_0,H)}\ud s +2\iprod{ G(\Phi(s))}{G(\bar u^{n}(s))}_{L_2(U_0,H)}\ud s\nonumber\\
&-\int_{E_0}|K(\Phi(s),\xi)|^2\ud\nu(\xi)\ud s+2\int_{E_0}
\iprod{K(\Phi(s),\xi)}{K(\bar{u}^{n}(s),\xi)}\ud\nu(\xi)\ud s\bigg].\label{imp_exp}
\end{align}

 {Now we multiply each side of the above equation by $\varphi\in L^\infty([0,T],\ud t;\mathbb{R}),$ $ \varphi \geq 0$ and integrate in time. Finally we let $n\rightarrow \infty$} using the convergence results \eqref{un_convH}-\eqref{weakV} and obtain
  % Observe that the weak lower semicontinuity of $ {\tilde \bE}\int_0^T\int_0^t\|\cdot\|^2\ud s\ud t$  together with the strong convergence in \eqref{un_convH} gives $\limsup_{n\rightarrow \infty}{\tilde \bE}\int_0^T\int_0^t2\tilde{\rho}|{{\bar u}^{n}(s)}|^2  \ud s \leq {\tilde \bE}\int_0^T 2\tilde{\rho}|{{\bar u}(s)}|^2  \ud s ={\tilde \bE}\int_0^T 2\iprod{\bar u(s)}{G(\bar u(s))\ud W(s)} + 2{\tilde \bE}\int_0^T\int_{E_0}\iprod{K(\bar u(s-),\xi)}{\bar u(s)}\ud\hat {\bar \pi}(s,\xi)$. Hence we obtain
\begin{align*}
\tilde \bE\int_0^T\varphi(t)&\left(e^{-\rho  t}\Hnorm{\bar u(t)}^2-\Hnorm{\bar u_0}^2\right)\ud t\\
&\le
\tilde \bE\int_0^T\varphi(t)\bigg(\int_0^t e^{-\rho s}
\bigg[-2\gamma\iprod{A\bar u(s)}{\Phi(s)}\ud s
+2\gamma\iprod{A\Phi(s)}{\Phi(s)}\ud s-2\gamma\iprod{A\Phi(s)}{\bar u(s)}\ud s\\
&+2\iprod{F^R(\Phi(s))}{\bar u(s)}\ud s
-2\iprod{F^R(\Phi(s))}{\Phi(s)}\ud s-2\iprod{Y^{R,\tilde{R}}(s)}{\Phi(s)}\ud s\\
&-\rho(-|\Phi(s)|^2 +2\iprod{\bar u}{\Phi})\ud s
-\norm{G(\Phi(s))}^2_{L_2(U_0,H)}\ud s+ 2\iprod{G(\bar u(s))}{G(\Phi(s))}_{L_2(U_0,H)}\ud s\\
&-\int_{E_0}|K(\Phi(s),\xi)|^2 \ud\nu(\xi)\ud s+2\int_{E_0}
\iprod{K(\Phi(s),\xi)}{K(\bar u(s-),\xi)}\ud\nu(\xi)\ud s\bigg]\bigg)\ud t.
\end{align*}
Combining this with \eqref{ito_exp2}, we obtain
\begin{align*}
&\tilde \bE \int_0^T  \varphi(t)\bigg(\int_{0}^t e^{-\rho  s}
\bigg[-2\gamma\iprod{A\bar u(s)}{\bar u(s)} -2\iprod{Y^{R,\tilde{R}}(s)}{\bar u(s)}
+\Vnorm{G(\bar{u}(s))}^2_{L_2(U_0,H)} {-\rho |\bar u(s)|^2}\bigg]\ud s
\\&\hspace{2in}+\int_{(0,t]}\int_{E_0}
|K(\bar{u}(s-),\xi)|^2\ud{\bar{\pi}}(s,\xi)\bigg)\ud t\\
&\le
\tilde \bE\int_0^T\varphi(t)\bigg(\int_0^t e^{-\rho s}
\bigg[-2\gamma\iprod{A\bar u(s)}{\Phi(s)}\ud s
+2\gamma\iprod{A\Phi(s)}{\Phi(s)}\ud s-2\gamma\iprod{A\Phi(s)}{\bar u(s)}\ud s\\
&\qquad+2\iprod{F^R(\Phi(s))}{\bar u(s)}\ud s
-2\iprod{F^R(\Phi(s))}{\Phi(s)}\ud s-2\iprod{Y^{R,\tilde{R}}(s)}{\Phi(s)}\ud s\\
&{+\rho\left(|\Phi(s)|^2 -2\iprod{\bar u(s)}{\Phi(s)}\right)\ud s}-\norm{G(\Phi(s))}^2_{L_2(U_0,H)} \ud s+ 2\iprod{G(\bar u(s))}{G(\Phi(s))}_{L_2(U_0,H)}\ud s\\
&-\int_{E_0}|K(\Phi(s),\xi)|^2 \ud\nu(\xi)\ud s+2\int_{E_0}
\iprod{K(\Phi(s),\xi)}{K(\bar u(s-),\xi)}\ud\nu(\xi)\ud s\bigg]\bigg)\ud t.
\end{align*}
Using again $|a|^2=|a-b|^2+2\iprod{a}{b}-|b|^2$ we obtain
\begin{align*}
 \tilde \bE\int_0^T\varphi(t)\int_0^t &e^{-\rho s}\bigg(
2\gamma\iprod{A\bar u(s)-A\Phi(s)}{\bar u(s)-\Phi(s)}\ud s+
2\iprod{F^R(\Phi(s))-Y^{R,\tilde{R}}(s)}{\Phi(s)-\bar u(s)}\ud s
\\&\qquad \qquad \qquad+ 2|\bar{u}(s)-\Phi(s)|^2\ud s-\|G(\bar u(s))-G(\Phi(s))\|^2_{L_2(U_0,H)}\ud s  \\
&\qquad \qquad \qquad -\int_{E_0}|K(\bar u(s),\xi)-K(\Phi(s),\xi)|^2\ud \nu(\xi)\ud s
 \bigg)\ud t\geq 0.
\end{align*}
Since the last two terms in the inequality above are negative we have,
\begin{align}
\tilde \bE\int_0^T\varphi(t)\int_0^t e^{-\rho s}\bigg(
2\gamma\iprod{A\bar u(s)-A\Phi(s)}{\bar u(s)-\Phi(s)}+
2\iprod{F^R(\Phi(s))-Y^{R,\tilde{R}}(s)}{\Phi(s)-\bar u(s)}\nonumber\\
+ \rho|\bar{u}(s)-\Phi(s)|^2  \bigg)\ud s\ud t\geq 0.\label{minty}
\end{align}
We now let $\Phi=\bar u\pm \lambda\zeta\vv$ for $\lambda \ge 0$, $\vv\in V$
and $\zeta \in L^\infty([0,T]\times \tilde{ \Omega},\ud t\otimes \ud \tilde\bP;\mathbb{R})$ and use the linearity of the operator $A$ . Then dividing both sides by $\lambda$, letting $\lambda\rightarrow 0$ and using the hemicontinuity property of the operator $F^R$, we obtain
\begin{equation}
\tilde \bE\left(\int_0^T\varphi(t)
\left(\int_0^t e^{-\rho s}\zeta(s)\Vdualpair{F^R(\bar u(s))-Y^{R,\tilde{R}}(s)}{\vv}\ud s\right)\ud t\right)=0.
\end{equation}
Since $\varphi$, $\zeta$ and $\vv$ {are all} arbitrary, we conclude that 
\begin{equation}
Y^{R,\tilde{R}}= F^R(\bar u(s)) \quad  \ud \tilde\bP \otimes \ud t-\text{a.e.}
\end{equation}
Therefore, we have now proven that $\bar u$ solves \eqref{approx_eqn} with respect to the basis $(\tilde \Omega, \tilde{\mathcal{F}},(\bar{\mathcal{F}}_t)_{t\geq 0},\tilde \bP, \bar W,\bar \pi  )$ which marks the end of the passage to the limit $n\rightarrow \infty$ (with $R$ and $\tilde R$ fixed) and thus {concludes} the proof of Theorem \ref{prop:passage_to_limit}. 
\end{proof}
\begin{rem}
	Note that in Lemma \ref{conv_G} and in Lemma \ref{conv_K} we do not obtain convergence of the mentioned processes in $L^2(\tilde \Omega;L^2([0,T];H))$ {as in e.g. \cite{1LayerShallow} or \cite{DGHT}}. 
	The lack of such a result could pose a problem while passing to the limit $n\rightarrow \infty$ in Step 2. However this is taken care of by first carrying out \eqref{mult_exp} and then applying the It\^{o} formula using the function $\psi(u)=|u(t)|^2$, as can be observed in \eqref{imp_exp}.	 
\end{rem}

\section{Passage to the limit $R \rightarrow \infty$} \label{sec:R}

For this section, to emphasize the dependence on the parameter $R$, we will use the notation {$(\bar u_0^R, \bar u^R, \bar W^R,\bar \pi^R):=(\bar u_0, \bar u, \bar W,\bar \pi)$} to denote the solution found in the previous section {and thus omiting for the moment the dependence on $\tilde R$, e.g. $\bar u^R =\bar u^{R, \tilde R}$, etc. % and $\bar u^R_0:=\bar u_0$ the initial conditions
 Now,} for fixed $\tilde R$ we will pass to the limit $R \rightarrow \infty$ and show the existence of a martingale solution to the stochastic equation
{\begin{equation}\label{approx_eqn2}\begin{split}
	 u(t)+ \int_0^t[\gamma Au(s)+& \theta_{\tilde{R}}(|u|)B(u(s),u(s)) +F(u(s))]\ud s \\&=u(0)+\int_0^tG(u(s))\ud W(s)+ \int_{(0,t]}\int_{E_0}K(u(s-),\xi)\ud \hat\pi(s,\xi) .
%\\	u(0)&=\bar u_0.
\end{split}	\end{equation}} 
Recall that $X \subset (W^{k,p}(\mathcal{O}))^d$ for some $p>2$ and integer $k\geq 0$. Recall also that in Lemma \ref{uniformboundL2} we used the equicoercivity result \eqref{FR3} for the family $\{F^R\}_{R>R_0}$, derived in \cite{TT20}, to obtain bounds that were independent of the parameter $R$. This readily gives us the following energy estimates.
\begin{lem}\label{gencase_uniformboundL2}	Let $\bar u^R$ denote a martingale solution to \eqref{approx_eqn} with respect to the basis \\ $(\tilde \Omega, \tilde{\mathcal{F}},{(\bar{\mathcal{F}}^R_t)_{t\geq 0}},\tilde \bP, {\bar W^R,\bar \pi^R } )$ % and {initial condition $\bar u^R_0$ } 
 as found in Theorem \ref{prop:passage_to_limit} in the previous section. Assuming all the hypotheses in Lemma \ref{uniformboundL2}, %there exists a constant $C=C(T,\gamma,\rho,k,p)>0$, independent of $R$, such that 
 for large enough $R>R_0$ we have
	\begin{align}\label{apriori_gencase}
	&\tilde \bE	\sup_{s\in[0,T]}
	|\bar u^R(s)| ^{2}+ (\gamma+c_1)
	\tilde \bE\int_0^T\|{\bar u^R(s)}\|^2\ud s
	%+\bE\int_0^T	\norm{u^R}^\beta_X \ud t	\le C{\bE}|u_0|^2.
	\le K_1.
	\end{align}
	Furthermore, for some $K_2(T,\gamma,\rho,k,p,K_1)>0$ independent of $R$, we obtain the following bounds,
	\begin{align}\label{apriori2}
	\sum_{i=0}^k  \left( \tilde \bE \int_0^T\int_{\mathcal{O}}|D^i \bar u^R|_p^p \mathbbm{1}_{\{|D^i \bar u^R|_p<R\}} \ud x\ud t + R^{p-2} \tilde \bE \int_0^T\int_{\mathcal{O}}|D^i \bar u^R|_p^2 \mathbbm{1}_{\{|D^i \bar u^R|_p>R\}} \, \ud x\ud t \right) &\leq K_2.
	\end{align}
\end{lem}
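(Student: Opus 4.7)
My plan is to obtain \eqref{apriori_gencase} by passing to the limit $n\to\infty$ in the uniform Galerkin bound \eqref{e11.14} already proved in Lemma \ref{uniformboundL2}, and then to extract \eqref{apriori2} by a fresh It\^o computation applied to $|\bar u^R|^2$, exploiting the pointwise structure of $F^R$ given by \eqref{gen_def_fR}.

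For \eqref{apriori_gencase}, the Skorohod copies $\bar u^n$ share the law of $u^n$ and so satisfy \eqref{e11.14} with constant $K_1$ independent of $n,R,\tilde R$. Combined with the weak convergence \eqref{weakV}, the weak-$*$ convergence \eqref{weakstar}, and the identification of the limit via the a.s.\ convergence \eqref{un_convH}, weak and weak-$*$ lower semicontinuity of the respective norms delivers \eqref{apriori_gencase} directly. Alternatively, since $\bar u^R$ itself solves \eqref{approx_eqn} by Theorem \ref{prop:passage_to_limit}, one may simply repeat the It\^o calculation of Lemma \ref{uniformboundL2}, invoking the equicoercivity \eqref{FR3} in place of a pointwise bound on $\langle F^R(\bar u^R),\bar u^R\rangle$ and controlling the bilinear, Wiener and jump terms exactly as in \eqref{I(t)}--\eqref{N(t)}.

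For \eqref{apriori2}, I would apply It\^o to $|\bar u^R|^2$ using equation \eqref{approx_eqn} and the cancellation \eqref{eqn:B_cancellation_property}, then retain a sharper lower bound for $\langle F^R(v),v\rangle$ than the $c_1\|v\|^2-c_5$ afforded by \eqref{FR3}. Splitting according to the definition \eqref{gen_def_fR}: on the set $\{|D^i v|_p<R\}$, convexity of $\mathscr{J}_i$ together with the coercivity in \eqref{gc} gives $\nabla\mathscr{J}_i(D^i v)\cdot D^i v\ge c_1|D^i v|_p^p-C$; on the set $\{|D^i v|_p>R\}$, setting $y=R(D^i v)/|D^i v|_p$ so that $|y|_p=R$, the Hessian bound \eqref{gen_convex} applied at $y$ yields $f_i^R(D^i v)\cdot D^i v\ge c_1' R^{p-2}|D^i v|_p^2-C'$ for $R>R_0$ large enough. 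Summing over $i$, integrating, taking expectations, and invoking \eqref{apriori_gencase} together with BDG and \eqref{eqn:G,K_H_growth} to absorb the noise terms then produces \eqref{apriori2} with $K_2$ depending only on $T,\gamma,\rho,k,p,K_1$.

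The principal obstacle lies in the regime $\{|D^i v|_p>R\}$, where $f_i^R(x)\cdot x$ comprises two contributions that must combine to yield $c_1' R^{p-2}|x|_p^2$: the first, $(|x|_p/R)\nabla\mathscr{J}_i(y)\cdot y$, scales like $R^{p-1}|x|_p$ and must be shown not to interfere with the second, $(1-R/|x|_p)(|x|_p/R)^2\,y D^2\mathscr{J}_i(y)y^T$, which delivers the desired $R^{p-2}|x|_p^2$ term via \eqref{gen_convex} up to errors of order $|x|_p^2/R^2$ that are absorbed provided $R$ exceeds some threshold $R_0$. This mirrors the equicoercivity argument of \cite{TT20} but is refined here to retain both regimes of the truncation separately rather than collapsing them into a single $V$-norm bound.
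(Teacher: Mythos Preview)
Your proposal is correct and follows essentially the same route as the paper: \eqref{apriori_gencase} is obtained by weak/weak-$*$ lower semicontinuity applied to the Galerkin bound \eqref{e11.14}, and \eqref{apriori2} by applying It\^o's formula to $|\bar u^R|^2$ and replacing the coarse equicoercivity \eqref{FR3} by the sharper pointwise lower bound \eqref{FRlower} that separates the two regimes of the truncation. Your final paragraph is a sketch of precisely the inequalities \eqref{gen_ineqR1}--\eqref{gen_ineqR4}, which the paper records and attributes to \cite{TT20}; one small clarification is that the first contribution $(|x|_p/R)\,\nabla\mathscr{J}_i(y)\cdot y$ is itself nonnegative (up to a constant) by convexity, so there is no interference to worry about---it simply does not help and can be discarded, while the Hessian term alone already delivers $c_1 R^{p-2}|x|_p^2$ modulo the $O(|x|_p^2/R^2)$ error you identify.
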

\begin{rem}
	Note that the bound {for the} second term on the left hand side of \eqref{apriori2} further implies {for the same $K_2$ as in \eqref{apriori2}},
	\begin{align}\label{apriori3}
	\sum_{i=0}^k    \tilde \bE \int_0^T \int_{\mathcal{O}} \mathbbm{1}_{\{|D^i\bar u^R|_p>R\}}\ud x\ud t \leq \frac{K_2}{R^{p}}.
	\end{align}
\end{rem}
\begin{proof}
%{To prove the energy estimate \eqref{apriori_gencase} we apply the infinite-dimensional version of the It\^o formula {(cf. Theorem 27.2 \cite{Metivier82} and Corollary 1 to Theorem 2.19 in \cite{CNTT20})} to \eqref{approx_eqn} {with respect to the basis $(\tilde \Omega, \tilde{\mathcal{F}},{(\bar{\mathcal{F}}^R_t)_{t\geq 0}},\tilde \bP, {\bar W^R,\bar \pi^R } )$} using the function $|\bar{u}^R|^2$ and then follow the proof of Lemma \ref{uniformboundL2} identically. {We make use of the fact that $\bar u^R(0)=u^R_0$ $\tilde \bP$-a.s. and has the same law as that of $u_0$. Observe also that, since the noise terms are adapted to the filtration $(\bar{\mathcal{F}}^R_t)_{t\geq 0}$, thanks to the It\^o isometry we get bounds like in \eqref{I(t)} -\eqref{N(t)}}. Note that, like in Lemma \ref{uniformboundL2}, here we use the condition \eqref{FR3}.\\ 
The estimate \eqref{apriori_gencase} is readily obtained by using the convergence results \eqref{weakstar} and \eqref{weakV} and Lemma \ref{uniformboundL2} along with the lower semicontinuity property of the % $L^\infty([0,T];L^2(\tilde \Omega;H))$ and $L^2([0,T]\times \tilde \Omega, \ud t \otimes \ud \tilde{\bP};V)$ 
norms involved.\\
We will now very briefly discuss the results from \cite{TT20} that lead to \eqref{apriori2}; {this discussion will be useful below}. %In \cite{TT20} bounds like \eqref{apriori2} are derived in the deterministic case. 
First it can be seen that the assumptions \eqref{F1}-\eqref{gc} give us, for our construction \eqref{gen_def_fR}, the 
 existence of some constant $c_6 >0$ independent of $R$  such that
\begin{align}
f_i (x ) \cdot {x} \geq c_1 |x|_p - c_6 \qquad \forall x \in \R^{d^{i+1}}, i=0,1,...,k.\label{gen_ineqR1}
\end{align}
These assumptions also give us that for $x\in \R^{d^{i+1}}$, if $|x|_{p}>R>1$, then for ${c_7 \geq\max\{c_2,c_6\}}>0$ (independent of $R$) we have
\begin{align}\label{gen_ineqR4}
f_i \left({\frac{Rx}{|x|_p}}\right) \cdot x + \left( x-{\frac{Rx}{|x|_p}}  \right)  \,{D}^2\mathscr{J}_i \left({{\frac{Rx}{|x|_p}} }\right) \cdot x \geq (c_1 R^{p-2}-c_5)|x|_{p}^2 .  %\leq (c_1 R^{p-2}+c_5)|x|_{p}^2. 
\end{align}
Then the definitions \eqref{gen_def_FR}-\eqref{gen_def_fR} along with a direct application of \eqref{gen_ineqR1} and \eqref{gen_ineqR4}, give us the following lower bound for any $v \in V$,
\begin{align}\label{FRlower}
(F^R(v),v) \geq \sum_{i=0}^k   \int_{\mathcal{O}}\left(|D^i v|_p^p \mathbbm{1}_{\{|D^i  v|_p<R\}} +%\ud x\ud t +  \int_0^T\int_{\mathcal{O}}
R^{p-2}|D^i v|_p^2 \mathbbm{1}_{\{|D^i v|_p>R\}} \, \right)\ud x  -C,
\end{align}
where the constant $C=C(p,k)>0$ is independent of $R$ and $v$.\\
For detailed proofs of the inequalities \eqref{gen_ineqR1} and \eqref{gen_ineqR4}  and thus {of} \eqref{FRlower}, we refer the reader to \cite{TT20} wherein these inequalities are derived in the deterministic setting.
 {We} now apply It\^o's formula to \eqref{approx_eqn} with respect to the basis $(\tilde \Omega, \tilde{\mathcal{F}},{(\bar{\mathcal{F}}^R_t)_{t\geq 0}},\tilde \bP, {\bar W^R,\bar \pi^R } )$ using the function $|\bar{u}^R|^2$, then take supremum in time and expectations of both sides, as in \eqref{ito}. However unlike in \eqref{e11.16} we use \eqref{FRlower} instead of \eqref{FR3}. Note that  the noise terms are adapted to the filtration $(\bar{\mathcal{F}}^R_t)_{t\geq 0}$ and we obtain bounds like in \eqref{I(t)} -\eqref{N(t)}. 

 This results in the following bounds, 
\begin{align*}
&\tilde \bE \sup_{t\in[0,T]}\sum_{i=0}^k   \int_0^t\int_{\mathcal{O}}\left(|D^i \bar{u}^R|_p^p \mathbbm{1}_{\{|D^i  \bar{u}^R|_p<R\}} +%\ud x\ud t +  \int_0^T\int_{\mathcal{O}}
R^{p-2}|D^i \bar{u}^R|_p^2 \mathbbm{1}_{\{|D^i \bar{u}^R|_p>R\}} \, \right)\ud x\ud t  -C(T,p,k)  \\
& \qquad \qquad \qquad   \leq {\bE}| u_0|^2 + \tilde \bE\sup_{t\in[0,T]}
|{\bar u^R(t)}|^2+ C(\rho)\tilde \bE\int_0^T(1+\Hnorm{\bar u^R(s)}^2)\ud s \\
&\qquad \qquad \qquad \le K_2 \quad \text{ thanks to \eqref{apriori_gencase}}.
\end{align*}
\end{proof}
\begin{prop}\label{ur_boundinH}
Let $\bar u^R$ be a martingale solution to \eqref{approx_eqn}  with respect to the stochastic basis $(\tilde \Omega, \tilde{\mathcal{F}},{(\bar{\mathcal{F}}^R_t)_{t\geq 0}},\tilde \bP, {\bar W^R,\bar \pi^R } )$ {as given by Theorem \ref{prop:passage_to_limit}}, %with an inital data $\bar u^R_0\in L^2(\Omega,\mathcal{F}_0,\tilde \bP;H)$,	
	then for every $\alpha\in (0,\frac 12)$ there exists a constant $C=C(\alpha,T,\tilde{R},K_1)>0$ such that
	\begin{equation}\label{frac_utilR}
	\sup_{R > R_0}	\tilde \bE\norm{\bar u^R}^{\frac{p}{p-1}}_{W^{\alpha,\frac{p}{p-1}}(0,T;{{D(A)'}+X'})}\le C.
		\end{equation}
	Furthermore, due to Lemma \ref{Sobolev1}  and an argument like in Proposition \ref{tightnessinV}, the estimate \eqref{frac_utilR} implies that the laws of $\bar u^{R}$ are tight in $L^2(0,T;H)$.
\end{prop}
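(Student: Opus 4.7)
The plan is to follow the template of Proposition \ref{boundinW}, but to obtain bounds that are uniform in $R$ we will no longer use the crude estimate \eqref{FR4} (whose constant $R^{p-2}$ blows up); instead the $F^R$ term will be controlled in $X'$ using the explicit construction \eqref{gen_def_fR} together with the refined energy estimates \eqref{apriori2} and \eqref{apriori3}. We start by isolating the stochastic integrals,
\begin{align*}
g^R(t) := \bar u^R(t) - \int_0^t P_nG(\bar u^R(s))\,d\bar W^R(s) - \int_{(0,t]}\int_{E_0} K(\bar u^R(s-),\xi)\,d\hat{\bar\pi}^R
= \bar u_0 - \int_0^t[\gamma A\bar u^R + B_{\tilde R}(\bar u^R) + F^R(\bar u^R)]\,ds,
\end{align*}
and treat each term on the right-hand side separately. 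For the two noise terms, Proposition \ref{noisefraction} applied with $\beta = p/(p-1)\in(1,2]$ (note $p>2$) already yields uniform bounds in $W^{\alpha,p/(p-1)}(0,T;H) \subset W^{\alpha,p/(p-1)}(0,T;D(A)'+X')$. The linear term $\int_0^\cdot \gamma A\bar u^R\,ds$ is estimated as in \eqref{eqn:fractional_Sobolev_A_est}, bounding $\|Av\|_{V'}\leq \|v\|$ and using \eqref{apriori_gencase}, giving uniform control in $W^{1,2}(0,T;V')$. The bilinear term is handled as in \eqref{frac_B} using \eqref{eqn:B_bound2} and the cutoff $\theta_{\tilde R}$, giving uniform control in $W^{1,2}(0,T;D(A)')$ with a constant depending on $\tilde R$. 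Since $W^{1,2}(0,T;D(A)') \hookrightarrow W^{\alpha,p/(p-1)}(0,T;D(A)')$ on $[0,T]$ (because $p/(p-1)\leq 2$), all of these contributions enter the desired space with $R$-independent norm.

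The heart of the matter is to show $\tilde\bE\int_0^T \|F^R(\bar u^R(s))\|_{X'}^{p/(p-1)}\,ds \leq C$ uniformly in $R$. Using the duality pairing and $\|f_i^R(D^iv)\|_{L^{p/(p-1)}}$ componentwise, it is enough to estimate, for each $i=0,\dots,k$,
\begin{equation*}
\tilde\bE\int_0^T\!\int_{\mathcal O} |f_i^R(D^i\bar u^R)|_{p/(p-1)}^{p/(p-1)}\,dx\,dt
\end{equation*}
uniformly in $R$, splitting into $\{|D^i\bar u^R|_p<R\}$ and $\{|D^i\bar u^R|_p>R\}$. On the first region $f_i^R = f_i$, and the growth condition \eqref{gc} gives $|f_i^R(x)|_{p/(p-1)}^{p/(p-1)}\lesssim |x|_p^p+1$, which is controlled by the first sum in \eqref{apriori2}. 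On the second region a direct inspection of \eqref{gen_def_fR} using \eqref{gc} at the truncated point $Rx/|x|_p$ (where the $l^p$-norm equals $R$) and the identity $x = (|x|_p/R)(Rx/|x|_p)$ yields
\begin{equation*}
|f_i^R(x)|_{p/(p-1)} \lesssim R^{p-1} + R^{p-2}|x|_p.
\end{equation*}
Raising to the $p/(p-1)$ power and using $|x|_p>R\geq 1$ together with $p/(p-1)\leq 2$ produces the key inequality $R^{p(p-2)/(p-1)}|x|_p^{p/(p-1)}\leq R^{p-2}|x|_p^2$ on $\{|x|_p>R\}$, so
\begin{equation*}
|f_i^R(x)|_{p/(p-1)}^{p/(p-1)}\lesssim R^{p-2}|x|_p^2 + R^p + 1 \quad\text{on }\{|x|_p>R\}.
\end{equation*}
The first two summands integrate to bounded quantities precisely by \eqref{apriori2} and \eqref{apriori3}, respectively. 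Summing the two regions and the index $i$ gives the claimed uniform bound, whence $\int_0^\cdot F^R(\bar u^R)\,ds$ is bounded in $W^{1,p/(p-1)}(0,T;X') \hookrightarrow W^{\alpha,p/(p-1)}(0,T;X')$ for every $\alpha\in(0,1)$.

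Collecting all five contributions and using the continuous embedding $W^{\alpha,p/(p-1)}(0,T;V')+W^{\alpha,p/(p-1)}(0,T;X')\hookrightarrow W^{\alpha,p/(p-1)}(0,T;D(A)'+X')$ yields \eqref{frac_utilR}. The tightness assertion follows from Lemma \ref{Sobolev1} with the choice $\mathcal Y_0 = V$, $\mathcal Y = H$, $\mathcal Y_1 = D(A)'+X'$, $s=2$ and $m=p/(p-1)$: the set $\mathcal B_M := \{v: \|v\|_{L^2(0,T;V)}^2 + \|v\|_{W^{\alpha,p/(p-1)}(0,T;D(A)'+X')}^{p/(p-1)}\leq M\}$ is compact in $L^2(0,T;H)$, and Chebyshev's inequality together with \eqref{apriori_gencase} and \eqref{frac_utilR} yields $\sup_{R}\tilde\bP[\bar u^R\notin \mathcal B_M]\to 0$ as $M\to\infty$, exactly as in Proposition \ref{tightnessinV}. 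The main obstacle is the pointwise analysis of $f_i^R$ on $\{|D^i\bar u^R|_p>R\}$ and the delicate exponent bookkeeping that matches the $R^{p-2}$-weighted $L^2$ control in \eqref{apriori2} to the required $L^{p/(p-1)}$ integrability of $F^R(\bar u^R)$.
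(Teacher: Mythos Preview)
Your proof is correct and follows the same architecture as the paper: isolate the stochastic integrals via Proposition \ref{noisefraction} with $\beta=p/(p-1)$, control the deterministic part through the embedding $W^{1,\cdot}\hookrightarrow W^{\alpha,\cdot}$, and identify the $F^R$ bound in $X'$ as the only nontrivial step. (Minor slip: at this stage there is no projector, so write $G(\bar u^R)$ rather than $P_nG(\bar u^R)$.)

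The one genuine difference is in how you obtain $\tilde\bE\int_0^T\|F^R(\bar u^R)\|_{X'}^{p/(p-1)}\,ds\le C$. The paper invokes \cite{TT20} and a H\"older splitting of the product form
\[
\big\|R^{\frac{p-2}{2}}|D^i\bar u^R|_p\,\mathbbm{1}_{\{|D^i\bar u^R|_p>R\}}\big\|_{L^2}^{\frac{p}{p-1}}\cdot
\big\|R^p\,\mathbbm{1}_{\{|D^i\bar u^R|_p>R\}}\big\|_{L^1}^{\frac{p-2}{2(p-1)}},
\]
followed by H\"older in expectation with exponents $\tfrac{p}{2(p-1)}+\tfrac{p-2}{2(p-1)}=1$, landing exactly on \eqref{apriori2}--\eqref{apriori3}. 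You instead give a self-contained pointwise argument: from \eqref{gen_def_fR} and \eqref{gc} you extract $|f_i^R(x)|_{p/(p-1)}\lesssim R^{p-1}+R^{p-2}|x|_p$ on $\{|x|_p>R\}$, then use the elementary comparison $R^{p(p-2)/(p-1)}|x|_p^{p/(p-1)}\le R^{p-2}|x|_p^2$ (valid since $|x|_p>R$ and both exponents differ by $(p-2)/(p-1)$) to match the available estimates. Your route is more elementary and avoids the external citation; the paper's route makes the H\"older structure explicit and is perhaps easier to adapt to variants. Both feed on exactly the same a priori information.
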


\begin{proof}
First observe that Proposition \ref{noisefraction} holds for $\beta=\frac{p}{p-1}$ and {$P_n(G(u^n))$ and $P_n(K(u^n))$ replaced by $G(\bar u^R)$ and $K(\bar u^R)$} respectively.\\
	Thus the rest of the proof follows the proof of Proposition \ref{boundinW} with slight modifications to the estimates found in \eqref{frac_B} and \eqref{frac_F} as described below. First since $\alpha<1$, $W^{\alpha,\frac{p}{p-1}}(0,T;D(A)'+X')$ is continuously embedded in $W^{1,\frac{p}{p-1}}(0,T;D(A)'+X')$ and
		we have, {as in \eqref{gn}},		
	\begin{equation} 
	\begin{aligned}
&	~\norm{g^R}^{\frac{p}{p-1}}_{W^{\alpha, \frac{p}{p-1}}(0,T;{D(A)'+X'})}\\
	&:=  \norm{\bar u_0^R -\int_0^{\cdot} \gamma A\bar u^R(s) \ud s  - \int_0^{\cdot} B_{\tilde R}(\bar u^R(s)) \ud s - \int_0^{\cdot}F^R(\bar u^R(s)) \ud s}^{\frac{p}{p-1}}_{W^{\alpha,\frac{p}{p-1}}(0,T;D(A)'+X')} \label{gR} \\
	&\lesssim  ~ \Hnorm{\bar u^R_0}^{\frac{p}{p-1}}+ \int_0^T \gamma\Hnorm{A\bar u^R(s)}^{\frac{p}{p-1}}_{{D(A)'}} \ud s + \int_0^T \Hnorm{B_{\tilde{R}}(\bar u^R(s))}_{{D(A)'}}^{\frac{p}{p-1}} \ud s + \int_0^T \Hnorm{F^R(\bar u^R(s))}^{\frac{p}{p-1}}_{{X'}} \ud s  ,
	%	=: & ~\Hnorm{u_0}^2 + \mathcal{K}_1 + \mathcal{K}_2 + \mathcal{K}_3,
	\end{aligned}
	\end{equation}
	where the hidden constants depend on $T,p$ but not on $R$.\\
	Recall that the law of $u^R_0$ is the same as that of $u_0$. Thus, using the fact that $\frac{p}{p-1}<2$, the expected values of the first two terms {above} are treated using the H\"older inequality. For example,
	\begin{align}
	\tilde \bE\int_0^T
	\gamma\Hnorm{A\bar u^R(s)}_{D(A)'}^{\frac{p}{p-1}}\ud s& \lesssim \gamma T^{\frac{p+1}{2p}}\left(\tilde \bE\int_0^T
	\Hnorm{A\bar u^R(s)}_{V'}^2\ud s\right)^{\frac{p-1}{2p}}\nonumber \\&
	\le C(T,p,\gamma)\left(\tilde \bE\int_0^T
	\Vnorm{\bar u^R(s)}^2\ud s\right)^{\frac{p-1}{2p}} \nonumber\\&	\leq C(T,p,\gamma,K_1),\label{fracAR}
	\end{align}	
where the constant $C(T,p,\gamma,K_1)>0$ is independent of $R$.\\
Next, {using} \eqref{eqn:B_bound2} and remembering that $\theta_{\tilde R}(x)$ vanishes for $x \geq 2\tilde R$, we find:
	\begin{align}
	\tilde \bE\int_0^T|B_{\tilde{R}}(\bar u^R(s),\bar u^R(s))|_{D(A)'}^{\frac{p}{p-1}} \ud s &\lesssim \tilde \bE \int_0^T \big(\theta_{\tilde{R}}(|\bar u^R|)\big)^{\frac{p}{p-1}}|\bar u^R(s)|^{^{\frac{p}{p-1}}}\|\bar u^R(s)\|^{\frac{p}{p-1}} \ud s\nonumber\\ & \lesssim \tilde{R}^{\frac{p}{p-1}}T^{\frac{p-2}{2p-2}}\left(\tilde \bE \int_0^T\|\bar u^R(s)\|^2\ud s\right)^{\frac{p}{2p-2}}  .	\label{BRbound}
	\end{align}
	Thanks to Lemma \ref{gencase_uniformboundL2} the last line in \eqref{BRbound} is bounded by a constant for $C=C(p, T,\tilde{R},K_1)>0$ independent of $R$.\\
	{Next, as proven in Theorem 4.1 in \cite{TT20}, using the H\"older inequality on the expressions in \eqref{gen_def_FR}-\eqref{gen_def_fR} we obtain
		\begin{align*}
		|F^R(\bar u^R)|_{L^{\frac{p}{p-1}}(0,T;X')}^{\frac{p}{p-1}} &\leq \||D^i\bar u^R|_p\mathbbm{1}_{\{|D^i \bar u^R_{}|_p \leq R\}}\|^{p}_{L^{p}(0,T;L^p(\mathcal{O}))}
\\	&{\hspace{-0.1in}}+ \|R^{\frac{p-2}{2}} |D^i\bar u^R|_p\mathbbm{1}_{\{|D^i \bar u^R_{}|_p \geq R\}}\|^{{\frac{p}{p-1}}}_{L^{2}(0,T;L^2(\mathcal{O}))}\| R^p \mathbbm{1}_{\{|D^i\bar u^R|_p\geq R\}}\|^{{\frac{p-2}{2(p-1)}}}_{L^1(0,T;L^{1}(\mathcal{O}))}.
		\end{align*}
		Then we take expectation of both sides and apply the H\"older inequality observing that $\frac{p}{2(p-1)}+\frac{p-2}{2(p-1)}=1$. Using \eqref{apriori2} and \eqref{apriori3}, for a constant $C=C(T,p,K_2)>0$ independent of $R$, we then obtain
		\begin{align}\label{FRbound}
		\tilde \bE\int_0^T|F^{{R}}(\bar u^R(s))|_{X'}^{\frac{p}{p-1}} \ud s \leq C(T,p,K_2).
		\end{align}}
	 Thus we obtain the {desired result \eqref{frac_utilR}}.
\end{proof}
Next define, (compare to \eqref{chiu}-\eqref{laws_n}), 
\begin{align}
\sX_1&=L^2(0,T;H)\cap \mathcal{D}([0,T];V'+X'),\label{chi1}\\
\mu_u^R(\cdot)&=\tilde \bP(\bar{u}^R\in\cdot) \in Pr\left(L^2(0,T;H)\cap
\mathcal{D}([0,T];V'+X')\right).\label{laws_R}
\end{align}

\begin{prop}\label{tightnessinchi1}
Let $\bar u^R$ be a solution to \eqref{approx_eqn}  with respect to the basis $(\tilde \Omega, \tilde{\mathcal{F}},{(\bar{\mathcal{F}}^R_t)_{t\geq 0}},\tilde \bP, {\bar W^R,\bar \pi^R } )$ {as given by Theorem \ref{prop:passage_to_limit}}. %with initial data $\bar u^R_0\in L^2(\Omega,\sF_0,\tilde \bP;H)$, 
	Then the laws $\{\mu_u^R\}_{R>R_0}$ of $\{\bar u^R\}_{R>R_0}$ are tight in the space $\sX_1$.	
\end{prop}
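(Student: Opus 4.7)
The plan is to follow the same two-step pattern used to establish Proposition \ref{tightnessinchi}: first show tightness of $\{\mu_u^R\}_{R>R_0}$ in $L^2(0,T;H)$, then show tightness in $\mathcal{D}([0,T];V'+X')$ endowed with the Skorohod topology, and finally combine the two via the argument used in Proposition \ref{tightnessinchi}. Tightness in $L^2(0,T;H)$ is already available, since Proposition \ref{ur_boundinH} combined with the compactness Lemma \ref{Sobolev1} (applied with $\mathcal{Y}_0 = V$, $\mathcal{Y}_1 = D(A)' + X'$, $\mathcal{Y} = H$, $\alpha \in (0,\tfrac12)$, $m = \tfrac{p}{p-1}$, $s = 2$) already yields it.

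The remaining task is to verify the Aldous condition in $\Xi = V' + X'$ for the approximating sequence. Integrating \eqref{approx_eqn} on $[0,t]$ we decompose
\begin{equation*}
\bar u^R(t) = \bar u^R_0 + \mathcal{N}_1(t) + \mathcal{N}_2(t) + \mathcal{N}_3(t) + \mathcal{N}_4(t) + \mathcal{N}_5(t),
\end{equation*}
where $\mathcal{N}_1, \mathcal{N}_2, \mathcal{N}_4, \mathcal{N}_5$ are the $A$, $B_{\tilde R}$, Wiener and compensated Poisson terms respectively, and $\mathcal{N}_3(t) = -\int_0^t F(\bar u^R(s))\,\ud s$ is the new nonlinear term. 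For $\mathcal{N}_1, \mathcal{N}_2, \mathcal{N}_4, \mathcal{N}_5$, the estimates \eqref{M1}, \eqref{M2}, and \eqref{M5} transcribe verbatim by using Lemma \ref{gencase_uniformboundL2} in place of Lemma \ref{uniformboundL2} and by bounding the corresponding $V'$ norms from above by the $V' + X'$ norm; this yields the desired $\delta^{\epsilon}$ bound (with $\epsilon = \tfrac12, \tfrac14, 1, 1$ respectively).

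The only genuinely new estimate is for $\mathcal{N}_3$. Here we work in $X'$ and use H\"older's inequality together with the bound \eqref{FRbound}: for any sequence of stopping times $\tau_R \le T$,
\begin{equation*}
\tilde\bE\,\norm{\mathcal{N}_3(\tau_R+\delta)-\mathcal{N}_3(\tau_R)}_{X'}
\le \tilde\bE \int_{\tau_R}^{\tau_R+\delta} |F^R(\bar u^R(s))|_{X'}\,\ud s
\le \delta^{1/p}\Bigl(\tilde\bE \int_0^T |F^R(\bar u^R(s))|_{X'}^{\frac{p}{p-1}} \ud s\Bigr)^{\frac{p-1}{p}} \le C\,\delta^{1/p},
\end{equation*}
with $C$ independent of $R$ by \eqref{FRbound}. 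This gives the Aldous condition with $\alpha = 1, \epsilon = 1/p$ in $X' \subset V'+X'$. The main obstacle, and the reason for enlarging the phase space from $V'$ to $V'+X'$, is precisely this $F^R$ term: unlike the $F^R$ of the previous section, here $F(v)$ lives only in $X'$ and has growth of order $p-1$, so the $V'$-control used for $F^R$ in Proposition \ref{tightnessinD} is no longer available.

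Combining the Aldous estimates for each $\mathcal{N}_i$ and for $\bar u^R_0$ (trivially tight in $H \hookrightarrow V'+X'$ since $(\bar u_0^R)$ has law $\mu_0$) yields tightness of $\{\mu_u^R\}_{R>R_0}$ in $\mathcal{D}([0,T]; V'+X')$. Invoking the intersection argument of Proposition \ref{tightnessinchi} (choosing compact sets $K_1 \subset \mathcal{D}([0,T];V'+X')$ and $K_2 \subset L^2(0,T;H)$ with measure $\ge 1 - \epsilon/2$ each, so $K_1 \cap K_2$ is compact in $\sX_1$ with measure $\ge 1-\epsilon$) then concludes that the laws of $\bar u^R$ are tight in $\sX_1$.
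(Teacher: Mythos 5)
Your proposal is correct and follows essentially the same route as the paper: the same decomposition, the same verbatim transfer of the estimates for the $A$, $B_{\tilde R}$ and noise terms via Lemma \ref{gencase_uniformboundL2}, the same H\"older/\eqref{FRbound} bound giving the Aldous condition with $\epsilon=1/p$ in $V'+X'$, and the same intersection argument to conclude. One small imprecision: the operator in \eqref{approx_eqn} is $F^R$ (not $F$), which does map into $V'$ by \eqref{FR4}; the reason the $X'$ bound \eqref{FRbound} is needed is that the $V'$ bound grows like $R^{p-2}$ and is therefore not uniform in $R$, not that the operator fails to take values in $V'$.
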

\begin{proof}
First we will show that the sequence of laws of $\bar u^R$ is tight in the space $\mathcal{D}([0,T];V'+X')$ by showing that {a suitable version of the} Aldous condition \eqref{Aldous} is satisfied.	 Observe that $\bar u^R$ satisfies:
	\begin{equation}
		\begin{aligned}\label{MR}
			\bar u^R(t) = &~ \bar u^R_0 - \int_0^t \gamma A \bar u^R(s)\ud s - \int_0^t B_{\tilde{R}}(\bar u^R(s),\bar u^R(s))\ud s - \int_0^t  F^R(\bar u^R(s))\ud s
			\\&~ +\int_0^tG(\bar u^R(s-))\ud  {\bar W^R}(s) + \int_{(0,t]}\int_{E_0}
			K(\bar u^R(s-),\xi){\ud\hat{\bar \pi}^R}(s,\xi)
			\\=: &~
			\bar u^R_0+\mathcal{M}_1(t)+
			\mathcal{M}_2(t)+
			\mathcal{M}_3(t)+
			\mathcal{M}_4(t)+\mathcal{M}_5(t).
		\end{aligned}
	\end{equation}
	Let $(\tau_R)_{R>R_0}$ be a sequence of stopping times where $0\le \tau_R \le T$.
	 Thanks to the similarity of the bounds in Lemma \ref{gencase_uniformboundL2} to that in Lemma \ref{uniformboundL2}, the proof that the terms $\mathcal{M}_1,\mathcal{M}_2, \mathcal{M}_4$ and $\mathcal{M}_5$ satisfy the Aldous condition follows from the proof of Proposition \ref{tightnessinD} identically. And so we obtain, for some $C=C(\tilde{R},T,\gamma,K_1)>0$, independent of $R$
	 \begin{align}
	\tilde \bE\norm{\mathcal{M}_i(\tau_R+\delta)-\mathcal{M}_i(\tau_R)}_{V'} \leq C\delta^{\frac12} \quad \text{ for } i=1,2,4,5.
	 \end{align} We will discuss the term $\mathcal{M}_3$ that is treated slightly differently here than in the previous section. 

	Using \eqref{FRbound} we obtain for some $C(T,p,K_1)>0$ independent of $R$,
		\begin{equation}
	\begin{aligned}
	\tilde \bE\left[\norm{\mathcal{M}_3(\tau_R +\delta)-\mathcal{M}_3(\tau_R)}_{X'}\right]	& \le C\tilde \bE\int_{\tau_R}^{\tau_R +\delta}|F^R(\bar u^R(s))|_{X'}\ud s
	\\
	& \le C\delta^{\frac1p}\left(\tilde \bE\int_{\tau_R}^{\tau_R+\delta}|F^R(\bar u^R(s))|^{\frac{p}{p-1}}_{X'}\ud s \right)^{\frac{p-1}{p}}
	\\
	& \le C\delta^{\frac1p} .
	\end{aligned}
	\end{equation}
This proves that the Aldous condition \eqref{Aldous} is satisfied by {$\bar u^R$  with $\Xi=V'+X'$, $\alpha=1$ and $\epsilon=\frac{1}{p}$. This implies that the laws of $(\bar u^R)_{\{R>R_0\}}$ are tight in the space $\mathcal{D}([0,T];V'+X')$}. Thus combining this result with Proposition \ref{ur_boundinH} and {proceeding as} in Proposition \ref{tightnessinchi} we conclude the proof of Proposition \ref{tightnessinchi1}.
\end{proof}
Let {$\Upsilon_1:= H \times \sX_1 \times C([0,T];U) \times \mathcal{N}^{\# *}_{[0,\infty)\times E}$}. Recall that due to Theorem \ref{skorohod1} we know that the laws of $\bar W^R$ (and similarly the laws of $\bar \pi^R$) are equal to one element.
Thus the tightness result in Proposition \ref{tightnessinchi1} together with an application of Prohorov's theorem gives us the existence of a measure $\mu^\infty_1$ on $\Upsilon_1$ such that the sequence $\mu^R$, of the joint probability laws of $({\bar u}_0^{R},
{\bar{ u}}^{R},{\bar{ W}}^{R},{\bar \pi}^{R})$ indexed by some infinite set $\Lambda \subseteq \mathbb{N}$, converges weakly to $\mu^\infty_1$.  Next we apply {Theorem \ref{skorohodtheorem}} to obtain the following strong convergence result. %{Note that the application of the Skorohod representation theorem requires that we restrict $R$ to a countable set, henceforth denoted by $\Lambda$.}  
\begin{thm}\label{skorohod2}
%We know from  that 
For any $R \in \Lambda$ there exist {$\Upsilon_1$}-valued random variables $(\bar{\bar u}_0^{R},
\bar{\bar{ u}}^{R},\bar{\bar{ W}}^{R},\bar {\bar\pi}^{R})$
and $(\bar{\bar u}_0,\bar{\bar u} ,\bar {\bar W}, \bar {\bar \pi})$, {depending on $\tilde R$}, such that
\begin{myenum}
	\item The probability laws of $(\bar{\bar u}_0^{R},
	\bar{\bar{ u}}^{R},\bar{\bar{ W}}^{R},\bar {\bar \pi}^{R})$ and  $(\bar{\bar u}_0,\bar{\bar u} ,\bar {\bar W}, \bar {\bar \pi})$ are $\mu^{R}$ and $\mu_1^\infty$, respectively,
	\item $(\bar {\bar W}^{R},\bar {\bar \pi}^{R})=(\bar {\bar{W}},\bar {\bar{\pi}}) $ everywhere on $\tilde{ \Omega}$,
	\item $(\bar{\bar u}_0^{R},
	\bar{\bar{ u}}^{R},\bar{\bar{ W}}^{R},\bar {\bar \pi}^{R})$ converges almost surely to $(\bar{\bar u}_0,\bar{\bar u} ,\bar {\bar W}, \bar {\bar\pi})$ in the topology of { $\Upsilon_1$}, i.e.,
	\begin{align}
	&{\bar{\bar u}_0^{R}\rightarrow \bar{\bar{u}}_0 \text{ in } H, \quad \tilde \bP\tn{-a.s.},\label{uork}}	\\
	&\bar{ \bar u^{}}^R\rightarrow
	\bar{\bar u} \text { in } L^2(0,T;H)\cap \mathcal{D}([0,T];V'+X'),
	\quad \tilde \bP\tn{-a.s.},\label{ur_convH}
	\end{align}
	\item  $\bar {\bar\pi} $, is a time homogeneous Poisson random measure over $(\tilde{ \Omega},\tilde { {\mathcal{F}}}, \tilde \bP)$ with intensity measure $\ud t \otimes \ud \nu$.
	
	\item
	$\bar{\bar u}^R(0)=\bar{\bar u}_0^R$ $\tilde \bP$-a.s. and %using an argument analogous to the one in \cite{Ben} and Proposition B.1 in \cite{EulerPaper} we can show that 
	$(\bar {\bar u}^{R},\bar {\bar W},\hat{\bar {\bar \pi}})$ satisfies
	the following equation $\tilde \bP$-a.s. in $V'$ for every $t\in[0,T]$
	\begin{align}\label{eq_R}
	&\bar{\bar u}^{R}(t)+\int_0^t\left[\gamma A(\bar {\bar u}^{R}(s))+ B_{\tilde{R}}(\bar {\bar u}^{R}(s),\bar {\bar u}^{R}(s))+ F^R(\bar {\bar u}^{R}(s))\right]\ud s
	\nonumber\\
	&\qquad \qquad= \bar{\bar u}^{R}_0+{\int_0^tG(\bar {\bar u }^{R}(s-))\ud \bar {\bar W}(s)+\int_{(0,t]}\int_{E_0}K(\bar {\bar u}^{R}(s-),\xi)\ud \hat{\bar {\bar \pi}}(s,\xi)},
	\end{align}
with respect to some filtration $(\bar{\bar{\mathcal{F}}}^R_t)_{t\geq 0}$.
\end{myenum}
\end{thm}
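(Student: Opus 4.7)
The plan is to deduce Theorem \ref{skorohod2} from the tightness result of Proposition \ref{tightnessinchi1} together with Prohorov's theorem and a version of the Skorohod representation theorem adapted to the non-Polish space $\Upsilon_1$. Since $\mathcal{D}([0,T];V'+X')$ is a Lusin (or sub-Polish) space and the remaining factors are separable metric spaces, I would invoke the Jakubowski--Skorohod theorem (as already used implicitly in Theorem \ref{skorohod1}) rather than the classical Skorohod theorem. This immediately gives items (i) and (iii) along a suitable subsequence $R \in \Lambda$: namely a new stochastic basis $(\tilde\Omega,\tilde{\mathcal{F}},\tilde\bP)$ and random variables $(\bar{\bar u}_0^R,\bar{\bar u}^R,\bar{\bar W}^R,\bar{\bar\pi}^R)$ and $(\bar{\bar u}_0,\bar{\bar u},\bar{\bar W},\bar{\bar\pi})$ with the prescribed laws, and such that the approximating sequence converges $\tilde\bP$-a.s.\ in the topology of $\Upsilon_1$.

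For item (ii), I would use the fact that the laws of $\bar W^R$ and $\bar \pi^R$ are independent of $R$ (each being equal to a single fixed measure). Hence the marginals $\mu^R$ restricted to $C([0,T];U) \times \mathcal{N}^{\#*}_{[0,\infty)\times E}$ are all equal to the law of $(W,\pi)$, and the weak limit agrees. One then invokes the standard refinement of the Skorohod construction that allows us to choose the copies so that $(\bar{\bar W}^R,\bar{\bar \pi}^R) = (\bar{\bar W},\bar{\bar \pi})$ pointwise on $\tilde\Omega$; this is a standard consequence of Kuratowski's theorem (noting that $C([0,T];U)$ and $\mathcal{N}^{\#*}_{[0,\infty)\times E}$ are Borel, and laws of constant-in-$R$ random variables are preserved under the representation). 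Item (iv), that $\bar{\bar\pi}$ is a time-homogeneous Poisson random measure with intensity $\ud t \otimes \ud\nu$, follows from the fact that this property is determined by the law, which by construction equals the law of $\pi$; this step mirrors the corresponding step in the proof of Theorem \ref{skorohod1} and can be justified as in \cite{Ben} or Proposition B.1 of \cite{EulerPaper}.

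For item (v), I would set up the filtration $(\bar{\bar{\mathcal{F}}}^R_t)_{t\geq 0}$ on the new probability space analogously to \eqref{Ft}, generated by $\bar{\bar u}^R(s),\bar{\bar W}(s), \bar{\bar \pi}((0,s]\times \Gamma)$ for $s\leq t$ and Borel $\Gamma \subset E_0$, augmented and right-continuified. The core issue is to transfer the SDE \eqref{approx_eqn}, satisfied by $\bar u^R$ with respect to $(\bar{\mathcal{F}}^R_t)_{t\geq 0}$, to $\bar{\bar u}^R$. The standard approach is to observe that the equality of joint laws implies that the difference between the two sides of \eqref{approx_eqn}, evaluated at $\bar{\bar u}^R$ and paired against any test function $\phi \in D(A)$, has zero expectation against any bounded continuous functional of the history of $(\bar{\bar u}^R,\bar{\bar W},\bar{\bar\pi})$ up to time $t$. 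By a monotone class argument this implies that \eqref{eq_R} holds $\tilde\bP$-a.s.\ for each $t$; see the discussion around equation (5.13) in \cite{EulerPaper} for the analogous step in the Lévy-noise setting. The finite-variation terms cause no issue since $\bar{\bar u}^R$ takes values in $H_n$ (inherited from the law), and the stochastic integrals are constructed via the same limiting procedure on both probability spaces.

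The main obstacle will be the careful handling of the stochastic integrals in passing the equation from $\bar u^R$ to $\bar{\bar u}^R$, since this requires identifying the new stochastic integrals with respect to $\bar{\bar W}$ and $\hat{\bar{\bar\pi}}$, and showing they agree with the corresponding functionals of the original sequences. As in \cite{Ben}, one approximates the integrands by simple (step) processes, uses that such simple stochastic integrals are measurable functions of the underlying paths, and then passes to the limit in probability on the new space using the a.s.\ convergence obtained via the Skorohod representation theorem. This technical step, together with the martingale-property transfer for $\bar{\bar W}$ and $\hat{\bar{\bar\pi}}$ with respect to $(\bar{\bar{\mathcal{F}}}^R_t)_{t\geq 0}$, is the delicate part; every other item is either a direct consequence of the laws being preserved or follows a template already executed in Theorem \ref{skorohod1}.
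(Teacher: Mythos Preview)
Your overall strategy is correct and matches the paper's: apply Prohorov to the tightness from Proposition \ref{tightnessinchi1}, then a Skorohod-type representation, and finally transfer the equation as in \cite{Ben} and \cite{EulerPaper}. Two points deserve correction, however.

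First, your claim that $\Upsilon_1$ is ``non-Polish'' and hence requires the Jakubowski--Skorohod theorem is mistaken. The paper notes (just after \eqref{laws_n}) that $\mathcal{D}([0,T];\mathcal{S})$ with the Skorohod topology is separable and metrizable by a complete metric whenever $\mathcal{S}$ is Polish; since $V'+X'$ is a separable Banach space, $\mathcal{D}([0,T];V'+X')$ is Polish, and so is $\sX_1$. The paper therefore applies its own Theorem \ref{skorohodtheorem} (the appendix), which is a classical Skorohod theorem on a product of two Polish spaces with the additional feature that the component with constant law (here the noise $(\bar W,\bar\pi)$) can be chosen \emph{independent of the index}. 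This gives item (ii) directly, without the Kuratowski-type argument you sketch. Your route via Jakubowski would work, but it introduces an unnecessary layer and leaves the ``constant noise copy'' step (ii) less cleanly justified than the paper's tailored theorem.

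Second, in your discussion of item (v) you write that ``$\bar{\bar u}^R$ takes values in $H_n$ (inherited from the law)''. This is wrong: $\bar u^R$ is the \emph{limit} of the Galerkin scheme (Theorem \ref{prop:passage_to_limit}), not a Galerkin approximation itself, and lives in $L^2(0,T;V)\cap L^\infty(0,T;H)$, not in any finite-dimensional $H_n$. Fortunately this slip is harmless for the argument, since the drift terms $A\bar{\bar u}^R$, $B_{\tilde R}(\bar{\bar u}^R)$, $F^R(\bar{\bar u}^R)$ are still well-defined in $V'$ by the regularity of $\bar{\bar u}^R$ and the bounds \eqref{FR4}, \eqref{eqn:B_bound2}; the transfer of the equation proceeds exactly as you indicate via \cite{Ben} and Proposition B.1 of \cite{EulerPaper}.
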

The filtrations $(\bar{\bar{\mathcal{F}}}^R_t)_{t\geq 0}$ and $(\bar{\bar{\mathcal{F}}}_t)_{t\geq0}$ are defined like in \eqref{Ft} and depend on $\tilde R$. %and we make a note here for the next section that it is dependent on the parameter $\tilde R$. 
{We note that $\bar{\bar u}_0$ might depend on $\tilde R$ but the only fact relevant to our analysis is that it has the same law as $u_0$.}
{\begin{rem}\label{diffrv}
		The random variable $(\bar{\bar u}_0^{R},
		\bar{\bar{ u}}^{R},\bar{\bar{ W}}^{R},\bar {\bar\pi}^{R})$ may or may not be the same as the random variable $({\bar u}_0^{R},
		{\bar{ u}}^{R},{\bar{ W}}^{R}, {\bar\pi}^{R})$. This solely depends on the measure $\mu_1^\infty$ which decides the refined cover of the space $\Upsilon_1$, %in a way that the covering sets have boundary measure zero, 
		in the proof of the Skorohod representation theorem {(see Theorem \ref{skorohodtheorem})}.
\end{rem}}	
{ In what follows, we will consider $R$ belonging to the denumerable set $\Lambda$.} Comparing equations \eqref{approx_eqn} and \eqref{eq_R} we can see that $\bar{\bar u}^R$ also satisfies the uniform bounds found in equation \eqref{apriori_gencase} in Lemma \ref{gencase_uniformboundL2}. %with respect to the new stochastic basis $({\tilde \Omega},\tilde {\mathcal{F}}, (\bar{ \bar{\mathcal{F}}}_t)_{t\ge 0},\tilde \bP,\bar {\bar W},\bar {\bar\pi})$. 
	Thus we obtain that $\bar {\bar u} \in L^\infty([0,T];L^2(\tilde \Omega,H)) \cap L^2([0,T]\times \tilde \Omega,\ud t \otimes  \ud \tilde \bP;V)$ and {there exists} some subsequence $R \rightarrow \infty$, {such that}
\begin{align}\label{conv_ur}
\bar {\bar u}^R &\rightharpoonup \bar {\bar u} \text{ weakly in } L^2([0,T]\times \tilde \Omega,\ud t \otimes \ud \tilde \bP;V),\\
\bar {\bar u}^R &\rightharpoonup \bar {\bar u} \text{  weak$^*$  in } L^\infty([0,T];L^2(\tilde \Omega,H)).
\end{align}
Bounds in \eqref{FRbound} also give us, {for a subsequence},
\begin{equation}\label{YR}
Y^{R,\tilde{R}}= F^R(\bar {\bar u}^{R})\rightharpoonup  Y^{\tilde{R}} \text{ weakly in }
L^{\frac{p}{p-1}}([0,T] \times\tilde \Omega,\ud t\otimes \ud \tilde\bP;X').
\end{equation}

{We denote $\bar{\bar {\mathscr{S}}}=({\tilde \Omega},\tilde {\mathcal{F}}, (\bar{ \bar{\mathcal{F}}}_t)_{t\ge 0},\tilde \bP,\bar {\bar W},\bar {\bar\pi})$. Note that our aim now is to show that $(\bar{\bar {\mathscr{S}}},\bar{\bar u})$ is a martingale solution of \eqref{approx_eqn2} which includes showing that $ Y^{\tilde{R}}=F({\bar {\bar u}})$}.
For that purpose we begin by defining the truncated functions $\xi^R_i:\tilde{ \Omega}\times [0,\infty)\times \mathcal{O} \rightarrow \mathbb{R}^{d^{i+1}} $ for $i=0,...,k$ as follows,
\begin{align}\label{def_xi}
\xi_i^R(\omega,t,x)= D^i\bar {\bar u}^R \mathbbm{1}_{\{|D^i \bar {\bar u}^R|_p \leq R\}}.
\end{align}
Here we recall the notation $|\cdot|_{p} \, :=|\cdot|_{l^{p}(\R^{d^{i+1}})}$ %and also denote ${R_x}:=R\frac{{x}}{|x|_{{p}}}$
given $x \in \R^{d^{i+1}}$ and $i=0,1,...,k$.
%Observe that $\bar{\bar u}^R$ also satisfies the energy estimates in Lemma \ref{gencase_uniformboundL2} .
Due to the availability of bounds independent of $R$ for the first left hand side term in \eqref{apriori2} we can see that for some subsequence $R\rightarrow \infty$ and some $\eta_i$, $i=0,...,k$, we have that
\begin{align}\label{conv_xir}
\xi^R_i \rightharpoonup \eta_i \text{ weakly in } L^p([0,T]\times \tilde \Omega, \ud t \otimes \ud \tilde\bP;(L^p(\mathcal{O}))^{d^{i+1}}).
\end{align}

Observe also that \eqref{apriori3} implies,
\begin{align}\label{conv_1r}
\mathbbm{1}_{\{|D^i \bar {\bar u}^R(x,t)|_p<R\}} \rightarrow 1 \text{ in probability for almost every $t\in [0,T]$ and $x \in \mathcal{O}$ as } R \rightarrow \infty.
\end{align}
An application of the Lebesgue dominated convergence theorem and \eqref{conv_ur}, \eqref{conv_xir} and \eqref{conv_1r} tells us that $\xi^R_i \rightharpoonup D^i\bar {\bar u} \text{ weakly in } L^2([0,T]\times \tilde \Omega, \ud t \otimes \ud \tilde\bP;(L^2(\mathcal{O}))^{d^{i+1}})$. This, due to \eqref{conv_xir}, further implies that in fact $\eta_i=D^i\bar {\bar u}$ for $i=0,...,k$ and thus
\begin{align}
\bar {\bar u} &\in L^p([0,T]\times \tilde \Omega, \ud t \otimes \ud \tilde\bP;X) \cap  L^2([0,T]\times \tilde \Omega,\ud t \otimes  \ud \tilde \bP;V),\label{uinlp}\\
\xi^R_i &\rightharpoonup D^i \bar {\bar u}\text{ weakly in } L^p([0,T]\times \bar { \Omega}, \ud t \otimes \ud \tilde\bP;(L^p(\mathcal{O}))^{d^{i+1}}) \text{ as }R \rightarrow \infty.\label{conv_xir_u}
\end{align}
Now we can prove the main result of this section:
\begin{thm}\label{gencasetheorem} 
		Suppose that $\bE|u_0|^2 <\infty$. Then $\bar{\bar u}$ (denoted by $\bar{\bar u} ^{\tilde R}$ in the later section) solves \eqref{approx_eqn2}, with respect to the stochastic basis $({\tilde \Omega},\tilde {\mathcal{F}}, (\bar{ \bar{\mathcal{F}}}_t)_{t\ge 0},\tilde \bP,\bar {\bar W},\bar {\bar\pi})$ with initial condition $\bar {\bar {u}}_0$ that has the same law as $u_0$.
\end{thm}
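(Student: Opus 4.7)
The proof will closely parallel that of Theorem \ref{prop:passage_to_limit}, with $R \to \infty$ now playing the role of $n \to \infty$ and $\tilde R$ held fixed. The two main steps are: (i) pass to the limit in each term of equation \eqref{eq_R} to obtain the analogue of \eqref{eqn:bar_u_soln_w/Y} with $Y^{\tilde R}$ in place of $F(\bar{\bar u})$; and (ii) identify $Y^{\tilde R} = F(\bar{\bar u})$ via Minty's monotonicity method.

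For step (i), I would test \eqref{eq_R} against a fixed $\phi \in D(A) \cap X$ and integrate. The linear $A$-term passes via \eqref{ur_convH} exactly as in \eqref{conv_A}. Since $\tilde R$ is fixed, the truncated bilinear term $B_{\tilde R}(\bar{\bar u}^R)$ is handled using the Lipschitz continuity of $\theta_{\tilde R}$, the estimates \eqref{eqn:B_bound2}--\eqref{eqn:B_bound3} and \eqref{ur_convH}, following the argument for \eqref{conv_B}, with the simplification that no projector $P_n$ is present. The $F^R$-term passes to $Y^{\tilde R}$ by the weak convergence \eqref{YR}, tested against $\phi \in X$. The noise terms converge thanks to \eqref{eqn:G,K_H_Lipschitz}, the It\^o isometry, the convergence $\bar{\bar u}^R \to \bar{\bar u}$ in $L^2(0,T;H)$ $\tilde\bP$-a.s., the uniform moment bounds from Lemma \ref{gencase_uniformboundL2}, and the Vitali convergence theorem, as in Lemmas \ref{conv_G}--\ref{conv_K}. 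Assembling these ingredients yields, for every $t \in [0,T]$, $\tilde\bP$-a.s. in $V' + X'$, the identity $\bar{\bar u}(t) + \int_0^t [\gamma A \bar{\bar u}(s) + B_{\tilde R}(\bar{\bar u}(s),\bar{\bar u}(s)) + Y^{\tilde R}(s)]\,ds = \bar{\bar u}_0 + \int_0^t G(\bar{\bar u}(s))\,d\bar{\bar W}(s) + \int_{(0,t]}\int_{E_0} K(\bar{\bar u}(s-),\xi)\,d\hat{\bar{\bar \pi}}(s,\xi)$, and in particular $\bar{\bar u}(0) = \bar{\bar u}_0$ $\tilde\bP$-a.s.

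For step (ii), the Minty argument proceeds as at the end of the proof of Theorem \ref{prop:passage_to_limit}. I would apply It\^o's formula to $e^{-\rho s}|\bar{\bar u}^R(s)|^2$ using \eqref{eq_R}, as in \eqref{ito_exp}--\eqref{eqn2}. For any test process $\Phi$ with $\sup_{i=0,\ldots,k}\|D^i \Phi\|_{L^\infty(\tilde\Omega\times[0,T]\times\mathcal{O})} < \infty$, the monotonicity \eqref{FR1} and the linearity of $A$ give $\iprod{F^R(\bar{\bar u}^R)}{\bar{\bar u}^R} \geq \iprod{F^R(\bar{\bar u}^R)}{\Phi} + \iprod{F^R(\Phi)}{\bar{\bar u}^R - \Phi}$. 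I would then take expectations, multiply by a nonnegative $\varphi \in L^\infty([0,T];\mathbb{R})$, and pass to the limit $R \to \infty$ along the chosen subsequence. The essential point is that, for such bounded $\Phi$, the construction \eqref{gen_def_FR}--\eqref{gen_def_fR} forces $F^R(\Phi) = F(\Phi)$ as soon as $R$ exceeds the $L^\infty$-bound, so all $\Phi$-only terms stabilize, while $\iprod{F^R(\bar{\bar u}^R)}{\Phi}$ converges by \eqref{YR}. Dropping the nonnegative $G$- and $K$-contributions just as in the derivation of \eqref{minty} leads to
\begin{align*}
\tilde\bE \int_0^T \varphi(t) \int_0^t e^{-\rho s} \Bigl( 2\gamma \Vdualpair{A(\bar{\bar u} - \Phi)}{\bar{\bar u} - \Phi} + 2\iprod{F(\Phi) - Y^{\tilde R}}{\Phi - \bar{\bar u}} + \rho|\bar{\bar u}(s) - \Phi(s)|^2 \Bigr) \,ds\,dt \geq 0.
\end{align*}
Finally, setting $\Phi = \bar{\bar u} \pm \lambda \zeta v$ with $\lambda > 0$, $v \in V \cap X$ bounded (hence with bounded derivatives up to order $k$), and $\zeta \in L^\infty([0,T] \times \tilde\Omega;\mathbb{R})$, dividing by $\lambda$, letting $\lambda \to 0$, and invoking the hemicontinuity \eqref{F2}, I conclude $Y^{\tilde R} = F(\bar{\bar u})$ $\ud \tilde\bP \otimes \ud t$-a.e., after a density argument to extend the test direction from bounded elements of $V \cap X$ to arbitrary $v \in X$.

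The main obstacle is the $R$-dependence of $F^R$ in the Minty step, which was absent in the $n \to \infty$ case of Theorem \ref{prop:passage_to_limit} where $F^R$ was fixed. The resolution rests on two features of the construction from \cite{TT20}: first, that $F^R(\Phi) = F(\Phi)$ eventually for test processes $\Phi$ with $L^\infty$-bounded derivatives up to order $k$, which allows the $\Phi$-only terms to stabilize; second, that the equicoercivity \eqref{FR3} together with the sharper lower bound \eqref{FRlower} provide the $R$-uniform control on $\iprod{F^R(\bar{\bar u}^R)}{\bar{\bar u}^R}$ needed to pass to the limit in the It\^o expansion. A secondary technical point is the justification of the It\^o formula for $|\bar{\bar u}^R|^2$ when $\bar{\bar u}^R \in L^2([0,T];V) \cap L^p([0,T];X)$, which is covered by the variational framework of \cite{KR07} and \cite{Metivier82} already cited after \eqref{ito}.
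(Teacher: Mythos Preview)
Your Step 1 is essentially correct and matches the paper. The gap is in Step 2.

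You restrict the Minty comparison to test processes $\Phi$ with $\sup_i \|D^i\Phi\|_{L^\infty(\tilde\Omega\times[0,T]\times\mathcal O)}<\infty$, precisely so that $F^R(\Phi)=F(\Phi)$ for all $R$ beyond the bound. That part is fine. But the Minty trick requires you to insert $\Phi=\bar{\bar u}\pm\lambda\zeta v$, and $\bar{\bar u}$ itself is only in $L^p([0,T]\times\tilde\Omega;X)\cap L^2([0,T]\times\tilde\Omega;V)$; its derivatives $D^i\bar{\bar u}$ are \emph{not} in $L^\infty$. So the inequality you derived does not apply to this choice of $\Phi$, and your density argument at the end is stated only for the test direction $v$, not for the full $\Phi$. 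Extending the inequality from bounded $\Phi$ to $\Phi\in L^p(X)$ by approximation is not automatic: it would require passing to the limit in terms like $\iprod{F(\Phi_m)}{\Phi_m-\bar{\bar u}}$ along a strongly $L^p(X)$-convergent sequence, together with the corresponding $A$- and $H$-terms, and you have not addressed this.

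The paper circumvents this difficulty by a genuinely different device. Rather than restricting the test process, it introduces the truncations $\xi^R_i:=D^i\bar{\bar u}^R\,\mathbbm{1}_{\{|D^i\bar{\bar u}^R|_p\le R\}}$ and proves four limit relations (Lemma \ref{4claims}) valid for \emph{arbitrary} $\phi\in L^p([0,T]\times\tilde\Omega;X)$. The key inputs are the refined a priori bounds \eqref{apriori2}--\eqref{apriori3}, the explicit structure \eqref{gen_def_fR} of $f^R_i$, and a comparison of the It\^o expansions of $e^{-\rho t}|\bar{\bar u}^R|^2$ and $e^{-\rho t}|\bar{\bar u}|^2$ to obtain the crucial $\limsup$ inequality \eqref{evol_claim1}. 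Because these four claims hold for general $\phi$, the monotonicity inequality \eqref{mono_FR} can be passed to the limit and the Minty substitution $\phi=\bar{\bar u}\pm\lambda\zeta v$ applied without any $L^\infty$ restriction.
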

\begin{proof}
	Here again {as for Theorem \ref{prop:passage_to_limit}} we will pass to the limit $R \rightarrow \infty$ and prove Theorem \ref{gencasetheorem} in two steps. {We will skip some details very similar to the proof of Theorem \ref{prop:passage_to_limit}.}\\	
\underline{Step 1}:\\
	We first show, {as done before in \eqref{step1}}, that $\bar {\bar u} $ solves the following equation $\tilde \bP$-a.s. and for a.e. $t \in [0,T]$
	\begin{align}\label{step1_R}
	&\bar {\bar u}(t)+\int_0^t[\gamma A(\bar {\bar u}(s))+B_{\tilde{R}}(\bar {\bar u}(s),\bar {\bar u}(s))+Y^{\tilde{R}}(s)]\ud s\nonumber\\
	&\hspace{1.5in}=\bar {\bar u}_0+\int_0^tG(\bar {\bar u}(s))\ud\bar {\bar W}(s)+\int_{(0,t]} \int_{E_0}K(\bar {\bar u}(s-),\xi)\ud\hat{\bar {\bar \pi}}(s,\xi),
	\end{align}
	where $Y^{\tilde{R}}$ was defined in \eqref{YR}. The proof of \eqref{step1_R} follows that of \eqref{step1} in Theorem \ref{prop:passage_to_limit}.\\ %Instead of using \eqref{FR4} in \eqref{F2Rtil} we use \eqref{FRbound}.		
  Note also that, thanks to Theorem \ref{skorohod1} and Theorem \ref{skorohod2}, we have that $\bar{\bar u}_0$ has the same law as $u_0$.\\ 
   
\underline{Step 2}:  \\In this step we will show that
\begin{align*}
&Y^{\tilde{R}}=F(\bar{\bar{u}}), \quad  \ud   \tilde\bP \otimes \ud t \tn{-a.e.}
\end{align*}
For that purpose we prove the following lemma for the truncated functions defined in \eqref{def_xi}:
\begin{lem}\label{4claims}
The following four convergences hold true for any $ \phi\in  L^{p}([0,T]\times \tilde \Omega,\ud t\otimes \ud \tilde\bP;X)$, for any $\varphi \in L^\infty([0,T],\ud t;\mathbb{R})$ such that $\varphi \geq 0$ and {for $\rho>0$ appearing in \eqref{eqn:G,K_H_growth}-\eqref{eqn:G,K_H_Lipschitz}},
\begin{subequations}
	\begin{align}
		&\limsup_{R \rightarrow \infty}\sum_{i=0}^k \tilde \bE \int_0^T\varphi(t)\int_0^t e^{-\rho s} \, ({f^R_i(\xi_i^R(s))},{\xi^R(s)}) \ud s \ud t \leq \tilde \bE \int_0^T\varphi(t)\int_0^t e^{-\rho s}   \iprod{Y^{\tilde{R}}}{ \bar{\bar u}(s)} \ud s \ud t, \label{evol_claim1}\\
	&\limsup_{R \rightarrow \infty} \sum_{i=0}^k\tilde \bE \int_0^T\varphi(t)\int_0^t e^{-\rho s}  \, \iprod{f^R_i(D^i\phi(s))}{\xi_i^R(s)} \ud s \ud t = \tilde \bE\int_0^T\varphi(t)\int_0^t e^{-\rho s}   \iprod{F(\phi(s))}{\bar{\bar u}(s)}\ud s \ud t, \label{evol_claim2}\\
	&\limsup_{R \rightarrow \infty} \sum_{i=0}^k\tilde \bE\int_0^T\varphi(t)\int_0^t e^{-\rho s}   \, \iprod{f^R_i(\xi_i^R(s))}{D^i\phi(s)} \ud s \ud t= \tilde \bE\int_0^T\varphi(t)\int_0^t e^{-\rho s}  \iprod{Y^{\tilde{R}}}{\phi(s)} \ud s \ud t,\label{evol_claim3}\\
		&\limsup_{R \rightarrow \infty} \sum_{i=0}^k\tilde \bE \int_0^T\varphi(t)\int_0^t e^{-\rho s}   \, \iprod{f^R_i(D^i\phi(s))}{D^i\phi(s)}\ud s \ud t=\tilde \bE \int_0^T\varphi(t)\int_0^t e^{-\rho s}   \iprod{F(\phi(s))}{ \phi(s)} \ud s \ud t.\label{evol_claim4}
	\end{align}
\end{subequations}
%in $L^1(\tilde \Omega\times [0,T])$.
\end{lem}
\begin{proof} Here we describe the outline of the proofs of the above four claims. As mentioned earlier, the reader is referred to {\cite{TT20} for an} in depth description in the deterministic setting.	\\
	
\textit{Proof of claim (\ref{evol_claim1})}: First we can observe that for some $K_3>0$ independent of $R$ we have
	\begin{align*}
| \sum_{i=0}^k \tilde \bE\int_0^T\varphi(t)\int_0^te^{-\rho s}(f^R_i(\xi_i^R) , \xi_i^R) \ud s \ud t| \leq C(T,\|\varphi\|_{L^\infty})\tilde \bE\sum_{i=0}^k \|\xi^R_i\|^p_{L^p([0,T]\times \tilde \Omega;\, \ud t \otimes \ud \tilde\bP;\, (L^p(\mathcal{O}))^{d^{i+1}})} \leq K_3.
	\end{align*}
	This implies that, up to a subsequence if necessary, the limit\\ $\lim_{R \rightarrow \infty}\sum_{i=0}^k \tilde \bE\int_0^T\varphi(t)\int_0^te^{-\rho s}(f^R_i(\xi_i^R) , \xi_i^R) \ud s \ud t$ exists. Next we can see that for any $R>1$ we have
\begin{align}
\int_0^te^{-\rho s}(f^R_i(\xi_i^R) , \xi_i^R) \ud s = \int_0^te^{-\rho s}(f^R_i(\xi_i^R),D^i \bar{\bar u}^R) \ud s &\leq \int_0^te^{-\rho s} (f^R_i(D^i \bar{\bar u}^R),D^i \bar{\bar u}^R) \ud s.\label{evol_claim1_1}
%	\text{and  }	 &\geq  \int_0^T(f^R_i(\xi_i^R),D^i u^R)dt \label{evol_claim1_2}
\end{align}
Summing both sides in $i$ we obtain that,
\begin{align}\label{evol_claim1_2}
\sum_{i=0}^k\int_0^te^{-\rho s}(f^R_i(\xi_i^R(s)) , \xi_i^R(s)) \ud s & \leq \int_0^t e^{-\rho s}(F^R(\bar{\bar u}^R(s)),\bar{\bar u}^R(s))\ud s.
\end{align} 
Now {we form an equation similar to \eqref{mult_exp}}, then apply It\^o's formula to the process in \eqref{eq_R}, take expectation of both sides and finally use \eqref{evol_claim1_1} to obtain, %$\tilde \bP$-a.s.,
	\begin{align*}
& \sum_{i=0}^k  \tilde \bE\int_0^te^{-\rho s}(f^R_i(\xi_i^R(s)),\xi_i^R(s)) \ud s  \leq \tilde \bE \int_0^te^{-\rho s}(F^R(\bar {\bar u}^R(s)),\bar {\bar u}^R(s))\ud s\\
&\qquad \qquad=  \tilde \bE\Big[\int_0^te^{-\rho s}\left({\frac12\|G(\bar {\bar u }^{R}(s))\|^2_{L_2(U_0,H)}}\ud s- \gamma({A(\bar {\bar u}^{R}(s))},{\bar {\bar u}^{R}(s)})\ud s-	\frac12{\rho} | {\bar{\bar u}^{R}(s)}|^2\ud s\right)\\
& \qquad \qquad +\frac12 \int_{(0,t]}{\int_{E_0} e^{-\rho s}|K(\bar {\bar u}^{R}(s),\xi)|^2}\ud {\bar {\bar\pi}}(s,\xi)\Big]-\frac12 \tilde \bE \left(e^{-\rho t}|\bar{\bar u}^R(t)|-|\bar{\bar u}^R_0|^2\right).
\end{align*}Using $|a|^2=|a-b|^2 +2\iprod{a}{b} -|b|^2$ we obtain that the following {is valid} for any $0\leq \varphi\in L^\infty([0,T],\ud t;\mathbb{R})$
\begin{align*}
\tilde \bE\int_0^T \varphi(t) &\int_0^te^{-\rho s}(F^R(\bar {\bar u}^R(s)),\bar {\bar u}^R(s))\ud s\ud t\\
&=  \tilde \bE\int_0^T\varphi(t)\int_0^te^{-\rho s}\Big[{\frac12\|G(\bar {\bar u }^{R}(s))-G(\bar {\bar u }(s))\|^2_{L_2(U_0,H)}}\ud s-\frac12{\|G(\bar {\bar u }(s))\|^2_{L_2(U_0,H)}}\ud s\\
&+ \frac12{\int_{E_0}|K(\bar {\bar u}^{R}(s),\xi)-K(\bar {\bar u}(s),\xi)|^2}\ud \nu(\xi)\ud s-\frac12{\int_{E_0}|K(\bar {\bar u}(s),\xi)|^2}\ud \nu(\xi)\ud s\\
&+ \iprod{G(\bar {\bar u }(s))}{G(\bar {\bar u }^R(s))}_{L_2(U_0,H)}\ud s+\iprod{K(\bar {\bar u}(s),\xi)}{K(\bar {\bar u}^R(s),\xi)} \ud\nu(\xi)\ud s\\
&+\gamma({A(\bar {\bar u}(s))},\bar {\bar u}(s))\ud s-\gamma\iprod{{A(\bar {\bar u}^{R}(s))- A(\bar {\bar u}(s))}}{{\bar {\bar u}^{R}(s)}-\bar {\bar u}(s)}\ud s-\gamma\iprod{A(\bar {\bar u}^{R}(s))}{\bar {\bar u}(s)}\ud s\\
&- \gamma\iprod{A(\bar {\bar u}(s))}{\bar {\bar u}^R(s)}\ud s
-\frac12 \rho \left(|\bar {\bar u}^{R}(s)-\bar {\bar u}(s)|^2-|\bar {\bar u}(s)|^2 +2\iprod{\bar {\bar u}^{R}(s)}{\bar {\bar u}(s)}\right)\ud s\Big]\ud t \\
&-\frac12 \tilde \bE \int_0^T\varphi(t)(e^{-\rho t}|\bar{\bar u}^R(t)|-|\bar{\bar u}^R_0|^2)\ud t.
\end{align*}
Using the fact that $-({A(\bar {\bar u}^{R}(s))-A(\bar {\bar u}(s))},{\bar {\bar u}^{R}(s)}-\bar {\bar u}(s)) \leq 0$ and applying an argument that follows the equation \eqref{nk} we obtain the following as $R \rightarrow \infty$
\begin{align}
&{\limsup_{R\rightarrow \infty}\sum_{i=0}^k  \tilde \bE\int_0^T\varphi(t)\int_0^te^{-\rho s}(f^R_i(\xi_i^R),\xi_i^R) \ud s \ud t}\nonumber\\
&\qquad \qquad \leq  \tilde \bE\int_0^T\varphi(t)\int_0^te^{-\rho s}\Big[-\frac12{\|G(\bar {\bar u }(s))\|^2_{L_2(U_0,H)}}\ud s-\frac12{\int_{E_0}|K(\bar {\bar u}(s),\xi)|^2}\ud\nu(\xi)\ud s\nonumber\\
&\qquad \qquad + \iprod{G(\bar {\bar u }(s))}{G(\bar {\bar u }(s))}_{L_2(U_0,H)} \ud s+\iprod{K(\bar {\bar u}(s),\xi)}{K(\bar {\bar u}(s),\xi)}\ud\nu(\xi)\ud s\nonumber\\
&\qquad \qquad -\gamma({A(\bar {\bar u}(s))},\bar {\bar u}(s))\ud s
-\frac12\rho (-|\bar {\bar u}(s)|^2 +2\iprod{\bar {\bar u}(s)}{\bar {\bar u}(s)})\ud s\Big]\ud t\nonumber\\
&\qquad \qquad-\frac12 \tilde \bE \int_0^T\varphi(t)(e^{-\rho t}|\bar{\bar u}(t)|-|\bar{\bar u}_0|^2)\ud t.
\end{align}
That is,
\begin{align}
&\hspace{-0.5in}{\limsup_{R\rightarrow \infty}\sum_{i=0}^k  \tilde \bE\int_0^T\varphi(t)\int_0^te^{-\rho s}(f^R_i(\xi_i^R),\xi_i^R) \ud s \ud t}
\nonumber\\&\leq  \tilde \bE\int_0^T\varphi(t)\int_0^te^{-\rho s}\Big[\frac12{\|G(\bar {\bar u }(s))\|^2_{L_2(U_0,H)}}\ud s+\frac12{\int_{E_0}|K(\bar {\bar u}(s),\xi)|^2}\ud \nu(\xi)\ud s\nonumber\\
&-\gamma({A(\bar {\bar u}(s))},\bar {\bar u}(s))\ud s
-\frac12\rho |\bar {\bar u}(s)|^2 \ud s\Big]\ud t -\frac12 \tilde \bE \int_0^T\varphi(t)(e^{-\rho t}|\bar{\bar u}(t)|-|\bar{\bar u}_0(t)|^2)\ud t\nonumber\\
&=  \tilde \bE\int_0^T\varphi(t)\int_0^te^{-\rho s}(Y^{\tilde R},\bar{\bar u})\ud s \ud t.\label{gR2}
\end{align}
The last step follows from similarly applying, {to \eqref{step1_R}}, the argument in \eqref{mult_exp} and the It\^o formula using the function $|\bar{\bar u}|^2$.\\
Thus, as claimed in \eqref{evol_claim1}, we have
\begin{align} 0 \leq \limsup_{R \rightarrow \infty} \sum_{i=0}^k  \tilde \bE\int_0^T\varphi(t)\int_0^te^{-\rho s}(f^R_i(\xi_i^R),\xi_i^R) \ud s \ud t \leq  \tilde \bE\int_0^T\varphi(t)\int_0^te^{-\rho s}(Y^{\tilde R},\bar{\bar u})\ud s \ud t.\end{align}
\textit{Proof of claim (\ref{evol_claim2})}:
Going back to the definition of $f^R_i$ and $\xi^R_i$ in \eqref{gen_def_fR} and \eqref{def_xi} we see that
\begin{align*}
 &\sum_{i=0}^k \tilde \bE\int_0^T\varphi(t)\int_0^te^{-\rho s} |  \iprod{f^R_i(D^i\phi)}{\xi_i^R} - \iprod{f_i(D^i\phi)}{D^i\bar{\bar u}}|\ud s\ud t\\
  &\qquad \leq \sum_{i=0}^k   T|\varphi|_{L^\infty}\tilde \bE \int_0^T|  \iprod{f^R_i(D^i\phi)}{\xi_i^R} - \iprod{f_i(D^i\phi)}{D^i\bar{\bar u}}|\ud s \\
 & \qquad \leq  \sum_{i=0}^k  T|\varphi|_{L^\infty}\tilde \bE \int_0^T|\left(f_i(D^i \phi), \xi_i^R -D^i \bar{\bar u}\right)|+ |\left(f_i^R(D^i \phi)-f_i(D^i \phi),\xi_i^R\right)| \ud s\\&\qquad =:I^b_1+I^b_2.
\end{align*}
{We can see from \eqref{gc}$_2$ that $f_i(D^i\phi) \in L^{p'}([0,T]\times \tilde \Omega, \ud t \otimes \ud \tilde\bP;(L^{p'}(\mathcal{O}))^{d^{i+1}}) $ since $ \phi\in  L^{p}([0,T]\times \tilde \Omega,\ud t\otimes \ud \tilde\bP;X)$.} And thus $I^b_1\rightarrow 0$ as $R \rightarrow \infty$ thanks to \eqref{conv_xir_u}. \\
Next using \eqref{gc} we observe that
\begin{align*}
I^b_2 &\leq T|\varphi|_{L^\infty}\sum_{i=0}^k \tilde \bE \int_0^T\int_{\mathcal{O}} |D^i\phi|_p^{p-1}|\xi_i^R|_p\mathbbm{1}_{\{|D^i\phi|_p>R\}}\ud x\ud s.
\shortintertext{Since $|\xi^R_i|_p<R$ by definition, we obtain the following bounds on the sets ${\{|D^i\phi|_p>R\}}$, }
\lim_{R \rightarrow \infty} I^b_2 &\leq  T|\varphi|_{L^\infty}\limsup_{R \rightarrow \infty}\sum_{i=0}^k\tilde \bE \int_0^T\int_{\mathcal{O}} |D^i\phi|_p^p\mathbbm{1}_{\{|D^i\phi|_p>R\}}\ud x \ud s.
%&\hspace{2in}\leq T\limsup_{R \rightarrow \infty}\sum_{i=0,1}\tilde \bE
\end{align*}
Observe that $ |D^i\phi|_p^p\mathbbm{1}_{\{|D^i\phi|_p>R\}}$ is dominated by $|D^i\phi|_p^p \, \in L^1( \tilde \Omega\times [0,T] \times \mathcal{O})$ and $\mathbbm{1}_{\{|D^i\phi|_p>R\}} \rightarrow 0$ a.e. on $\tilde \Omega\times [0,T] \times \mathcal{O}$. { Therefore,} using the Lebesgue dominated convergence theorem we obtain $I^b_2 \rightarrow 0$. Hence we have,
\begin{align}
&\limsup_{R \rightarrow \infty}\sum_{i=0}^k \tilde \bE\int_0^T\varphi(t)\int_0^te^{-\rho s} |  \iprod{f^R_i(D^i\phi)}{\xi_i^R} - \iprod{f_i(D^i\phi)}{D^i\bar{\bar u}}|\ud s\ud t=0.
\end{align}
Recall from \eqref{gen_Ff} that since $\phi, \bar{\bar u} \in  L^{{p}{}}([0,T]\times \tilde \Omega,\ud t\otimes \ud \tilde\bP;X)$ 
we have $ \iprod{F(\phi)}{\bar{\bar u}}=\sum_{i=0}^k \iprod{f_i(D^i\phi)}{D^i\bar{\bar u}} $ and thus \eqref{evol_claim2} follows.\\
\textit{Proof of claim (\ref{evol_claim3})}:
	We write
	\begin{align}\label{claimc}
\sum_{i=0}^k  \iprod{f^R_i(\xi_i^R)}{D^i\phi} &=  \iprod{F^R(\bar{\bar u}^R)}{\phi} -I^c_1(s) 
	\end{align}	
	where $I^c_1(s)$ is given by,
	\begin{align*}
\sum_{i=0}^k \int_{\mathcal{O}}\Big(f_i\left({\frac{R D^i \bar{\bar u}^R}{|D^i \bar{\bar u}^R|_p}}\right) \cdot D^i \phi+ \left(D^i \bar{\bar u}^R- {\frac{R D^i \bar{\bar u}^R}{|D^i \bar{\bar u}^R|_p}}\right)  D^2 \mathscr{J}_i\left({\frac{R D^i \bar{\bar u}^R}{|D^i \bar{\bar u}^R|_p}}\right) \cdot D^i \phi \Big)  \mathbbm{1}_{\{|D^i \bar{\bar u}^R|_p>R\}}\ud x. 
\end{align*}
	Using the H{\"o}lder inequality and \eqref{gc} we obtain
	\begin{align*}
	\tilde \bE\int_0^T\varphi(t)\int_0^te^{-\rho s}|I^c_1(s)|\ud s\ud t \leq& T|\varphi|_{L^\infty}\sum_{i=0}^k R^{\frac{p-2}{2}}\Bigg(\tilde \bE\int_0^T \int_{\mathcal{O}}R^{p-2}|D^i\bar{\bar u}^R|_p^2\mathbbm{1}_{\{|D^i\bar{\bar u}^R|_p>R\}}\ud x\ud s\Bigg)^{\frac12}\\
	&\hspace{-0.7in}\Bigg(\tilde \bE\int_0^T \int_{\mathcal{O}}|D^i\phi|_p^p\mathbbm{1}_{\{|D^i\bar{\bar u}^R|_p>R\}}\ud x\ud s\Bigg)^{\frac1p}
\Bigg(\tilde \bE\int_0^T \int_{\mathcal{O}}\mathbbm{1}_{\{|D^i\bar{\bar u}^R|_p>R\}}\ud x\ud s\Bigg)^{\frac{p-2}{2p}}.
	\end{align*}
Using \eqref{apriori_gencase}, \eqref{apriori2} and \eqref{apriori3} we obtain, for some constant $C=C(p,K_2)>0$, that
	\begin{align*}
	\tilde \bE\int_0^T\varphi(t)\int_0^te^{-\rho s}|I^c_1(s)|\ud s\ud t \leq C|\varphi|_{\infty}T\sum_{i=0}^k \Bigg(\tilde \bE\int_0^T \int_{\mathcal{O}}|D^i\phi|_p^p\mathbbm{1}_{\{|D^i\bar{\bar u}^R|_p>R\}}\ud x\ud s\Bigg)^{\frac1p}.
	\end{align*}

Observe that$|D^i\phi|_p^p\mathbbm{1}_{\{|D^i\bar{\bar u}^R|_p>R\}}$ is dominated by $|D^i\phi|_p^p \in L^1(\mathcal{O} \times \tilde \Omega\times[0,T])$. Thanks to \eqref{conv_1r} we can apply the Lebesgue dominated convergence theorem and, thinning the sequence if required, we obtain that
		\begin{align}\label{Ito0}
	\tilde \bE\int_0^T\varphi(t)\int_0^te^{-\rho s}|I^c_1(s)|\ud s\ud t \rightarrow 0 \text{ as }R \rightarrow \infty.
	\end{align}
Hence, as claimed, from \eqref{claimc} and \eqref{Ito0} we obtain
	\begin{align}
\limsup_{R \rightarrow \infty}\sum_{i=0}^k  \tilde \bE\int_0^T\varphi(t)\int_0^te^{-\rho s} \iprod{f^R_i(\xi_i^R)}{D^i\phi} \ud s \ud t= \tilde \bE\int_0^T\varphi(t)\int_0^te^{-\rho s} \iprod{Y^{\tilde{R}}}{\phi}\ud s\ud t. \end{align}
The proof of \eqref{evol_claim4} follows closely the proof of \eqref{evol_claim2} and so we omit the details.
\end{proof}
{We now} observe that because of {the} monotonicity of $f^R$, we have for any $R>1$, {any }$ \phi\in  L^{p}([0,T]\times \tilde \Omega,\ud t\otimes \ud \tilde\bP;X)$ and {any }nonnegative {funtion} $\varphi\in L^\infty([0,T],\ud t;\mathbb{R})$,
\begin{equation}\label{mono_FR}
\sum_{i=0}^k\tilde \bE\left(\int_0^T\varphi(t)
 \left(\int_0^t e^{-\rho s}\iprod{{f}^R_i(\xi_i^R) -{f^R_i}(D^i\phi)}{\xi_i^R-D^i\phi}\ud s\right)\ud t\right) \geq 0.
\end{equation}	
With Lemma \ref{4claims} in hand, we take $\limsup_{R\rightarrow \infty}$ on both sides of \eqref{mono_FR} and apply \eqref{evol_claim1}-\eqref{evol_claim4}.  We find that for all  $ \phi\in  L^{p}([0,T]\times \tilde \Omega,\ud t\otimes \ud \tilde\bP;X)$ and nonnegative $\varphi\in L^\infty([0,T],\ud t;\mathbb{R})$,
	\begin{align*}
	%&\leq \limsup_{R \rightarrow \infty} \tilde \bE\left(\int_0^T\varphi(t)	\sum_{i=0}^k \left(\int_0^te^{-\rho s}\iprod{{f}^R_i(\xi_i^R) -{f^R_i}(D^i\phi)}{\xi_i^R-D^i\phi}\ud s\right)\ud t\right)\\
	&  \tilde \bE\left(\int_0^T\varphi(t)
	\left(\int_0^te^{-\rho s}\iprod{Y^{\tilde{R}}(s)-{F}(\phi(s))}{\bar{\bar u}(s)-\phi(s)}\ud s\right)\ud t\right) \geq 0.
	\end{align*}
	We use the Minty method {once again and choose} $\phi=\bar {\bar u}\pm \lambda \zeta \vv$ {with} $\lambda > 0$, $\vv\in X$
	and $\zeta\in L^\infty([0,T]\times \tilde \Omega,\ud t\otimes \ud \tilde\bP;\mathbb{R})$. {We then divide} both sides by $\lambda$ {and let} $\lambda\rightarrow 0$. {We obtain}, thanks to {the} hemicontinuity of the operator $F$  
	\begin{equation}
	\tilde \bE\left(\int_0^T\varphi(t)
	\left(\int_0^t\zeta(s)\iprod{Y^{\tilde{R}}(s)-F(\bar{\bar u}(s))}{\vv}\ud s\right)\ud t\right)=0.
	\end{equation}
	Since $\varphi$, $\zeta$ and $\vv$ are arbitrary, we conclude that
	\begin{equation}
	Y^{\tilde{R}}= F(\bar {\bar u})  \quad  \ud   \tilde\bP \otimes \ud t \tn{-a.e.}
	\end{equation}
	
	Thus we have proven {that} $\bar{\bar u}$ solves \eqref{approx_eqn2} with respect to the basis $({\tilde \Omega},\tilde {\mathcal{F}}, (\bar{\bar {\mathcal{F}}}_t)_{t\ge 0},\tilde \bP,\bar {\bar W},\bar {\bar \pi})$ and this concludes the passage to the limit $R \rightarrow \infty$ (with $\tilde R$ fixed). Theorem \ref{gencasetheorem} is thus proven.\end{proof}
	
\section{Passage to the limit $\tilde{R} \rightarrow \infty$}\label{sec:tildeR}
In this section we pass to the limit $\tilde R \rightarrow \infty$ and {finally }show the existence of 
a martingale solution to \eqref{eqn}-\eqref{bc}. { Except for the term $B$, many of the steps will be similar to the passages to the limit $n\rightarrow \infty$, $R \rightarrow \infty$ and we will skip the details.}\\From the previous section we know that $\bar{\bar u}$, now denoted by $\bar{\bar u}^{\tilde{R}}$ to emphasize its dependence on $\tilde R$, solves \eqref{approx_eqn2} %with initial condition $\bar{\bar u}_0^R:=\bar{\bar u}_0$ 
with respect to the basis $({\tilde \Omega},\tilde {\mathcal{F}}, {(\bar{\bar {\mathcal{F}}}^{\tilde R}_t)_{t\ge 0}},\tilde \bP,{ \bar {\bar W}^{\tilde R},{\bar {\bar \pi}}^{\tilde R}} )$.\\ 
We can see {that }there exists a constant $C(\gamma,\rho,c_1,c_2)> 0$, independent of $\tilde{R}$, such that
\begin{align}
\tilde \bE\sup_{s\in[0,T]}
|\bar{\bar u}^{\tilde{R}}(s)|^{2}+ \gamma
\tilde \bE\int_0^T \|{\bar{\bar u}^{\tilde{R}}(s)}\|^2\ud s + \tilde \bE\int_0^T \|{\bar{\bar u}^{\tilde{R}}(s)}\|_X^p\ud s\le C{\bE}|{ u}_0|^2=:K_3.\label{energytilR}
%+\bE \int_0^T|f|_H\ud t
\end{align}
We will omit the details of the proof of the above energy estimate since it {is} derived {exactly} as in Lemma \ref{uniformboundL2} using \eqref{F3} instead of \eqref{FR3}.
Next we will prove the following bounds in fractional Sobolev spaces.
\begin{prop}\label{utilr_boundinH}
	Let $\bar {\bar{u}}^{\tilde R}$  solve the SPDE \eqref{approx_eqn2} with respect to the basis  $({\tilde \Omega},\tilde {\mathcal{F}}, {(\bar{\bar {\mathcal{F}}}^{\tilde R}_t)_{t\ge 0}},\tilde \bP,{ \bar {\bar W}^{\tilde R},{\bar {\bar \pi}}^{\tilde R}} )$%with an inital data $\bar{\bar u}_0\in L^2(\bar{\tilde{ \Omega}},\bar{\tilde{\mathcal{F}}}_0,\bar{\tilde \bP};H)$. 
	. Since $p>2$ we can choose $1<m <\min\{\frac{p}{p-1},\frac{4p}{2+3p}\}<\frac43$.
	Then for every $\alpha\in (0,\frac 12)$ %and $V_q$ defined in \eqref{vq}
	there exists a constant $C=C(\alpha,\gamma,\rho,T,m,p)>0$, independent of $\tilde{R}$, such that
	\begin{equation}
		\tilde \bE\|{{\bar{ \bar u}}^{\tilde{R}}}\|^{m}_{W^{\alpha,m}(0,T;V'+X')}\le C.\label{frac_uR}
		\end{equation}
	Furthermore, due to Lemma \ref{Sobolev1} {  and an argument like in Proposition \ref{tightnessinV},} the estimate \eqref{frac_uR} implies that the laws of $\bar {\bar u}^{\tilde R}$ are tight in $L^2(0,T;H)$.
	\end{prop}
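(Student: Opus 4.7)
The plan is to mimic the strategy of Propositions \ref{boundinW} and \ref{ur_boundinH}: decompose $\bar{\bar u}^{\tilde R}(t)$ into the drift part
\[
g^{\tilde R}(t) := \bar{\bar u}^{\tilde R}_0 - \int_0^t\bigl[\gamma A\bar{\bar u}^{\tilde R}(s) + B_{\tilde R}(\bar{\bar u}^{\tilde R}(s),\bar{\bar u}^{\tilde R}(s)) + F(\bar{\bar u}^{\tilde R}(s))\bigr]\,\ud s
\]
and the two stochastic integrals. For the stochastic integrals, the analogue of Proposition \ref{noisefraction} (with $P_n$ removed, $u^n$ replaced by $\bar{\bar u}^{\tilde R}$, and $\beta = m \in (1,2)$) together with the a priori bound \eqref{energytilR} yields an $\tilde R$-independent estimate in $W^{\alpha,m}(0,T;H)$, which embeds continuously into $W^{\alpha,m}(0,T;V'+X')$. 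For $g^{\tilde R}$, since $\alpha < 1$ the embedding $W^{1,m} \hookrightarrow W^{\alpha,m}$ reduces the task to controlling each drift term in $L^m(0,T;V'+X')$ in expectation, uniformly in $\tilde R$.

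The linear term is handled via $\|A v\|_{V'} \leq \|v\|$, Hölder in time (using $m < 4/3 < 2$), Jensen's inequality in $\tilde{\bE}$, and \eqref{energytilR}. The monotone nonlinearity is controlled by \eqref{F4}:
\[
\tilde{\bE}\int_0^T \|F(\bar{\bar u}^{\tilde R})\|_{X'}^m\,\ud s \lesssim \tilde{\bE}\int_0^T \bigl(\|\bar{\bar u}^{\tilde R}\|_X^{(p-1)m} + 1\bigr)\,\ud s,
\]
and since $m < p/(p-1)$ one has $(p-1)m/p < 1$, so Hölder in time and Jensen in $\tilde{\bE}$ bound this by $C(T,p)(\tilde{\bE}\int_0^T \|\bar{\bar u}^{\tilde R}\|_X^p\,\ud s)^{(p-1)m/p} \leq C\,K_3^{(p-1)m/p}$.

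The principal obstacle is the truncated bilinear term $B_{\tilde R}$, for which no $\tilde R$-independent $D(A)'$-bound is available (unlike in \eqref{frac_B}). The key trick is to use \eqref{eqn:B_bound3} combined with $\theta_{\tilde R} \leq 1$ and the continuous embedding $X \hookrightarrow H$, giving
\[
|B_{\tilde R}(\bar{\bar u}^{\tilde R},\bar{\bar u}^{\tilde R})|_{V'} \leq |\bar{\bar u}^{\tilde R}|^{1/2}\,\|\bar{\bar u}^{\tilde R}\|^{3/2} \lesssim \|\bar{\bar u}^{\tilde R}\|_X^{1/2}\,\|\bar{\bar u}^{\tilde R}\|^{3/2}.
\]
Applying Hölder in time with conjugate exponents $2p/m$ and $2p/(2p-m)$ gives
\[
\int_0^T \|\bar{\bar u}^{\tilde R}\|_X^{m/2}\|\bar{\bar u}^{\tilde R}\|^{3m/2}\,\ud s \leq T^{\sigma}\Bigl(\int_0^T \|\bar{\bar u}^{\tilde R}\|_X^p\,\ud s\Bigr)^{m/(2p)} \Bigl(\int_0^T \|\bar{\bar u}^{\tilde R}\|^2\,\ud s\Bigr)^{3m/4},
\]
where the second factor arises after another Hölder in $t$, valid exactly when $3mp/(2p-m) \leq 2$, i.e.\ $m \leq 4p/(2+3p)$. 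A final Hölder in $\tilde{\bE}$ with exponents $s_1 = 2p/m$ and $s_2 = 2p/(2p-m)$ produces the factor $m/(2p) + 3m/4 = m(2+3p)/(4p)$, so the resulting bound rests on energy estimates $\tilde{\bE}\int_0^T \|\bar{\bar u}^{\tilde R}\|_X^p\,\ud s$ and $\tilde{\bE}\int_0^T \|\bar{\bar u}^{\tilde R}\|^2\,\ud s$ — both controlled by $K_3$ in \eqref{energytilR} — precisely under $m < 4p/(2+3p)$. This simultaneous matching with the two available first-order moments is what forces the exponent restriction stated in the proposition.

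Combining the four estimates establishes \eqref{frac_uR}. For the tightness assertion, I apply Lemma \ref{Sobolev1} with $\mathcal{Y}_0 = V$, $\mathcal{Y} = H$, $\mathcal{Y}_1 = V'+X'$ (which is reflexive as the dual of the reflexive space $V\cap X$), $s = 2$, so that $L^2(0,T;V)\cap W^{\alpha,m}(0,T;V'+X')$ embeds compactly into $L^2(0,T;H)$; then a Chebyshev argument identical to the one in Proposition \ref{tightnessinV}, using \eqref{energytilR} and \eqref{frac_uR}, yields tightness of the laws of $\bar{\bar u}^{\tilde R}$ in $L^2(0,T;H)$.
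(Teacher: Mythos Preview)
Your proof is correct and follows essentially the same approach as the paper: the same drift/noise decomposition, the same use of the embedding $W^{1,m}\hookrightarrow W^{\alpha,m}$ for the drift, the same treatment of the $A$ and $F$ terms via H\"older and \eqref{F4}, and the same key estimate for $B_{\tilde R}$ based on \eqref{eqn:B_bound3} together with the three-exponent H\"older inequality that forces the constraint $m<4p/(2+3p)$. The only cosmetic differences are that the paper applies H\"older jointly in $(t,\omega)$ and invokes the embedding $X\hookrightarrow H$ \emph{after} the H\"older step (bounding $\tilde\bE\int_0^T|\bar{\bar u}^{\tilde R}|^p\,\ud s$ by $\tilde\bE\int_0^T\|\bar{\bar u}^{\tilde R}\|_X^p\,\ud s$), whereas you apply the embedding first and then split the H\"older inequality into a time step followed by a probability step; both routes yield the same bound under the same exponent restriction.
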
	
	\begin{proof}		
		The proof follows {that} of Proposition \ref{ur_boundinH} and Proposition \ref{boundinW}  with slight modifications to the estimates \eqref{frac_B} and \eqref{frac_F} as described below. 
		%\begin{comment}
		First for $\alpha<1$ we have, {as for \eqref{gn}, \eqref{gR}}:		
		\begin{equation} 
		\begin{aligned}
		~\|{g^{\tilde R}}\|^m_{W^{\alpha, m}(0,T;{V'+X'})}&:=  \|{\bar{ \bar u}}_0^{\tilde{R}} -\int_0^{\cdot}\left( \gamma A {{\bar{ \bar u}}^{\tilde{R}}}(s) + B_{\tilde R}(%{{\bar{ \bar u}}^{\tilde{R}}}(s),
			{{\bar{ \bar u}}^{\tilde{R}}}(s)) +F({{\bar{ \bar u}}^{\tilde{R}}}(s))\right) \ud s\|^{m}_{W^{\alpha,m}(0,T;V'+X')} \nonumber \\
		\lesssim & ~ \Hnorm{{{\bar{ \bar u}}_0^{\tilde{R}}}}^{m}+ \int_0^T \gamma\Hnorm{A{{\bar{ \bar u}}^{\tilde{R}}}(s)}^{m}_{{V'}} \ud s + \int_0^T \Hnorm{B_{\tilde{R}}(%{{\bar{ \bar u}}^{\tilde{R}}}(s),
			{{\bar{ \bar u}}^{\tilde{R}}}(s))}_{{V'}}^{m} \ud s + \int_0^T \Hnorm{F({{\bar{ \bar u}}^{\tilde{R}}}(s))}^{m}_{{X'}} \ud s ,
		\end{aligned}
		\end{equation}
		where the hidden constants depend on $T,p,m$ but not on $\tilde R$.\\
Observe that since $m<2$, the expected values of the first two terms {above} are treated like in \eqref{fracAR}. 
Next, using \eqref{eqn:B_bound3} and observing that{  $\frac{m}{2p} +\frac{3m}4+(1-\frac{m(2+3p)}{4p})=1$ }we apply the H\"older inequality {with these exponents} to obtain the following estimates in place of \eqref{frac_B},
		\begin{align}
	\tilde \bE\int_0^T|B_{\tilde{R}}({\bar{\bar u}^{\tilde{R}}(s)})|_{V'}^{m} \ud s 
	&\leq \tilde \bE \int_0^T |{\bar{\bar u}^{\tilde{R}}(s)}|^{\frac{m}2}  \|{\bar{\bar u}^{\tilde{R}}(s)}\|^{\frac{3m}2} \ud s \nonumber\\
		%&\lesssim { T^{(1-\frac{m(2+3p)}{4p})} \tilde \bE\left[ \left(	\int_0^T|\bar {\bar u}^{\tilde R}(s)|^{p} \ud s\right)^{\frac{m}{2p}}\left(	\int_0^T\|\bar {\bar u}^{\tilde R}(s)\|^{2} \ud s\right)^{\frac{3m}4}\right] }\nonumber\\
		&\lesssim T^{(1-\frac{m(2+3p)}{4p})}\left(\tilde \bE \int_0^T |{\bar{\bar u}^{\tilde{R}}(s)}|^{p} \ud s\right)^{\frac{m}{2p}}\left(\tilde \bE \int_0^T \|{\bar{\bar u}^{\tilde{R}}(s)}\|^{2} \ud s\right)^{\frac{3m}{4}}. \nonumber
\shortintertext{Since the embedding $X \subset H$ is continuous {we can, thanks to \eqref{energytilR}, bound this, {independently} of $\tilde R$ by}}
& T^{(1-\frac{m(2+3p)}{4p})}\left(\tilde \bE \int_0^T \|{\bar{\bar u}^{\tilde{R}}(s)}\|_X^{p} \ud s\right)^{\frac{m}{2p}}\left(\tilde \bE \int_0^T \|{\bar{\bar u}^{\tilde{R}}(s)}\|^{2} \ud s\right)^{\frac{3m}{4}} \nonumber\\
&\leq C(T,m,p,K_3). \label{frac_BtilR}
	\end{align}
Using the condition \eqref{F4} and the fact that {{}$m < \frac{p}{p-1}$},  we {{}obtain} the following bounds {{}where}  $C=C(T,m,p,k_3)>0$ {is} independent of $\tilde{R}$
\begin{align*}
\tilde \bE\int_0^T
|F(\bar{\bar u}^{\tilde{R}}(s))|^m_{X'}\ud s &\leq T^{(1-{\frac{m(p-1)}{p}})}  \left(\tilde \bE\int_0^T
|F(\bar{\bar u}^{\tilde{R}}(s))|^{\frac{p}{p-1}}_{X'}\ud s\right)^{\frac{m(p-1)}{p}}\end{align*}\begin{align}
&\lesssim C(T,m,p) \left(\tilde \bE\int_0^T\|{{\bar{\bar u}^{\tilde{R}}(s)}}\|^p_X \ud s\right)^{\frac{m(p-1)}{p}} \nonumber\\&\le C.\label{frac_FtilR}
\end{align}
Now observe that Proposition \ref{noisefraction} holds for $\beta=m$ and with $P_n(G(u^n))$, $P_n(K(u^n))$ replaced by $G(\bar{\bar u}^{\tilde R})$, $K(\bar{\bar u}^{\tilde R})$ respectively. Combining with \eqref{frac_BtilR} and \eqref{frac_FtilR} we obtain \eqref{frac_uR}.
\end{proof}
Next define,
\begin{align}
\mu_u^{\tilde{R}}(\cdot)&=\tilde{\bP}(\bar{\bar u}^{\tilde{R}}\in\cdot) \in Pr\left(L^2(0,T;H)\cap
\mathcal{D}([0,T];V'+X')\right),
\end{align}
where $Pr(\mathcal{S})$ denotes, {as before}, the set of probability measures  on a metric space $\mathcal{S}$.
\begin{prop}\label{tightnessinchi2}
Assume that $p>2$ and let $\bar {\bar u}^{\tilde{R}}$
solve the SPDE \eqref{approx_eqn2} with respect to the basis\\$({\tilde \Omega},\tilde {\mathcal{F}}, {(\bar{\bar {\mathcal{F}}}^{\tilde R}_t)_{t\ge 0}},\tilde \bP,{ \bar {\bar W}^{\tilde R},{\bar {\bar \pi}}^{\tilde R}} )$. %with initial data $\bar{\bar u}_0\in L^2(\bar{\tilde{ \Omega}},\bar{\tilde{\mathcal{F}}}_0,\bar{\tilde \bP};H)$ 
Then the laws $\{\mu_u^{\tilde{R}}\}_{\tilde{R}>1}$ of $\{\bar{\bar u}^{\tilde{R}}\}_{\tilde{R}>1}$ are tight in {the space} $\sX_1$, defined in \eqref{chi1}.
\end{prop}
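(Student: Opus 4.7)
The plan is to mimic the structure of Proposition \ref{tightnessinchi1}, where tightness on $\mathcal{X}_1 = L^2(0,T;H) \cap \mathcal{D}([0,T]; V'+X')$ was obtained by combining tightness in $L^2(0,T;H)$ with tightness in $\mathcal{D}([0,T]; V'+X')$ via the Aldous condition. The first half is already in hand: Proposition \ref{utilr_boundinH} gives the uniform bound \eqref{frac_uR} in $W^{\alpha,m}(0,T; V'+X')$, which together with Lemma \ref{Sobolev1} (applied with $\mathcal{Y}_0 = V$, $\mathcal{Y}_1 = V'+X'$, $\mathcal{Y}=H$) and a Chebyshev-type argument as in Proposition \ref{tightnessinV} yields tightness of $\{\mu_u^{\tilde R}\}_{\tilde R > 1}$ on $L^2(0,T;H)$.

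For the $\mathcal{D}([0,T]; V'+X')$ part, I would write $\bar{\bar u}^{\tilde R}(t)$ using \eqref{approx_eqn2} as a sum $\bar{\bar u}_0^{\tilde R}+\mathcal{M}_1+\mathcal{M}_2+\mathcal{M}_3+\mathcal{M}_4+\mathcal{M}_5$, exactly as in \eqref{MR}, and verify the Aldous condition of the form \eqref{Aldous} for each $\mathcal{M}_i$, evaluated between $\tau_{\tilde R}$ and $\tau_{\tilde R}+\delta$ for an arbitrary sequence of stopping times $\tau_{\tilde R}\leq T$. The linear term $\mathcal{M}_1$ and the two stochastic terms $\mathcal{M}_4, \mathcal{M}_5$ are handled in the $V'$-norm by an argument identical to that of Proposition \ref{tightnessinD}, using the uniform estimate \eqref{energytilR} and the It\^o isometry together with \eqref{eqn:G,K_H_growth}.

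The bilinear term $\mathcal{M}_2$ should be bounded in the $V'$-norm as in \eqref{M2}: using $\theta_{\tilde R}\leq 1$ together with \eqref{eqn:B_bound3} and H\"older's inequality gives
\begin{align*}
\tilde{\bE}\|\mathcal{M}_2(\tau_{\tilde R}+\delta)-\mathcal{M}_2(\tau_{\tilde R})\|_{V'}
&\lesssim \tilde{\bE}\int_{\tau_{\tilde R}}^{\tau_{\tilde R}+\delta}|\bar{\bar u}^{\tilde R}(s)|^{1/2}\|\bar{\bar u}^{\tilde R}(s)\|^{3/2}\ud s\\
&\leq \delta^{1/4}\bigl(\tilde{\bE}\sup_{s\in[0,T]}|\bar{\bar u}^{\tilde R}(s)|^{2}\bigr)^{1/4}\bigl(\tilde{\bE}\int_0^T\|\bar{\bar u}^{\tilde R}(s)\|^{2}\ud s\bigr)^{3/4},
\end{align*}
which is bounded by $C\delta^{1/4}$ uniformly in $\tilde R$ thanks to \eqref{energytilR}. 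The nonlinear monotone term $\mathcal{M}_3$ is the one forcing us to use the $X'$-norm: using \eqref{F4} together with H\"older's inequality and $\frac{p}{p-1}\le p$,
\begin{equation*}
\tilde{\bE}\|\mathcal{M}_3(\tau_{\tilde R}+\delta)-\mathcal{M}_3(\tau_{\tilde R})\|_{X'}
\lesssim \tilde{\bE}\int_{\tau_{\tilde R}}^{\tau_{\tilde R}+\delta}\bigl(\|\bar{\bar u}^{\tilde R}(s)\|_X^{p-1}+c_4\bigr)\ud s
\leq C\delta^{1/p},
\end{equation*}
where again \eqref{energytilR} gives the bound uniformly in $\tilde R$. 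Since $V'+X'$ is continuously embedded by the dual of $V\cap X$, each $\mathcal{M}_i$ satisfies the Aldous-type moment bound in the $V'+X'$-norm.

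This yields tightness of the laws in $\mathcal{D}([0,T]; V'+X')$ by the lemma after Proposition \ref{tightnessinD}; combining with tightness in $L^2(0,T;H)$ via the same compact-set intersection argument used in Proposition \ref{tightnessinchi} completes the proof. The main subtlety is conceptual rather than technical: the passage $\tilde R\to\infty$ is precisely what forces us to work on $V'+X'$ (rather than $V'$ alone as in Proposition \ref{tightnessinD}), because $F$ is a priori only bounded in $X'$ and $p-1$ can be arbitrarily large; but the cancellation property of $B$ still delivers a bound for the $B$-increment independent of $\tilde R$ via \eqref{energytilR}, so no additional structural assumption is needed.
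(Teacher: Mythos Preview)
Your proposal is correct and follows essentially the same approach as the paper: tightness in $L^2(0,T;H)$ via Proposition~\ref{utilr_boundinH}, the Aldous condition in $V'+X'$ for the $\mathcal{D}$-component with $\mathcal{M}_1,\mathcal{M}_2,\mathcal{M}_4,\mathcal{M}_5$ handled exactly as in Proposition~\ref{tightnessinD} (the paper likewise stresses that the bound \eqref{M2} is already independent of $\tilde R$), and $\mathcal{M}_3$ estimated in $X'$ via \eqref{F4} to get the $\delta^{1/p}$ increment. The paper records the resulting Aldous exponent as $\epsilon=\min\{1/p,1/4\}$ and then concludes by the intersection argument of Proposition~\ref{tightnessinchi}, just as you describe.
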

\begin{proof}	
A part of the proof is to show that {a suitable} Aldous condition \eqref{Aldous} is satisfied by $\bar{\bar u}^{\tilde R}$. \\
With that in mind, let $(\tau_{\tilde R})_{\tilde{R}>1}$ be a sequence of stopping times such that $0\le \tau_{\tilde R} \le T$. We %integrate {the} equation \eqref{approx_eqn2} in time, from 0 to t and
 define {the} terms $\mathcal{M}_{1,2,3,4,5}$ as in \eqref{Mn} in Proposition \ref{tightnessinD} and \eqref{MR} in Proposition \ref{tightnessinchi1}. Thanks to the estimates \eqref{energytilR}, the terms $\mathcal{M}_{1,2,4,5}$ are treated identically as in the proof of Proposition \ref{tightnessinD}. {We emphasize here that the bounds found in \eqref{M2} are independent of $\tilde R$ and so the treatment of the term $\mathcal{M}_2$ here is carried out as in \eqref{M2}.} \\
 Thus we discuss here the term $\mathcal{M}_3$ that is treated differently. %and repeat the argument for the term $\mathcal{M}_2$ to stress that the bounds are independent of $\tilde R$. \\
 
Observe that using \eqref{F4} we obtain for some $C(T,p,K_3)>0$ independent of $\tilde{R}$,
\begin{align}
\tilde \bE\left[\norm{\mathcal{M}_3(\tau_{\tilde R} +\delta)-\mathcal{M}_3(\tau_{\tilde R})}_{V'+X'}\right]	& \lesssim \tilde \bE\int_{\tau_{\tilde R}}^{\tau_{\tilde R} +\delta}|F(\bar{\bar u}^{\tilde{R}}(s))|_{X'}\ud s\nonumber\\
& \lesssim \delta^{\frac1p}\left[\tilde \bE\int_{\tau_{\tilde R}}^{\tau_{\tilde R}+\delta}|F(\bar{\bar u}^{\tilde{R}}(s))|^{\frac{p}{p-1}}_{X'}\ud s\right]^{\frac{p-1}{p}}\nonumber\\
& \lesssim \delta^{\frac1p}\left[\tilde \bE\int_{\tau_{\tilde R}}^{\tau_{\tilde R}+\delta}\|\bar{\bar u}^{\tilde{R}}(s)\|^{p}_{X}\ud s\right]^{\frac{p-1}{p}}\nonumber\\
& \le C\delta^{\frac1p} .
\end{align}
Hence the condition \eqref{Aldous}  is satisfied by $\bar{\bar u}^{\tilde R}$  with $\Xi=V'+X'$, $\alpha=1$ and $\epsilon=\min\{\frac1p,\frac14\}$ which in turn implies that, $\mu_u^{\tilde R}$, the laws of $\{\bar {\bar u}^{\tilde R}\}_{\tilde R>1}$ are tight in the space $\mathcal{D}([0,T];V'+X')$ {endowed} with the Skorohod topology.  Hence, finally applying the procedure in Proposition \ref{tightnessinchi} together with Proposition \ref{utilr_boundinH}, we finish the proof of Proposition \ref{tightnessinchi2}.
\end{proof}
Thus, using Prohorov's theorem we obtain the existence of a measure $\mu^\infty_2$ on $\Upsilon_1= H \times \sX_1 \times C([0,T];U) \times \mathcal{N}^{\# *}_{[0,\infty)\times E}$  such that the sequence $\mu^{\tilde R}$, of the joint probability law of  $(\bar{\bar u}_0^{\tilde R},
\bar{{\bar u}}^{\tilde R},\bar{{\bar W}}^{\tilde{R}},\bar {\bar \pi}^{\tilde{R}})$ indexed by some infinite set $\Lambda' \subseteq \mathbb{N}$, converges weakly to $\mu^\infty_2$ as $\tilde R \rightarrow \infty$. Next we use Theorem \ref{skorohodtheorem} again to obtain the following result.
%Next we apply the Skorohod representation theorem:% and infer  %a filtered probability space $(\tilde{\Omega},\tilde{\sF},\tilde{\bP})$, with the associated expectation denoted by $\tilde{ \bE}$, and 
\begin{thm}\label{skorohod3}For $\tilde R \in \Lambda'$ there exist ${{\Upsilon}_1}$-valued random variables $(\tilde{ u}_0^{\tilde R},
\tilde{{ u}}^{\tilde R},\tilde{{ W}}^{\tilde{R}},\tilde{\pi}^{\tilde{R}})$
and\\ $(\tilde{ u}_0,\tilde{ u} ,\tilde{ W}, \tilde{ \pi})$ such that
\begin{myenum}
	\item The probability laws of $(\tilde{ u}_0^{\tilde R},
	\tilde{{ u}}^{\tilde R},\tilde{{ W}}^{\tilde{R}},\tilde{\pi}^{\tilde{R}})$
	and $(\tilde{ u}_0,\tilde{ u} ,\tilde{ W}, \tilde{ \pi})$ are $\mu^{\tilde{R}}$ and $\mu_2^\infty$, respectively,
	\item $(\tilde { W}^{\tilde{ R}},\tilde{ \pi}^{\tilde{R}})=(\tilde{{W}},\tilde{{\pi}}) $ everywhere on $\tilde{\Omega}$,
	\item $(\tilde{ u}_0^{\tilde{R}},
	\tilde{{ u}}^{\tilde{R}},\tilde{{ W}}^{\tilde{R}},\tilde{\pi}^{\tilde R})$ converges almost surely to $(\tilde{ u}_0,\tilde{ u} ,\tilde{ W}, \tilde{ \pi})$ as $\tilde R \rightarrow \infty$ in the topology of ${\Upsilon}_1$:
	\begin{align}
	&\tilde{{ u}}^{\tilde{R}}_0\rightarrow \tilde{{u}}_0 \text{ in } H, \tilde{\bP}\tn{-a.s.},\label{uotilrk}	\\
	&\tilde{{ u}}^{\tilde{R}}\rightarrow
	\tilde{ u} \text { in } L^2(0,T;H)\cap \mathcal{D}([0,T];V'+X'),
	\tilde \bP\tn{-a.s.},\label{utilr_convH}
	\end{align}
	\item  $\tilde{ \pi} $, is a time-homogeneous Poisson random measure over $(\tilde{ \Omega}, {\tilde {\mathcal{F}}}, \tilde{ \bP})$ with intensity measure $\ud t \otimes \ud \nu$.
	\item $\tilde u^{\tilde R}_0=\tilde u^{\tilde R}(0)$ $\tilde \bP$-a.s. and
	$(\tilde{{ u}}^{\tilde{R}},\tilde { W}^{},\hat{\tilde{ \pi}})$ satisfies
	the following equation, $\tilde \bP$- a.s. for every $t\in[0,T]$
	\begin{equation}\label{eq_tilR}
	\begin{split}
	&\tilde{{ u}}^{\tilde{R}}(t)+\int_0^t\left[\gamma A(\tilde{{ u}}^{\tilde{R}}(s))+ \theta_{\tilde{R}}(|\tilde u^{\tilde R}|)B(\tilde{{ u}}^{\tilde{R}}(s),\tilde{{ u}}^{\tilde{R}}(s))+ F(\tilde{{ u}}^{\tilde{R}}(s))\right]\ud s
	\\
	&\qquad \qquad= \tilde{{ u}}^{\tilde{R}}_0+\int_0^tG(\tilde{{ u}}^{\tilde{R}}(s))\ud \tilde{ W}(s)+\int_{(0,t]}\int_{E_0}K(\tilde{{ u}}^{\tilde{R}}(s),\xi)\ud \hat{\tilde{\pi}}(s,\xi),
	% P_nK(t,u^n(t),\xi)\ud \hat\pi(t,\xi) 
	\end{split}\end{equation}
	with respect to {the filtration} $(\tilde{\mathcal{F}}^{\tilde R}_t)_{t\geq 0}$ defined similarly to \eqref{Ft}. 
	{(See also Remark \ref{diffrv}.)}
\end{myenum}
\end{thm}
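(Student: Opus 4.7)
The plan is to mirror the arguments of Theorems \ref{skorohod1} and \ref{skorohod2}, adapting them to the passage $\tilde R \to \infty$. First I would collect the ingredients for tightness of the joint laws on $\Upsilon_1 = H \times \sX_1 \times C([0,T];U) \times \mathcal{N}^{\#*}_{[0,\infty)\times E}$: Proposition \ref{tightnessinchi2} gives tightness of the laws of $\bar{\bar u}^{\tilde R}$ on $\sX_1$; the laws of $\bar{\bar u}_0^{\tilde R}$ are all equal to $\mu_0$ and hence tight on $H$; and the laws of $\bar{\bar W}^{\tilde R}$ and $\bar{\bar \pi}^{\tilde R}$ reduce to single measures (those of the ambient Wiener process and Poisson random measure) and are therefore automatically tight on their respective spaces. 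Tychonoff's theorem then gives joint tightness on $\Upsilon_1$ of the sequence $\mu^{\tilde R}$, and Prohorov's theorem supplies the weak limit $\mu_2^\infty$ along some subsequence $\Lambda' \subseteq \mathbb{N}$.

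Next I would apply Theorem \ref{skorohodtheorem} (the Skorohod representation theorem used previously) to $\mu^{\tilde R} \rightharpoonup \mu_2^\infty$, producing the new probability space $(\tilde \Omega, \tilde{\mathcal F}, \tilde \bP)$ and random variables $(\tilde u_0^{\tilde R}, \tilde u^{\tilde R}, \tilde W^{\tilde R}, \tilde \pi^{\tilde R})$, $(\tilde u_0, \tilde u, \tilde W, \tilde \pi)$ satisfying (i) and (iii) immediately. Statement (ii) follows from the fact that the laws of $\bar{\bar W}^{\tilde R}$ and $\bar{\bar \pi}^{\tilde R}$ do not depend on $\tilde R$, so that the limit components along the $C([0,T];U)$ and $\mathcal{N}^{\#*}_{[0,\infty)\times E}$ factors coincide $\tilde \bP$-a.s.\ with $\tilde W^{\tilde R}$ and $\tilde \pi^{\tilde R}$, as was already verified in the earlier Skorohod applications (cf.\ Theorem \ref{skorohod1} and Theorem \ref{skorohod2}).

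For (iv), I would verify that $\tilde \pi$ is a time-homogeneous Poisson random measure with intensity $\mathrm{d}t \otimes \mathrm{d}\nu$ by checking that, for every Borel set $\Gamma \in \mathscr{E}$ with $\nu(\Gamma) < \infty$ and disjoint Borel sets in time, the counting variables $\tilde \pi((s,t] \times \Gamma)$ have the correct Poisson law and independence, which is inherited along the a.s.\ convergence in $\mathcal{N}^{\#*}_{[0,\infty)\times E}$ from the corresponding property of $\bar{\bar \pi}^{\tilde R}$. The argument is identical to the one invoked in the proofs of Theorems \ref{skorohod1} and \ref{skorohod2} and relies on the adaptation of \cite{Ben} to the \levy setting described in Proposition B.1 of \cite{EulerPaper}.

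The main obstacle, and where I expect to concentrate effort, is (v): showing that $(\tilde u_0^{\tilde R}, \tilde u^{\tilde R}, \tilde W, \hat{\tilde \pi})$ solves equation \eqref{eq_tilR}. The point is that the right-hand side of \eqref{eq_tilR} involves stochastic integrals, and one has to argue that these integrals, when recomputed on $(\tilde \Omega, \tilde{\mathcal F}, \tilde \bP)$ from $\tilde u^{\tilde R}$, $\tilde W$, $\tilde \pi$, coincide $\tilde \bP$-a.s.\ with the pushforwards of the stochastic integrals on the original space. This is the Bensoussan-type argument: the stochastic integral operator is a measurable functional of the joint process $(u, W, \pi)$ up to a null set, so equality in law of the joint processes lifts to $\tilde \bP$-a.s.\ equality of the integrated equation. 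Once this is in hand, the deterministic Lebesgue integrals on the left-hand side of \eqref{eq_tilR} transfer under equality in law by the standard identification of Bochner integrals with measurable functionals, so the equation \eqref{eq_tilR} holds $\tilde \bP$-a.s.\ for all $t \in [0,T]$. The filtration $(\tilde{\mathcal F}^{\tilde R}_t)_{t \geq 0}$ is then constructed from $\tilde u^{\tilde R}$, $\tilde W$, $\tilde \pi$ exactly as in \eqref{Ft}, giving the desired adaptedness and completing the proof.
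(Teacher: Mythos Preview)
Your proposal is correct and follows essentially the same route as the paper: the paper simply invokes Prohorov's theorem on the tight family $\{\mu^{\tilde R}\}$ and then applies the Skorohod representation theorem (Theorem~\ref{skorohodtheorem}) to obtain (i)--(iii), with (iv) and (v) handled by the same references to \cite{Ben} and Proposition~B.1 of \cite{EulerPaper} already cited after Theorem~\ref{skorohod1}. No separate detailed proof is given in the paper, so your sketch is if anything more explicit than what appears there.
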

{Henceforth we will restrict our analysis to $\tilde R \in \Lambda'$.} Comparing equations \eqref{approx_eqn2} and \eqref{eq_tilR} we can observe that $\tilde{ u}^{\tilde R}$ satisfies the energy estimates \eqref{energytilR} as well.
This further implies that $\tilde { u} \in L^\infty([0,T];L^2(\tilde {\Omega},H)) \cap L^2([0,T]\times  {\tilde\Omega},\ud t \otimes \ud \tilde {\bP} ;V)\cap L^p([0,T]\times  {\tilde\Omega},\ud t \otimes  {\ud \tilde\bP};X)$ and that there exists a subsequence $\tilde R \rightarrow \infty$ such that,
\begin{align}\label{conv_utilr}
\tilde{  u}^{\tilde R} &\rightharpoonup \tilde { u} \text{ weakly in }L^2([0,T]\times \tilde {\Omega},\ud t \otimes \ud \tilde {\bP};V)\cap L^p([0,T]\times \tilde {\Omega},\ud t \otimes \ud \tilde {\bP};X),\\
\tilde { u}^{\tilde R} &\rightharpoonup \tilde{ u} \text{  weak$^*$ in } L^\infty([0,T];L^2(\tilde {\Omega},H)).
\end{align}
Observe that due to \eqref{F4} we also have, for a subsequence $\tilde R \rightarrow \infty$, the existence of $Y \in  L^{\frac{p}{p-1}}([0,T]\times \tilde {\Omega},\ud t \otimes \ud \tilde {\bP};X')$ such that
\begin{align}\label{conv_Ftilr}
F(\tilde{  u}^{\tilde R}) &\rightharpoonup Y \text{ weakly in } L^{\frac{p}{p-1}}([0,T]\times \tilde {\Omega},\ud t \otimes \ud \tilde {\bP};X').
\end{align}
Now we are in position to prove one of our main results, namely Theorem \ref{existence_martingale}. We define $(\tilde{\mathcal{F}})_{t\geq 0}$ as in \eqref{Ft} and denote $\tilde {\mathscr{S}}:= ({\tilde{ \Omega}},{\tilde{ {\mathcal{F}}}}, (\tilde{ {\mathcal{F}}}_t)_{t\ge 0},\tilde{ \bP},\tilde W, \tilde{\pi})$. In the remainder of the section we will show that if $\bE[|u_0|^2] <\infty$ and $p>2$, then $(\tilde {\mathscr{S}},\tilde u)$ is a martingale solution to \eqref{eqn}-\eqref{bc} (in the sense of Definition \ref{definition1}) with initial condition $\tilde{ {u}}_0$, that has the same law as $u_0$. %where $\tilde u$ appears in \eqref{utilr_convH} and for the stochastic basis we have. \\That is, $\tilde{ u}$ solves \eqref{eqn_1}.	We will now prove Theorem \ref{existence_martingale} in two steps.\\	
We will do this in two steps.\\	
	\noindent \textbf{\textit{\underline{Step 1}}}:	
	We will first show that $\tilde u $ solves the following equation
	\begin{equation}\label{step1tilR}
	\begin{split}
	&\tilde u(t)+\int_0^t[\gamma A(\tilde u(s))+B(\tilde u(s))+ Y(s)]\ud s\\&\hspace{2in}=\tilde u_0+\int_0^tG(\tilde u(s))\ud\tilde{W}(s)+\int_{(0,t]}\int_{E_0}K(\tilde u(s-),\xi)\ud\hat{\tilde{\pi}}(s,\xi),\end{split}
	\end{equation}
	where $Y$ was defined in \eqref{conv_Ftilr}.
	
For that purpose we first observe that the following convergence results hold for {$\phi \in D(A)$}, as $\tilde R \rightarrow \infty$:		
	\begin{align}\label{conv_AtilR}
&	 \gamma \int_0^t \iprod{A\tilde{ u}^{\tilde R}(s)}{\phi} \ud s \rightarrow \gamma \int_0^t \iprod{A\tilde{ u}^{}(s)}{\phi} \ud s \quad \text { in }
	L^1(\tilde\Omega\times [0,T]),\\
	\label{conv_GtilR}
&	\int_0^tG(\tilde{ u}^{\tilde R}(s))\ud\tilde{W}(s)\rightarrow \int_0^tG(\tilde{ u}^{}(s))\ud\tilde{W}(s) \quad \text{ in } L^1(\tilde \Omega;L^1(0,T;H)),\\
\label{conv_KtilR}
&	\int_{(0,t]} \int_{E_0}K(\tilde{u}^{\tilde R}(s-),\xi)\ud\hat{\tilde{\pi}}(s,\xi)
	\rightarrow \int_{(0,t]} \int_{E_0} {K(\tilde{ u}^{}(s-),\xi)}{}\ud\hat{\bar{\pi}}(s,\xi) \text{ in  $L^1(\tilde\Omega; L^1(0,T;H)).$}
	\end{align}
We obtain the above convergence results {as in} the proofs of \eqref{conv_A}, Lemma \ref{conv_G} and Lemma \ref{conv_K} respectively.
{In fact the above convergence result \eqref{conv_AtilR} holds in $L^2(\tilde {\Omega} \times [0,T])$ and \eqref{conv_GtilR}-\eqref{conv_KtilR} hold in  $L^2(\tilde\Omega; L^2(0,T;H))$.
	 This is due to the availability of the $L^p(\tilde \Omega;L^p(0,T;X))$ bounds, for $p>2$, found in \eqref{energytilR} which can be used to upgrade the estimates \eqref{G2} and \eqref{eqn:estimate2}.}\\  

Next we will prove that,
\begin{align}\label{conv_tilu}
	\tilde{u}^{\tilde R} \rightarrow \tilde{u} \text{ in } L^2(\tilde{\Omega};L^2(0,T;H)) \quad \text{as $\tilde R \rightarrow \infty$}.
\end{align}
Since $p>2$, using \eqref{energytilR} we observe that for some $C=C(T,p,K_3)>0$ independent of $\tilde{R}$, we have
\begin{align*}
\sup_{\tilde R \geq 1}\tilde{\bE}|\tilde{u}^{\tilde R}|^{p}_{L^2(0,T;H)} &= \sup_{\tilde R \geq 1}\tilde{\bE} (\int_0^T|\tilde{u}^{\tilde R}|^{2} \ud s)^{\frac{p}{2}}\lesssim \sup_{\tilde R \geq 1}\tilde{\bE} \int_0^T|\tilde{u}^{\tilde R}|^{p} \ud s \lesssim \sup_{\tilde R \geq 1}\tilde{\bE} \int_0^T\|\tilde{u}^{\tilde R}\|_X^{p}\ud s \lesssim C. \end{align*}
Combining this result with \eqref{utilr_convH} and  applying the Vitali convergence theorem, we infer \eqref{conv_tilu}. Next we will prove that as $\tilde R \rightarrow \infty$,
	\begin{equation}\label{conv_BtilR}
	\int_0^t\iprod{B_{\tilde{R}}(\tilde{u}^{\tilde R},\tilde{u}^{\tilde R})}{\phi}\ud s
	\rightarrow \int_0^t\iprod{B(\tilde u,\tilde u)}{\phi}\ud s \quad \text{ in $L^1(\tilde{\Omega} \times [0,T])$.}
	\end{equation}
We begin the proof of \eqref{conv_BtilR} by observing that,
\begin{align*}
\left|\int_0^t(B_{\tilde R}(\tilde{u}^{\tilde R},\tilde{u}^{\tilde R})-B(\tilde{u}^{},\tilde{u}^{}), \phi) \ud s\right|& \leq \int_0^t\Big[|\theta_{\tilde R}(|\tilde u^{\tilde{R}}|)\left(B(\tilde{u}^{\tilde R}-\tilde u,\tilde{u}^{\tilde R})+B(\tilde{u}^{},\tilde u^{\tilde R}-\tilde{u}^{}), \phi\right)|\\
&\hspace{1in}+| \left(1-\theta_{\tilde R}(|\tilde u^{\tilde R}|) \right)(B(\tilde u,\tilde u),\phi)| \Big] \ud s\\
& :=I_1^{\tilde R}(t) + I_2^{\tilde R}(t).
\end{align*}
%{We use that $|\theta_{\tilde R}| \leq 1$ and the embedding $D(A) \subset (L^{\frac{2p}{p-2}}(\mathcal{O}))^{d}$, for $p> \frac{d}{2}$}
We use \eqref{eqn:B_bound2}	to treat $I_1^{\tilde R}$ as follows
\begin{align}
I_1^{\tilde R}(t) &\leq \int_0^t |\tilde u^{\tilde R}-\tilde u^{}|\left(\|\tilde u^{\tilde R}\|+\|\tilde u^{}\|\right)\|\phi\|_{D(A)}\ud s,\nonumber
\end{align}{which implies {that} for some $C=C(T,p)>0$ independent of $\tilde{R}$,}
\begin{align}
 \tilde{\bE}\int_0^T I^{\tilde R}_1 \ud t& \leq C\|\phi\|_{D(A)}\left(\tilde{\bE} \int_0^T|\tilde u^{\tilde R}-\tilde u^{}|^2\ud s\right)^{\frac12}\left(\tilde{\bE} \int_0^T\|\tilde u^{\tilde R}\|^2+\|\tilde u^{}\|^2 \ud s\right)^{\frac12}\label{pgeqd2}.\end{align}
%{ Observe that so far we had only used the condition $p>2$ and it is only in \eqref{pgeqd2} that we used the stricter condition $p>\frac{d}2$}. 
Thanks to \eqref{energytilR} and \eqref{conv_tilu} we then obtain,
 \begin{align*}
  \tilde{\bE}\int_0^T I^{\tilde R}_1 \ud t& \leq C\|\phi\|_{D(A)}\left(\tilde{\bE}\int_0^T|\tilde u^{\tilde R}-\tilde u^{}|^2 \ud s \right)^{\frac12} \rightarrow 0 \text{ as } \tilde R \rightarrow \infty .
\end{align*}
	Thus thinning the sequence {{}$\tilde R$} if required, {we see that} $I^{\tilde R}_1 \rightarrow 0$ a.e. on $\tilde{\Omega} \times [0,T]$ as $\tilde R \rightarrow \infty$.

{We invoke \eqref{conv_tilu} to obtain that, for a subsequence, $\tilde u^{\tilde R} \rightarrow \tilde u$ a.e. in $\Omega \times [0,T]$. Hence,
\begin{align*}|\theta_{\tilde{R}}(|\tilde u^{\tilde R}|)-1| &\leq |\theta_{\tilde{R}}(|\tilde u^{\tilde R}|)-\theta_{\tilde{R}}(|\tilde u^{}|)|+ |\theta_{\tilde{R}}(|\tilde u^{}|)-1|\\
&\leq||\tilde u^{\tilde R}|-|\tilde u|| + |\theta_{\tilde{R}}(|\tilde u^{}|)-1|\\
& \leq |\tilde{u}^{\tilde R}- \tilde u| + \mathbbm{1}_{\{|\tilde u|>2\tilde R\}}
\rightarrow 0 \qquad  \text{ a.e. on } \Omega \times[0,T]. \end{align*}}
This is key in treating the term $I^{\tilde R}_2$. Observe that \eqref{eqn:B_bound2} gives,%the embedding $D(A) \subset (L^{\frac{2p}{p-2}}(\mathcal{O}))^{d}$, for $p> \frac{d}{2}$ gives,
\begin{align*}
I^{\tilde R}_2 &\leq \int_0^t \left|1-\theta_{\tilde R}(|\tilde u^{\tilde R}|) \right||(B(\tilde u,\tilde u),\phi)| \ud s
\leq \|\phi\|_{D(A)}\int_0^t \left|1-\theta_{\tilde R}(|\tilde u^{\tilde R}|) \right| |\tilde u|\|\tilde u\| \ud s.
\end{align*}The continuous embedding $X \subset H$ thus implies that for some $C=C(T,p)>0$\begin{align*}
\tilde{\bE} \int_0^T I^{\tilde R}_2 \ud t 
&\lesssim \|\phi\|_{D(A)}\left(\tilde{\bE}\int_0^T  \left|1-\theta_{\tilde R}(|\tilde u^{\tilde R}|) \right|^{\frac{2p}{p-2}}\ud s\right)^{\frac{p-2}{2p}} \left(\tilde{\bE} \int_0^T\|\tilde u\|_X^p \ud s\right)^{\frac{1}{p}}\left(\tilde{\bE} \int_0^T\|\tilde u\|^2 \ud s\right)^{\frac{1}{2}}\\
&\leq C\|\phi\|_{D(A)}\left(\tilde{\bE}\int_0^T \left|1-\theta_{\tilde R}(|\tilde u^{\tilde R}|) \right|^{\frac{2p}{p-2}}\ud s\right)^{\frac{p-2}{2p}}\\
&\rightarrow 0 \text{ as } \tilde R \rightarrow \infty \text{ using  the Lebesgue dominated convergence theorem}.\end{align*}
This leads to the convergence of $I^{\tilde R}_2 \rightarrow 0$ a.e. on $\tilde{\Omega} \times [0,T]$ for a subsequence $\tilde R \rightarrow \infty$.
	Hence we have shown that
	\begin{align}\label{b1}
	\int_0^t(B_{\tilde R}(\tilde{u}^{\tilde R}(s),\tilde{u}^{\tilde R}(s)) \ud s \rightarrow \int_0^t (B(\tilde{u}^{}(s),\tilde{u}^{}(s)), \phi) \ud s \qquad  \text{a.e. on $\tilde{\Omega} \times [0,T]$ as $\tilde R \rightarrow \infty$.}
	\end{align}
	Next, %due to our assumptions that $p>2$ 
	we choose $1<q<\frac{2p}{2+p}<2$. Using \eqref{eqn:B_bound3} {we obtain that} for some $C=C(\phi,T,p,q,K_3)>0$ independent of $\tilde{R}$
	\begin{align}
	\sup_{\tilde R \geq 1}
	\tilde{\bE}\int_0^T\bigg|\int_0^t\iprod{B_{\tilde{R}}(\tilde{u}^{\tilde R},\tilde{u}^{\tilde R})}{\phi}\ud s\bigg|^{q} \ud t
	&= 	\tilde{\bE}\int_0^T\bigg|\int_0^t \theta_{\tilde{R}}(|u^{\tilde R}|)\iprod{B(\tilde{u}^{\tilde R},\tilde{u}^{\tilde R})}{\phi}\ud s\bigg|^{q} \ud t \nonumber\\
		&\lesssim T\|\phi\|_{D(A)}^q	\tilde{\bE}\left(\int_0^T|\tilde u^{\tilde R}|^{}\|\tilde u^{\tilde R}\|^{}\ud s\right)^{q} \nonumber\\
	&\hspace{-0.2in}\lesssim T^{\frac{2p-q(p+2)}{2p}+1} \|\phi\|_{D(A)}^q \left(	\tilde{\bE}\int_0^T\|\tilde u^{\tilde R}\|^{p}_{X} \ud s\right)^{\frac{q}{p}}\left(	\tilde{\bE}\int_0^T\|\tilde u^{\tilde R}\|^{2} \ud s\right)^{\frac{q}2} \nonumber\\
	&\le C(T,\phi,p,q) \label{b2} \quad \text{thanks to \eqref{energytilR}.}
	\end{align}
 Thus using the Vitali convergence theorem with \eqref{b1} and \eqref{b2} we obtain \eqref{conv_BtilR}.\\
	Accumulating all convergence results \eqref{conv_AtilR}, \eqref{conv_GtilR}, \eqref{conv_KtilR} and \eqref{conv_BtilR}, along with \eqref{uotilrk}, \eqref{utilr_convH} and \eqref{conv_Ftilr}, we see that for any $\phi\in D(A)$ and {for any} measurable set $\mathcal{S}\subset \tilde \Omega\times[0,T]$, we have
	\begin{multline}\label{convergenceresult_tilR}
	\tilde{\bE}\int_0^T\chi_{\mathcal{S}}
	\bigg[\iprod{\tilde u(s)}{\phi}\ud s
	+\int_0^t \gamma \iprod{A\tilde u(s)}{\phi}\ud s
	+\int_0^t\iprod{B(\tilde u(s),\tilde u(s)}{\phi}\ud s+\int_0^t
	\iprod{ Y}{\phi}\ud s\bigg]\ud t
	\\=\tilde{\bE}\int_0^T\chi_{\mathcal{S}}
	\bigg[\iprod{\tilde u_0}{\phi}\ud s+\int_0^t\iprod{G(\tilde u(s))}{\phi}\ud \tilde W(s)+ \int_{(0,t]}\int_{E_0}\iprod{K(\tilde u(s-),\xi)}{\phi}\ud\hat{\tilde{\pi}}(s,\xi)\bigg]\ud t.
	\end{multline} 
		This finishes the proof of \eqref{step1tilR}.\\		
	\noindent \textbf{\textit{\underline{Step 2}}}:	It remains to verify that
	\begin{align} \label{fy}
	&Y=F(\tilde{u}),\,\,\, \ud \tilde{\bP}\otimes \ud t \tn{-a.e.}
	\end{align}	
	We will apply the Minty method one last time here to prove \eqref{fy}.\\	
Using the exact argument from Step 2 in the proof of Theorem \ref{prop:passage_to_limit} that leads to inequality \eqref{minty}, {we obtain} for any $\varphi \in L^\infty([0,T],\ud t;\mathbb{R})$ such that $\varphi \geq 0$, and any  $ \phi\in  L^{p}([0,T]\times \tilde{ \Omega},\ud t\otimes  \ud\tilde{ \bP};X)$
\begin{align}\label{minty2}
\tilde \bE\int_0^T\varphi(t)\bigg(\int_0^t
2\gamma\iprod{A\tilde u(s)-A\phi(s)}{\tilde u(s)-\phi(s)}\ud s+
2\iprod{F(\phi(s))-Y}{\phi(s)-\tilde u(s)}\ud s\\
+ \rho|\tilde u(s)-\phi(s)|^2 \ud s \bigg)\ud t\geq 0.\nonumber
\end{align}
As earlier, we choose $\phi= {\tilde u}\pm \lambda \zeta \vv$ for $\lambda \ge 0$, $\vv\in X$
	and $\zeta\in L^\infty([0,T]\times \tilde{ \Omega},\ud t\otimes \ud \tilde{ \bP};\mathbb{R})$ and then divide both sides by $\lambda$. Letting $\lambda\rightarrow 0$ we obtain 
	\begin{equation}
	\tilde{\bE}\left(\int_0^T\varphi(t)
	\left(\int_0^t\zeta(s)\iprod{F(\tilde u(s))-Y^{}}{\vv}\ud s\right)\ud t\right)=0.
	\end{equation}
	Since $\varphi$, $\zeta$ and $\vv$ are all arbitrary, we conclude that
	\begin{equation}
	Y= F(\tilde { u})  \quad \ud\tilde{ \bP}\otimes \ud t \tn{-a.e.}
	\end{equation}
Thus we have proven that for the stochastic basis $(\tilde {\Omega}, \tilde{\mathcal{F}}, (\tilde{\mathcal{F}}_t)_{t\ge 0}, \tilde{\bP}, \tilde W, \tilde{\pi})$, 
	\begin{equation}
	\begin{split}
		&\tilde u(t)+\int_0^t[\gamma A(\tilde u(s))+B(\tilde u(s),\tilde u(s))+ F(\tilde u(s))]\ud s\\&\hspace{1.7in}=\tilde u_0+\int_0^tG(\tilde u(s))\ud\tilde{W}(s)+\int_{(0,t]} \int_{E_0}K(\tilde u(s-),\xi)\ud\hat{\tilde{\pi}}(s,\xi),\end{split}
\end{equation}
holds %in ${V'+X'}$, 
$\tilde{\bP}$-a.s. for all $t \in [0,T]$, where $\tilde{u}_0$ has the same law as $u_0$.
That is, we have finished the proof of Theorem \ref{existence_martingale}, namely the proof of the existence of a martingale solution $\tilde u$ to \eqref{eqn}-\eqref{bc}.
\begin{rem}
	 As mentioned {in an earlier remark following the equations \eqref{conv_AtilR}-\eqref{conv_KtilR}}, we have better convergence results \eqref{conv_GtilR}-\eqref{conv_KtilR} available in the space $L^2(\tilde \Omega;L^2(0,T;H))$. Thus we do not need to use an argument like in \eqref{mult_exp} and we can directly pass to the limit $\tilde R\rightarrow \infty$ after applying the It\^o formula using the function $|\tilde u|^2$ to the process in \eqref{eq_tilR}.
\end{rem}

\section{Global pathwise solution of \eqref{eqn}-\eqref{bc}}\label{global}
In this section we show that there exists a unique global pathwise solution to \eqref{eqn}-\eqref{bc}, i.e. {we} show the existence of $u$ such that \eqref{eqn2:pathwise} is satisfied and thus establish Theorem \ref{existence}.\\
{As explained below this proof relies on an application of a result of Gy\"ongy and Krylov \cite{GyKr} which is the infinite dimensional version of the result of Watanabe and Yamada \cite{YW} and for that purpose we start by proving the uniqueness of pathwise solutions to \eqref{eqn}-\eqref{bc}}.
\subsection{Pathwise uniqueness}\label{uniqueness}

In this section we will prove the uniqueness of martingale solutions, existence of which has been established in the previous section.
%For a martingale solution $u$ to \eqref{eqn} we begin by introducing the stopping time,
%\begin{equation*}
%	\sigma^M=\inf\{t \geq 0:	\int_0^t \Vnorm{u(s)}_X^pds \ge M\} \wedge T.
%\end{equation*}
%{( Since we have only shown that $u_1$ and $u_2$ are \cadlag in $V' + X'$ at this point it's not immediately clear that $\tau^M$ is a stopping time) }

\begin{prop}\label{pathwise}
	Let $u_1$ and $u_2$
	be two martingale solutions of \eqref{eqn}-\eqref{bc} relative to the same stochastic basis $(\Omega,\mathcal{F},(\mathcal{F}_t)_{t\ge 0},\bP,W,\pi)$. Then $u_1,u_2$ are indistinguishable, that is if $u_1(0)=u_2(0)$ {a.s. then} we have
	\begin{equation}\label{indist}
	\bP[u_1(t)=u_2(t), \forall t \geq 0]=1.
	\end{equation}	
\end{prop}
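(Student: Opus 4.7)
The plan is to set $w:=u_1-u_2$, which satisfies (by subtraction) the evolution equation
\begin{align*}
\ud w + [\gamma Aw + (B(u_1)-B(u_2)) + (F(u_1)-F(u_2))]\ud t
= (G(u_1(t-))-G(u_2(t-)))\ud W + \int_{E_0}(K(u_1(t-),\xi)-K(u_2(t-),\xi))\ud \hat\pi,
\end{align*}
with $w(0)=0$, and then apply the It\^o formula (as in Theorem 2.19 of \cite{CNTT20}) to the process $\Psi_t(w):=e^{-\int_0^t \alpha(s)\ud s}|w(t)|^2$, with a carefully chosen nonnegative adapted process $\alpha(t)=C_*(1+\|u_1(t)\|_X^2)$. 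The monotonicity of $F$ (assumption \eqref{F1}) makes the $F$-contribution nonnegative and hence harmless, and the linear term contributes the coercive $2\gamma\|w\|^2$ coming from $\iprod{Aw}{w}_{V',V}=\|w\|^2$.

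The crux is controlling the bilinear term. Using bilinearity and the cancellation property \eqref{eqn:B_cancellation_property} (extended to $w\in V\cap X$ by density), I would write
$B(u_1)-B(u_2)=B(w,u_1)+B(u_2,w)$ and observe $\iprod{B(u_2,w)}{w}=0$, leaving $\iprod{B(u_1)-B(u_2)}{w}=\iprod{B(w,u_1)}{w}$. The standard ``divergence-free'' integration-by-parts identity then gives $\iprod{B(w,u_1)}{w}=-\iprod{B(w)}{u_1}$, and the new hypothesis \eqref{rs} yields
\[
|\iprod{B(w)}{u_1}|\le |B(w)|_{X'}\|u_1\|_X \lesssim |w|\,\|w\|\,\|u_1\|_X \le \tfrac{\gamma}{2}\|w\|^2 + C'|w|^2\|u_1\|_X^2.
\]
This is exactly where \eqref{rs} is essential: it lets the first factor $\|w\|$ be absorbed by the coercivity of $A$, while the remaining $|w|^2\|u_1\|_X^2$ is of Gr\"onwall type.

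For the noise contributions, the quadratic-variation correction produces $\|G(u_1)-G(u_2)\|_{L_2(U_0,H)}^2+\int_{E_0}|K(u_1,\xi)-K(u_2,\xi)|^2 \ud\nu\le 2\rho|w|^2$ by \eqref{eqn:G,K_H_Lipschitz}, while the genuine stochastic integrals are (local) martingales. Choosing $C_*$ large enough that $\alpha(t)\ge 2C'\|u_1(t)\|_X^2+4\rho$, the Itô expansion of $\Psi_t(w)$ collapses to
\[
\ud \Psi_t(w) \le \ud M_t,
\]
for a local martingale $M_t$. Introducing the stopping times $\tau_N:=\inf\{t\le T:|w(t)|^2+\int_0^t\|u_1(s)\|_X^p\,\ud s\ge N\}$ makes the stopped process a true martingale, so taking expectation gives $\tilde\bE[\Psi_{t\wedge\tau_N}(w)]\le \tilde\bE|w(0)|^2=0$. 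Since the exponential weight is strictly positive ($\int_0^T\alpha(s)\ud s<\infty$ a.s. by the integrability $\int_0^T\|u_1\|_X^p<\infty$ built into Definition \ref{definition1}), this forces $w(t\wedge \tau_N)=0$ a.s. Letting $N\to\infty$ (with $\tau_N\nearrow T$ a.s.) yields $w(t)=0$ a.s. for each $t\in[0,T]$, and the c\`adl\`ag sample paths in $V'+X'$ then upgrade this to the required pathwise statement \eqref{indist}.

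The main technical obstacle is the bilinear term, which dictates why \eqref{rs} was included as an additional hypothesis for Theorem \ref{existence}; secondary care is needed for the Poisson compensator in the It\^o formula, and for the stopping-time localization that makes the expectation argument legitimate even though $u_1,u_2$ lie only in $L^2(\Omega;L^2(0,T;V))\cap L^p(\Omega;L^p(0,T;X))$ with possibly large jump component.
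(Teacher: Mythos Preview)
Your proposal is correct and follows essentially the same strategy as the paper: subtract the equations, use monotonicity \eqref{F1} to discard the $F$-term, reduce the bilinear term via cancellation and \eqref{rs} to $|w|\,\|w\|\,\|u_1\|_X$, absorb $\|w\|^2$ into the coercive $\gamma A$-term, and close with a Gr\"onwall-type argument localized by a stopping time built from $\int_0^t\|u_1\|_X^p\,\ud s$.

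The one genuine technical difference is in how the Gr\"onwall step is executed. The paper applies It\^o to $|u|^2$ directly, takes $\sup_{s\le t\wedge\tau^M}$, invokes BDG on the two martingale terms, and then appeals to a stochastic Gr\"onwall lemma (Lemma~5.3 in \cite{GHZ}). You instead build the Gr\"onwall weight into the It\^o formula via $e^{-\int_0^t\alpha(s)\ud s}|w|^2$, which lets you avoid both the $\sup$ and BDG entirely: after stopping, a plain expectation kills the local martingale and you read off $\bE[\Psi_{t\wedge\tau_N}(w)]\le 0$ immediately. Your route is a bit cleaner here and is the style used in e.g.\ the Pr\'ev\^ot--R\"ockner monograph; the paper's route has the minor advantage of yielding an $L^2(\Omega;L^\infty(0,T;H))$-type control on the difference along the way. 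Both are standard and equivalent for the purpose of proving \eqref{indist}.
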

\begin{proof}
	Letting $u=u_1-u_2$ {we write the equations \eqref{eqn} satisfied by $u_1$ and $u_2$ and obtain by difference the equation satisfied by $u$. We then obtain, by
	 applying the It\^{o} formula using the function $|u|^2$ to the difference equation:}
	\begin{align}
	\nonumber |u(t)|^2	 +2\int_0^t &(\gamma\iprod{Au(s)}{u(s)}+\iprod{F(u_1(s))-F(u_2(s))}{u(s)} + 2\iprod{B(u_1(s))-B(u_2(s))}{u(s)} )\ud s
	\nonumber \\
	&\hspace{-0.3in} =\int_0^t|G(u_1(s))-G(u_2(s))|_{L_2(U_0,H)}^2\ud s + 2\int_0^t\iprod{[G(u_1(s))-G(u_2(s))]\ud W(s)}{u}
	\nonumber \\
	& + 2\int_{(0,t]}\int_{E_0}\iprod{ u(s)}{K(u_1(s-),\xi)-K(u_2(s-),\xi)}
	\ud\hat\pi(s,\xi)\nonumber\\
	&+\int_{(0,t]}\int_{E_0} |K(u_1(s-),\xi)-K(u_2(s-),\xi)|^2\ud\pi(s,\xi).\label{difference}
	\end{align}
Here we recall that our assumption \eqref{F1} gives:
\begin{equation}\label{boundF}
	\iprod{F(u_1)-F(u_2)}{u}\ge 0.
\end{equation}
Next we derive upper bounds for the term $B$. Observe that due to the bilinearity of $B$ and the  assumptions \eqref{eqn:B_cancellation_property} and \eqref{rs} we have
	\begin{align}
		\iprod{B(u_1,u_1)-B(u_2,u_2)}{u}&=\iprod{\frac 12\left(B(u_1+u_2,u_1-u_2)+B(u_1-u_2,u_1+u_2)\right)}{u}\nonumber\\
		&=\frac 12\left(\iprod{B(u_1+u_2,u)}{u}+\iprod{B(u,u_1+u_2)}{u}\right) \nonumber\\
		%&\leq |u|^2\left(\|u_1\|_X^2+\|u_2\|_X^2\right)+\|u\|^2 \label{uniq_B}.
	&=-(B(u,u_1),u)\nonumber\\
		&\lesssim |u|\|u_1\|_X\|u\| \label{uniq_B}.
	\end{align}

We continue our analysis % for the more challenging case namely when %any $p>2$ when $d=2$ and $p>\frac73$ and $d=3$
 by introducing the stopping time
\begin{equation}\label{tau_m}
\tau^M=\inf\{t \geq 0:
\int_0^t \|u_1\|^p_X \ud s > M\}, \quad M>0.
\end{equation}
	Then inserting \eqref{uniq_B} and {the monotonicity condition} \eqref{boundF} into equation \eqref{difference}, taking the supremum in time over $[0,t\wedge \tau^M]$, and finally taking expectations, {we obtain}
	\begin{align}
	\bE\sup_{s\in [0,t\wedge\tau^M]}\Hnorm{u(s)}^2&+\gamma\bE\int_0^{t\wedge\tau^M}\Vnorm{u(s)}^2 \ud s
		\le
\frac2{\gamma}\bE\int_0^{t\wedge\tau^M} \left( |u(s)|^2 \|u_1\|_X^2+ \frac{\gamma}{2}\|u(s)\|^2 \right)\ud s
\nonumber	\\&+2\bE\sup_{s\in [0,t\wedge\tau^M]}
	\bigg|\int_0^s\iprod{G(r,u_1(r))-G(r,u_2(r))\ud W(r)}{u}\bigg|\nonumber\\
	& + \bE\int_0^{t\wedge\tau^M}||G(u_1(s))-G(u_2(s))||_{L_2(U_0,H)}^2\ud s
\nonumber\\& +2\bE\sup_{s\in [0,t\wedge\tau^M]}
	\bigg|\int_{(0,s]}\int_{E_0}\iprod{K(r,u_1(r-),\xi)-K(r,u_2(r-),\xi)}{ u(r)}
	\ud\hat\pi(r,\xi)\bigg|\nonumber\end{align}\begin{align}&
	+\bE\int_{(0,t\wedge\tau^M]}\int_{E_0}
	|K(u_1(s-),\xi)-K(u_2(s-),\xi)|^2\ud\pi(s,\xi).\label{energyunique}
	\end{align}
	By using the Lipschitz growth assumption \eqref{eqn:G,K_H_Lipschitz}, we arrive at
	\begin{align}
	&\bE\int_0^{t\wedge\tau^M}||G(u_1(s))-G(u_2(s))||_{L_2(U_0,H)}^2\ud s\nonumber \\&\hspace{1.3in}+\bE\int_{(0,t\wedge\tau^M]}\int_{E_0}
	|K(u_1(s-),\xi)-K(u_2(s-),\xi)|^2\ud\pi(s,\xi)\nonumber
	\\&\hspace{1.3in} \le \rho\bE
	\int_0^{t\wedge\tau^M}(1+\Hnorm{u(s)}^2)\ud s.\label{boundG1}
	\end{align}
	By utilizing the BDG inequality along with the hypothesis \eqref{eqn:G,K_H_growth}, for some $C=C(\rho,T)>0$, we {see that}
	\begin{align}\label{boundG2}
	\nonumber\bE\sup_{s\in [0,t\wedge\tau^M]}
	&\bigg|\int_0^s\iprod{[G(r,u_1(r))-G(r,u_2(r))]\ud W(r)}{u}\bigg|\\
	&\lesssim \bE\left(\int_0^{t\wedge\tau^M}\iprod{G(r,u_1(r))-G(r,u_2(r))}{u}^2\ud s\right)^{\frac12}\nonumber\\\nonumber
	&\lesssim\bE\left(\int_0^{t\wedge\tau^M}\Hnorm{u(s)}^2\Hnorm{G(u_1(s))-G(u_2(s))}^2\ud s\right)^{\frac12}\\
	&\nonumber\lesssim\bE\left(\sup_{s\in [0,t\wedge\tau^M]}\Hnorm{u(s)}^2\int_0^{t\wedge\tau^M}\Hnorm{G(u_1(s))-G(u_2(s))}^2\ud s\right)^{\frac12}\\
&\nonumber \le \frac 14\bE\sup_{s\in [0,t\wedge\tau^M]}\Hnorm{u(s)}^2+C\bE\int_0^{t\wedge\tau^M}
	\Hnorm{G(u_1(s))-G(u_2(s))}^2\ud s
\\&\le \frac 14\bE\sup_{s\in [0,t\wedge\tau^M]}\Hnorm{u(s)}^2+C\bE\int_0^{t\wedge\tau^M}
	\rho\Hnorm{u(s)}^2\ud s.
	\end{align}
	And similarly 
	\begin{align}
	\nonumber\bE &\sup_{s\in [0,t\wedge\tau^M]}
	\bigg|\int_{(0,t]}\int_{E_0}\iprod{ u(s)}{K(u_1(s-),\xi)-K(u_2(s-),\xi)}
	\ud \hat\pi(s,\xi)\bigg|
	\\
	& \nonumber\lesssim \bE
	\left(\int_{(0,t]}\int_{E_0}\iprod{ u(s)}{K(u_1(s-),\xi)-K(u_2(s-),\xi)}^2
	\ud \pi(s,\xi)\right)^{\frac12}
	%\\&\nonumber\lesssim \bE\left(\int_{(0,t]}\int_{E_0}\Hnorm{u}^2\Hnorm{K(u_1(s-),\xi)-K(u_2(s-),\xi)}^2\ud\pi(s,\xi)\right)^{\frac12}
	\\&\nonumber\lesssim \bE\left(
	\sup_{s\in [0,t\wedge\tau^M]}\Hnorm{u(s)}^2\bigg[\int_{(0,t]}\int_{E_0}\Hnorm{K(u_1(s-),\xi)-K(u_2(s-),\xi)}^2\ud\pi(s,\xi)\bigg]\right)^{\frac12}
	\\&\nonumber\le \frac 14\bE\sup_{s\in [0,t\wedge\tau^M]}\Hnorm{u(s)}^2+C\bE\bigg[\int_0^{t\wedge \tau^M}\int_{E_0}\Hnorm{K(u_1(s-),\xi)-K(u_2(s-),\xi)}^2\ud\nu(\xi)\ud s\bigg]
	%\end{align}\begin{align}
	\\&\le \frac 14\bE\sup_{s\in [0,t\wedge\tau^M]}\Hnorm{u(s)}^2+C\bE\int_0^{t\wedge\tau^M}
	\rho\Hnorm{u(s)}^2\ud s. \label{boundK1}
	\end{align}
	Collecting the estimates \eqref{boundG1}, \eqref{boundG2} and \eqref{boundK1} we find that
	\begin{align}\label{ineq_uniq}	\bE\sup_{s\in [0,t\wedge\tau^M]}\Hnorm{u(s)}^2\le C(\rho,T,\gamma)\bE\int_0^{t\wedge\tau^M} |u(s)|^{2}\|u_1\|_X^2\ud s.	\end{align}
	
	%	\begin{align}\label{ineq_uniq}	\bE\sup_{s\in [0,t\wedge\tau^M]}\Hnorm{u(s)}^2 &\le C(\rho,T)	\bE\int_0^{t\wedge\tau^M} |u(s)|^{2}\|u_1\|_X^2\ud s\\	& \leq (C,\rho,T,M) \bE \int_0^{t \wedge \tau^M} \sup_{r \in [0,s\wedge \tau^M]}|u(r)|^2 \ud s.\nonumber	\end{align}

We observe that the process $\varphi(s):= \|u_1(s)\|_X^2$ satisfies
	\begin{equation}
	\int_0^{t\wedge\tau^M}\varphi(s)\ud s\le
	 \left(\int_0^{t\wedge\tau^M} \|u_1(s)\|^p_X \ud s\right)^{\frac{2}p} \leq  M^{\frac{2}p},\qquad \bP{-a.s.}
	\end{equation}
	We now apply the stochastic Gr\"{o}nwall {lemma}  (see e.g. Lemma 5.3 in \cite{GHZ}) to \eqref{ineq_uniq} and use the fact that $\tau^M \rightarrow \infty$ as $M \rightarrow \infty$ almost surely, to deduce 
	\begin{align}\label{eqn:indist}
	\bE\left(\sup_{t\in [0,T]}\Hnorm
	{u(t)}^2\right) %\leq C(T,M) \bE\left(\Hnorm	{u(0)}^2\right)
	=0.
	\end{align}

	 Since $T \geq 0$ is arbitrary, this completes the proof of Proposition \ref{pathwise}.
\end{proof}
\begin{rem}
	Note that {by fixing the stochastic basis in the proof above} one can prove the uniqueness of the pathwise solutions to \eqref{eqn}-\eqref{bc} {defined as in} Definition \ref{definition2}.
\end{rem}

\subsection{Pathwise solution}\label{path_sol}

In this section we apply the well known result by  Gy\"{o}ngy and Krylov in \cite{GyKr} that extends %(see page 1135 in [22] for instance)
the Yamada–Watanabe theorem (cf. \cite{YW}) to infinite dimension. {This theorem} states that the existence of martingale solutions and the pathwise uniqueness imply the existence of a pathwise solution to \eqref{eqn}-\eqref{bc}.

We can now finish the proof of Theorem \ref{existence} which follows a standard procedure. % that can be found, for example, {in} \cite{DGHT}, \cite{EulerPaper}. 
Here we will briefly describe the key steps involved.

First using the following theorem in \cite{GyKr}, it can be shown that $\bar{\bar {u}}^{\tilde{R}}$, the sequence of solutions to \eqref{approx_eqn2}, converges in probability as $\tilde{R} \rightarrow \infty$ with respect to the original stochastic basis $(\Omega,\mathcal{F}, (\mathcal{F}_t)_{t \geq 0},\bP,W, \pi)$.

\begin{thm}\label{GKthm}
	Let $\sE$ be a Polish space. A sequence of ${\sE}$-valued random variables $\{\mathscr{Y}_n\}_{n \geq 0}$, defined on te same probability space, converges in probability if and only if for every subsequence of joint probability laws of $\{\mathscr{Y}_n\}_{n \geq 0}$, given by $\{\mu_{m_{l}},\mu_{n_{l}}\}_{l\geq 0}$, there exists a further subsequence that converges weakly to a probability measure $\mu$ such that
	\begin{align}
		\mu(\{(x,y) \in {\sE} \times {\sE} : x=y\})=1.
	\end{align}\end{thm}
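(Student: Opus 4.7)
The plan is to prove the two directions of this Gyöngy–Krylov characterization separately, treating the forward direction as essentially tautological and focusing on the converse.

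For the forward direction, I would assume $\mathscr{Y}_n \to \mathscr{Y}$ in probability for some $\mathcal{E}$-valued random variable $\mathscr{Y}$. Then for any pair of subsequences $(m_l), (n_l)$, the pair $(\mathscr{Y}_{m_l}, \mathscr{Y}_{n_l})$ also converges in probability to $(\mathscr{Y}, \mathscr{Y})$, hence in distribution. The joint laws $\mu_{m_l, n_l}$ thus converge weakly along the full subsequence to the pushforward of $\mathbb{P}$ under $\omega \mapsto (\mathscr{Y}(\omega), \mathscr{Y}(\omega))$, which is supported on the diagonal $\Delta = \{(x,y) : x = y\}$.

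For the harder converse direction, I would argue by contradiction. Suppose the diagonal condition holds but $\{\mathscr{Y}_n\}$ does not converge in probability. Since $\mathcal{E}$ is Polish and hence the space of $\mathcal{E}$-valued random variables is complete under the Ky Fan metric, the failure of Cauchyness in probability gives $\varepsilon, \delta > 0$ and subsequences $(m_l), (n_l)$ with
\begin{equation*}
\mathbb{P}\bigl(d_\mathcal{E}(\mathscr{Y}_{m_l}, \mathscr{Y}_{n_l}) \geq \varepsilon\bigr) \geq \delta \quad \text{for all } l.
\end{equation*}
By hypothesis, a further subsequence of the joint laws $\mu_{m_l, n_l}$ converges weakly to a measure $\mu$ concentrated on $\Delta$. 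Since the set $F_\varepsilon := \{(x,y) \in \mathcal{E} \times \mathcal{E} : d_\mathcal{E}(x,y) \geq \varepsilon\}$ is closed and disjoint from $\Delta$, the Portmanteau theorem gives
\begin{equation*}
\limsup_{l \to \infty} \mu_{m_l, n_l}(F_\varepsilon) \leq \mu(F_\varepsilon) \leq \mu(\Delta^c) = 0,
\end{equation*}
contradicting the lower bound $\mu_{m_l, n_l}(F_\varepsilon) \geq \delta$.

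The main subtlety I anticipate is ensuring the target $\mathscr{Y}$ in the forward direction really exists (which is immediate from convergence in probability) and, more importantly, justifying that one may work with the pair $(\mathscr{Y}_{m_l}, \mathscr{Y}_{n_l})$ without leaving the joint law framework — this is why the separability of $\mathcal{E}$ (hence measurability of the diagonal and of $d_\mathcal{E}$) and the Polish assumption enter. The Portmanteau inequality on the closed set $F_\varepsilon$ — rather than the open set $\{d > \varepsilon\}$, for which the inequality goes the wrong way — is the one technical point that must be handled correctly.
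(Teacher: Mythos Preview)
Your argument is correct and is essentially the standard proof of the Gy\"ongy--Krylov lemma. Note, however, that the paper does not actually prove this theorem: it is quoted as a known result from \cite{GyKr} and then applied, so there is no ``paper's own proof'' to compare against. Your write-up would serve perfectly well as a self-contained justification if one were desired.
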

In preparation for using this theorem, define \begin{align}\mathcal{Y}:=H \times \sX_1 \times H \times \sX_1 \times C([0,T];U) \times \mathcal{N}^{\#*}_{[0,\infty)\times E},\end{align}
 where  $\mathcal{N}^{\#*}_{[0,\infty)}$	denotes the space of counting measures on  $[0,\infty)\times E$ that are
finite on bounded sets.
 Now, consider two subsequences $(\mathscr{Y}_k:=\bar{\bar {u}}^{\tilde{R}_k})_{k=1}^\infty$ and $(\mathscr{Y}_k^{\#}:=\bar{\bar {u}}^{\tilde{R}^{\#}_{k}})_{k=1}^\infty$ and the double sequence $({\mathscr{Y}_k}_0 , \mathscr{Y}_k , {\mathscr{Y}^{\#}_{k0}}  , {\mathscr{Y}^{\#}_k} ,W, \pi)$. Next we denote by $\{\mu^{\tilde R_k,\tilde R^{\#}_k}\}_{k\geq 1}$ the sequence of
joint laws of the $\mathcal{Y}$-valued random variables. Then it can be shown that $\{\mu^{\tilde R_k,\tilde R^{\#}_k}\}_{k\geq 1}$ is tight and thus weakly compact over $\mathcal{Y}$ . Then by Prohorov’s theorem (see e.g., \cite{Billingsley99}), there exists a subsequence $k\rightarrow \infty$ such that $\{\mu^{\tilde R_k,\tilde R^{\#}_k}\}_{k\geq 1}$ converges weakly to a probability measure $\tilde \nu$ on $\mathcal{Y}$. Then by another application of the Skorohod representation theorem we know that there exists a filtered probability space $(\tilde \Omega^{},\tilde{\mathcal{F}}^{}, (\tilde{\sF}_t)_{t\geq 0},\tilde{\bP}^{})$ and $\mathcal{Y}$-random variables
$(\tilde {\mathscr{Y}_k}_0 , \tilde {\mathscr{Y}}_k ,\tilde {{\mathscr{Y}}^{\#}_{k0}}  ,\tilde{ {\mathscr{Y}}^{\#}_k} ,\tilde W, \tilde{\pi})$ and $(\tilde {\mathscr{Y}}_0 , \tilde {\mathscr{Y}} ,\tilde {{\mathscr{Y}}^{\#}_{0}}  ,\tilde{ {\mathscr{Y}}^{\#}} ,\tilde W, \tilde{\pi})$ such that $(\tilde {\mathscr{Y}_k}_0 , \tilde {\mathscr{Y}}_k ,\tilde W, \tilde{\pi})$ and $(\tilde {\mathscr{Y}}_0 , \tilde {\mathscr{Y}} ,\tilde W, \tilde{\pi})$ and, $(\tilde {{\mathscr{Y}}^{\#}_{k0}}  ,\tilde{ {\mathscr{Y}}^{\#}_k} ,\tilde W, \tilde{\pi})$ and $(\tilde {{\mathscr{Y}}^{\#}_{0}}  ,\tilde{ {\mathscr{Y}}^{\#}} ,\tilde W, \tilde{\pi})$ satisfy the conclusions of Theorem \ref{skorohod3}.

 Thus using the arguments in the proof of Theorem \ref{existence_martingale} following Theorem \ref{skorohod3} we conclude that $\tilde{\mathscr{Y}}$ and $\tilde{\mathscr{Y}}^{\#}$ are two martingale solutions to \eqref{eqn}-\eqref{bc} with initial conditions $\tilde{\mathscr{Y}}_0$ and $\tilde{\mathscr{Y}}^{\#}_0$ respectively with respect to the stochastic basis $(\tilde \Omega^{},\tilde{\mathcal{F}}^{},(\tilde{\mathcal{F}}_t^{})_{t\geq 0},\tilde{\bP}^{},\tilde W,\tilde{\pi})$.
% Now since $\mathscr{Y}_{k0} \rightarrow u_0$ and $\mathscr{Y}^{\#}_{k0} \rightarrow u_0$ in $H$, $\tilde \bP$-a.s. as $k\rightarrow \infty$ we have that
Now since $\tilde{\bP}[\tilde{\mathscr{Y}}_0=\tilde{\mathscr{Y}}_0^{\#}=u_0]=1$, thanks to Proposition \ref{pathwise} we obtain $\tilde{\bP}$-a.s. that $\tilde{\mathscr{Y}}=\tilde{\mathscr{Y}}^{\#}$ for all $t\in[0,T]$.
 
Thus as mentioned above we use Theorem \ref{GKthm} to infer that the sequence of $\sX_1$-valued random variables $\{\bar{\bar u}^{\tilde R}\}_{R \geq 1}$, defined on the original stochasic basis $(\Omega,\mathcal{F},\bP)$, converges to $u$ in probability as $\tilde R \rightarrow \infty$. 
Then, along another subsequence $\tilde R \rightarrow \infty$ we have {that} the following conditions hold $\bP$-a.s.
 \begin{align}
 \bar{\bar u}_0^{\tilde R} &\rightarrow u_0 \text{ in }H,\\
\bar{\bar u}^{\tilde R} &\rightarrow u \text{ in } L^2(0,T;H)\cap \mathcal{D}([0,T];V'+X').
 \end{align}

Applying the identical argument as in Section \ref{sec:tildeR} we obtain that the limit $u$ is indeed the unique global pathwise solution to \eqref{eqn}-\eqref{bc}, i.e., $u$ satisfies \eqref{eqn2:pathwise}.\\
This finishes the proof of Theorem \ref{existence}. }

%%%%%%%%%%%%
%%%%%%%%%%%%%%%%%%%%%%%%%%%%%%%%%%%%%%%%%
%%%%%%%%%%%%%%%%%%%%%%%%%%%%%%%%%%%%%%%%%%%
%%%%%%%%%%%%%%%%%%%%%%%%%%%%%%%%%%%%%%%%%%
%%%%%%%%%%%%%%%%%%%%%%%%%%%%%%%%%%%%%%%%%%%

\section{Examples}\label{Examples}
\subsection{The Ladyzensaya-Smagorinsky equations}\label{smago}
The first application of Theorem \ref{existence} that we want to present, corresponds to a variation of the Navier-Stokes equations introduced by Ladyzhenskaya \cite{Lad67}, \cite{Lad68}; the equations also appears in relation with the Smagorinsky model of turbulence \cite{Sma63}. For $p >2$ {we} look for a vector valued function $u$ and {a} scalar valued function $p_*$ that satisfy:
\begin{equation} \label{e11.132d}
\begin{cases}
\ud u + [(u \cdot \nabla)u- \text{div}((1+|\nabla u|^2)^{\frac{p-2}{2}}\nabla u) + \nabla p_*]\ud t \\
\hspace{2in}= G(u) \ud W + \int_{E_0} K(u,\xi) \ud \hat{\pi} & \tn{in } (0,T) \times \sO \\
\text{div }u=0   &\tn{in } (0,T) \times \sO\\
u = 0 & \tn{on } (0,T) \times  \partial \sO \\
u(0) = u_0 & \tn{on } \{t=0\} \times \sO,
\end{cases}
\end{equation}
We consider 
$X=\{ \varphi \in (W^{1,p}_0(\mathcal{O}))^d, \text{ div } \varphi =0\}$, $V=\{\varphi \in (H^1_0(\mathcal{O}))^d, \text{ div } \varphi =0\}$ and $H=\{ \varphi \in (L^2(\mathcal{O}))^d, \text{ div }\varphi=0, \varphi \cdot \nu =0 \text{ on } \partial \mathcal{O}\}$ where $\nu$ is the unit {outward} normal on $\partial \mathcal{O}$.{ Here $F(u) \in X'$ is {associated with the operator} $-\text{div}((1+|\nabla u|^2)^{\frac{p-2}{2}}\nabla u)$ and is defined by $ (F(u),v)=\int_{\mathcal{O}}(1+|\nabla u|^{2})^{\frac{p-2}{p}}\nabla u \nabla v\ud x$ for $u,v \in X$. As described in Section \ref{sec:functional_framework} the bilinear term $B:V \times V \rightarrow V'$ is defined by $(B(u,v),w)= \int_{\mathcal{O}}u_{i}\partial_i v_j w_j \ud x$.}\\
It is easy to see that the monotonicity condition \eqref{F1} i.e. $\iprod{F(u)-F(v)}{u-v}_{X,X'}\ge 0, \,\,\forall u,v\in X$ is satisfied. \\
Additionally we also have for some $c_p^*>0$ that (see \cite{Lad67}),
\begin{align}\label{F1''}
\iprod{F(u)-F(v)}{u-v}\ge c_p^*\|u-v\|^2 \qquad \forall u,v\in X.
\end{align}
Indeed define $w^r= ur+(1-r)v$ and ${f}( v)=(1+|\nabla v|^2)^{\frac{p-2}{2}}\nabla v$. Then we have that
\begin{align*}
f(u)-f(v)&=\int_0^1 \frac{\ud f(w^r)}{\ud r} \ud r\\
& =\nabla(u-v)\int_0^1 \left((p-2)(1+|\nabla w^r|^2)^{\frac{p-2}2}+ (1+|\nabla w^r|^2)^{\frac{p-2}2}\right) \ud r.
\end{align*}
This gives,
\begin{align*}
\iprod{F(u)-F(v)}{u-v}=\iprod{f(u)-f(v)}{\nabla u-\nabla v}&=\iprod{\int_0^1 \frac{\ud f(w^r)}{\ud r} \ud r}{\nabla u- \nabla v},\\
&\geq (p-1)|\nabla u-\nabla v|^2.
\end{align*}
In fact it has also been shown that a stronger condition (F1') is satisfied (see e.g. \cite{GM}):\\
(F1') (Strong monotonicity): For $p>2$ there exists a $c_p>0$ such that,
{\begin{align}\label{F1'}
\iprod{F(u)-F(v)}{u-v}\ge c_p\|u-v\|^p_X \qquad \forall u,v\in X.
\end{align}}
It is easy to see that the hypothesis (F2) is satisfied for any $p \in \R$ (see e.g. \cite{Lio69}).\\
{As proved in \cite{TT20}, the hypotheses \eqref{gen_convex} - \eqref{gc} are satisfied for any $p>2$ and $k=1$ by {considering} the corresponding convex functional for any $v \in X$
\begin{align}
{J}(v)= \int_{\mathcal{O}} \mathscr{J}_1(\nabla v)dx=  \int_{\mathcal{O}} \frac{1}{p}(1+|\nabla v|^2)^{\frac{p}2} dx,
\end{align}
where $\mathscr{J}_1: \R^{d^2} \rightarrow \R$ for $ x \in \R^{d^2}$ is given by,
\begin{align}
\mathscr{J}_1(x)= \frac{1}{p}(1+|x|^2)^{\frac{p}2}. 
\end{align} This also leads to (F3) and (F4). \\}
The cancellation property \eqref{eqn:B_cancellation_property} is a classical one; a proof can be found {e.g.} in \cite{Tem95}, \cite{T_NSE}. Next we will show that the term $B$ satisfies \eqref{eqn:B_bound2}-\eqref{eqn:B_bound3}. In {the most restrictive case, in} space dimensions $d=3$ we use the Ladyzenskaya inequality to obtain 
\begin{align}
|\Vdualpair{B(v,u)}{w}|= |\Vdualpair{B(v,w)}{u}|& \leq |v|_{(L^2(\mathcal{O}))^d} |\nabla w|_{(L^4(\mathcal{O}))^d}|u|_{(L^4(\mathcal{O}))^d}\nonumber\\
& \lesssim |v||u|^{\frac14}\|u\|^{\frac34} \| w\|^{\frac14}|Aw|^{\frac34}\nonumber\\&
 \lesssim |v|\|u\||Aw|.
\end{align}
When $d=3$ we also have the embeddings $H^1(\mathcal{O}) \subset L^6(\mathcal{O})$ and $H^{\frac12}(\mathcal{O}) \subset L^3(\mathcal{O})$ (see, e.g. \cite{Br11}) which together with the Gagliardo-Nirenberg interpolation inequality give us,
\begin{align}
|\Vdualpair{B(v,u)}{w}| & \leq |v|_{(L^3(\mathcal{O}))^3} |\nabla u|_{(L^2(\mathcal{O}))^3}|w|_{(L^6(\mathcal{O}))^3}\nonumber\\
& \lesssim |v|^{\frac12}\|v\|^{\frac12} \|u\| \|w\|.
\end{align}

The case $d=2$ is easier to prove {and so we omit the calculations}. \\
We will next prove that the assumption \eqref{rs} is satisfied.

Observe that for $p\geq d$ we have, $X \subset (L^\infty(\mathcal{O}))^d$ and %$H^1 \subset (L^{\frac{2d}{d-2}}(\mathcal{O}))^d$
 hence we readily obtain
\begin{align}\label{pgeqd}
	|(B(u,u),w)| =& \lesssim |u|| w|_{(L^\infty(\mathcal{O}))^d}	|\nabla u|\nonumber\\
& \lesssim |u| | w |_{X}\|u\|\nonumber\\
& \lesssim  |u|^2 \|w \|_X^2+ \|u\|^2 .
\end{align}
{In conclusion Theorem \ref{existence} applies to equations \eqref{smago} when $p \geq d$ and {$p>2$}.}
\begin{rem}{ We now show how by a slight change in the proof of Proposition \ref{pathwise} we can cover other related situations. Indeed, we observe} that the hypothesis \eqref{rs} in Theorem \ref{existence} was used only in Section \ref{uniqueness} in the proof of Proposition \ref{pathwise} to prove pathwise uniqueness.
However thanks to the stronger monotonocity properties \eqref{F1''} and \eqref{F1'} available for the operator $F$ we can extend and thus improve the uniqueness result {for values of $p$} smaller than $d$ in the following way; note that we still asume $p>2$, so that we cover here the cases $2<p\leq d$. \end{rem}
For example, consider the case when
 $d=3>p \geq \frac{7}{3}$. Observe that $L^p(0,T;X) \cap L^\infty(0,T;H) \subset L^{ \mathcal{S}}(0,T;(L^{ \mathcal{R}}(\mathcal{O}))^3)$ if for some $\theta \in (0,1)$ we can write $\frac{1}{\mathcal{S}}=\frac{1-\theta}{p}$ and $\frac1{ \mathcal{R}} = \frac{(1-\theta)(3-p)}{3p} + \frac{\theta}{2}$. Indeed, for $\frac{7}{3}\leq p<3$ we can let $\theta=\frac{2(3p-7)}{5(p-2)}$. For this $\theta$ we write $\frac1\sigma:=1-\frac1p-\frac1{  \mathcal{R}} =\frac1{   \mathcal{S}} + \frac{\frac3{  \mathcal{R}}}{6}>0$ which gives,
\begin{align}
\nonumber|(B(u,u),w)| \leq |u|_{(L^\sigma(\mathcal{O}))^d} \|u\|_X |w|_{(L^{  \mathcal{R}}(\mathcal{O}))^3} 
& \leq |u|^{\frac2{   \mathcal{S}}}_{(L^2(\mathcal{O}))^3}|u|^{\frac3{  \mathcal{R}}}_{(L^6(\mathcal{O}))^3}\|u\|_X|w|_{(L^{  \mathcal{R}}(\mathcal{O}))^3} \nonumber\\
& \leq |u|^{\frac2{   \mathcal{S}}}\|u\|^{\frac3{  \mathcal{R}}}\|u\|_X|w|_{(L^{  \mathcal{R}}(\mathcal{O}))^3}\nonumber\\
& \leq |u|^2|w|^{   \mathcal{S}}_{(L^{  \mathcal{R}}(\mathcal{O}))^3}+\|u\|^{2} +\|u\|^p_X.\label{pleqd}
\end{align}
Hence, using \eqref{F1''} and \eqref{F1'}, we obtain the following estimates in place of \eqref{energyunique} in the proof of Proposition \ref{pathwise},
\begin{align}
\bE\sup_{s\in [0,t\wedge\tau^M]}\Hnorm{u(s)}^2&+ c_p^*\, \bE\int_0^{t\wedge\tau^M}\Vnorm{u(s)}^2 \ud s+c_p \,
\bE\int_0^{t\wedge\tau^M}\Vnorm{u(s)}_X^p\ud s \nonumber\\
&\le
\bE\int_0^{t\wedge\tau^M}  |u(s)|^2 \|u_1\|^{   \mathcal{S}}_{(L^{  \mathcal{R}}(\mathcal{O}))^3}+ \|u(s)\|^2 +\|u(s)\|_X^p\ud s
\nonumber	
\\&+2\bE\sup_{s\in [0,t\wedge\tau^M]}
\bigg|\int_0^s\iprod{G(r,u_1(s))-G(r,u_2(s))\ud W(s)}{u}\bigg|\nonumber\\
& + \bE\int_0^{t\wedge\tau^M}||G(u_1(s))-G(u_2(s))||_{L_2(U_0,H)}^2\ud s
\nonumber\\
& +2\bE\sup_{s\in [0,t\wedge\tau^M]}
\bigg|\int_{(0,t]}\iprod{\int_{E_0} u(s)}{K(u_1(s-),\xi)-K(u_2(s-),\xi)}
\ud\hat\pi(s,\xi)\bigg|\nonumber\\
&+\bE\int_{[0,t\wedge\tau^M]}\int_{E_0}
|K(u_1(s-),\xi)-K(u_2(s-),\xi)|^2\ud\pi(s,\xi).\label{unique_lady}
\end{align}
Similarly instead of \eqref{tau_m} we define
\begin{equation}
\tau^M=\inf\{t \geq 0:
\int_0^t \|u_1\|^{   \mathcal{S}}_{(L^{  \mathcal{R}}(\mathcal{O}))^3} > M\}, \quad M>0.
\end{equation}
The rest of the proof of Proposition \ref{pathwise} follows identically.
  
  Since all the hypotheses are satisfied, Theorem \ref{existence} applies producing the existence of a {unique} global pathwise solution to \eqref{e11.132d} for any $p>2$ when $d=2$ and any $p\geq \frac73$ when $d=3$.
  
  \begin{rem}In the deterministic setting, Ladyzenkaya in \cite{Lad68}, showed that weak solutions to \eqref{e11.132d} exist and are globally unique in time for any $p\geq 3$ for space dimensions $d=2,3$ (see also Chapter 2, Section 5 in \cite{Lio69} and \cite{DW13}).\end{rem} %Later in \cite{DG91}, the authors showed that the same is true for any $s\geq \frac15$. 
  %Thus our result (combined with \cite{TT20} in the deterministic context wherein the existence of a weak solution is obtained for all $p\geq 2$ in dimensions $d \leq 4$) is a generalization of these  results.
 { \begin{rem}\label{rem_plaplace}
  	Note that Theorem \ref{existence_martingale} gives the existence {(without uniqueness)} of a martingale solution to \eqref{e11.132d} for all $p>2$. The same is true for the case where the monotone operator in \eqref{e11.132d} is replaced by the operator $  F(u)=-\sum^d_{i=1}\frac{\partial}{\partial x_i}\left( |\nabla u|^{p-2}\frac{\partial u}{\partial x_i}\right)$. This is because we require the presence of the linear term $A=-\Delta$ only while proving pathwise uniqueness of martingale solutions (in obtaining the estimates \eqref{unique_lady}) which eventually leads to the existence of a unique pathwise solution as described in Section \ref{path_sol}. See below Remark \ref{imp_equi} for a comment on how the equicoercivity result \eqref{FR3} is utilized in proving the results from earlier sections when the term $A$ is absent.
  \end{rem}}
  \subsection{Equations without the operator $A$ and with $B=0$}\label{withoutA}
  \subsubsection{Evolution equations with $p$-Laplacian}\label{ex_plaplace}
  Consider the following equations for some $p > 2$:
  \begin{equation}
  \begin{cases}\label{plaplace}
  \ud u -  \left[\sum^d_{i=1}\frac{\partial}{\partial x_i}\left( |\nabla u|^{p-2}\frac{\partial u}{\partial x_i}\right)\right]\ud t= G(u) \ud W + \int_{E_0} K(u,\xi) \ud \hat{\pi} & \tn{in } (0,T) \times \sO \\
  u = 0 & \tn{on } (0,T) \times  \partial \sO \\
  u(0) = u_0 & \tn{on } \{t=0\} \times \sO.
  \end{cases}
  \end{equation}
  Here $X=(W^{1,p}_0(\mathcal{O}))^d$ and $H=(L^2(\mathcal{O}))^d$. Even though the operator $A$ is absent from equations \eqref{plaplace}, the natural choice for $V$ is the space $(H^1_0(\mathcal{O}))^d$.
  {The convex functional $J$ in \eqref{J_form} associated with the monotone operator $F(u)=-  \sum^d_{i=1}\frac{\partial}{\partial x_i}\left[ |\nabla u|^{p-2}\frac{\partial u}{\partial x_i}\right]$ is given by $$J(v)=\frac1p\int_{\mathcal{O}}|\nabla v|^p \ud x.$$}
  We again refer to \cite{TT20} for a proof that the conditions \eqref{gen_convex}-\eqref{gc} are satisfied by $\mathscr{J}_1(x)=\frac1p|x|^p$. { Observe that when $B=0$, even though we used a special eigenbasis dependent on the term $A$ in the definition \eqref{e11.12},	the absence of the term $A$ does not hinder the application of the Galerkin method. One can use an appropriate orthonormal basis of the space $H$ to carry out the analysis in Section \ref{galerkin}.}
  Theorem \ref{existence} then gives the existence of a unique pathwise solution to \eqref{plaplace} for all $p \geq 2$.
  \begin{rem}\label{imp_equi}
  	For the equations \eqref{plaplace}, note that the equicoercivity result for $F^R$, stated in \eqref{FR3} and derived in \cite{TT20}, is key in getting the energy estimates in Lemma \ref{uniformboundL2} and Lemma \ref{gencase_uniformboundL2} independent of the parameter $R$. This is important due to the absence of the term $A$ which usually provides estimates in the space $L^2(\Omega; L^2(0,T;V))$. {Note also that while proving uniqueness of pathwise solutions, unlike in the equation discussed in Remark \ref{rem_plaplace}, the absence of the term $B$ compensates for the absence of the term $A$ in the derivation of the estimates \eqref{energyunique}.}   	
  		   	
  \end{rem}

\subsubsection{}\label{porous}
Consider the following equations:
\begin{equation} \label{porous_eqn}
\begin{cases}
\ud u + { \Delta}\left[|{\Delta u}|^{p-2}{\Delta u}\right]\ud t = G(u) \ud W + \int_{E_0} K(u,\xi) \ud \hat{\pi} & \tn{in } (0,T) \times \sO \\
u  = 0, \,\, {\frac{\partial u}{\partial \nu}=0} & \tn{on } (0,T) \times  \partial \sO \\
u(0) = u_0 & \tn{on } \{t=0\} \times \sO,
\end{cases}
\end{equation}
where $\nu$ is the unit {outward} normal on $\partial \mathcal{O}$.
Here we take $X=W^{2,p}_0(\mathcal{O})$, $V=H^2_0(\mathcal{O})$ and $H=L^2(\mathcal{O})$.
{The monotone, hemicontinuous operator $F(u)={ \Delta}\left[|{\Delta u}|^{p-2}{\Delta u}\right]$ is a variant of the non-linear operator appearing in the classical porous medium equation (cf. \cite{A86}) describing the flow of an ideal gas through a porous medium where the scalar valued function $u$ represents density.
Observe that the convex functional $J$ associated with the aforementioned monotone operator is given by
\begin{align*}
J(v)= \frac1p\int_{\mathcal{O}} |\Delta v|^p \ud x.
\end{align*}
Since we look for a scalar valued solution $u$ to \eqref{porous_eqn}, we prove for $x \in \mathbb{R}^{d^2}$, that $\mathscr{J}_2(x)=|x|^p$ satisfies the conditions \eqref{gen_convex}-\eqref{gc}, which were assumed in the general setting of a vector valued solution to the SPDEs \eqref{spde}. We will omit the computations since they are identical to the ones for $\mathscr{J}_1(x)$ in Section \eqref{ex_plaplace}. The assumptions \eqref{F3}-\eqref{F4} follow from \eqref{gc}. Observe here as well that the absence of the term $A$ does not create any problem; see Remark \ref{imp_equi}. Thus Theorem \ref{existence} applies and gives us the existence of a unique pathwise solution to \eqref{porous_eqn} for any $p\geq 2$.}

\subsection{Polynomial nonlinearity}\label{ex_poly}
We let $\mathcal{O}$ be an open, bounded subset of $\mathbb{R}^d$ with {a sufficiently} smooth boundary. Now consider the following equations which are a generalization of the stochastic Chafee-Infante equations.
\begin{equation} \label{e11.132a}
\begin{cases}
\ud u + [- \Delta u + P(u)]\ud t = G(u) \ud W + \int_{E_0} K(u,\xi) \ud \hat{\pi} & \tn{in } (0,T) \times \sO \\
u = 0 & \tn{on } (0,T) \times  \partial \sO \\
u(0) = u_0 & \tn{on } \{t=0\} \times \sO,
\end{cases}
\end{equation}
The operator $P$ can be a polynomial of odd order with a leading positive coefficient: $P(u)=\sum^{2q+1}_{i=0}  a_iu^i$ with $a_{2q+1}>0$.  Also in view of applications in \cite{CNT19} it could be a pseudo-polynomial of even order, sum of a term $a_{2q}|u|^{2q-1}u,$ $a_{2q}>0$, and of lower order terms of similar form.\\
The functional framework in Section \ref{sec:functional_framework} applies to the monotone operator {$F=-\Delta + P$} with $H=L^2(\mathcal{O}), V=H^1_0(\mathcal{O}),$  $X=L^{2q+2}(\mathcal{O})\cap H^1_0(\mathcal{O})$.

Notice that due to the absence, {in these equations}, of any additional nonlinearity like the term $B$, {the dimension} $d\geq 1$ can be arbitrarily large as opposed to the restriction $d \leq 3$ imposed in \cite{DHI} (see also Example 3 pg. 5698 in \cite{CNTT20}). Note also that for the same reasons, we do not require {the} strong monotonicity condition \eqref{F1'} and instead can get the desired results as long {as} we have the condition \eqref{F1}: $\iprod{F(u)-F(v)}{u-v}_{X,X'}\ge 0$ satisfied for any $ u,v\in X$. The authors in \cite{TT20} provided detailed calculations showing that this monotonicity condition holds along with the other hypotheses \eqref{F3}-\eqref{F4} with $p=2q+2$.\\ Thus Theorem \ref{existence} gives the existence of a global pathwise solution to \eqref{e11.132a} for any $q \geq 0$ and $d\geq 1$. 

{ 
	\subsection{Other examples}\label{other}
In this section we consider the following equations. %an open, bounded subset of $\mathbb{R}^d$.
\begin{equation} \label{x1}
\begin{cases}
\ud u - \frac{\partial}{\partial x_1}\left[(|\frac{\partial u}{\partial x_1}|^{p-2})\frac{\partial u}{\partial x_1}\right] \ud t = G(u) \ud W + \int_{E_0} K(u,\xi) \ud \hat{\pi} & \tn{in } (0,T) \times \sO \\
u\nu_1  = 0 & \tn{on } (0,T) \times  \partial \sO\\
u(0) = u_0 & \tn{on } \{t=0\} \times \sO,
\end{cases}
\end{equation}
{where $\nu=(\nu_1,...,\nu_d)$ is the unit outward normal along $\partial \sO$.}
Here we consider the spaces $X=\{u\in L^p(\mathcal{O}); \,  \frac{\partial u}{\partial x_1} \in L^p(\mathcal{O}), \, u\nu_1 = 0 \text{ on }  \partial\sO\}$ , $V=\{u\in L^2(\mathcal{O}); \,\frac{\partial u}{\partial x_1} \in L^2(\mathcal{O}),\, u\nu_1  = 0\text{ on }  \partial\sO  \}$ and $H=L^2(\mathcal{O})$. The boundary condition $u\nu_1=0$ makes sense on $\partial\sO$, considering the vector $U=(u,0,...,0)$ which belongs to $L^2(\sO)$, with div$(U)$ belonging also to $L^2(\sO)$, when $u$ and $\frac{\partial u}{\partial x_1} \in L^2(\sO)$. We then apply the classical trace theorem from Navier-Stokes equations theory, see e.g. Theorem 1.2, Ch. I in \cite{Tem95}. The space $V$ is Hilbert under the scalar product $(u,v)+(\frac{\partial u}{\partial x_1},\frac{\partial v}{\partial x_1})$. \\
In this case the associated convex functional for any $v\in X$ is given by $J(v)=\int_{\mathcal{O}} |\nabla v \cdot e_1|^p \ud x$ and $\mathscr{J}_1(x)=|x \cdot e_1|^p$ where $e_1=(1,0,...,0), x \in \mathbb{R}^d$. With straightforward calculations it can be seen that for $x \in \mathbb{R}^d$, a variant of 
 the conditions \eqref{gen_convex}-\eqref{gc} is satisfied with $|x|_{l^p(\mathbb{R}^d)}$ replaced by $|x \cdot e_1|_{l^p(\R)}$. %, that is, with $x \in \mathbb{R}^{d+1}$ replaced by $x_1 \in \mathbb{R}$. %Following the algebra involved in proving%satisfies the hypotheses \eqref{gen_convex}-\eqref{gc}.
The functions $f_1^R$ are defined as follows:
\begin{align}
f_1^{R}(x)=f_1(x)\mathbbm{1}_{\{|x \cdot e_1|_{p}<R\}} +  \left[ f_1 \left(\frac{Rx}{|x|_{p}}\right) + \left( 1-\frac{R}{|x|_{p}}\right) x  D^2 \mathscr{J}_1\left(\frac{Rx}{|x|_{p}}\right) \right] \mathbbm{1}_{\{|x \cdot e_1|_{p}>R\}},
\end{align}
The rest of the analysis carries out identically as with the example described in Section \ref{ex_plaplace} due to the similarities between the monotone operators. Observe that here the condition $X=W^{k,p}(\mathcal{O})\cap V$, which was assumed only for the simplicity of our analysis, is not required. In general, it suffices for the functions $\mathscr{J}_i$'s to have polynomial growth so that the constructions defined in \eqref{gen_def_FR}-\eqref{gen_def_fR} approximate the monotone operator $F$ and give us the desired existence result.
Finally, an application of Theorem \ref{existence} gives us the existence of a unique pathwise solution to the SPDE \eqref{x1} for any $p > 2$.}

%\section{Appendix}
\begin{appendices}
	\appendix 
\section{About the Skorohod representation theorem by Justin Cyr}

Here we present a proof of the Skorohod representation theorem.\footnote{In this article as well as in related articles  \cite{CNTT20}, \cite{1LayerShallow}, \cite{EulerPaper} we relied on a specialized version of the Skorohod theorem stated as Theorem C.1 in \cite{BHR}. In this appendix we provide, for the sake of completeness, an alternate proof of Theorem C.1 in \cite{BHR} due to Justin Cyr who has co-authored the articles \cite{CNTT20}, \cite{1LayerShallow}, \cite{EulerPaper}. }

\begin{thm}\label{skorohodtheorem}
	Let $(\Omega, \sF, \bP)$ be a probability space and $U^1, U^2$ be two complete separable metric spaces. Let $\chi_n \colon \Omega \ra U^1 \times U^2$, $n \in \N$, be a sequence of weakly convergent random variables with laws $\left(\mu_n \right)_{n=1}^\infty$. 
	For $i = 1, 2$ let $\pi_i \colon U^1 \times U^2 \ra U^i$ denote the natural projection, i.e.,
	\[
	\pi_i(\chi^1, \chi^2) = {\chi^i}, \qquad \forall (\chi^1, \chi^2) \in U^1 \times U^2.
	\]
	Finally, let us assume that the random variables $\pi_1(\chi_n) = \chi_n^1$ on $U^1$ share the same law, independent of $n$.
	
	Then, there exists a family of $U^1 \times U^2$-valued random variables $\left(\bar{\chi}_n\right)_{n=1}^\infty$ on the probability space $(\tilde{\Omega}, \tilde{\sF}, \tilde{\bP}) := ([0,1) \times [0,1), \tn{ Borel sets}, \tn{Lebesgue measure})$ and a random variable $\bar{\chi}_\infty$ on $(\tilde{\Omega}, \tilde{\sF}, \tilde{\bP})$ such that the following statements hold:
	\begin{myenum}
		\item The law of $\bar{\chi}_n$ is $\mu_n$ for every $n \in \N$,
		\item $\bar{\chi}_n \ra \bar{\chi}_\infty$ in $U^1 \times U^2$, $\tilde{\bP}$-a.s.,
		\item $\bar{\chi}_n^1 = \bar{\chi}_\infty^1$ everywhere on $\tilde{\Omega}$ for every $n \in \N$.
	\end{myenum}
\end{thm}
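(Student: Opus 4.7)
The product decomposition $\tilde\Omega = [0,1) \times [0,1)$ plays an essential role: the first factor $\omega_1$ will carry the common first marginal (which coincides across all $n$), while the second factor $\omega_2$ provides independent randomness for a parameterized Skorohod construction of the second coordinate. By arranging $\bar\chi_n^1$ to depend only on $\omega_1$ and not on $n$, property $(iii)$ becomes automatic; the content of the proof lies in producing the second coordinate with the correct laws and with almost sure convergence.

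\textbf{Disintegration and first coordinate.} Let $\mu_\infty$ denote the weak limit of $(\mu_n)$, and let $\nu$ be the common law of the $\chi_n^1$, so that $\pi_{1\ast}\mu_n = \nu$ for every $n \in \N \cup \{\infty\}$. Since $U^1$ and $U^2$ are Polish, each $\mu_n$ admits a Borel disintegration $\mu_n(\ud x^1\,\ud x^2) = \nu(\ud x^1)\,K_n(x^1,\ud x^2)$ with Borel probability kernels $K_n \colon U^1 \to \sP(U^2)$. Moreover, since $U^1$ is standard Borel and $([0,1),\lambda)$ is nonatomic, there is a Borel map $\psi \colon [0,1) \to U^1$ with $\psi_\ast\lambda = \nu$ (constructed via a Borel isomorphism of $U^1$ with a Borel subset of $[0,1]$, composed with an inverse-distribution-function type map). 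Setting
\[
\bar\chi_n^1(\omega_1,\omega_2) := \psi(\omega_1) \qquad \text{for every } n \in \N \cup \{\infty\},
\]
property $(iii)$ holds by construction, and each $\bar\chi_n^1$ has the prescribed law $\nu$.

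\textbf{Second coordinate via parameterized Skorohod.} The remaining task is to construct Borel maps $\Phi_n \colon U^1 \times [0,1) \to U^2$, for $n \in \N \cup \{\infty\}$, satisfying, for $\nu$-a.e.\ $x \in U^1$: (a) $\Phi_n(x,\cdot)_\ast\lambda = K_n(x,\cdot)$, and (b) $\Phi_n(x,\omega_2) \to \Phi_\infty(x,\omega_2)$ as $n \to \infty$ for $\lambda$-a.e.\ $\omega_2$. Defining $\bar\chi_n^2(\omega_1,\omega_2) := \Phi_n(\psi(\omega_1),\omega_2)$, property $(i)$ then follows from (a) together with Fubini's theorem, and property $(ii)$ follows from (b) by integration in $\omega_1$.

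\textbf{Main obstacle.} The hard part is to produce such $\Phi_n$ with \textit{joint} Borel measurability in $(x,\omega_2)$ and yielding \textit{simultaneous} almost sure convergence in (b). A naive fiberwise application of the classical Skorohod theorem to each sequence $K_n(x,\cdot) \to K_\infty(x,\cdot)$ is obstructed by the fact that weak convergence of the joint laws $\mu_n \to \mu_\infty$ does not in general force weak convergence of the disintegration kernels $K_n(x,\cdot)$ for $\nu$-a.e.\ $x$. To bypass this, I would first apply the classical Skorohod representation theorem on the Polish space $U^1 \times U^2$ to obtain random variables $\tilde\chi_n \colon ([0,1),\lambda) \to U^1 \times U^2$ with $\tilde\chi_n \sim \mu_n$ and $\tilde\chi_n \to \tilde\chi_\infty$ almost surely, and then transport this coupling onto $\tilde\Omega$. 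Since $\tilde\chi_\infty^1$ and $\psi$ share the common law $\nu$ while $([0,1),\lambda)$ is nonatomic, an a.e.\ defined Borel measure-preserving map can be used to align them; the auxiliary randomness $\omega_2$ then absorbs the ensuing measurability defects while the almost sure convergence of the transported second coordinate is preserved. Carrying out this transport jointly across all $n$ with the required Borel measurability, while ensuring that the resulting pair $(\bar\chi_n^1,\bar\chi_n^2)$ still has joint law $\mu_n$, is the most delicate step of the argument.
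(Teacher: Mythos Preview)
Your overall architecture---fix $\bar\chi_n^1 = \psi(\omega_1)$ independently of $n$ and build $\bar\chi_n^2$ from $\omega_2$---is the right one, but the transport step you sketch to produce the second coordinate has a genuine gap. After applying classical Skorohod on $U^1 \times U^2$ you obtain $\tilde\chi_n \to \tilde\chi_\infty$ a.s.\ on $([0,1),\lambda)$, but the first coordinates $\tilde\chi_n^1$ are \emph{different} random variables for different $n$; they converge a.s.\ but are not equal. A single measure-preserving transport $T$ can align $\tilde\chi_\infty^1 \circ T$ with $\psi(\omega_1)$, and then $\bar\chi_n := \tilde\chi_n \circ T$ still converges a.s., but $\bar\chi_n^1 = \tilde\chi_n^1 \circ T$ only converges to $\psi(\omega_1)$ rather than equalling it, so (iii) fails. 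If instead you use $n$-dependent transports $T_n$ to force $\tilde\chi_n^1 \circ T_n = \psi(\omega_1)$ for every $n$, then (iii) holds but the a.s.\ convergence $\tilde\chi_n \circ T_n \to \tilde\chi_\infty \circ T_\infty$ is lost, since the $T_n$ need not stabilize in any useful sense. Your sentence about $\omega_2$ ``absorbing the ensuing measurability defects while the almost sure convergence \ldots\ is preserved'' names this tension without resolving it; as you yourself note, the kernels $K_n(x,\cdot)$ need not converge weakly for $\nu$-a.e.\ $x$, and nothing in the transport step recovers this.

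The paper avoids both disintegration and classical Skorohod and instead builds $\bar\chi_n$ directly. It fixes nested countable partitions $\{\sO^k_\alpha\}$ of $U^1$ and $\{\sC^k_\beta\}$ of $U^2$ whose boundaries are $\mu_\infty$-null, and for each $n,k$ partitions $[0,1)^2$ into rectangles $I^{n,k}_{\alpha,\beta}$ whose \emph{horizontal} extent is determined solely by the common first marginal (hence independent of $n$) and whose vertical extent encodes the conditional probabilities $\mu_n(\sO^k_\alpha \times \sC^k_\beta \mid \sO^k_\alpha \times U^2)$. Piecewise-constant maps $Z_{n,k}$ on these rectangles are Cauchy in $k$; their limits $\bar\chi_n$ automatically share the same first coordinate because the horizontal partition never depended on $n$. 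The a.s.\ convergence as $n \to \infty$ then follows from weak convergence of the rectangle endpoints, using the null-boundary condition. In effect, the alignment you try to impose after the fact by transport is baked into the construction from the first level of the partition onward.
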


Note that in applications to martingale solutions for stochastic PDEs, $U^1$ is the noise space and $U^2$ is the solution space. A key part of the extension, part $iii)$, says that the noise processes for the sequence on the new probability space can be chosen in a way that does not depend on $n$. 

\begin{proof}
	Let $\mu_\infty$ denote the weak limit of the sequence $\left(\mu_n \right)_{n=1}^\infty$, which exists by hypothesis. The proof begins by defining partitions of $U^1$ and $U^2$. Let $\left(x_i\right)_{i=1}^\infty$ and $\left(y_j \right)_{j=1}^\infty$ be countable dense subsets of $U^1$ and $U^2$, respectively, and let $\left(r_k\right)_{k=1}^\infty$ be a sequence of positive numbers such that $r_k \downarrow 0$ and\footnote{$B(x,r)$ denotes the closed ball of radius $r > 0$ centered at the point $x$ in a metric space. We use this notation in both spaces $U^1$ and $U^2$.} $\mu_\infty(\partial B(x_i, r_k) \times U^2) = 0$ and $\mu_\infty(U^1 \times \partial B(y_j, r_k)) = 0$ for all $i,j, k \in \N$. It is possible to choose $r_k$ with this property because, for example, for each fixed $i$ there are at most countably many $r > 0$ such that $\mu_\infty(\partial B(x_i, r) \times U^2) > 0$. 
	
	We will define a sequence $\left( \sO^k \right)_{k=1}^\infty$ of nested countable partitions of $U^1$ and a sequence $\left( \sC^k \right)_{k=1}^\infty$ of nested countable partitions of $U^2$. For each fixed $k \in \N$ and multi-index $\alpha \in \N^k$, define
	\[
	\sO^k_{\alpha} := \{ x \in U^1 : \tn{for each } \ell \leq k, \alpha_\ell \tn{ is the minimal index } i \tn{ such that } x \in B(x_{i}, r_\ell) \}.
	\]
	The proof in \cite{BHR} uses an inductive construction to define an equivalent partition, denoted $O^k_{i_1,\ldots,i_k}$. The sets $O^k_{i_1,\ldots,i_k}$ defined in \cite{BHR} are empty if $i_1,\ldots,i_k$ is not monotone increasing but otherwise $O^k_{i_1,\ldots,i_k}$ is equal to the set $\sO^k_{\alpha}$ defined above, when $\alpha = (i_1,\ldots,i_k)$. In particular, the inductive construction shows that the sets $\sO^k := \{ \sO^k_{\alpha} : \alpha \in \N^k\}$ are Borel-measurable. Since $\left(x_i\right)_{i=1}^\infty$ is dense in $U^1$ it is clear that $U^1 = \bigcup_{\alpha \in \N^k} \sO^k_\alpha$ for each $k$. The minimality property ensures that $\sO^k_\alpha \cap \sO^k_\beta = \varnothing$ if $\alpha \neq \beta$. Thus, $\sO^k$ is a partition of $U^1$.
	
	Note that the partitions are nested. That is, each set $\sO^k_\alpha$ in partition $\sO^k$ is itself partitioned by the sets $\{\sO^{k+m}_{(\alpha, \tilde{\alpha})} : \tilde{\alpha} \in \N^m \}$ in partition $\sO^{k+m}$.
	
	We define nested countable partitions $\sC^k = \{ \sC^k_\beta : \beta \in \N^k\}$ of $U^2$ for each $k \in \N$ by the same construction, thereby obtaining nested countable partitions $\sO^k \times \sC^k := \{\sO^k_\alpha \times \sC^k_\beta : \alpha, \beta \in \N^k \}$ of $U^1 \times U^2$. By our choice of the sequence $\left(r_k \right)_{k=1}^\infty$ we have $\mu_\infty(\partial (\sO^k_\alpha \times \sC^k_\beta)) = 0$ for every $k \in \N$ and $\alpha, \beta \in \N^k$.
	
	For each measure in the sequence $\mu_1, \mu_2, \ldots$ and including $\mu_\infty$, and for each partition $\sO^k \times \sC^k$ of $U^1 \times U^2$ we define a nested countable partition of rectangles for $\tilde{\Omega} := [0,1) \times [0,1)$. In a change from the proof in \cite{BHR} we do this construction for $\mu_\infty$ as well to obtain the limit $\bar{\chi}_\infty$; so, we allow $n = \infty$ in the notation below.
	
	\begin{defn}
		For each $n \in \bar{\N}$ and $k \in \N$ define a partition $I^{n,k}$ of the square $[0,1) \times [0,1)$ as follows. For $\alpha, \beta \in \N^k$ we put
		\begin{align}
		I^{n,k}_{\alpha, \beta} &:= \Big[ \mu_n^1\Big( \bigcup_{\alpha' < \alpha} \sO^k_{\alpha'} \Big),  \mu_n^1\Big( \bigcup_{\alpha' \leq \alpha} \sO^k_{\alpha'} \Big) \Big)  \nonumber \\[5pt]
		&\qquad \times \Big[ \mu_n\Big( \sO^k_\alpha \times \bigcup_{\beta' < \beta} \sC^k_{\beta'} \ \Big| \ \sO^k_\alpha \times U^2 \Big) , \mu_n\Big( \sO^k_\alpha \times \bigcup_{\beta' \leq \beta} \sC^k_{\beta'} \ \Big| \ \sO^k_\alpha \times U^2 \Big)  \Big).  \label{eqn:C.1} 
		\end{align}
	\end{defn}
	
	Remarks about \eqref{eqn:C.1}:
	\begin{itemize} 
		\item The set unions, e.g.~$\bigcup_{\alpha' < \alpha} \sO^k_{\alpha'}$, are indexed by lexicographic order on $\N^k$. These are finite unions only when $\alpha$ is of the form $\alpha = (1, 1, \ldots, 1, a)$ for $a \in \N$, otherwise they are countably infinite.
		\item $\mu_n( A \ | \ B)$ denotes the conditional probability of $A$ given $B$ under $\mu_n$.
		\item $\mu^1_n$ denotes the marginal of $\mu_n$ on $U^1$, i.e., $\mu^1_n(A) := \mu_n(A \times U^2)$ for $A \in \sB(U^1)$. In other words, $\mu^1_n$ is the law of $\pi_1(\chi_n)$. 
	\end{itemize}
	
	For $k \in \N$ and each measure $\mu_n$ we are partitioning the unit square into countably many rectangles $I^{n,k} := \{ I^{n,k}_{\alpha, \beta} : \alpha, \beta \in \N^k\}$ with horizontal lengths $\mu_n^1(\sO^k_\alpha) = \mu_n(\sO^k_\alpha \times U^2)$ and vertical lengths $\mu_n(\sO^k_\alpha \times \sC^k_\beta \ | \ \sO^k_\alpha \times U^2)$. Note that the horizontal lengths $\mu_n^1(\sO^k_\alpha)$ do not depend on $n$ (including $n = \infty$), since we assume that the marginals $\mu^1_n$ are the same for all $n \in \N$. The area of rectangle $I^{n,k}_{\alpha, \beta}$ is $\mu_n(\sO^k_\alpha \times \sC^k_\beta)$. The rectangles $I^{n,k}$ cover the entire unit square $\tilde{\Omega}$ because $\sO^k \times \sC^k$ partitions $U^1 \times U^2$ and $\mu_n$ is a probability measure. 
	
	The partitions $I^{k, n}$ are nested as $k$ increases. That is, for $k_1 < k_2$ the rectangle $I^{n,k_1}_{\alpha, \beta}$ in partition $I^{n,k_1}$ is itself partitioned by the rectangles $\{I^{n,k_2}_{(\alpha, \tilde{\alpha}), (\beta,\tilde{\beta})} : \tilde{\alpha}, \tilde{\beta} \in \N^{k_2-k_1} \}$ from partition $I^{n, k_2}$. 
	
	Now we define the $U^1 \times U^2$ random variables $\left(\bar{\chi}_n \right)_{n=1}^\infty$ and $\bar{\chi}_\infty$ on the new probability space $(\tilde{\Omega}, \tilde{\sF}, \tilde{\bP})$. 
	
	First, for each $k \in \N$ and $\alpha \in \N^k$ we choose an element $x^k_\alpha$ of the set $\sO^k_\alpha$ from the countable dense subset $\left(x_i \right)_{i=1}^\infty$. For instance, let $i^*$ be the minimal index $i$ such that $x_i \in \sO^k_\alpha$ and set $x^k_\alpha := x_{i^*}$. Similarly, for each $k \in \N$ and $\beta \in \N^k$ choose an element $y^k_\beta$ of $\sC^k_\beta$ from the countable dense set $\left(y_j \right)_{j=1}^\infty$. 
	
	Second, for each $n \in \bar{\N}$ and $k \in \N$ we define a $U^1 \times U^2$-valued random variable $Z_{n,k} \colon \tilde{\Omega} \ra U^1 \times U^2$ by setting
	\[
	Z_{n,k}(\bar{\omega}) := (Z^1_{n,k}(\bar{\omega}), Z^2_{n,k}(\bar{\omega})) := (x^k_\alpha, y^k_\beta) \qquad \tn{if } \bar{\omega} \in I^{k, n}_{\alpha, \beta} \ (\alpha, \beta \in \N^k). 
	\] 
	Each $Z_{n,k}$ is constant on a countable set of measurable rectangles and is therefore measurable. The values of $Z_{n,k}$ do not depend on $n$; only the rectangles depend on $n$ and even then only in the vertical axis. 
	
	Third, we pass to the limit as $k \ra \infty$. We claim that the sequence $\left(Z_{n,k}\right)_{k=1}^\infty$ is Cauchy in $U^1 \times U^2$ for each fixed $n \in \bar{\N}$. Indeed, consider $Z_{n,k_1}(\bar{\omega})$ and $Z_{n,k_2}(\bar{\omega})$ for positive integers $k_1 < k_2$ and $\bar{\omega} \in \tilde{\Omega}$. The key  is that $\bar{\omega}$ belongs to a rectangle of the form $I^{n,k_1}_{\alpha, \beta}$ in partition $I^{n,k_1}$ and a rectangle of the form $I^{n,k_2}_{(\alpha, \tilde{\alpha}), (\beta,\tilde{\beta})}$ in partition $I^{n,k_2}$, where $\tilde{\alpha}, \tilde{\beta} \in \N^{k_2-k_1}$. The points $Z^1_{n,k_1}(\bar{\omega}) = x^{k_1}_{\alpha}$ and $Z^1_{n,k_2}(\bar{\omega}) = x^{k_2}_{(\alpha , \tilde{\alpha})}$ both belong to the set $\sO^{k_1}_{\alpha}$ in $U^1$, since $\sO^{k_2}_{(\alpha, \tilde{\alpha})} \subseteq \sO^{k_1}_{\alpha}$; likewise, $y^{k_1}_{\beta}, y^{k_2}_{(\beta, \bar{\beta})} \in \sC^{k_1}_\beta$. Since $\sO^{k_1}_\alpha$ and $\sC^{k_1}_\beta$ have diameter bounded by $2 r_{k_1}$, we obtain the following estimate, uniformly in $\bar{\omega}$:\footnote{$d$ denotes a complete metric for the topology on $U^1 \times U^2$. We can take $d := d^1 + d^2$ where $d^i$ is a complete metric on $U^i$.}
	\begin{equation} \label{eqn:C.3}
	d\big(Z_{n,k_1}(\bar{\omega}), Z_{n,k_2}(\bar{\omega}) \big) \leq 4r_{k_1}, \qquad \tn{ for all } k_2 \geq k_1, \ \tilde{\omega} \in \tilde{\Omega}.
	\end{equation}
	Therefore, $\left(Z_{n,k}\right)_{k=1}^\infty$ is Cauchy in $U^1 \times U^2$ for each fixed $n$. Since $U^1 \times U^2$ is complete we can define $U^1 \times U^2$-valued random variables $\bar{\chi}_n \colon \tilde{\Omega} \ra U^1 \times U^2$ for each $n \in \bar{\N}$ by
	\[
	\bar{\chi}_n(\bar{\omega}) := \lim_{k \ra \infty} Z_{n, k}(\bar{\omega}).
	\]
	This limit exists for every $\bar{\omega} \in \tilde{\Omega}$ and the resulting function $\bar{\chi}_n$ is Borel-measurable.
	
	It remains to show that the random variables $\left( \bar{\chi}_n \right)_{n=1}^\infty$ and $\bar{\chi}_\infty$ have the following properties:
	\begin{myenum}
		\item The law of $\bar{\chi}_n$ is $\mu_n$ for every $n \in \bar{\N}$.
		\item The projection $\pi_1(\bar{\chi}_n) = \bar{\chi}_n^1$ onto $U^1$ does not depend on $n$ (including $n=\infty$).
		\item $\bar{\chi}_n \ra \bar{\chi}_\infty $ in $U^1 \times U^2$, $\tilde{\bP}$-a.s.
	\end{myenum}
	
	We save the trickiest part, passing to the limit as $n \ra \infty$, for last. %Note that it will follow from the properties above that $\mu_\infty$ is the law of $\bar{\chi}_\infty$ and that $\pi_1(\bar{\chi}_n) = \pi_1(\bar{\chi}_\infty)$ everywhere on $\tilde{\Omega}$ for every $n \in \N$.
	
	For property i), note that for fixed $n \in \bar{\N}$, $k \in \N$ and $\alpha, \beta \in \N^k$ we have
	\begin{equation*}
	\tilde{\bP}[\bar{\chi}_n \in \sO^k_\alpha \times \sC^k_\beta] = \tn{Area}(I^{n,k}_{\alpha, \beta}) = \mu_n(\sO^k_\alpha \times \sC^k_\beta).
	\end{equation*}
	Therefore, the law of $\bar{\chi}_n$ agrees with $\mu_n$ on the sets $\Pi :=\{\sO^k_\alpha \times \sC^k_\beta : k \in \N, \alpha, \beta \in \N^k \}$. $\Pi$ is closed under finite intersections (i.e., $\Pi$ is a $\pi$-system) and generates the Borel $\sigma$-algebra on $U^1 \times U^2$. It follows from the $\pi$-$\lambda$ theorem (e.g., Theorem 3.3 in \cite{Bil95}) that the law of $\bar{\chi}_n$ is equal to $\mu_n$.
	
	For property ii), recall that, by hypothesis, the endpoints of the horizontal interval 
	\[
	\Big[ \mu_n^1\Big( \bigcup_{\alpha' < \alpha} \sO^k_{\alpha'} \Big),  \mu_n^1\Big( \bigcup_{\alpha' \leq \alpha} \sO^k_{\alpha'} \Big) \Big) 
	\] 
	of rectangle $I^{n, k}_{\alpha, \beta}$ do not depend on $n$. The value of $\beta$ for which $\bar{\omega} \in I^{n, k}_{\alpha, \beta}$ varies with $n$ but the value of $\alpha$ does not. Therefore, as $n$ varies with $k$ fixed the projection onto $U^1$ of 
	\(
	Z_{n,k}(\bar{\omega}) = (x^k_\alpha, y^k_\beta)
	\)
	remains unchanged. Thus, after passing to the limit as $k \ra \infty$ we see that $\bar{\chi}^1_n$ does not depend on $n$. 
	
	For property iii) consider the set $B \subseteq \tilde{\Omega}$ defined by 
	\[
	B := \bigcup_{k=1}^\infty \bigcap_{N=1}^\infty \bigcup_{n=N}^\infty \{ d(\bar{\chi}_n , \bar{\chi}_\infty) \geq 4 r_k \}.
	\]
	It is clear that $\bar{\chi}_n \ra \bar{\chi}_\infty$ as $n \ra \infty$ on $B^c$. Therefore, we must show that $\tilde{\bP}(B) = 0$. For this it is sufficient to show that for every $k \in \N$ and every $\ep > 0$ there exists an $N \in \N$ such that 
	\begin{equation} \label{eqn:show}
	\tilde{\bP} \Big( \bigcup_{n=N}^\infty \{ d(\bar{\chi}_n , \bar{\chi}_\infty) \geq 4 r_k \} \Big) \leq \ep.
	\end{equation}
	For a given $k \in \N$ and $\ep > 0$ we can find a positive integer $K$ and multi-indexes $\alpha_1, \beta_1, \ldots, \alpha_K, \beta_K$ in $\N^k$ such that 
	\begin{equation} \label{eqn:B_0_small_msre}
	\mu_{\infty} \Big( \Big( \bigcup_{i=1}^{K} \sO^k_{\alpha_i} \times \sC^k_{\beta_i} \Big)^c \Big) < \frac{\ep}{2},
	\end{equation}
	because $\mu_{\infty} \Big( \bigcup_{\alpha, \beta \in \N^k} \sO^k_{\alpha} \times \sC^k_{\beta}  \Big)  = 1$. To shorten the notation, define 
	\[
	I^k_i := I^{\infty, k}_{\alpha_i, \beta_i}, \ 1 \leq i \leq K, \quad \tn{and} \quad I^k_0 := \Big( \bigcup_{i=1}^{K} I^{\infty, k}_{\alpha_i, \beta_i} \Big)^c.
	\]
	
	Now, if $d(\bar{\chi}_n(\bar{\omega}) , \bar{\chi}_\infty(\bar{\omega})) \geq 4 r_k$ then either $\bar{\omega} \in I^k_0$ (which has small probability by \eqref{eqn:B_0_small_msre}) or $\bar{\omega}$ belongs to $I^{\infty, k}_{\alpha_i, \beta_i}$ for some $i = 1, \ldots,K$. In the second case $\bar{\omega}$ cannot belong to $I^{n, k}_{\alpha_i, \beta_i}$, otherwise $\bar{\chi}_n(\bar{\omega})$ and $\bar{\chi}_\infty(\bar{\omega})$ would both belong to the closure of $\sO^k_{\alpha_i} \times \sC^k_{\beta_i}$, which has radius $2 r_k$. Therefore, we have
	\begin{align*}
	\bigcup_{n=N}^\infty \{ d(\bar{\chi}_n , \bar{\chi}_\infty) \geq 4 r_k \} &\subseteq I^k_0 \cup \bigcup_{n=N}^\infty \bigcup_{i=1}^K I^k_i \setminus I^{n, k}_{\alpha_i, \beta_i} \\[5pt]
	&=  I^k_0 \cup \Big( \bigcup_{i=1}^K I^k_i \setminus \Big( \bigcap_{n=N}^\infty  I^{n, k}_{\alpha_i, \beta_i} \Big) \Big)
	\end{align*}
	and the estimate 
	\begin{equation} \label{eqn:estimate}
	\tilde{\bP} \Big( \bigcup_{n=N}^\infty \{ d(\bar{\chi}_n , \bar{\chi}_\infty) \geq 4 r_k \} \Big) \leq \frac{\ep}{2} + \sum_{i=1}^K \tilde{\bP} \Big( I^k_i \setminus \Big( \bigcap_{n=N}^\infty  I^{n, k}_{\alpha_i, \beta_i} \Big) \Big).
	\end{equation}
	To estimate $\tilde{\bP} \Big(I^k_i \setminus \Big( \bigcap_{n=N}^\infty  I^{n, k}_{\alpha_i, \beta_i} \Big) \Big)$ for each $i$, recall that all of the rectangles have the same horizontal component of length $\mu_\infty^1(\sO^k_{\alpha_i})$. Therefore, we just need to estimate the differences between the endpoints of the vertical components. We have 
	\begin{align*}
	\tilde{\bP} \Big(I^k_i &\setminus \Big( \bigcap_{n=N}^\infty  I^{n, k}_{\alpha_i, \beta_i} \Big) \Big) \leq  \\[5pt]
	& \mu_\infty^1(\sO^k_{\alpha_i}) \cdot \Big[ \Big|\mu_\infty \Big( \sO^{k}_{\alpha_i} \times  \bigcup_{\beta' < \beta_i} \sC^k_{\beta'} \ \Big| \ \sO^{k}_{\alpha_i} \times U^2  \Big) - \sup_{n \geq N} \mu_n \Big(  \sO^{k}_{\alpha_i} \times \bigcup_{\beta' < \beta_i} \sC^k_{\beta'} \ \Big| \ \sO^{k}_{\alpha_i} \times U^2  \Big) \Big| \\[5pt]
	&+ \Big|\mu_\infty \Big(\sO^{k}_{\alpha_i} \times \bigcup_{\beta' \leq \beta_i}  \sC^k_{\beta'} \ \Big| \ \sO^{k}_{\alpha_i} \times U^2  \Big) - \inf_{n \geq N} \mu_n \Big(  \sO^{k}_{\alpha_i} \times \bigcup_{\beta' \leq \beta_i} \sC^k_{\beta'} \ \Big| \ \sO^{k}_{\alpha_i} \times U^2  \Big) \Big| \Big].
	\end{align*}
	We chose the sequence $\left( r_k \right)_{k=1}^\infty$ in such a way that the boundary of every set appearing above has measure $0$ under $\mu_\infty$. Therefore it follows from the weak convergence $\mu_n \Rightarrow \mu_\infty$ that we can choose $N \in \N$ large enough so that the RHS above is bounded by $\ep/(2K)$. Combining this estimate with \eqref{eqn:estimate} yields \eqref{eqn:show} and establishes the desired $\tilde{\bP}$-a.s.~convergence.
\end{proof}
\end{appendices}
\section{Acknowledgment}
This work was supported by the Research Fund
of Indiana University. 

\bibliography{NTT21_biblio}
\bibliographystyle{plain}

\end{document}